\newtheorem{proposition}[equation]{Proposition}
\newtheorem{theorem}[equation]{Theorem}
\newtheorem{lemma}[equation]{Lemma}
\newtheorem{conjecture}[equation]{Conjecture}
\newtheorem{corollary}[equation]{Corollary}
\theoremstyle{remark}
\theoremstyle{definition}
\newtheorem{definition}[equation]{Definition}
\newtheorem{notation}[equation]{Notation}
\theoremstyle{remark}
\newtheorem{remark}[equation]{Remark}
\newtheorem{example}[equation]{Example}
\numberwithin{equation}{section}
\DeclareMathOperator{\GL}{GL}
\DeclareMathOperator{\SO}{SO}
\DeclareMathOperator{\PSO}{PSO}
\DeclareMathOperator{\Spin}{Spin}
\DeclareMathOperator{\Sp}{Sp}
\DeclareMathOperator{\Res}{Res}
\DeclareMathOperator{\Sym}{Sym}
\DeclareMathOperator{\End}{End}
\DeclareMathOperator{\Rep}{Rep}
\DeclareMathOperator{\id}{id}
\DeclareMathOperator{\im}{im}
\DeclareMathOperator{\ch}{ch}
\DeclareMathOperator{\NS}{NS}
\DeclareMathOperator{\MT}{MT}
\DeclareMathOperator{\prim}{prim}
\DeclareMathOperator{\even}{even}
\DeclareMathOperator{\odd}{odd}
\newcommand{\calX}{\mathcal{X}}
\newcommand{\ZZ}{\mathbb{Z}}
\newcommand{\QQ}{\mathbb{Q}}
\newcommand{\RR}{\mathbb{R}}
\newcommand{\CC}{\mathbb{C}}
\newcommand{\HH}{\mathbb{H}}
\newcommand{\PP}{\mathbb{P}}
\newcommand{\mt}{\mathfrak{mt}}
\newcommand{\so}{\mathfrak{so}}
\newcommand{\fg}{\mathfrak{g}}
\newcommand{\fh}{\mathfrak{h}}
\newcommand{\fm}{\mathfrak{m}}
\newcommand{\s}{\mathfrak{sl}}
\newcommand{\gl}{\mathfrak{gl}}
\newcommand{\Kum}{\mathrm{Kum}}
\newcommand{\OG}{\mathrm{OG}}
\begin{document}
	\title[LLV Decomposition]{The LLV decomposition of hyper-K\"ahler cohomology\\[0.2em]\small (The known cases and the general conjectural behavior)}

\author[M. Green]{Mark Green}
\address{UCLA, Mathematics Department, Box 951555, Los Angeles, CA 90095-1555}
\email{mlg@math.ucla.edu}

\author[Y.-J. Kim]{Yoon-Joo Kim}
\address{Stony Brook University, Department of Mathematics, Stony Brook, NY 11794-3651}
\email{yoon-joo.kim@stonybrook.edu}

\author[R. Laza]{Radu Laza}
\address{Stony Brook University, Department of Mathematics, Stony Brook, NY 11794-3651}
\email{radu.laza@stonybrook.edu}

\author[C. Robles]{Colleen Robles}
\address{Duke University, Mathematics Department, Box 90320, Durham, NC 27708-0320}
\email{robles@math.duke.edu}
\thanks{The  third author (RL) was partially supported by NSF Grant DMS-1802128, and the fourth author (CR)  was partially supported by NSF Grant DMS-1611939.}

\bibliographystyle{amsalpha}
	\date{\today}

	\begin{abstract}
	Looijenga--Lunts and Verbitsky showed that the cohomology of a compact hyper-K\"ahler manifold $X$ admits a natural action by the Lie algebra $\mathfrak{so} (4, b_2(X)-2)$, generalizing the Hard Lefschetz decomposition for compact K\"ahler manifolds.
	In this paper, we determine the Looijenga--Lunts--Verbitsky (LLV) decomposition for all known examples of compact hyper-K\"ahler manifolds, and propose a general conjecture on the weights occurring in the LLV decomposition, which in particular determines strong bounds on the second Betti number $b_2(X)$ of hyper-K\"ahler manifolds (see \cite{b2HK}). 
	
	Specifically, in the $K3^{[n]}$ and $\Kum_n$ cases, we give generating series for the formal characters of the associated LLV representations, which generalize the well-known  G\"ottsche formulas for the Euler numbers, Betti numbers, and Hodge numbers for these series of hyper-K\"ahler manifolds. For the two exceptional cases of O'Grady ($\OG6$ and $\OG10$) we refine the known results on their cohomology (cf. \cite{MRS} and \cite{dCRS}). In particular, we note that the LLV decomposition leads to a simple proof for the Hodge numbers of hyper-K\"ahler manifolds of $\OG10$ type. In a different direction, for all known examples of hyper-K\"ahler manifolds, we establish  the so-called  Nagai's conjecture on the monodromy of degenerations of hyper-K\"ahler manifolds. More consequentially, we note that Nagai's conjecture is a first step towards a more general and more natural conjecture, that we state here. Finally, we prove that this new conjecture is satisfied by the known types of hyper-K\"ahler manifolds.
\end{abstract}
	\maketitle
\section{Introduction}
The compact hyper-K\"ahler manifolds are one of the most interesting building blocks in algebraic and complex geometry, as they are  the most likely case to admit a good  general classification.  Indeed, they are  $K$-trivial varieties, and among the three possible irreducible pieces for $K$-trivial varieties, they occupy the middle ground between complex tori (trivial classification) and Calabi-Yau manifolds (already too varied in dimension $3$). Unfortunately, all that is known so far is a small list of examples of hyper-K\"ahler manifolds: two infinite series, $K3^{[n]}$ and $\Kum_n$ in dimension $2n$, due to Beauville \cite{beauville}, and two exotic examples, $\OG10$ and $\OG6$ in dimension $10$ and $6$ respectively, due to  O'Grady \cite{OG10,OG6}. Not only it is not known if this list is essentially complete, but even the finiteness of the deformation types in any dimension $2n(>2)$  is a wide open question. 

Verbitsky's Global Torelli Theorem  (\cite{ver13}, \cite{huy11}) says that a hyper-K\"ahler manifold $X$ is essentially determined  by the Hodge structure on the second cohomology $H^2(X)$. While this is similar to saying that a complex torus $A$ is determined by $H^1(A)$, in contrast to the case of tori, the reconstruction of $X$ from  its second cohomology $H^2(X)$ is very mysterious. In this paper, as a more tangible goal, we focus on reconstructing the entire cohomology $H^*(X)$ from the second cohomology $H^2(X)$ (at least for the known examples mentioned above). Our starting point is the work of Verbitsky \cite{ver90,ver95,ver96} and Looijenga--Lunts \cite{ll97} who have noted that, for hyper-K\"ahler manifolds, $H^*(X)$ admits a natural representation by the Lie algebra $\fg=\so(4,b_2(X)-2)$, generalizing the usual $\s(2)$ representation that occurs in the Hard Lefschetz Theorem. We call this Lie algebra $\mathfrak g$ {\it the Looijenga--Lunts--Verbitsky (LLV) algebra} of $X$. The LLV algebra $\fg$ is determined by the second cohomology. Namely, $\fg$ is the special orthogonal algebra associated to the quadratic space $V:=(H^2(X,\RR),q_X)\oplus U$, where $q_X$ is the Beauville--Bogomolov--Fujiki quadratic form on $H^2(X)$, and $U$ is the standard hyperbolic plane ($V$ is the {\it Mukai completion} of $H^2(X)$).  By construction, the resulting decomposition, referred throughout as the {\it LLV decomposition}, of $H^*(X)$ into irreducible $\fg$ representations is a diffeomorphism invariant of $X$, and thus only depends on the deformation class of $X$.  Furthermore, all natural decompositions of the cohomology $H^*(X)$ factor through the LLV decomposition. Here, examples of such decompositions include the Hodge decomposition once a complex structure is fixed, the usual $\s(2)$-Lefschetz decomposition once a K\"ahler form is fixed, and Verbitsky's $\so(4,1)$-decomposition once a hyper-K\"ahler metric is fixed.

\begin{notation}\label{notation_varpi}
	The irreducible $\mathfrak g$--representations are indexed by their highest weights, and the latter are (non-negative) integral linear combinations of the fundamental weights $\{\varpi_j\}$. An irreducible $\fg$-module with highest weight $\mu$ will be denoted by $V_{\mu}$. Sometimes we can describe $V_{\mu}$ explicitly. For example, an irreducible $\fg$-module $V_{\varpi_1}$ with highest weight $\varpi_1$ is the standard representation $V = H^2(X) \oplus \RR^2$. Similarly, an irreducible $\fg$-module $V_{k \varpi_1}$ with highest weight $k \varpi_1$ is the largest irreducible $\fg$-submodule of $\Sym^k V$ (more precisely, $V_{k\varpi_1}$ is the kernel of the contraction map $\Sym^k V\to  \Sym^{k-2} V$ with respect to the quadratic form, an element of $\Sym^2 V^*$, defining $\fg$).
\end{notation}

The existence of the LLV decomposition has strong consequences on the cohomology of hyper-K\"ahler manifolds. For instance, Verbitsky and Bogomolov described explicitly the subalgebra of $H^*(X)$ generated by $H^2(X)$ (see \cite{ver96} and \cite{bog96}), and for many questions this knowledge suffices. From our perspective, we interpret this result as saying that for a $2n$-dimensional hyper-K\"ahler manifold $X$, the irreducible $\fg$-submodule of $H^* (X)$ containing the second cohomology is the unique irreducible component isomorphic to $V_{n\varpi_1} (\subset \mathrm{Sym}^n V$). We call it the \emph{Verbitsky component}. The Verbitsky component $V_{n\varpi_1}$ is always present (with multiplicity $1$) in the LLV decomposition of $H^*(X)$. The remaining question is what other representations occur in the LLV decomposition of a hyper-K\"ahler manifold $X$, and what restrictions do they satisfy. While some general results are established, our paper is primarily concerned with the study of the {\it known} cases of hyper-K\"ahler manifolds $X$, by which we mean $X$ is of $\mathrm{K3}^{[n]}$, $\Kum_n$, $\OG 6$, or $\OG 10$ type.  By extrapolating from these known cases, we arrive to a general Conjecture \ref{main_conj} on the structure of LLV decomposition for general hyper-K\"ahler manifolds (see \cite{b2HK} for some important consequences). 

\subsection{The LLV decomposition for the known cases} The Betti and Hodge numbers of all the known cases of hyper-K\"ahler manifolds were previously worked out by other authors. Specifically,  G\"ottsche and Soergel \cite{got90,gs93} have studied the Hodge structure $H^*(X)$ for the two infinite series $\mathrm{K3}^{[n]}$ and $\Kum_n$. More recently, the two exceptional O'Grady cases were settled by Mongardi--Rapagnetta--Sacc\`a \cite{MRS} for $\OG6$ type, and by de Cataldo--Rapagnetta--Sacc\`a \cite{dCRS} for $\OG10$ type. While these previous results are closely related to the LLV decomposition, surprisingly the question of actually describing the LLV decomposition  does not seem to have been addressed previously (except some low dimensional [$\le 6$] cases). Our first result does exactly this. 

\begin{theorem} \label{thm_llv_decompose}
	The LLV decompositions of the known classes of hyper-K\"ahler manifolds are as follows:
	\begin{enumerate}
		\item The generating series of the formal characters of the $\so(4,21)$-modules $H^* (\mathrm{K3}^{[n]})$ is
		\begin{equation}\label{eq_llv_k3n}
			1 + \left( \sum_{i=0}^{11} (x_i + x_i^{-1}) \right) q + \sum_{n=2}^{\infty} \ch \left(H^* (\mathrm{K3}^{[n]})\right) q^n = \prod_{m=1}^{\infty} \prod_{i=0}^{11} \frac {1} {(1 - x_i q^m) (1 - x_i^{-1} q^m)} .
		\end{equation}
		(The identity should be understood in the formal power series  ring $A[[q]]$, where 
		$$A = \ZZ[x_0^{\pm1},\cdots,x_{11}^{\pm1}, (x_0 \cdots x_{11})^{\pm \frac{1}{2}}]^{\mathfrak W}$$ is the complex representation ring of $\so (4, 21)$, and $\mathfrak W$ indicates the Weyl group of $B_{12}$.)		
		\item  Define the formal power series
		\[ B(q) = \prod_{m=1}^{\infty} \left[ \prod_{i=0}^3 \frac{1} {(1 - x_i q^m) (1 - x_i^{-1} q^m)} \prod_j (1 + x_0^{j_0} x_1^{j_1} x_2^{j_2} x_3^{j_3} q^m) \right] ,\]
		with $j = (j_0, \cdots, j_3) \in \{ -\frac{1}{2}, \frac{1}{2} \}^{\times 4}$ and $j_0 + \cdots + j_3 \in 2\ZZ$.  Let $b_1$ be the degree $1$ coefficient of $B(q) = 1 + b_1\cdot q + b_2\cdot q^2+ \cdots$, and $J_4(d)=d^4 \cdot \prod_{p\mid d}(1-\frac{1}{p^4})$ be the fourth Jordan totient function. With these notations, the generating series of the formal characters of the $\so(4,5)$-modules $H^* (\mathrm{Kum}_n)$ is
		\begin{equation} \label{eq_llv_kumn}
			1 + \left( \sum_{i=0}^3 (x_i + x_i^{-1}) + 16 \right) q + \sum_{n=2}^{\infty} \ch (H^* (\mathrm{Kum}_n)) q^n = \sum_{d=1}^{\infty} J_4 (d) \frac{B(q^d) - 1}{b_1 \cdot q} .
		\end{equation}
		(Again, the identity holds in $A[[q]]$ where $A = \ZZ[x_0^{\pm1},\cdots,x_3^{\pm1}, (x_0 \cdots x_3)^{\pm \frac{1}{2}}]^{\mathfrak W}$ is the complex representation ring of $\so (4,5)$.)
		
		\item As a $\so(4,6)$-module, 
		\begin{equation}\label{eq_llv_OG6}
			H^* (\mathrm{OG6}) = V_{3 \varpi_1} \oplus V_{\varpi_3} \oplus V^{\oplus 135} \oplus \RR^{\oplus 240},
		\end{equation}
		where $V$ is the standard representation, $V_{\varpi_3}$ is isomorphic to $\wedge^3V$, and $\RR^{\oplus 240}$ stands for $240$ copies of the trivial representation.
		\item As a $\so(4,22)$-module, 
		\begin{equation}\label{eq_llv_OG10}
			H^* (\mathrm{OG10}) = V_{5 \varpi_1} \oplus V_{2 \varpi_2},
		\end{equation}
			where $V_{2\varpi_2}$ is the largest irreducible submodule of $\mathrm{Sym}^2(\wedge^2V)$.
	\end{enumerate}
\end{theorem}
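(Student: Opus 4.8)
The proof splits along the two groups of examples. Since the LLV decomposition depends only on the deformation type, I may compute with the standard models. For the infinite series (parts (1)--(2)) the product shape of the right-hand sides is the fingerprint of a Fock-space structure, and my plan is to refine the G\"ottsche--Soergel computation of the Hodge structures \cite{got90,gs93} into a statement about full LLV weights. For the two O'Grady types (parts (3)--(4)) the problem is finite: I would take the Betti and Hodge numbers of \cite{MRS} and \cite{dCRS} as input and solve for the LLV multiplicities directly.

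For part (1) I would first recall the Nakajima--Grojnowski description of $\bigoplus_{n\ge 0} H^*(\mathrm{K3}^{[n]})$ as a bosonic Fock space: it is freely generated from the vacuum by creation operators $\mathfrak{a}_{-m}(\alpha)$, one for each $m\ge 1$ and each $\alpha$ in a basis of $H^*(\mathrm{K3})$, with $q$ tracking the total weight $n=\sum m_i$. Abstractly this already yields a symmetric-algebra generating identity; the content I must add is that each such operator is a weight vector for the LLV algebra $\so(4,21)$ whose weight is \emph{independent of the depth} $m$ and equals the weight of $\alpha\in H^*(\mathrm{K3})$ under the embedding sending the $24$-dimensional (purely even) $H^*(\mathrm{K3})$ onto the $24$ nonzero weights of the standard representation $V$, i.e.\ the monomials $x_i^{\pm1}$, $i=0,\dots,11$. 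Granting this equivariance, taking formal characters of the symmetric algebra on $\bigoplus_{m\ge1} H^*(\mathrm{K3})\cdot q^m$ produces exactly the product on the right of \eqref{eq_llv_k3n}. For part (2) the same scheme applies to $A^{[n+1]}$ with $A$ an abelian surface, except that the relevant Fock space is now a \emph{super}-space: the even cohomology $H^{\mathrm{even}}(A)$ (dimension $8$) supplies the vectorial bosonic factors $\prod_{i=0}^3\frac{1}{(1-x_iq^m)(1-x_i^{-1}q^m)}$, while the odd cohomology $H^{\mathrm{odd}}(A)$ (dimension $8$) enters as the spinorial fermionic factors $\prod_j(1+x_0^{j_0}\cdots x_3^{j_3}q^m)$, matching the eight half-integral sign patterns with $\sum j_i$ even. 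In addition one must cut out $\mathrm{Kum}_n$ as a fibre of the summation map $A^{[n+1]}\to A$ and account for the translation action of the torsion group $A[n+1]$; this is precisely the source of the Jordan-totient weighting $\sum_d J_4(d)$ and of the normalization $\tfrac{B(q^d)-1}{b_1 q}$ in \eqref{eq_llv_kumn}.

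For the two O'Grady cases I would argue by finite linear algebra. Fix the grading element $h\in\fg$ so that $H^k(X)$ lies in its $(k-2n)$-eigenspace; then any irreducible $V_\mu$ occurring in $H^*(X)$ has its $h$-eigenvalues confined to $[-2n,2n]$, leaving only finitely many candidate highest weights $\mu$. For each candidate I would compute both its total dimension (Weyl dimension formula) and its Hodge realization (the restriction of the Weyl character to the Hodge torus). The Verbitsky component $V_{n\varpi_1}$ is present with multiplicity $1$ by \cite{ver96,bog96}, so I would subtract it off and match the remaining Betti and Hodge numbers: from \cite{MRS} in the $\OG6$ case (forcing the summand $V_{\varpi_3}\cong\wedge^3V$ together with the $135$ copies of $V$ and the $240$ trivials), and from \cite{dCRS} in the $\OG10$ case (where the residual cohomology is pinned down to the single module $V_{2\varpi_2}\subset\Sym^2(\wedge^2V)$). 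That the Poincar\'e polynomial and the Hodge--Deligne polynomial together determine the multiplicities uniquely is what closes the computation; in the $\OG10$ case the decomposition is so rigid that it conversely \emph{yields} the Hodge numbers, the simple proof alluded to in the introduction.

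The main obstacle, and the step I expect to be genuinely delicate, is the LLV-equivariance claim underlying parts (1)--(2): showing that the Nakajima creation operators are honest LLV weight vectors of depth-independent weight, and correctly identifying how the surface's LLV algebra ($\so(4,20)=D_{12}$, resp.\ $\so(4,4)=D_4$) sits inside the Hilbert scheme's LLV algebra ($\so(4,21)=B_{12}$, resp.\ $\so(4,5)=B_4$). The passage from type $D$ to type $B$ reflects the extra exceptional class $\delta\in H^2(X^{[n]})$ and the Heisenberg degree, and getting this embedding exactly right is what makes the product formula come out with the stated weights. For the O'Grady cases the analogous difficulty is only bookkeeping: ensuring the list of candidate representations is complete and that no accidental coincidences leave the multiplicity system underdetermined.
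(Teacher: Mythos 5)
Your treatment of the O'Grady cases contains a genuine gap: the closing claim that ``the Poincar\'e polynomial and the Hodge--Deligne polynomial together determine the multiplicities uniquely,'' with completeness of the candidate list being ``only bookkeeping,'' is \emph{false} for $\OG6$. The numerical system there is underdetermined: both
$V_{(3)} \oplus V_{(1,1,1)} \oplus V^{\oplus 135} \oplus \RR^{\oplus 240}$ and $V_{(3)} \oplus V_{(1,1)}^{\oplus 6} \oplus V^{\oplus 115} \oplus \RR^{\oplus 290}$
reproduce the full Hodge diamond of \cite{MRS} (this is exactly Proposition \ref{prop_2cases_og6}), so Betti and Hodge numbers alone cannot force the summand $V_{\varpi_3}\cong\wedge^3V$. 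This is precisely the ``accidental coincidence'' you hoped to rule out, and it is why the paper must return to the geometry of the Rapagnetta/Mongardi--Rapagnetta--Sacc\`a construction: it upgrades the Hodge-number computations of \cite{MRS} to an isomorphism of $\QQ$-Hodge structures $H^*(X,\QQ) = H^*(Y,\QQ)^{\sigma} \oplus H^*_{\even}(A\times A^\vee,\QQ)(-1)\oplus 256\,\QQ(-3)$ with $Y$ of $K3^{[3]}$ type (Proposition \ref{lem:OG6_lem1}), computes from this the Mumford--Tate decomposition $H^4(X,\QQ) = \bar W_{(2)}\oplus \bar W_{(1,1)}\oplus 6\bar W\oplus 145\,\QQ$ over $\bar\fm\cong\so(2,3)$ (Proposition \ref{prop:OG6_prop1}), and then checks that only the first candidate restricts through $\bar\fm\subset\bar\fg\subset\fg$ to this module, the second giving $\bar W_{(2)}\oplus 9\bar W\oplus 140\,\QQ$ instead (Proposition \ref{prop:OG6_prop2}). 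Without some such additional geometric input your $\OG6$ argument cannot close. Your $\OG10$ argument, by contrast, is sound and is essentially the paper's: the candidate list is cut down by Proposition \ref{prop:even_rep} and the Weyl dimension formula, and $b_2=24$, the Euler number, and Salamon's relation pin down the unique solution $V_{(5)}\oplus V_{(2,2)}$ --- indeed the paper needs only the vanishing of odd cohomology as input, recovering the Hodge diamond of \cite{dCRS} as output, as you correctly observe.

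For parts (1)--(2) your Fock-space route is genuinely different from the paper's, but its pivotal step is asserted rather than proved, and it is not a consequence of the cited sources: \cite{got90,gs93} are statements about Hodge structures, i.e., only about the $\bar\fm$-module structure, and the claim that each Nakajima operator $\mathfrak{a}_{-m}(\alpha)$ is an LLV weight vector of depth-independent weight is a substantial theorem in its own right. (The grading part is easy --- the class $\mathfrak{a}_{-m}(\alpha)$ creates has degree $|\alpha|+2(m-1)$ in $H^*(S^{[m]})$, so its $h$-eigenvalue $|\alpha|-2$ is independent of $m$ --- but the commutators with the dual Lefschetz operators $\Lambda_x$ are not, and the very notion of a uniform-in-$n$ action requires compatible identifications of the LLV algebras of all $S^{[n]}$.) The paper sidesteps this entirely: it reads G\"ottsche--Soergel (extended to non-projective surfaces via \cite{dCM}, so that $\bar\fm\cong\so(3,19)$ is maximal) as an $\bar\fm$-module isomorphism, lifts it via the grading operator $h$ to a $\fg(S)\cong\so(4,20)$-module isomorphism, and then invokes the equal-rank fact that restriction $\Rep(\mathrm{B}_{12})\to\Rep(\mathrm{D}_{12})$ is injective on representation rings (Proposition \ref{prop:restriction}), so the $\so(4,21)$-structure comes for free and the product formula falls out of symmetric-power characters. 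Similarly for $\Kum_n$: your heuristic attributing the $J_4$-weighting to translations by $A[n+1]$ points in the right direction (note $\sum_{d\mid n+1}J_4(d)=(n+1)^4$), but the actual derivation must process the $g(\alpha)^4=\gcd$-multiplicities in the G\"ottsche--Soergel formula, which the paper does via the nontrivial torus-integration identity of Lemma \ref{lem:pf_g_module_2}; and the formula there is proved for $H^*(\Kum_n)\otimes H^*(A)$, which is where the normalization by $b_1 q$ originates. If the Nakajima equivariance were established, your approach would buy a stronger, uniform Fock-space statement; as written, it substitutes an unproven lemma for the paper's complete argument.
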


While our result relies heavily on the previous work on the cohomology of hyper-K\"ahler manifolds (in particular, \cite{got90}, \cite{gs93}, \cite{dCM}, \cite{MRS}, and \cite{dCRS}), the structure  of $H^*(X)$ described in Theorem \ref{thm_llv_decompose} is  more refined. This is especially clear in the case of hyper-K\"ahler manifolds of exceptional types $\OG6$ and $\OG10$ (see however also Remark \ref{rem18} below for $K3^{[n]}$ and $\Kum_n$). In particular, we note that the arguments of \S\ref{case_llv_og10} together with \cite{FFZ} give an independent  and conceptually easier proof of the main result of \cite{dCRS} (see Remark \ref{rem_LF}). In a different direction, we note that the ``functorial'' nature of the LLV decomposition can be used to extract geometric information. Specifically, frequently there are natural subalgebras $\fg'\subset \fg$ (for instance $\fg'=\fg_{NS}$, the Neron--Severi algebra; see \cite[(1.11)]{ll97}) and one might be interested in the $\fg'$-module structure of $H^*(X)$; this is easily determined from the LLV decomposition by applying the restriction functor. Concretely, this idea is used in a forthcoming paper of the third author  with G. Pearlstein and Z. Zhang  to study some special class of $\OG10$ manifolds with a symplectic involution (see \cite{LPZ} and \cite{LSV} for some related work).

\begin{remark} A more compact version of \eqref{eq_llv_k3n} is
\begin{equation}\label{eq_llv_k3n_B}
		 \sum_{n=0}^{\infty} \ch (H^* (\mathrm{K3}^{[n]})) q^n = \prod_{m=1}^{\infty} \prod_{i=0}^{11} \frac {1} {(1 - x_i q^m) (1 - x_i^{-1} q^m)}, 
		 \end{equation}
		 by noting that formally
\[ \ch (H^* (\mathrm{K3}^{[0]})) = 1, \qquad \ch (H^* (\mathrm{K3}^{[1]})) = \sum_{i=0}^{11} (x_i + x_i^{-1}) .\]
The reason for using \eqref{eq_llv_k3n} above is that  $H^*(\mathrm{K3}^{[1]})$ does not have a structure of $\so(4,21)$-module. Similar discussion applies also to the second identity \eqref{eq_llv_kumn} for $\Kum_n$ hyper-K\"ahler manifolds. 
\end{remark}

\begin{remark}\label{rem18}
As is often the case for infinite families, it is more convenient to work with the generating series \eqref{eq_llv_k3n} and \eqref{eq_llv_kumn} to encode 
 the LLV module structure of the cohomology of $\mathrm{K3}^{[n]}$ and $\Kum_n$ types.  However, one can also determine their explicit LLV decompositions. We refer to Corollaries \ref{cor:g_module} and \ref{cor:g_module2} for these explicit descriptions when $\dim X \le 10$. 
 Furthermore, the two generating series can be easily specialized to the generating series for the Hodge-Deligne polynomials, Poincar\'e polynomials, signatures of the middle cohomology, or the Euler numbers. We recover this way some well known formulas of G\"ottsche \cite{got90,got94} (see Corollary \ref{gen_series_k3} and \ref{gen_series_kum} for the $K3^{[n]}$ and $\Kum_n$ case respectively). In particular, as a specialization of \eqref{eq_llv_k3n}, one gets
 \begin{equation}
 \sum_{n=0}^{\infty} e(H^* (\mathrm{K3}^{[n]})) q^n =\prod_{m=1}^{\infty} \frac{1}{(1-q^m)^{24}}=\frac{q}{\Delta(q)}
 \end{equation}
(with $\Delta(q)$ the weight $12$ modular form),  which is equivalent to the Yau-Zaslow formula on the number of rational curves on a $K3$ (see \cite{B-YZ}). On the other hand, note that since the $\Kum_n$ construction involves both the Hilbert scheme of $(n+1)$ points on an abelian surface $A$, and taking the fiber of the sum map $A^{[n+1]}\to A$,  the associated formulas are automatically more involved. Nonetheless, we believe that our formula \eqref{eq_llv_kumn} and its specializations are improvements over the existing literature.  In particular, the role played by $J_4(n+1)$ in controlling the trivial representations in $H^*(\Kum_n)$ (and thus universal Hodge cycles of middle dimension) seems new. \end{remark}

\begin{remark}
After the completion of our manuscript, we have learned that Letao Zhang \cite{LZ} computed the generating series for the characters of $H^*(K3^{[n]})$, viewed as modules with respect to the generic Mumford-Tate algebra, a subalgebra of the  
LLV algebra $\mathfrak g$ (see \S\ref{subsec_mt}). The results and methods involved are similar to those of Theorem 1.1 (1). 
\end{remark}

\begin{remark}
While the work of Verbitsky \cite{ver90,ver95,ver96}  and Looijenga--Lunts \cite{ll97}  is now more than two decade old, we are not aware of a serious exploration of the full power of the LLV decomposition for hyper-K\"ahler manifolds until recently (see however Moonen \cite{moonen} for the case of abelian varieties). For instance, to our knowledge, the only cases where the LLV decomposition was previously described were $K3^{[n]}$ for $n\le 3$ ($n=3$ due to Markman \cite{mar02}) and $\Kum_2$ (cf. \cite{ll97}). In contrast, in the past year there seem to have been a flurry of applications related to the LLV decomposition. Perhaps the most spectacular application is
Oberdieck's simplification \cite{Ober} (see also \cite{NOY}) of  Maulik--Negut \cite{MN} proof of  Beauville's conjecture (\cite{B-Conj}) for Hilbert scheme of points of $K3$ surfaces. Essentially, 
 by lifting the action of the Neron-Severi algebra $\fg_{\NS}$ from $H^*(X)$ to the Chow groups $\mathrm{CH}^*(X)$, one gets Beauville's conjecture as a corollary of Schur's lemma (N.B. the same idea was used by Moonen \cite{moonen} for abelian varieties). Some other recent applications (in various directions) of the LLV decomposition include \cite{SY,HLSY}, \cite{Tae}, and \cite{FFZ}. 
\end{remark} 

\subsection{Nagai's Conjecture} The original motivation for our paper was the seemingly unrelated study of degenerations of hyper-K\"ahler manifolds and specifically the so-called Nagai conjecture \cite{nagai08}. Let $\mathfrak X / \Delta$ be a one-parameter projective degeneration of hyper-K\"ahler manifolds. Similar to the K3 case,  it is natural to define the {\it Type} of the degeneration to be {\it I, II, or III}, in accordance to the index of nilpotence $\nu_2$ of the log monodromy operator $N_2=\log (T_2)_{u}$ on $H^2(X)$. However, in contrast to the case of K3 surfaces, the hyper-K\"ahler manifolds $X$ of dimension $2n > 2$ have interesting higher cohomology $H^k(X)$, and thus, it is natural to investigate the behavior of $H^k(X)$ under degenerations. In particular, it is natural to ask how the monodromies on various cohomologies are related to each other. As the hyper-K\"ahler manifolds are controlled by their second cohomology, one might expect some tight connection between the second monodromy and higher monodromies.
For instance, as a consequence of the fact that the Verbitsky component $V_{n\varpi_1}\subset H^*(X)$ controls the holomorphic part of the cohomology, one sees (e.g. \cite[\S6.2]{klsv18}) that Type III degenerations of hyper-K\"ahler manifolds (defined in terms of $H^2$) are equivalent to maximal unipotent monodromy (MUM) degenerations (defined in terms of the middle cohomology $H^{2n}$). More generally, it is natural to expect that the index of nilpotency $\nu_{2k}$ of log monodromy $N_{2k}$ on $H^{2k}(X_t)$ satisfies 
\begin{equation} \label{nagaieq}
	\nu_{2k}=k\cdot \nu_2 \quad\textrm{for}\quad k=1,\dots, n. 
\end{equation}
We refer to \eqref{nagaieq} as {\it Nagai's conjecture}, as Nagai \cite{nagai08} was the first to investigate this question. In particular, he established \eqref{nagaieq} for $\mathrm{K3}^{[n]}$ type degenerations and partially for $\Kum_n$ type degenerations. Nagai's conjecture was also verified for Type I and III degenerations of any hyper-K\"ahler manifolds in \cite{klsv18}, leaving only the Type II case open. Here we establish Nagai's conjecture in full for all known examples of hyper-K\"ahler manifolds.

\begin{theorem}\label{nagai_thm}
	Let $\mathfrak X / \Delta$ be a one-parameter degeneration of hyper-K\"ahler manifolds such that the general fiber $X_t$ is of $\mathrm{K3}^{[n]}$, $\Kum_n$, $\OG6$, or $\OG10$ type. Then Nagai's conjecture \eqref{nagaieq} holds (with $\dim X_t=2n$).
\end{theorem}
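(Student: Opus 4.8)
The plan is to translate the conjecture into a purely representation-theoretic statement about the LLV action, and then to read off the answer from the explicit decomposition of Theorem~\ref{thm_llv_decompose}. The crucial input is that the log-monodromy is \emph{internal} to the LLV algebra: by the work of Verbitsky and Looijenga--Lunts, refined in \cite{klsv18}, the operator $N=\log T_u$ acting on all of $H^*(X_t)$ is the image, under the LLV representation $\rho\colon\fg\to\End H^*(X_t)$, of the single nilpotent element $N_2\in\so(H^2)\subset\fg$; its nilpotency index on the standard representation $V$ is $\nu_2\in\{0,1,2\}$, according to whether the degeneration is of Type I, II, or III. Granting this, $\nu_{2k}$ is the top weight of the Jacobson--Morozov $\s(2)$-triple $(N_2,Y,N_2^+)$ of $N_2$ acting on the subspace $H^{2k}(X_t)$. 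Because the whole triple lies in $\so(H^2)$, it acts trivially on the hyperbolic summand $U$, while the cohomological grading is recorded by a semisimple $Z\in\fg$ acting by $\pm2,0$ on $V=H^2\oplus U$ with the $\pm2$-eigenlines lying in $U$. Hence $Z$ commutes with $N_2,Y,N_2^+$, and I may work with the joint $(Z,Y)$-bigrading, in which $H^{2k}$ is the $Z$-eigenspace of eigenvalue $2k-2n$ and $\nu_{2k}=\max\{\,Y\text{-weight on }H^{2k}\,\}$.

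For the lower bound $\nu_{2k}\ge k\nu_2$, which holds for every hyper-K\"ahler manifold, I use the Verbitsky component $V_{n\varpi_1}$, always present in $H^*(X_t)$. Let $y_+\in H^2(X_t)$ be a highest-weight vector for the $\s(2)$-action of $N_2$ on $H^2$; it is $q$-isotropic of $Y$-weight $\nu_2$. Taking $u^*\in U$ isotropic of $Z$-degree $-2$ and $Y$-weight $0$, the monomial $y_+^{\,k}\,(u^*)^{\,n-k}\in\Sym^n V$ is a product of mutually orthogonal isotropic vectors, hence lies in the kernel of every contraction and so belongs to $V_{n\varpi_1}$. It has $Z$-degree $2(k-n)$, i.e.\ cohomological degree $2k$, and it is annihilated by the raising operator $N_2^+$, so it is an $\s(2)$-highest-weight vector of $Y$-weight $k\nu_2$; therefore $N_2^{\,k\nu_2}$ is nonzero on it and $\nu_{2k}\ge k\nu_2$.

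For the matching upper bound I would show that every irreducible component $V_\mu$ occurring in $H^*(X_t)$ has at most $n$ boxes, i.e.\ $|\mu|\le n$. Since $V_\mu\subset V^{\otimes|\mu|}$, every weight of $V_\mu$ of $Z$-degree $2(k-n)$ is a sum of $|\mu|$ weights of $V$, and at least $n-k$ of these must come from $U$ (to produce the net $Z$-degree) and hence carry $Y$-weight $0$; as each of the remaining factors contributes $Y$-weight at most $\nu_2$, the total $Y$-weight is at most $\nu_2\big(|\mu|-(n-k)\big)\le k\nu_2$. Combined with the previous paragraph this yields $\nu_{2k}=k\nu_2$, which is \eqref{nagaieq}. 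The bound $|\mu|\le n$ is read off directly from Theorem~\ref{thm_llv_decompose}: for $\OG6$ the components are $V_{3\varpi_1},V_{\varpi_3},V,\RR$ (of sizes $3,3,1,0\le3$) and for $\OG10$ they are $V_{5\varpi_1},V_{2\varpi_2}$ (of sizes $5,4\le5$), while for $\mathrm{K3}^{[n]}$ and $\Kum_n$ it follows because the $q^n$-coefficient of the generating series \eqref{eq_llv_k3n} and \eqref{eq_llv_kumn} is built from monomials of total degree $n$ in the $x_i$.

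The essential difficulty is the upper bound, and it is genuinely a Type II phenomenon: for Type I ($\nu_2=0$) the statement is vacuous, and for Types I and III the full equality is already known for all hyper-K\"ahler manifolds by \cite{klsv18}, while the lower bound above is likewise general. What is \emph{not} available in general is control of the ``large'' components $V_\mu$ of $H^*(X_t)$, and it is precisely here that one must invoke the complete LLV decomposition of the known families. Thus Theorem~\ref{thm_llv_decompose} is not merely convenient but is the substantive ingredient that makes the bound $|\mu|\le n$—and hence Nagai's conjecture in the remaining Type II case—accessible.
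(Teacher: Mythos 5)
Your proposal is correct in outline and reaches the theorem by a genuinely different route through the upper bound. The paper first reduces to representation theory exactly as you do (its Theorem \ref{thm:total_log_mon}, $N_k=\rho_k(N_2)$ --- note this is Soldatenkov's result \cite{sol18}, reproved via period maps in Section \ref{sect_periods}, rather than \cite{klsv18} as you cite), and its lower bound is your Verbitsky-component argument in the guise of Lemma \ref{lem:sym_index}. But for the upper bound in the Type II case the paper does something sharper: it puts $N_2$ in a normal form using the limit mixed Hodge structure of K3 type (Lemma \ref{prop:ss}, where $\dim\mathrm{im}\,N_2=2$ is a Hodge-theoretic, not representation-theoretic, fact) and computes the \emph{exact} nilpotency index of $\rho_\lambda(N)$ on each irreducible $\bar{\mathfrak g}$-module (Lemma \ref{lem:main}: $\lambda_1+|\lambda_2|$ when $\nu_2=1$). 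This yields the sharp if-and-only-if criterion $\mu_0+\mu_1+|\mu_2|\le n$ (Proposition \ref{prop_criterion_nagai}), valid for an arbitrary hyper-K\"ahler manifold with $b_2\ge 4$. Your soft estimate --- embedding $V_\mu\subset V^{\otimes|\mu|}$ and counting how many tensor factors must lie in the $Z$-weight $(-2)$ line of $U$ to reach degree $2k$, so that the $Y$-weight is at most $\nu_2(|\mu|-(n-k))$ --- is valid for an \emph{arbitrary} nilpotent $N_2\in\bar{\mathfrak g}$ of index $\nu_2$ and bypasses the Hodge theory entirely, but it only gives a sufficient condition, namely $|\mu|\le n$. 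That condition is strictly stronger than the paper's criterion (e.g.\ $\mu=(1,1,1,1,1)$ with $n=4$ satisfies $\mu_0+\mu_1+|\mu_2|\le n$ but not $|\mu|\le n$), so your argument could not prove the equivalence in Proposition \ref{prop_criterion_nagai}; however, since the known types satisfy the stronger bound $|\mu|\le n$ --- this is exactly the paper's Theorem \ref{thm_nagai2}/\ref{thm:stronger_ineq}, proved by the same generating-series degree count you sketch --- your route does suffice for Theorem \ref{nagai_thm}, and it is arguably shorter and more self-contained for that purpose. What the paper's heavier computation buys is the sharp criterion itself, which is the bridge to Conjecture \ref{main_conj} and to the converse direction (Nagai \emph{fails} whenever the weight condition fails).

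Two points need tightening. First, the tensor-power embedding $V_\mu\subset V^{\otimes|\mu|}$ requires $\mu$ integral, which fails for the spin-type components in $H^*_{\odd}(\Kum_n)$; you should say explicitly that Nagai's conjecture \eqref{nagaieq} concerns only even cohomology, where all weights are integral by Proposition \ref{prop:even_rep}. Second, for $\Kum_n$ the assertion that the $q^n$-coefficient of \eqref{eq_llv_kumn} has $x$-degree at most $n$ is not immediate by inspection, because of the division by $b_1\cdot q$: one must multiply back by $b_1 q$ and argue, as in the paper's proof of Theorem \ref{thm:stronger_ineq}, that a violating monomial would force $B(q)$ to contain a term whose $x$-degree exceeds its $q$-degree by at least $\tfrac{3}{2}$, which the explicit product formula rules out. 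Neither point breaks the proof, but both should be stated.
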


The approach in \cite{klsv18} to Nagai's conjecture is based on studying Kulikov type normalizations of the degeneration $\mathfrak X/ \Delta$, using both general results from the minimal model program and specific results on hyper-K\"ahler geometry. The approach here is essentially orthogonal, focusing exclusively on the cohomological behavior (in particular, our results will say little about the geometric shape of the degeneration).

To start, we consider the interplay between the LLV decomposition of the cohomology $H^*(X)$ and the period map. First, it is known that  $H^2(X)$ determines the Hodge structure on $H^*(X)$ by means of LLV $\fg$-representation (see Section \ref{sec:llv} below; cf. also \cite{sol19}). We realize this fact again, using the language of period maps and period domains. Specifically, we prove that for families of hyper-K\"ahler manifolds, the period map on the second cohomology $H^2(X)$ determines the period map on the entire cohomology $H^*(X)$ (Theorem \ref{thm_higher_periods}). It then follows that 
the log monodromy $N_k$ on the $k^{th}$ cohomology is determined by $N_2$ via the LLV representation, a result previously noticed by  Soldatenkov \cite[Proposition 3.4]{sol18} (by a different method). In conclusion, Nagai's conjecture reduces to a representation theoretic question. Namely, the given data is the nilpotency index of $N_2\in \bar \fg$ acting on the standard $\bar \fg$-module $\bar V=H^2(X)$. We are interested in the nilpotency index of $N_{2k} = \rho_k(N_2)$ where $\rho_{2k}:\bar \fg\to \End(H^{2k}(X))$ is the degree $2k$ restriction of the LLV representation. Using a representation theoretic computation  we conclude  that Nagai's condition \eqref{nagaieq} is equivalent to the following condition on the dominant $\fg$-weights occurring in the LLV decomposition.  For the purpose of stating the precise result, it is convenient to break with convention (see Notation \ref{notation_varpi}) and use instead the following notation\footnote{The reason for preferring the notation $\mu= \ (\mu_0,\mu_1,\cdots,\mu_r) \ $ for indexing the representations is Hodge theoretic. Namely, each $V_{\mu}$ carries its own Hodge structure, and this can be more naturally captured in terms of the notation $\mu_i$ (e.g., see \eqref{eq:hodge_from_weight}).}. 

\begin{notation} The highest weight $\mu$ of an irreducible $\fg$-representation $V_\mu$ can be written as a linear combination
\[
  \mu \ = \ (\mu_0,\mu_1, \cdots,\mu_r) \ = \ 
  \sum_{i=0}^r \mu_i\,\varepsilon_i \,,
\]
with $\{ \pm\varepsilon_i \}$ the nonzero weights of the standard representation $V$. We refer the reader to \eqref{eq:fund_weights_B} and \eqref{eq:fund_weights_D} of the appendix for the precise relationship between $\{\varpi_i\}$ and $\{\varepsilon_i\}$.  Here we note that the Verbitsky component $V_{n\varpi_1}$ is denoted $V_{(n)}$ in this notation.
\end{notation}

\begin{proposition} \label{prop_criterion_nagai}
	Let $X$ be a compact hyper-K\"ahler manifold of dimension $2n$ with $b_2(X)\ge 5$. Let $\fg=\so(4,b_2-2)$ be the associated LLV algebra, and 
	\begin{equation} \label{eq:decomp}
		H^* (X) \cong \sideset{}{_{\mu \in S}} {\bigoplus} V_{\mu} {}^{\oplus m_{\mu}}, 
	\end{equation}
	be the decomposition of the cohomology of $X$ into irreducible $\fg$-representations (where $\mu = (\mu_0, \cdots, \mu_r)$ indicates a dominant integral weight of $\mathfrak g$, $r = \lfloor b_2 (X) / 2 \rfloor$, and $V_{\mu}$ the irreducible $\mathfrak g$-module of highest weight $\mu$). Then Nagai's condition \eqref{nagaieq} holds if all the highest weights $\mu \in S$ in \eqref{eq:decomp} contributing to even cohomology satisfy
	\begin{equation}\label{condition_mu}
		 \mu_0 + \mu_1 + \mu_2 \le n .
	 \end{equation}
\end{proposition}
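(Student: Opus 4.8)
The plan is to turn \eqref{nagaieq} into an extremal computation of weights for $\fg$ and then run it through the three degeneration types, the Type II case being the decisive one.

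\textbf{Reduction to a weight computation.} First I would invoke Theorem \ref{thm_higher_periods} (equivalently Soldatenkov's observation) to write $N_{2k}=\rho_{2k}(N_2)$, where $N_2$ is a nilpotent element of $\bar\fg=\so(H^2(X),q_X)\subset\fg$ and $\rho_{2k}$ is the degree-$2k$ piece of the LLV representation; thus $\nu_{2k}$ is the nilpotency index of $\rho_{2k}(N_2)$. By Jacobson--Morozov I embed $N_2$ in an $\s_2$-triple $(N_2,Z,N_2^+)\subset\bar\fg$, so that on any module the nilpotency index of $\rho(N_2)$ equals the top eigenvalue of the semisimple $\rho(Z)$. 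Writing $L=\varepsilon_0^\vee$ for the coweight dual to $\varepsilon_0$ (the grading operator, whose eigenvalue $k-n$ cuts out $H^{2k}(X)$), and noting that $N_2$, hence $Z$, is supported on $H^2(X)$ and annihilates the hyperbolic plane $U$ of the Mukai completion, I may diagonalize $Z$ and $L$ simultaneously to obtain
\[
\nu_{2k} \;=\; \max\bigl\{\,\langle\lambda,Z\rangle \;:\; \lambda\ \text{a weight of}\ H^*(X),\ \langle\lambda,L\rangle=\lambda_0=k-n \,\bigr\}.
\]
Since $H^*(X)$ decomposes as in \eqref{eq:decomp}, this maximum is taken over the weights of the individual $V_\mu$, and only integral $\mu$ (those contributing to even cohomology) are relevant to \eqref{nagaieq}.

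\textbf{The element $Z$ in each type.} Next I would determine $Z$ explicitly. Because $(H^2(X),q_X)$ is a Hodge structure of $K3$ type ($h^{2,0}=1$) of signature $(3,b_2-3)$, the polarized limiting mixed Hodge structure pins down the nilpotent orbit of $N_2$: Type I gives $N_2=0$ and $Z=0$; Type II ($\nu_2=1$) forces Jordan type $(2,2,1^{b_2-4})$, i.e. $Z=\varepsilon_1^\vee+\varepsilon_2^\vee$; and Type III ($\nu_2=2$) forces Jordan type $(3,1^{b_2-3})$, i.e. $Z=2\,\varepsilon_1^\vee$. The crucial structural point is that $Z$ is supported on $\varepsilon_1,\varepsilon_2$, disjoint from the degree coordinate $\varepsilon_0$; thus $\langle\lambda,Z\rangle=\lambda_1+\lambda_2$ in Type II and $2\lambda_1$ in Type III, to be maximized subject to $\lambda_0=k-n$.

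\textbf{The extremal estimate.} The Verbitsky component $V_{(n)}$ is always present and gives the lower bound $\nu_{2k}\ge k\,\nu_2$: the weight $\lambda$ of $V_{(n)}$ with $\lambda_0=k-n$, $\lambda_1=k$ and all other coordinates zero (which satisfies $\sum_i|\lambda_i|=n$, hence lies in $V_{(n)}$) realizes the Nagai value. For the matching upper bound I use that every weight $\lambda$ of $V_\mu$ lies in the convex hull of the $\mathfrak W$-orbit of $\mu$, so the sum of its three largest coordinate absolute values is at most $\mu_0+\mu_1+|\mu_2|$. In Type II this yields, for each integral $\mu\in S$,
\[
\langle\lambda,Z\rangle=\lambda_1+\lambda_2\;\le\;|\lambda_1|+|\lambda_2|\;\le\;\bigl(\mu_0+\mu_1+|\mu_2|\bigr)-|\lambda_0|\;=\;\bigl(\mu_0+\mu_1+|\mu_2|\bigr)-(n-k),
\]
so \eqref{condition_mu} implies $\nu_{2k}\le k$ for all $k$, whence $\nu_{2k}=k=k\,\nu_2$. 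Conversely, if some integral $\mu$ violates \eqref{condition_mu}, then for $k=n-|\mu_2|$ the signed permutation $\lambda$ of $\mu$ placing $|\mu_2|$ in the $\varepsilon_0$-slot and $\mu_0,\mu_1$ in the $\varepsilon_1,\varepsilon_2$-slots is a genuine weight with $\langle\lambda,Z\rangle=\mu_0+\mu_1>k$, so Nagai's condition fails for a Type II degeneration realizing this $Z$. The same estimate with $Z=2\varepsilon_1^\vee$ shows Type III only requires $\mu_0+\mu_1\le n$, which is implied by \eqref{condition_mu}, while Type I is automatic; hence \eqref{condition_mu} is exactly equivalent to \eqref{nagaieq}.

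\textbf{Main obstacle.} I expect the delicate step to be the determination of the Jordan type of $N_2$ in Type II, namely that $N_2$ has rank exactly two rather than merely $N_2^2=0$. This is where the $K3$-type constraint $h^{2,0}=1$ enters in an essential way, through the $\SL_2$-orbit theorem and the polarization of the limiting mixed Hodge structure; the representation theory of $\fg$ alone does not see it. Secondary technical points are the careful bookkeeping of the $D_r$ sign in the final coordinate (which is responsible for $|\mu_2|$ rather than $\mu_2$) and the verification that the weights realizing the extremal values are honest weights of $V_\mu$, and not merely vertices of the weight polytope.
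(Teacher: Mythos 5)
Your proposal is correct in substance, and it shares the paper's skeleton: the reduction $N_{2k}=\rho_{2k}(N_2)$ (Theorem \ref{thm:total_log_mon}), the Hodge-theoretic determination of the Jordan type of $N_2$ from the K3-type limit mixed Hodge structure, and a weight-polytope convexity bound. But your computational core is genuinely different from the paper's. The paper normalizes $N_2$ explicitly via the limit mixed Hodge structure (Lemma \ref{prop:ss}, Lemma \ref{lem:basis}), computes nilpotency indices by hand on $\wedge^i \bar V$ and on the tensor modules $\Sym^{a_1}\bar V\otimes \Sym^{a_2}(\wedge^2\bar V)\otimes\cdots$ (Lemmas \ref{lem:submain} and \ref{lem:main}), works degree-by-degree with the $\bar\fg$-decomposition of $H^{2k}(X)$ (Proposition \ref{prop:nilp_index}, Corollary \ref{cor:gbar_criterion}), and only then lifts the resulting condition to the $\fg$-decomposition through $\fg_0$-modules and the polytope Lemma \ref{lem:weight_polytope}. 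You instead complete $N_2$ to an $\s(2)$-triple via Jacobson--Morozov, use that the nilpotency index of $\rho(N_2)$ on any module equals the top eigenvalue of the semisimple element $\rho(Z)$, and run a single maximization of the linear functional $\langle\lambda,Z\rangle$ over weights subject to the grading constraint $\lambda_0=k-n$ (legitimate, since the triple sits in $\bar\fg$ and hence preserves degree). This buys real simplifications: the formula $\lambda_1+|\lambda_2|$ becomes the transparent maximum of a linear functional over a Weyl orbit, the two-step branching-and-lifting bookkeeping collapses into one convexity argument (your ``three largest coordinates'' bound is exactly Lemma \ref{lem:weight_polytope}), and the basis-dependent verification the paper needed for $\rho(N)^3=0$ on $\wedge^i\bar V$ in Type III (see the parenthetical admission in the proof of Lemma \ref{lem:submain}) disappears, since weights of $\wedge^i\bar V$ have $|\theta_1|\le 1$. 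Your lower bound via the Verbitsky weight $(k-n)\varepsilon_0+k\varepsilon_1$ matches the paper's Lemma \ref{lem:sym_index}, and your converse is the paper's Weyl-group construction (sign flip of $\varepsilon_0$, with a compensating flip of $\varepsilon_r$ in type $\mathrm D$ to preserve parity).

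Two points you flagged do need genuine care in a full write-up. First, the crux --- that $\nu_2=1$ forces $\mathrm{rank}\, N_2=2$, i.e.\ Jordan type $(2,2,1^{b_2-4})$ --- is precisely the content of the paper's Lemma \ref{prop:ss}, proved from the K3-type constraint via Table \ref{table:LMHS_diamond}; you correctly identify the mechanism but leave it unproved, and pure representation theory cannot supply it. Second, when $b_2(X)=4$ the partition $(2,2)$ is very even in $\so(\bar V_\CC)\cong\so(4,\CC)$, so there are two nilpotent orbits, with semisimple parts $\varepsilon_1^\vee+\varepsilon_2^\vee$ and $\varepsilon_1^\vee-\varepsilon_2^\vee$; your assertion that the polarized limit mixed Hodge structure ``pins down the nilpotent orbit'' must be weakened to ``up to the relabeling swapping the two half-spin weights.'' This is harmless exactly because the criterion carries $|\mu_2|$ rather than $\mu_2$ (compare the paper's remark following Theorem \ref{thm:criterion}), but it is the one spot where the paper's explicit normal form does work that your cleaner argument must still do by hand --- the ``$\mathrm D_r$ sign bookkeeping'' you anticipated.
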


We note that a converse statement holds under mild conditions (likely to be true in general). For a more detailed discussion, see Theorem~\ref{thm:criterion} and the remark following it. Using this criterion and our computation of the LLV decomposition for the known cases (Theorem \ref{thm_llv_decompose}), we can conclude that Nagai's conjecture holds in all known examples of hyper-K\"ahler manifolds (Theorem \ref{nagai_thm}). 

\subsection{A conjecture on the cohomology of hyper-K\"ahler manifolds} We now take a closer look at the representation theoretic formulation \eqref{condition_mu} of Nagai's conjecture.  While we are able to check \eqref{condition_mu} holds under some strong assumptions (e.g. $\dim(X)\le 8$), we were not able to able to verify \eqref{condition_mu} in general. We expect \eqref{condition_mu} to be a new condition on the cohomology of hyper-K\"ahler manifolds (it holds for the known cases, but we believe it to be an open question in general). In fact, trying to prove \eqref{condition_mu} in general, we have arrived to a heuristic of motivic nature which gives the stronger and natural condition \eqref{eq:conj} below. Informally,  condition \eqref{eq:conj} says that {\it the  Verbitsky's component $V_{(n)}$ is the dominant component of the LLV representation of $H^* (X)$} (see also Remark \ref{rem_equiv}). 

\begin{conjecture}\label{main_conj}
Let $X$ be a compact hyper-K\"ahler manifold of dimension $2n$. Then the weights  $\mu = (\mu_0, \cdots, \mu_r)$ occurring  in the LLV decomposition \eqref{eq:decomp} of $H^*(X)$ satisfy
	\begin{equation} \label{eq:conj}
		\mu_0 + \cdots + \mu_{r-1} + |\mu_r| \le n. 
	\end{equation}
\end{conjecture}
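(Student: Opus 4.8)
The plan is to establish \eqref{eq:conj} for the four known deformation types using the explicit LLV decompositions of Theorem~\ref{thm_llv_decompose}; a hypothetical unknown type is out of reach and the statement stays conjectural there. The first step is a reformulation. For a dominant weight $\mu=(\mu_0,\dots,\mu_r)$ the left-hand side of \eqref{eq:conj} is exactly the $\ell^1$-norm $\|\mu\|:=\sum_{i=0}^r|\mu_i|$ of its $\varepsilon$-coordinate vector, since dominance forces $\mu_0\ge\cdots\ge\mu_{r-1}\ge|\mu_r|\ge 0$. The Weyl group of $\fg=\so(4,b_2-2)$ acts on the $\varepsilon_i$ by permutations and sign changes, so $\|\cdot\|$ is constant on Weyl orbits; as $\|\cdot\|$ is convex and the extreme points of the weight polytope of $V_\mu$ form the orbit $\mathfrak W\mu$, every weight $w$ of $V_\mu$ satisfies $\|w\|\le\|\mu\|$. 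Because the highest weight $\mu$ is itself a weight of $H^*(X)$, it follows that
\[
  \max_{\mu\in S}\bigl(\mu_0+\cdots+\mu_{r-1}+|\mu_r|\bigr)\ =\ \max\{\|w\|:\ w \text{ a weight of } H^*(X)\}.
\]
Thus \eqref{eq:conj} is \emph{equivalent} to the assertion that every weight occurring in the $\fg$-character of $H^*(X)$ has $\ell^1$-norm at most $n$. This is the form I would verify, since it can be read off the characters directly, without first isolating the irreducible constituents.

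For the two O'Grady types this is immediate. In \eqref{eq_llv_OG6}, with $\fg=\so(4,6)$ of type $D_5$ and $n=3$, the Verbitsky component is $V_{(3,0,0,0,0)}$ of norm $3$, while $V_{\varpi_3}\cong\wedge^3V$ has highest weight $(1,1,1,0,0)$ of norm $3$, and $V$, $\RR$ have norm $1,0$; all are $\le 3$. In \eqref{eq_llv_OG10}, with $\fg=\so(4,22)$ of type $D_{13}$ and $n=5$, the Verbitsky component $V_{(5,0,\dots,0)}$ has norm $5$ and $V_{2\varpi_2}$, the top piece of $\Sym^2(\wedge^2V)$, has highest weight $(2,2,0,\dots,0)$ of norm $4$; both are $\le 5$.

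For $\mathrm{K3}^{[n]}$ I would argue at the level of the generating series \eqref{eq_llv_k3n}, whose right-hand side is a purely \emph{bosonic} Fock space: each single-particle factor $1/(1-x_i^{\pm1}q^m)$ carries a weight $\pm\varepsilon_i$ of norm $1$ and a level $m\ge 1$. A monomial contributing to the coefficient of $q^n$ is a product of such oscillators whose levels, counted with occupation number, sum to $n$; its total weight is therefore a sum of at most $n$ vectors of norm $1$, hence of $\ell^1$-norm $\le n$ by the triangle inequality. So every weight of $H^*(\mathrm{K3}^{[n]})$ has norm $\le n$, which is the desired bound.

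The genuinely hard case is $\Kum_n$, and this is where I expect the main obstacle. Now the single-particle space is $H^*(A)$ for an abelian surface, whose odd part $H^1\oplus H^3$ contributes \emph{fermionic} oscillators $(1+x^jq^m)$ whose weights $j$ are half-spin weights of norm $2$. Within one level the even half-spin vectors can combine only with forced cancellation, but the same $j$ recurs across levels, so the auxiliary series $B(q)$ already carries weights of norm up to $2n$ and the naive bound fails outright; absorbing this excess is precisely the role of the division by $b_1$. Concretely, $B(q)=\sum_n\ch H^*(A^{[n]})q^n$, and $A^{[n]}$ is \emph{not} hyper-K\"ahler: the hyper-K\"ahler manifold $\Kum_n$ is the fibre of the sum map $A^{[n+1]}\to A$. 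Using that this map is isotrivial with finite translation monodromy and that $H^*(A,L_\chi)=0$ for every nontrivial character local system $L_\chi$, the Leray spectral sequence degenerates $\fg$-equivariantly to give $\ch H^*(A^{[n+1]})=b_1\cdot\ch H^*(\Kum_n)^{A[n+1]}$, so the quotient $b_{n+1}/b_1$ entering \eqref{eq_llv_kumn} is the character of the $A[n+1]$-invariants. The task then splits into: (i) showing that the deconvolution of the level-$(n+1)$ part of $B$ by $b_1=\ch H^*(A)$ removes all the excess, i.e. that $b_{n+1}/b_1$ has weights of norm $\le n$; and (ii) controlling the non-invariant classes, which the Jordan-totient sum $\sum_d J_4(d)\,(B(q^d)-1)/(b_1q)$ reconstructs and which should contribute only low-norm (indeed trivial) summands. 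Step (i) is the crux, and the hard part will be that division by $b_1$ is not manifestly norm-decreasing: one must match the norm-$4$ monomials of $b_{n+1}$, built from a half-spin weight repeated across two levels, against products (norm $\le n$ in $b_{n+1}/b_1$)$\times$(norm $\le 2$ in $b_1$). I would attempt this by refining $B(q)$ with a bigrading by (level, fermion number) and proving inductively that after dividing by $b_1$ the surviving weights satisfy $\|w\|\le$ level; the explicit low-dimensional computations of Corollary~\ref{cor:g_module2} ($\dim X\le 10$) would serve as a consistency check for the induction.
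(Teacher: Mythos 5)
Your reformulation of \eqref{eq:conj} as ``every weight of $H^*(X)$ has $\ell^1$-norm at most $n$'' (via Weyl-invariance and convexity of the norm on the weight polytope) is exactly the paper's Remark \ref{rem_equiv}, and three of your four cases coincide with the paper's proof of Theorem \ref{thm:stronger_ineq}: the $\OG6$ and $\OG10$ cases are read off the explicit decompositions in Theorem \ref{thm_llv_decompose}(3),(4), and your bosonic Fock-space count for $K3^{[n]}$ (each oscillator raises the $x$-degree by at most its $q$-level) is verbatim the paper's argument. One point implicitly in your favor: since $\fg$ is of type $\mathrm{B}$ for both infinite series, every Weyl orbit contains an all-nonnegative representative, so bounding plain exponent sums of monomials really does bound the $\ell^1$-norm of the dominant highest weights.

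The gap is the $\Kum_n$ case, which you explicitly leave as a program, and your diagnosis of its difficulty is wrong on two counts. Quantitatively, $B(q)$ does \emph{not} carry weights of norm up to $2n$ in $q$-degree $n$: each fermionic factor $(1+x_0^{j_0}x_1^{j_1}x_2^{j_2}x_3^{j_3}q^m)$ can be used at most once, its excess (norm minus $q$-degree) is $2-m$, which is positive only for $m=1$ and only for the single factor with $j=(\tfrac12,\tfrac12,\tfrac12,\tfrac12)$, while bosonic factors always have excess $\le 0$; hence every monomial of $B(q)$, and likewise of $B(q^d)$, has exponent-sum at most its $q$-degree plus $1$. (Your ``same $j$ recurs across levels'' mechanism costs $1+2+\cdots+k$ in $q$-degree to gain norm $2k$, so it never beats excess $1$.) Structurally, no deconvolution by $b_1$, no Leray spectral sequence for $A^{[n+1]}\to A$, and no bigraded induction is needed, because division is the wrong move: since all character coefficients are nonnegative, one argues by contradiction as the paper does. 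If some weight $\mu$ of $\ch H^*(\Kum_n)$ had $\mu_0+\cdots+\mu_3\ge n+\tfrac12$, then multiplying identity \eqref{eq_llv_kumn} by $b_1q$ and pairing $x^{\mu}q^n$ with the term $\sqrt{x_0x_1x_2x_3}\,q$ of $b_1q$ (no cancellation can occur) would force some $B(q^d)$ to contain a monomial whose exponent-sum exceeds its $q$-degree by at least $\tfrac32$, contradicting the excess bound above. With this observation your ``hard case'' closes in a few lines; as written, your steps (i)--(ii) are unexecuted and rest on the incorrect norm-$2n$ estimate, so the proposal does not yet prove Theorem \ref{thm_nagai2}.
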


\begin{remark} \label{rem_19}
	Since the Hodge decomposition on $H^*(X)$ factors through the LLV decomposition (see the discussion of \S\ref{sec:llv_further_decomp}), the LLV decomposition \eqref{eq:decomp} can be also interpreted as a Hodge structure decomposition.  More precisely, the (Mukai) Hodge structure on  the standard $\fg$-representation $V$ induces a Hodge structure on any representation $V_\mu$. Taking into account the Hodge weights, the LLV decomposition \eqref{eq:decomp} reads 
	\begin{equation}\label{eq:decomp_wt}
		H^* (X) \cong \sideset{}{_{\mu \in S}} {\bigoplus} V_{\mu} \left(\mu_0 + \cdots + \mu_{r-1} + |\mu_r| - n\right)^{\oplus m_{\mu}},
	\end{equation}
	where as usual, the notation $W(k)$ means the Tate twist by $\QQ(k)$ of the Hodge structure $W$ (N.B. the Tate twist lowers the weights by $2k$, or more precisely $W(k)^{p,q}=W^{p+k,q+k}$).
	In other words, Conjecture \ref{main_conj} means that $H^* (X)$ is obtained from the Mukai completion of a weight $2$ Hodge structure of K3 type in an effective way, allowing only positive twists by the Lefschetz motives $\mathbb L^l$ (with $l=n-(\mu_0 + \cdots + \mu_{r-1} + |\mu_r|)\ge 0$).
	For instance, the Hodge structure on the Hilbert scheme $S^{[n]}$ of points on a $K3$ surface is obtained by considering various symmetric powers $S^{(a)}$ of $S$ and blow-ups with center $S^{(a)}$ (see \eqref{eq_GS} for a precise formula). 
\end{remark}

As evidence for this conjecture, we note that it holds for all known types of hyper-K\"ahler manifolds.
\begin{theorem} \label{thm_nagai2} Conjecture \ref{main_conj} holds for 
 hyper-K\"ahler manifolds of $\mathrm{K3}^{[n]}$, $\Kum_n$, $\OG6$, or $\OG10$ type. \end{theorem}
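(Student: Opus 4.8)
The plan is to combine the explicit LLV decompositions of Theorem \ref{thm_llv_decompose} with a single elementary observation about weights. For a dominant weight $\mu=(\mu_0,\dots,\mu_r)$ of $\fg=\so(4,b_2-2)$ write $\|\mu\|:=\sum_i|\mu_i|$ for the $\ell^1$-norm of its coordinate vector; since a dominant weight has $\mu_0,\dots,\mu_{r-1}\ge 0$, this equals $\mu_0+\cdots+\mu_{r-1}+|\mu_r|$, the quantity in \eqref{eq:conj}. The $\ell^1$-norm is constant on Weyl orbits and every weight of an irreducible $V_\mu$ lies in the convex hull of the orbit of $\mu$, so $\|\mu\|$ is the maximum of $\sum_i|e_i|$ over all torus weights $\prod_i x_i^{e_i}$ of $V_\mu$. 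Because distinct irreducible characters occur with positive coefficients in $\ch H^*(X)$ and so cannot cancel, Conjecture \ref{main_conj} for a given type is \emph{equivalent} to the assertion that every torus weight occurring in $\ch H^*(X)$ has size $\sum_i|e_i|\le n$. I will verify this ``size $\le n$'' bound type by type.

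The two O'Grady cases are immediate from \eqref{eq_llv_OG6} and \eqref{eq_llv_OG10}. For $\OG6$ (so $n=3$) the summands $V_{3\varpi_1},\,V_{\varpi_3}\cong\wedge^3V,\,V,\,\RR$ have highest weights $(3,0,0,0,0),\,(1,1,1,0,0),\,(1,0,0,0,0),\,0$, of sizes $3,3,1,0\le 3$; for $\OG10$ (so $n=5$) the summands $V_{5\varpi_1}$ and $V_{2\varpi_2}$ have highest weights $(5,0,\dots,0)$ and $(2,2,0,\dots,0)$, of sizes $5$ and $4\le 5$. The $\mathrm{K3}^{[n]}$ case is equally clean from \eqref{eq_llv_k3n}: a $\mathrm{K3}$ surface has no odd cohomology, so every factor of the product is bosonic, of the form $(1-x_i^{\pm 1}q^m)^{-1}=\sum_{a\ge 0}x_i^{\pm a}q^{ma}$, and each such factor contributes $q$-degree $ma\ge a=|{\pm a}|$, at least as large as the absolute exponent it contributes. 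Summing over all factors, every monomial in the coefficient of $q^n$ has $\sum_i|e_i|\le n$, which is the desired bound (and is sharp, the Verbitsky component $V_{(n)}=V_{n\varpi_1}$ of weight $(n,0,\dots,0)$ realizing equality).

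The $\Kum_n$ case is where the real work lies, since $A$ has nontrivial odd cohomology and \eqref{eq_llv_kumn} correspondingly carries the fermionic factors $\prod_j(1+x^j q^m)$ with half-integral spinor weights $x^j=x_0^{j_0}\cdots x_3^{j_3}$ of size $\sum_i|j_i|=2$. First I would read \eqref{eq_llv_kumn} structurally: $B(q)=\sum_{N\ge 0}\ch H^*(A^{[N]})q^N$ is the Göttsche series of the Hilbert schemes of $A$, and the remaining operations encode that the Albanese map $A^{[n+1]}\to A$ is an isotrivial $\Kum_n$-bundle whose monodromy is the translation action of the torsion group $A[n+1]$ (the source of $J_4$). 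Writing $b_N:=\ch H^*(A^{[N]})$, this gives the representation-ring identity
\[
	b_1\cdot \ch H^*(\Kum_n)\;=\;\sum_{d\mid n+1}J_4(d)\,b_{(n+1)/d}.
\]
The gain is that it converts the division by $b_1$ into a product, which interacts well with sizes: all $b_N$ and $\ch H^*(\Kum_n)$ have nonnegative coefficients, and as $\ch H^*(\Kum_n)$ is Weyl-invariant its top-size weights occur with every sign pattern; multiplying such a weight by the all-$+$ spinor weight of $b_1=\ch H^*(A)$ (its unique top-size weight, of size $2$) produces a monomial of size $2+\|\ch H^*(\Kum_n)\|_{\max}$ with no cancellation. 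Hence $\|b_1\cdot\ch H^*(\Kum_n)\|_{\max}=2+\|\ch H^*(\Kum_n)\|_{\max}$, where $\|\cdot\|_{\max}$ denotes the largest size of an occurring weight.

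It then suffices to bound the right-hand side. Since $J_4(d)>0$ and $\|b_{(n+1)/d}\|_{\max}$ decreases in $d$, positivity forces $\|\sum_d J_4(d)b_{(n+1)/d}\|_{\max}=\|b_{n+1}\|_{\max}$, so everything reduces to the sharp estimate $\|b_N\|_{\max}\le N+1$ for the weights of $H^*(A^{[N]})$; this yields $\|\ch H^*(\Kum_n)\|_{\max}\le (n+2)-2=n$, which is exactly \eqref{eq:conj} (with the Verbitsky component $V_{(n)}$ again realizing equality). I expect this last estimate to be the main obstacle. Naively each fermionic factor contributes size $2$ for a single unit of $q$-degree, giving only the crude bound $\|b_N\|_{\max}\le 2N$; the sharp ``$+1$'' improvement reflects the cancellation among distinct spinor weights (the eight even-sign vectors sum to $0$ in each coordinate) together with the fact that a given spinor weight cannot be repeated at a fixed $m$ and so costs quadratically growing $q$-degree across distinct $m$. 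Making this precise amounts to the combinatorial inequality $\big\|\sum_{(m,j)\in P}j\big\|\le 1+\sum_{(m,j)\in P}m$ for any set $P$ of distinct fermionic factors, which I would establish by assigning to each spinor weight its minimal available degrees $1,2,\dots$ and combining the convexity estimate $2t-\binom{t+1}{2}\le 1$ with the cancellation among non-parallel spinor vectors. This combinatorial heart is the only genuinely nontrivial point of the argument.
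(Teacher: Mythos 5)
Your architecture coincides with the paper's proof of Theorem \ref{thm:stronger_ineq}: the $\OG6$ and $\OG10$ cases are read off from Theorem \ref{thm_llv_decompose}; for $\mathrm{K3}^{[n]}$ one checks factor by factor that exponent size is dominated by $q$-degree (your triangle-inequality version is correct and matches the paper's); and for $\Kum_n$ one clears the denominator by multiplying by $b_1 q$, uses positivity of characters to exclude cancellation, and reduces to a degree-excess bound on the monomials of $B(q)$. Your preliminary equivalence between \eqref{eq:conj} and the $\ell^1$-bound on all torus weights is also correct (it is the weight-polytope reformulation of Remark \ref{rem_equiv}). The genuine divergence, and the gap, is the $\Kum_n$ endgame. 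By insisting on bounding the $\ell^1$-size $\sum_i |e_i|$ of \emph{every} weight of $B(q)$, you arrive at the inequality $\bigl\| \sum_{(m,j)\in P} j \bigr\|_{\ell^1} \le 1 + \sum_{(m,j)\in P} m$, which you explicitly do not prove but only sketch, flagging it yourself as ``the main obstacle.'' The inequality is in fact true, and your ingredients are the right ones: the eight even spinor weights form four antipodal pairs spanning an orthogonal frame $f_1,\dots,f_4$, one computes $\| \sum_k u_k f_k \|_{\ell^1} = \max \bigl( 2\max_k u_k,\ \sum_k u_k \bigr)$, and compares with the cost $\sum_k \binom{u_k+1}{2}$ via $(t-1)(t-2)\ge 0$ and $\binom{t+1}{2}\ge t$. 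But as submitted, your argument is incomplete precisely at the step you identify as its combinatorial heart.

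The paper avoids this difficulty entirely by tracking the \emph{signed} exponent sum $\sum_i e_i$ rather than the $\ell^1$-norm. Since $\ch H^*(\Kum_n)$ is invariant under the full $\mathrm B_4$ Weyl group (arbitrary sign changes and permutations), a violating weight may be taken with all coordinates nonnegative, and adding the all-plus spinor weight of $b_1$ keeps the coordinates nonnegative, so its signed sum equals its size. One then only needs: no monomial of $B(q)$ has signed exponent sum exceeding its $q$-degree by $\tfrac{3}{2}$ or more. For the signed sum this is immediate: every bosonic factor has excess $\le 0$, the six mixed-sign and the all-minus spinor factors have excess $\le -m$ or $-m+ \text{(nonpositive)}$, and the only factor with positive excess is $(1 + (x_0x_1x_2x_3)^{1/2} q^m)$, usable at most once per $m$ with excess $2-m$, giving total excess at most $(2-1)+(2-2) = 1 < \tfrac{3}{2}$. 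This one-line verification replaces your combinatorial lemma; your route is salvageable but strictly harder. Two minor slips, both harmless: $b_1$ has eight top-size weights of $\ell^1$-size $2$ (all even spinor weights), not a unique one — though only the all-plus one is needed; and your monotonicity claim that $\|b_{(n+1)/d}\|_{\max}$ decreases in $d$ is unproved but also unnecessary, since the upper bound $\max_{d \mid n+1} \bigl( \tfrac{n+1}{d} + 1 \bigr) \le n+2$ already suffices.
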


Conjecture \ref{main_conj} has immediate consequences on the boundedness of the second Betti number for hyper-K\"ahler manifolds (a first step towards the boundedness of hyper-K\"ahler manifolds). This is discussed in detail in \cite{b2HK}, which generalizes the Beauville--Guan \cite{guan01} bound: {\it $b_2(X)\le 23$ for hyper-K\"ahler fourfolds} (N.B. Conjecture \ref{main_conj} holds for fourfolds). Here, we only note the following consequence of the conjecture.

\begin{corollary}
Let $X$ be a hyper-K\"ahler manifold of dimension $2n$ for which Conjecture \ref{main_conj} holds. Assume that $b_2(X) \ge 4n$, then $X$ has no odd cohomology. 
\end{corollary}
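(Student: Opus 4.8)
The plan is to reduce the statement to a weight‑counting estimate, exploiting the fact that the parity of cohomological degree is detected by the LLV grading element. Recall (see Section~\ref{sec:llv}) that the total cohomological degree is given by a fixed semisimple element $h_0 \in \fg$ acting on $H^k(X)$ by the scalar $k-2n$; in the $\varepsilon$‑coordinates of the appendix it is normalized so that $\langle \varepsilon_i, h_0\rangle = 2\,\delta_{i0}$. Hence a weight vector of weight $\nu = (\nu_0,\dots,\nu_r)$ inside an irreducible summand $V_\mu \subset H^*(X)$ lies in cohomological degree $k$ with $k-2n = 2\nu_0$; in particular $k$ is even if and only if $\nu_0$ is an integer. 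Since all weights of an irreducible $\fg$‑module lie in a single coset of the root lattice, and the roots of $B_{r+1}$ and $D_{r+1}$ are integral vectors, each $V_\mu$ is \emph{entirely} of tensor type (all weight coordinates integral) or \emph{entirely} of spin type (all coordinates in $\ZZ+\tfrac12$). Combining these two observations, $V_\mu$ contributes to odd cohomology precisely when it is spinorial. Thus it suffices to prove that, under the hypotheses, no spinorial $V_\mu$ occurs in \eqref{eq:decomp}.

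Next I would record the elementary lower bound for spinorial weights. If $\mu = (\mu_0,\dots,\mu_r)$ is a dominant weight of spin type, then every coordinate lies in $\ZZ+\tfrac12$; together with dominance ($\mu_0 \ge \cdots \ge \mu_{r-1} \ge |\mu_r| \ge 0$) this forces $|\mu_r| \ge \tfrac12$, and hence $\mu_i \ge \tfrac12$ for every $i$. Summing the $r+1$ coordinates gives
\begin{equation*}
	\mu_0 + \cdots + \mu_{r-1} + |\mu_r| \ \ge \ \frac{r+1}{2}.
\end{equation*}

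Finally I would combine this with Conjecture~\ref{main_conj} and the Betti hypothesis. Since $b_2(X) \ge 4n$, we have $r = \lfloor b_2(X)/2\rfloor \ge 2n$, so the bound above yields $\mu_0 + \cdots + \mu_{r-1} + |\mu_r| \ge (2n+1)/2 = n + \tfrac12 > n$ for every spinorial $\mu$. This contradicts the inequality \eqref{eq:conj}, which holds for $X$ by assumption. Therefore no spin‑type representation appears in the LLV decomposition, and by the first paragraph $X$ has no odd cohomology.

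The only genuinely non‑formal input is the identification, in the first paragraph, of cohomological parity with the tensor/spin dichotomy of the LLV summands; the remaining two steps are one‑line estimates. I expect this identification to be the point requiring care, as it rests on pinning down the grading element $h_0$ inside $\fg$ and its normalization in the weight coordinates $(\mu_0,\dots,\mu_r)$. This is, however, already built into the Hodge‑theoretic weight notation (cf.\ the footnote explaining the choice of $\mu_i$), so I would simply cite the relevant part of Section~\ref{sec:llv}; no new computation should be needed.
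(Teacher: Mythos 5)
Your proof is correct and takes essentially the same route as the paper's (Remark \ref{rem_van_odd}): summands in odd cohomology have all coordinates in $\ZZ+\tfrac12$ (Proposition \ref{prop:even_rep}), whence $\mu_0+\cdots+\mu_{r-1}+|\mu_r|\ge \tfrac{r+1}{2}\ge \tfrac{2n+1}{2}>n$ once $b_2(X)\ge 4n$, contradicting \eqref{eq:conj}. Your first paragraph merely rederives the parity/spin dichotomy inline via the grading element and the root-lattice coset argument, where the paper simply cites Proposition \ref{prop:even_rep}; the substance is identical.
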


In particular, {\emph{a posteriori}}, we see that Theorem \ref{thm_nagai2} implies the vanishing of odd cohomology for $\OG10$, which then determines the Hodge diamond for $\OG10$ from  little geometric data (see Theorem \ref{thm_og10}).

\subsection{Structure of the paper}
We start in Section \ref{sec:llv} with a discussion of the LLV algebra $\fg$ and its action on the cohomology $H^*(X)$. While this is mostly standard material, we make some small but important  observations. For instance, we note the LLV algebra $\fg$ is defined over $\QQ$ and describe its $\QQ$-algebra structure.  This allows us to relate the LLV algebra $\fg$ to the special Mumford--Tate (MT) algebra $\bar \fm$. Note that $\fg$ is a diffeomorphism invariant, while on the other hand $\bar \fm$ varies in moduli (it depends on the Hodge structure). For hyper-K\"ahler manifolds, one can enlarge the Lie algebra $\bar {\mathfrak m}$ to its Mukai completion $\mathfrak m$. We note that $\fm\subset \fg$, and thus it acts on the cohomology of $X$. Typically, by construction, one understands the Hodge structure on $H^*(X)$, or equivalently the decomposition of $H^*(X)$ with respect to $\fm$. One of our main tools for the proof of Theorem \ref{thm_llv_decompose}  (in Section \ref{Sect_compute_LLV}) is to use representation theory to lift this $\fm$-representation to a $\fg$-representation. 

For concreteness, let us briefly discuss this procedure for a $\mathrm{K3}^{[n]}$ type hyper-K\"ahler manifold $X$. By definition, we can specialize $X$ to the one isomorphic to $S^{[n]}$ for a K3 surface $S$. Then the formula of G\"ottsche--Soergel expresses the cohomology of $H^*(X)$ in terms of the cohomology of $H^*(S)$ as Hodge structures. From our perspective, these results  express the cohomology of $H^*(X)$ as a representation of the Mumford--Tate algebra $\bar \fm$. Moreover, taking $S$ as a very general non-projective K3 surface (which is allowed by \cite{dCM}), we can further assume $\bar {\mathfrak m} \cong \so(3,19)$. Then, one can can easily construct the Mukai completion, which means, that, using the natural degree grading on cohomology, one can lift the $\bar {\mathfrak m}(\cong\so(3,19))$-module structure of $H^*(X)$ to a $\fm(\cong\so(4,20))$-module structure. However, we are still not done, as this Lie algebra is still slightly smaller than the LLV algebra $\fg\cong \so(4,21)$ for $K3^{[n]}$. We now use a representation theory fact on restriction representations. Namely, the Lie algebras $\so(4, 20)$ and $\so(4, 21)$ are of type $D_{12}$ and $B_{12}$ respectively, in particular are of the same rank. It follows that the restriction representation functor $\Rep(\mathrm B_{12}) \to \Rep(\mathrm D_{12})$ is injective on the level of objects. Thus, there is a unique lift of the $\fm$-module structure on $H^*(X)$ to a $\fg$-module structure. In other words, we have lifted the G\"ottsche--Soergel presentation of $H^*(X)$ as a Hodge structure to the LLV decomposition of $H^*(X)$.

The $\Kum_n$ case is similar but more complicated as it contains nonvanishing odd cohomology and many trivial representations. For example, to get a flavor of this phenomenon of an excessive amount of trivial representations, the reader can consider the Kummer surface $S$;  it has $16$ independent Hodge cycles in $H^2(S)$, which in turn will lead to trivial representations. More generally, we notice that the Jordan totient function $J_4(n+1)=O(n^4)$ governs the number of trivial representations for $\Kum_n$. The exceptional O'Grady's $10$-dimensional example is surprisingly easier to handle. The reason for this is that the LLV algebra $\so (4, 22)$ is large compared to the Euler number $e(X)$. Once one knows that the odd cohomology vanishes (cf. \cite{dCRS}, \cite{FFZ}), there is not much space remaining for the complement of the Verbitsky component in the LLV decomposition of $H^*(X)$. On the other hand, O'Grady's $6$-dimensional example is  harder, as there are two combinatorial solutions matching the Hodge diamond of \cite{MRS}. In order to find the right choice for the LLV representation in the $\OG6$ case, we need to revisit the geometric construction of  $\OG6$ used in \cite{rapog6} and \cite{MRS}.

The second part of the paper is concerned with Nagai's conjecture \eqref{nagaieq}. First, in Section \ref{sect_periods}, we discuss the relationship between higher period maps and  the LLV algebra.  We note that, except some overlap with the work of Soldatenkov \cite{sol18,sol19}, the material here is new and possibly of independent interest\footnote{(Note added in proof) We refer also to Looijenga \cite[Sect. 4]{loo19} (which appeared after a first version of our manuscript) for some further discussion of higher period maps.}. 

Our main representation theoretic criterion (Proposition \ref{prop_criterion_nagai}) is established in Section \ref{sec:Nagai_LLV}. Using our computation of the LLV decomposition for the known cases (Theorem \ref{thm_llv_decompose}), we prove Theorem \ref{thm_nagai2} in Section \ref{sect_complete_proof}. As noted, this is a stronger version of Theorem \ref{nagai_thm}, concluding the proof of 
Nagai's conjecture for all known examples of hyper-K\"ahler manifolds.

For reader's convenience, we briefly review some relevant representation theory facts in the Appendix.

\subsection*{Acknowledgement}
The third author wishes to thank K. Hulek, K. O'Grady, G. Pearlstein, G. Sacc\`a, C. Voisin, and Z. Zhang for several discussions on related topics. In particular, this paper has its origins in \cite{klsv18}, but it took us in surprising new directions. The authors also thank M. de Cataldo and G. Sacc\`a for keeping us informed of their work on the cohomology of OG10 (cf. \cite{dCRS}). Finally, we are all grateful to P.  Griffiths with whom we had numerous discussions on parallel topics, which crystalized for us the importance of the Looijenga--Lunts--Verbitsky algebra (and related objects). 

While preparing this manuscript, the papers \cite{sol18} and \cite{sol19} appeared. We acknowledge their influence; in particular, occasionally they helped streamline some of our arguments.

We thank L. Fu, E. Looijenga, A. Negut, and Q. Yin for some useful comments on an earlier version of our paper. In particular, Remark \ref{rem_LF} is due to Lie Fu. We also thank to referees' many useful comments. Especially, one of the referees pointed out a gap in our original proof of Theorem~\ref{thm_higher_periods}. Our revised approach in \S \ref{S:Phik} is following closely the referee's suggestion.

\section{The Looijenga--Lunts--Verbitsky algebra for hyper-K\"ahler manifolds}
\label{sec:llv}
A compact {\it hyper-K\"ahler manifold} (aka irreducible holomorphic symplectic manifold) $X$ is a simply connected compact K\"ahler manifold such that  $H^{0} (X,\Omega_X^2)$ is generated by a global holomorphic symplectic $2$-form $\sigma$. To fix the notation, $X$ will denote  a compact hyper-K\"ahler manifold (not necessarily projective) of dimension $2n$. Throughout the paper, we will use 
\[ \bar V = H^2 (X, \QQ), \qquad \bar q : \bar V \to \QQ \]
for the second cohomology endowed with the (rational) Beauville-Bogomolov quadratic form $\bar q$ of $X$ (i.e. $\bar q(x)^n = c \cdot x^{2n}$, for some constant $c$). Note that the quadratic space $(\bar V, \bar q)$ (and the associated Lie algebra $\so(\bar V,\bar q)$) are diffeomorphism invariants of $X$. The purpose of this section is to introduce, following Verbitsky \cite{ver95} and Looijenga--Lunts \cite{ll97},  the {\it Looijenga--Lunts--Verbitsky (LLV) algebra} $\fg$, which enhances $\so(\bar V,\bar q)$. The LLV algebra $\fg$ acts naturally on the cohomology algebra $H^*(X)$, giving rise to a more refined diffeomorphism invariant of $X$, the {\it LLV decomposition} of the cohomology. After reviewing the basic structure and properties of $\fg$ and its action on cohomology, we discuss the interplay between $\fg$ and the natural Hodge structure on $H^*(X)$ (assuming a complex structure on $X$ was fixed) and the associated Mumford-Tate algebra $\bar \fm$ (an analytic invariant).

\subsection{Looijenga--Lunts--Verbitsky algebra} The LLV algebra $\fg$ and the associated LLV decomposition of $H^*(X)$  generalize the usual hard Lefschetz $\s(2)$-decomposition of the cohomology in the presence of a K\"ahler class $\omega$ on $X$ (for the moment $X$ can be any compact K\"ahler manifold). Specifically, recall that $\omega$ defines two operators, the Lefschetz operator $L_\omega=\omega\cup\underline{\ }$, and the inverse Lefschetz operator $\Lambda_\omega(=\star^{-1} L_{\omega} \star$). Then,  $L_\omega$ and $\Lambda_\omega$ generate an $\s(2)\subset \gl(H^*(X))$ acting on $H^*(X)$. Hard Lefschetz is equivalent to the resulting $\s(2)$-decomposition of the cohomology. Looijenga--Lunts \cite{ll97} have formalized this process and avoided the use of the Hodge star operator $\star$. To start, note that 
$$[L_\omega,\Lambda_\omega]=h,$$
where $h$ is the degree operator 
\begin{equation} \label{eq_def_h}
	\begin{aligned}
		h : H^* (X, \QQ) \ \ & \to && H^* (Y, \QQ), \\
		x \ \ & \mapsto && (k - \dim X) x \quad \mbox{for} \quad x \in H^k (X, \QQ) .
	\end{aligned}
\end{equation}
It follows that $\{L_\omega, h,\Lambda_\omega\}$ is an $\s(2)$-triple. The operator $L_x = x \cup \underline{\ }$ is well defined for any cohomology class $x \in H^2(X)$, while $h$ is independent of any choice. The key observation now is that, due to the Jacobson--Morozov Theorem, the existence of an operator (automatically unique) $\Lambda_x$ completing  $\{L_x, h\}$ to an $\s(2)$-triple is an open algebraic condition on the classes $x \in H^2(X)$. Thus, the dual Lefschetz operator $\Lambda_x$ can be defined for almost all classes $x$, independent of being a K\"ahler class, or even of the complex structure of $X$. This allows to define a Lie algebra $\fg$ (clearly definable over $\QQ$) containing all these operators. By construction, $\fg$ is a diffeomorphism invariant of $X$, $\fg$ acts on $H^*(X)$, and any $\s(2)$ Lefschetz decomposition factors through $\fg$. The K\"ahler assumption is needed only to conclude that the set of $x \in H^2(X)$ for which $\Lambda_x$ is defined is a \emph{non-empty} (and thus dense) open Zariski set.

\begin{definition} [{\cite{ll97}}]\label{def_llv}
	Let $X$ be a compact K\"ahler manifold (not necessarily hyper-K\"ahler). The \emph{Looijenga--Lunts--Verbitsky (LLV) algebra}\footnote{$\fg(X)$ is called {\it the total Lie algebra} of $X$ in \cite{ll97}, and denoted by $\fg_{tot}(X)$. Another natural algebra considered by \cite{ll97} is the LLV Neron--Severi algebra $\fg_{\NS}(X)$ which is generated by Lefschetz $L_x$ and inverse Lefschetz $\Lambda_x$ operators for $x\in \NS(X)$. Obviously, $\fg_{\NS}\subset \fg(=\fg_{tot})$. We do not discuss $\fg_{\NS}$ in this paper.} $\mathfrak g(X)$  of $X$ is the Lie subalgebra of $\mathfrak {gl} (H^* (X, \QQ))$ generated by all formal Lefschetz and dual Lefschetz operators $L_x, \Lambda_x \in \mathfrak {gl} (H^* (X, \QQ))$ associated to almost all elements $x \in H^2 (X, \QQ)$.\end{definition}

The LLV algebra $\fg(X)$ is a semisimple Lie algebra defined over $\QQ$ (cf. \cite[(1.9)]{ll97}). We are interested in its structure and action on cohomology when  $X$ is a compact hyper-K\"ahler manifold (which we assume from now on). For notational simplicity, we write
$$\fg=\fg(X)$$
if no confusion is likely occur. 

\subsubsection{The structure of the LLV algebra (over $\QQ$) for hyper-K\"ahler manifolds}
The semisimple degree operator $h\in \fg$ induces an eigenspace decomposition of $\fg$. In the case of hyper-K\"ahler manifolds, only degrees $2$, $0$, and $-2$ occur\footnote{In general, the Lefschetz operators $L_\eta$ commute, but the dual Lefschetz operators $\Lambda_\eta$ do not. For hyper-K\"ahler manifolds and abelian varieties, $\Lambda_\eta$ do commute, resulting in the restricted range of weights.} and thus we have an eigenspace decomposition
\begin{equation}\label{eq_decompose_g_deg}
	\mathfrak g = \mathfrak g_2 \oplus \mathfrak g_0 \oplus \mathfrak g_{-2}
\end{equation}
with respect to $h$ acting on $\fg$ by the adjoint action. The $0$-eigenspace $\mathfrak g_0$ is a reductive subalgebra of $\fg$, which can be then decomposed as 
\begin{equation} \label{eq:g_0}
	\fg_0=\bar \fg\oplus \QQ\cdot h,
\end{equation}
where $\bar \fg$ is the semisimple part ($\bar \fg=[\fg_0,\fg_0]$), and the $1$-dimensional center $\mathfrak z(\fg_0)$ is spanned by the degree operator $h$.  We refer to $\bar \fg$ as the {\it reduced LLV algebra} of $X$. Since $\bar {\mathfrak g} \subset \mathfrak g_0$ consists of degree $0$ operators, the induced $\bar {\mathfrak g}$-action on $H^*(X)$ preserves the degree.  That is, we have a representation
\begin{equation}\label{def_rhok}
	\rho_k : \bar {\mathfrak g} \to \End(H^k (X, \QQ)) .
\end{equation}
In particular, $\bar \fg$ acts on $H^2(X)$. On the other hand, it preserves the cup product as a derivation:
\begin{equation} \label{eq:ll_respects_cup}
	e. (x \cup y) = (e.x) \cup y + x \cup (e.y) \qquad \mbox{for} \quad e \in \bar {\mathfrak g}, \quad x, y \in H^* (X, \QQ) .
\end{equation}
Together with the Fujiki relation $\bar q (x)^n = c x^{2n}$, one concludes that it also respects the Beauville--Bogomolov form $\bar q$ on $H^2 (X)$. That is, we have
\[ \bar \fg\subset \so(\bar V,\bar q) ,\]
where $\bar q$ is the Beauville--Bogomolov form on the second cohomology $\bar V = H^2(X,\QQ)$ as before. In fact, the equality holds, and $\bar V$ is the standard representation of $\bar {\mathfrak g}$. More precisely, $\fg$ and $\bar \fg$ can be described as follows:

\begin{theorem} [Looijenga--Lunts, Verbitsky] \label{prop:ll_isom}
	Let $X$ be a compact hyper-K\"ahler manifold. Then the LLV and reduced LLV algebras of $X$ are described by 
	\begin{eqnarray}
		\bar {\mathfrak g} &\cong& \mathfrak {so} (\bar V, \bar q), \\
		\mathfrak g &\cong& \mathfrak {so} \left(\bar V \oplus \QQ^2, \bar q \oplus \begin{psmallmatrix} 0 & 1 \\ 1 & 0 \end{psmallmatrix}\right)
	\end{eqnarray}
	In particular,
	\begin{equation} \label{eq:ll_isom_R}
		\bar {\mathfrak g}_{\RR} \cong \mathfrak {so} (3, b_2(X) - 3) \qquad\mbox{and}\qquad \mathfrak g_{\RR} \cong \mathfrak {so} (4, b_2(X) - 2) .
	\end{equation}
\end{theorem}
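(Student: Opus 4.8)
The plan is to reconstruct $\fg$ and $\bar{\fg}$ from their eigenspaces under the degree operator $h$, using the grading $\fg = \fg_2 \oplus \fg_0 \oplus \fg_{-2}$ and the decomposition $\fg_0 = \bar{\fg}\oplus\QQ h$ already in hand, together with the inclusion $\bar{\fg}\subseteq\so(\bar V,\bar q)$. The first task is to upgrade this inclusion to an equality; the second is to assemble the whole algebra and read off the real forms. I would begin by locating the outer graded pieces. The Lefschetz operators $L_x=x\cup\underline{\ }$ ($x\in\bar V$) all lie in $\fg_2$, and $x\mapsto L_x$ is injective since $L_x(1)=x$, so $\fg_2\supseteq\{L_x\}\cong\bar V$; the dual operators $\Lambda_y$ span $\fg_{-2}$. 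Because the grading is concentrated in degrees $\{-2,0,2\}$ (the special feature of the hyper-K\"ahler case recorded above), one has $[\fg_{\pm2},\fg_{\pm2}]\subseteq\fg_{\pm4}=0$, so both $\fg_2$ and $\fg_{-2}$ are automatically abelian, and each is a $\bar{\fg}$-submodule of the standard representation via the adjoint action of $\fg_0$.

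The heart of the argument is the computation of the brackets $[L_x,\Lambda_y]\in\fg_0=\bar{\fg}\oplus\QQ h$. The $\s(2)$-relation coming from the Jacobson--Morozov triple $\{L_y,h,\Lambda_y\}$ pins the degree operator down via $[L_y,\Lambda_y]=h$, while for the off-diagonal brackets the component of $[L_x,\Lambda_y]$ lying in $\bar{\fg}$ acts on $\bar V=H^2$ as a nonzero multiple of the elementary rotation
\[
	E_{x,y}(z) \ = \ \bar q(y,z)\,x - \bar q(x,z)\,y .
\]
As $x,y$ range over $\bar V$ these rotations span all of $\so(\bar V,\bar q)$, and since each $E_{x,y}$ lies in $\bar{\fg}$ we obtain $\bar{\fg}=\so(\bar V,\bar q)$, proving the first isomorphism. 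I expect this evaluation of $E_{x,y}$ to be the main obstacle: the operator $\Lambda_y$ is produced abstractly by Jacobson--Morozov rather than by a formula (and is not linear in $y$, scaling instead as $\bar q(y,y)^{-1}$), so extracting its action on $H^2$ and $H^4$ requires an honest analysis of the $\s(2)$-module structure.

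Having identified all the graded data, the second isomorphism becomes a matching of graded Lie algebras. On $W=\bar V\oplus\QQ^2$ with the form $\bar q\oplus\begin{psmallmatrix}0&1\\1&0\end{psmallmatrix}$, let $H\in\so(W)$ act by $\pm2$ on the isotropic basis of the hyperbolic plane $U=\QQ^2$ and by $0$ on $\bar V$. The $\mathrm{ad}(H)$-eigenspaces of $\so(W)$ are precisely $\bar V\oplus(\so(\bar V)\oplus\QQ)\oplus\bar V$, with brackets between the degree $\pm2$ pieces reproducing exactly the $h$-term and the rotation $E_{x,y}$ computed above. Sending $h$ to a suitable multiple of $H$, the $L_x$ and $\Lambda_y$ to the corresponding degree $\pm2$ root vectors, and $\bar{\fg}=\so(\bar V)$ to the degree-zero semisimple part gives a graded isomorphism $\fg\cong\so(W)$; in particular $\fg_{\pm2}\cong\bar V$ so that $\fg_2=\{L_x\}$ a posteriori. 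The dimension count
\[
	\dim\fg \ = \ 2\,b_2 + \left(\tbinom{b_2}{2}+1\right) + \cdots \ = \ \tbinom{b_2+2}{2} \ = \ \dim\so(W)
\]
(where the middle displayed sum already accounts for $\fg_0$ and the two copies of $\bar V$) confirms that no collapse occurs.

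Finally, the real forms follow from the signature of the Beauville--Bogomolov form. On $H^2(X,\RR)$ this form is positive on the three-dimensional space spanned by a K\"ahler class and the real and imaginary parts of the holomorphic symplectic form, and negative on the remainder, so it has signature $(3,b_2-3)$; hence $\bar{\fg}_{\RR}\cong\so(3,b_2(X)-3)$. Adjoining the hyperbolic plane $U$, of signature $(1,1)$ over $\RR$, raises the signature to $(4,b_2-2)$, and therefore $\fg_{\RR}\cong\so(4,b_2(X)-2)$, as claimed.
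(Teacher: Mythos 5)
Your outline has the right shape, but it is not a proof: the one step you label ``the main obstacle'' --- evaluating the semisimple part of $[L_x,\Lambda_y]$ and showing it acts on $\bar V=H^2(X)$ as a nonzero multiple of the rotation $E_{x,y}$ --- is precisely the entire mathematical content of the Looijenga--Lunts/Verbitsky structure theorem, and you explicitly defer it rather than carry it out. Everything downstream (spanning $\so(\bar V,\bar q)$, matching the graded brackets with $\so\bigl(\bar V\oplus\QQ^2,\,\bar q\oplus\begin{psmallmatrix}0&1\\1&0\end{psmallmatrix}\bigr)$, the dimension count) presupposes this formula, which in the paper appears as the cross-term relation $[a,b]=a\wedge b+\bar q(a,b)h$ quoted from \cite[Lemma 3.9]{ksv17}. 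Carrying it out honestly is not routine: $\Lambda_y$ is produced abstractly by Jacobson--Morozov, and computing $[L_x,\Lambda_y]$ as an operator on \emph{all} of $H^*(X)$ requires controlling the $\s(2)_y$-module structure outside the subalgebra generated by $H^2$, which is where Looijenga--Lunts invoke the Verbitsky--Bogomolov description of $SH^2$. A second, related gap: even granting that the rotations $E_{x,y}$ are hit, concluding $\bar\fg\cong\so(\bar V,\bar q)$ requires the restriction map $\bar\fg\to\gl(H^2)$ to be \emph{injective} --- a priori $\fg_0$ could contain operators vanishing on $H^2$ but not on higher cohomology --- and your argument never rules this out. (A small slip besides: $\Lambda_y$ is homogeneous of degree $-1$ in $y$, i.e.\ $\Lambda_{\lambda y}=\lambda^{-1}\Lambda_y$; it is rational in $y$ with $\bar q(y)$ in the denominator, but it does not scale as $\bar q(y,y)^{-1}$.)

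It is worth seeing how the paper deals with this. It does \emph{not} reprove the real structure theorem: the isomorphisms over $\RR$ are quoted from \cite[Prop.\ 4.5]{ll97}, which settles both surjectivity and faithfulness at once, and then $\bar\fg\cong\so(\bar V,\bar q)$ over $\QQ$ follows ``by dimension reasons'' from the inclusion $\bar\fg\subset\so(\bar V,\bar q)$ (already established via the derivation property and the Fujiki relation). The genuinely new content of the theorem as stated is the $\QQ$-algebra structure of $\fg$, and the proof reduces to checking that the bracket rules of \cite[Lemma 3.9]{ksv17} --- the grading relations, the identifications $\fg_{\pm2}\cong\bar V$ and $\bar\fg\cong\wedge^2\bar V$ as $\bar\fg$-modules, and the cross-term $[a,b]=a\wedge b+\bar q(a,b)h$ --- are defined over $\QQ$ and coincide with those of the Mukai completion. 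Your graded-reconstruction strategy is exactly the skeleton of that argument, and your signature computation for \eqref{eq:ll_isom_R} is correct; to make the proposal complete you should either cite the real isomorphism as the paper does and confine your work to the $\QQ$-rationality of the brackets, or actually perform the $\s(2)$-analysis (including the faithfulness of the $\fg_0$-action on $H^2$) that you currently only advertise.
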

\begin{proof}
	The isomorphism over $\RR$ is \cite[Proposition 4.5]{ll97} (also \cite{ver95}). The isomorphism over $\QQ$ is not clearly addressed in the literature, so we provide the details here. For the reduced LLV algebra $\bar \fg$, since $\fg\subset \so(\bar V,\bar q)$ (both defined over $\QQ$), the equality follows by dimension reasons. 
	
	For the identification of $\fg$ we use the description from \cite[Lemma 3.9]{ksv17} of $\fg$ in terms of the subalgebra $\bar \fg$. Starting from the decomposition
	$$\mathfrak g = \mathfrak g_{-2} \oplus (\bar {\mathfrak g} \oplus \QQ h) \oplus \mathfrak g_2,$$
	one sees that $\bar \fg\cong \so(\bar V,\bar q)\cong \wedge^2 \bar V$, and $\fg_{\pm 2} \cong \bar V$ as $\bar {\mathfrak g}$-representations (e.g. $\fg_2$ is generated by commuting Lefschetz operators $L_x$ for $x \in H^2(X)=\bar V$). Identifying $\fg_{\pm 2}$ with $\bar V$, and $\bar \fg$ with $\wedge^2\bar V$ by the rule 
	$$a \wedge b \mapsto \frac{1}{2} (\bar q(a,-) \otimes b - \bar q(b, -) \otimes a)$$
	(recall $a,b\in \bar V=H^2(X,\QQ)$), we have the following bracket rules (which determine $\fg$ starting from $\bar \fg$):
	\begin{enumerate}
		\item[(1)] The obvious grading relations: 
		\begin{itemize}
			\item $[h,a] = -2a, \quad [h,b] = 2b, \quad [h,e] = 0$ \ \ for \ \ $a \in \mathfrak g_{-2}$, $b \in \mathfrak g_2$, $e \in \bar {\mathfrak g}$;
			\item $[a,a'] = 0$ \ \ for \ \ $a,a' \in \mathfrak g_{-2}$. \qquad $[b,b'] = 0$ \ \ for \ \ $b,b' \in \mathfrak g_2$.
		\end{itemize}
		
		\item[(2)] The identifications $\fg=\wedge^2 \bar V$ and $\fg_{\pm 2}=\bar V$ are as $\bar {\mathfrak g}$-representations, i.e. 
		\begin{itemize}
			\item $[e,e']$ \ \ for \ \ $e,e' \in \bar {\mathfrak g}$ is defined by the Lie bracket operation on $\bar {\mathfrak g}$;
			\item $[e,a] = e.a \in \mathfrak g_{-2}$, \quad $[e,b] = e.b \in \mathfrak g_2$ \ \ for \ \ $a \in \mathfrak g_{-2}$, $b \in \mathfrak g_2$, $e \in \bar {\mathfrak g}$.
		\end{itemize}
		
		\item[(3)] Finally, the interesting cross-term relation:
		\begin{itemize}
			\item $[a,b] = a \wedge b + \bar q(a,b) h \in \mathfrak g_0$ \ \ for \ \ $a \in \mathfrak g_{-2}$, $b \in \mathfrak g_2$.
		\end{itemize}
	\end{enumerate}
	All of these bracket relations are defined over $\QQ$. On the other hand, we note that the bracket relations above are exactly the same as those for $\mathfrak {so} \left( \bar V \oplus \QQ^2, \bar q \oplus \begin{psmallmatrix} 0 & 1 \\ 1 & 0 \end{psmallmatrix} \right)$ described in terms of $\mathfrak {so} (\bar V, \bar q)$. (Recall, in particular, that as $\so(\bar V, \bar q)$-representation it holds  $\wedge^2 (\bar V \oplus \QQ^2) = \bar V \oplus (\wedge^2 \bar V \oplus \QQ) \oplus \bar V$.)
\end{proof}

\begin{example} \label{ex_llv_k3}
	If $X$ is a $K3$ surface, $H^*(X)$ is naturally endowed with the Mukai pairing $H^*(X,\QQ)\cong H^2(X,\QQ)\oplus U$, even defined over $\ZZ$. In terms of representations, $H^2(X,\QQ)$ is the standard representation of $\bar \fg$ (whose real form is $\so(3,19)$), and $H^*(X,\QQ)$ is the standard representation of $\fg$ (whose real form is $\so(4,20)$). Note that $\bar {\mathfrak g}$ is achieved by the generic special Mumford--Tate algebra of a K3 surface.
	
	Interesting things happen for the case of Kummer surfaces, which can be considered as the case $n=1$ in the series $\Kum_n$. A Kummer surface is a K3 surface, so it has $\bar {\mathfrak g}_{\RR} = \so (3, 19)$ as above. However, by construction it always contains $16$ independent $(-2)$-curves. Due to this fact, its generic special Mumford--Tate algebra has a real form $\so (3,3)$. As a result, letting $\bar V$ be its standard representation, we have $H^*(X, \QQ) = H^0 \oplus (\bar V \oplus \QQ^{16}) \oplus H^4$. This explains the meaning of the degree $1$ term in Theorem \ref{thm_llv_decompose}(2).
\end{example}

Motivated by Theorem \ref{prop:ll_isom} above, we would rather like to consider the \emph{Mukai completion}
\begin{equation*}
	V = \bar V \oplus \QQ^2 , \qquad q = \bar q \oplus \begin{psmallmatrix} 0 & 1 \\ 1 & 0 \end{psmallmatrix}
\end{equation*}
of $\bar V (= H^2(X,\QQ))$ as a more natural object associated to the cohomology of $X$. Theorem \ref{prop:ll_isom} says that $\bar V$ is the standard representation of the reduced LLV algebra $\bar {\mathfrak g}$, while the Mukai completion $V$ is the standard representation of $\fg$ (N.B. only for K3 surfaces, $H^*(X, \QQ) \cong V$). 

\begin{corollary} \label{prop:simple} Let $X$ be a hyper-K\"ahler manifold, and  $r = \lfloor b_2(X) / 2 \rfloor$.	The LLV algebra $\mathfrak g$ is a simple Lie algebra of type $\mathrm B_{r+1}$ or $\mathrm D_{r+1}$, depending on the parity of $b_2(X)$. Its reduced form $\bar {\mathfrak g}$ is a simple Lie algebra of type $\mathrm B_r$ or $\mathrm D_r$. \qed
\end{corollary}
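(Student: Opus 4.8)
The plan is that nothing genuinely new is needed: Theorem~\ref{prop:ll_isom} has already identified both algebras as special orthogonal algebras of explicit quadratic spaces, so the corollary is pure bookkeeping with the Cartan classification of $\so(N)$ according to the parity of $N$. I would therefore treat it as an immediate consequence rather than a fresh computation.

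First I would record the dimensions from Theorem~\ref{prop:ll_isom}: $\bar{\mathfrak g}\cong\so(\bar V,\bar q)$ with $\dim\bar V=b_2(X)$, and $\fg\cong\so(V,q)$ with $V=\bar V\oplus\QQ^2$ of dimension $b_2(X)+2$. Then I would invoke the standard fact that the special orthogonal algebra of a nondegenerate quadratic space of dimension $N$ is simple of type $\mathrm{B}_{(N-1)/2}$ for $N$ odd and of type $\mathrm{D}_{N/2}$ for $N$ even, at least once $N$ lies outside the degenerate range $N\le 4$ discussed below. Since the Dynkin type of a semisimple Lie algebra is read off after extending scalars to $\CC$, where all nondegenerate quadratic forms of a fixed dimension become equivalent, it is harmless that $\fg$ and $\bar{\mathfrak g}$ are here defined over $\QQ$.

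Next comes the parity bookkeeping, which reproduces exactly the statement. If $b_2(X)$ is even, then both $\dim\bar V=b_2$ and $\dim V=b_2+2$ are even, so $\bar{\mathfrak g}$ has type $\mathrm{D}_{b_2/2}=\mathrm{D}_r$ and $\fg$ has type $\mathrm{D}_{(b_2+2)/2}=\mathrm{D}_{r+1}$, using $r=\lfloor b_2/2\rfloor=b_2/2$. If $b_2(X)$ is odd, then both dimensions are odd, giving $\bar{\mathfrak g}$ of type $\mathrm{B}_{(b_2-1)/2}=\mathrm{B}_r$ and $\fg$ of type $\mathrm{B}_{(b_2+1)/2}=\mathrm{B}_{r+1}$, again with $r=\lfloor b_2/2\rfloor$. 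This matches the asserted dependence on the parity of $b_2(X)$.

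The one place where I would actually have to be careful---and hence the only real obstacle---is the claim of \emph{simplicity} in low dimensions, since $\so(N)$ degenerates for small $N$: $\mathrm{D}_1=\so(2)$ is abelian and $\mathrm{D}_2=\so(4)\cong\s(2)\times\s(2)$ is semisimple but not simple. For $\fg=\so(b_2+2)$ these bad cases correspond to $b_2\le 2$, which is excluded because the Beauville--Bogomolov form has signature $(3,b_2-3)$ and hence $b_2(X)\ge 3$; for $\bar{\mathfrak g}=\so(b_2)$ the only remaining bad value is $b_2=4$ (type $\mathrm{D}_2$), which does not occur among hyper-K\"ahler manifolds (indeed $b_2\ge 7$ for all the known types). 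I would therefore simply note that $\dim\bar V\ge 3$ and $\dim V\ge 5$ fall in the simple range, so the type assignment above already delivers simplicity. With that caveat disposed of, the corollary follows immediately from Theorem~\ref{prop:ll_isom}.
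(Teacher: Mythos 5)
Your proposal is correct and takes essentially the same route as the paper, which states the corollary with a \qed precisely because it is immediate from Theorem \ref{prop:ll_isom} together with the Cartan classification of $\mathfrak{so}(N)$ by the parity of $N$ --- your dimension and parity bookkeeping is exactly that argument. One caution on your low-dimensional caveat: the claim that $b_2(X)=4$ ``does not occur'' is not a known theorem (only the \emph{known} deformation types satisfy $b_2\ge 7$, and the paper itself treats $b_2=3$ as a live possibility in Proposition \ref{prop:nagai_b2_3}), so the $\mathrm{D}_2$ edge case for $\bar{\mathfrak g}$ is genuinely open rather than excluded --- though the paper's own statement silently carries the same implicit restriction.
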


\subsubsection{The LLV decomposition}

The Looijenga--Lunts--Verbitsky algebra $\mathfrak g$ is by definition a subalgebra of $\mathfrak {gl} (H^* (X, \QQ))$ -- it acts on the full cohomology $H^* (X, \QQ)$.  Since $\mathfrak g$ consists of only even degree operators \eqref{eq_decompose_g_deg}, this action preserves the even and odd cohomology; that is, the action of $\fg$ preserves the direct sum 
\begin{equation} \label{eq:LLV_odd_even}
	H^* (X, \QQ) = H^*_{\even} (X, \QQ) \oplus H^*_{\odd} (X, \QQ) .
\end{equation}
Since $\mathfrak g$ is semisimple, the decomposition \eqref{eq:LLV_odd_even} may be further refined.  We have 
\begin{equation} \label{eq:LLV_decomp}
	H^* (X, \QQ) = \sideset{}{_{\mu}} {\bigoplus} V_{\mu}^{\oplus m_{\mu}} ,
\end{equation}
with $V_\mu$ the irreducible $\mathfrak g$ module of highest weight $\mu$.    We call \eqref{eq:LLV_decomp} the \emph{LLV decomposition}; it is a basic diffeomorphism invariant of $X$.  

With the notation of Appendix \ref{sec:appendixA}, we write $\mu = (\mu_0,\ldots,\mu_r)$ to indicate that $\mu = \sum_i \mu_i \varepsilon_i$.  (Here $\varepsilon$ are weights of the standard representation $V$.)  For example, $V_{(n)}$ is the ``largest'' irreducible subrepresentation of $\mathrm{Sym}^nV$.

\begin{theorem} [Verbitsky] \label{thm:verbitsky_component}
Let $X$ be a compact hyper-K\"ahler manifold $X$ of dimension $2n$.  Then the subalgebra $SH^2(X,\QQ) \subset H^*(X,\QQ)$ generated by $H^2(X,\QQ)$ is an irreducible $\mathfrak g$--module $V_{(n)} \subset \mathrm{Sym}^nV$ of highest weight $\mu = (n)$.
\end{theorem}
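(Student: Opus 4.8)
The plan is to realize $SH^2(X,\QQ)$ as a cyclic $\fg$-module generated by the fundamental class $1 \in H^0(X,\QQ)$, and then to pin down its isomorphism type by highest weight theory. I would carry out the representation-theoretic steps over $\CC$ (the conclusions descend to $\QQ$ since $1$ and the top class are rational and $V_{(n)}$ is defined over $\QQ$ as a kernel of contraction, cf.\ Notation \ref{notation_varpi}).

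First I would record how the three graded pieces of $\fg = \fg_{-2}\oplus\fg_0\oplus\fg_2$ act on $1$. By construction $\fg_2$ is spanned by the Lefschetz operators $L_x$, $x\in H^2$, so $L_{x_1}\cdots L_{x_k}\cdot 1 = x_1\cup\cdots\cup x_k$ and hence $\mathrm{Sym}(\fg_2)\cdot 1 = SH^2$. On the other hand $\fg_{-2} = \{\Lambda_x\}$ lowers degree by $2$, so $\fg_{-2}\cdot 1 \subset H^{-2} = 0$; the reduced algebra $\bar\fg$ acts by derivations (see \eqref{eq:ll_respects_cup}), whence $e.1 = e.(1\cup 1) = 2(e.1)$ forces $\bar\fg\cdot 1 = 0$; and $h\cdot 1 = -2n\cdot 1$ by \eqref{eq_def_h}. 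Thus $1$ is annihilated by $\fg_{-2}$ and $\bar\fg$ and is an $h$-eigenvector. Using the PBW ordering $U(\fg) = U(\fg_2)\,U(\fg_0)\,U(\fg_{-2})$ together with these facts, $U(\fg)\cdot 1 = U(\fg_2)\cdot 1 = SH^2$. In particular $SH^2$ is a $\fg$-submodule, cyclically generated by the lowest weight vector $1$. I regard this as the main obstacle: a priori it is not clear that the abstractly-defined dual Lefschetz operators $\Lambda_x$ preserve the subalgebra generated by $H^2$, and the enveloping-algebra argument is precisely what bypasses a direct verification.

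Next I would prove irreducibility. Being finite dimensional over the semisimple $\fg$, $SH^2$ is a direct sum of irreducibles $SH^2 = \bigoplus_i L_i$. Writing $1 = \sum_i v_i$ and using that the projections are $\fg$-maps, each nonzero $v_i$ inherits the properties of $1$: it is killed by $\bar\fg$ and $\fg_{-2}$ and has $h$-eigenvalue $-2n$, i.e.\ it is a lowest weight vector of $L_i$ of weight $-n\varepsilon_0$ (recall $h$ acts on a weight-$\lambda$ space by $2\lambda_0$, as one checks directly on $V = H^*(\mathrm{K3})$). Since $1$ generates $SH^2$, every $v_i$ is nonzero, so the number of summands is at most $\dim (SH^2)_{-n\varepsilon_0} \le \dim(SH^2\cap H^0) = 1$. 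Hence $SH^2$ is irreducible.

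Finally I would identify the highest weight. Choosing $x\in H^2$ with $\bar q(x)\ne 0$, the Fujiki relation gives $x^{2n}\ne 0$, so $SH^2\cap H^{4n} = H^{4n}$ is spanned by the one-dimensional top class, call it $p$. Then $p$ is killed by $\fg_2$ (as $H^{4n+2}=0$) and by $\bar\fg$ (which acts trivially on the one-dimensional $H^{4n}$), hence by every raising operator; its weight is $n\varepsilon_0 = (n)$ because $h\,p = 2n\,p$ and its $\bar\fg$-weight is $0$. Since $SH^2$ is irreducible and contains this highest weight vector, $SH^2 \cong V_{(n)}$, which by Notation \ref{notation_varpi} is the largest irreducible submodule of $\mathrm{Sym}^nV$, of highest weight $\mu = (n)$. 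This completes the plan.
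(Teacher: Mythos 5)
Your proposal is correct, and it reaches the conclusion by a genuinely different route than the paper. On the first step ($\fg$-stability of $SH^2(X,\QQ)$), your PBW argument $U(\fg)\cdot 1 = U(\fg_2)\,U(\fg_0)\,U(\fg_{-2})\cdot 1 = \Sym(\fg_2)\cdot 1 = SH^2(X,\QQ)$ is essentially the paper's inductive computation in compact form: the paper proves closure under $\Lambda_x$ by writing $\Lambda_x (x_1 \cdots x_k) = [L_{x_1},\Lambda_x](x_2\cdots x_k) - L_{x_1}\Lambda_x(x_2\cdots x_k)$ and inducting on $k$, which is exactly the straightening that PBW packages; your version is cleaner and makes visible that $SH^2$ is cyclic over the vacuum vector $1$, annihilated by $\fg_{-2}\oplus\bar\fg$. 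The genuine divergence is in the identification step: the paper quotes Verbitsky's structure theorem \eqref{eq:verbitsky_component} from \cite{ver96} (that $SH^2(X,\QQ)_{2k} = \Sym^k H^2(X,\QQ)$ for $k \le n$, and symmetrically above the middle) to pin down the $\bar\fg$-module structure degree by degree, and then invokes the branching rules of \S\ref{sec:appendixB_branching} to force the answer $V_{(n)}$, whereas you use neither input. Your irreducibility argument — the projections of $1$ onto the irreducible summands are nonzero lowest weight vectors of weight $-n\varepsilon_0$, all confined to the one-dimensional $SH^2\cap H^0$ — is sound (being killed by all of $\bar\fg$ and by $\fg_{-2}$ is exactly annihilation by every negative root space of $\fg$, given $h=\varepsilon_0^\vee$ as in \eqref{eq:hf_operators}), and the highest weight is then read off from the top class, which lies in $SH^2$ by the Fujiki relation and is a highest weight vector of weight $(n)$ by degree reasons together with semisimplicity of $\bar\fg$ acting on the line $H^{4n}$. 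What each approach buys: the paper's route is short once \cite{ver96} is granted and ties the statement directly to the classical Verbitsky--Bogomolov description of the subring; yours is self-contained and in fact reverses the logical dependence, since once $SH^2 \cong V_{(n)}$ is established, the degreewise description \eqref{eq:verbitsky_component} drops out of the branching rules as a corollary rather than serving as an input. Your descent to $\QQ$ is also handled correctly: $SH^2(X,\QQ)$ is a rational $\fg$-module whose complexification you show to be irreducible of integral highest weight $(n)$, so Proposition \ref{prop:def_Q} identifies it with the rational form $V_{(n)}$.
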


\begin{proof}
	By definition $SH^2 (X, \QQ)$ is the subalgebra generated by the second cohomology. Hence, every element in this subalgebra can be expressed by a linear combination of the product $x_1 \cdots x_k$ of elements in the second cohomology $x_i \in H^2 (X, \QQ)$. Now from \eqref{eq:ll_respects_cup}, one directly sees that $SH^2 (X, \QQ)$ is $\bar {\mathfrak g}$-invariant. Let us further show that it is in fact invariant under the full $\mathfrak g$-action.
	
	Recall the decomposition $\mathfrak g = \mathfrak g_{-2} \oplus (\bar {\mathfrak g} \oplus \QQ h) \oplus \mathfrak g_2$ in \eqref{eq_decompose_g_deg}. The algebra $SH^2 (X, \QQ)$ is clearly $h$-invariant. Thus it is enough  to show $SH^2 (X, \QQ)$ is closed under the $\mathfrak g_2$-action and $\mathfrak g_{-2}$-action. Any element in $\mathfrak g_2$ is of the form $L_x$ for $x \in H^2 (X, \QQ)$, the multiplication operator by $x$. Hence $SH^2 (X, \QQ)$ is closed under $L_x$ by definition. The vector space $\mathfrak g_{-2}$ is generated by the operators $\Lambda_x$ for $x \in H^2 (X, \QQ)$. To prove $SH^2 (X, \QQ)$ is closed under $\Lambda_x$, we need the following standard trick in representation theory. For any $x_1, \cdots, x_k \in H^2 (X, \QQ)$, we have
	\[ \Lambda_x (x_1 x_2 \cdots x_k) = \Lambda_x (L_{x_1} (x_2 \cdots x_k)) = [L_{x_1}, \Lambda_x] (x_2 \cdots x_k) - L_{x_1} (\Lambda_x (x_2 \cdots x_k)) .\]
	Since $[L_{x_1}, \Lambda_x] \in \mathfrak g_0 = \bar {\mathfrak g} \oplus \QQ h$, we know the first component is contained in $SH^2 (X, \QQ)$. Hence to prove $\Lambda_x (x_1 x_2 \cdots x_k) \in SH^2 (X, \QQ)$, it is enough to prove prove $\Lambda_x (x_2 \cdots x_k)$ is contained in $SH^2 (X, \QQ)$. Now use the induction on $k$. This proves $SH^2 (X, \QQ)$ is closed under $\Lambda_x$, and hence closed under the full $\mathfrak g$-action.
	
	By restricting to $\bar{\mathfrak g} \subset \mathfrak g$, we may regard $SH^2 (X, \QQ)$ as a $\bar{\mathfrak g}$--representation.  From that perspective, Verbitsky \cite{ver96} showed that
\begin{equation} \label{eq:verbitsky_component}
	SH^2(X, \QQ)_{2k} = \begin{cases}
		\Sym^k H^2 (X, \QQ) \quad & \mbox{if } 0 \le k \le n \\
		\Sym^{2n-k} H^2 (X, \QQ) \quad & \mbox{if } n < k \le 2n 
	\end{cases} .
\end{equation}
In particular, 
	\[ SH^2 (X, \QQ) = \Sym^n \bar V \oplus (\Sym^{n-1} \bar V)^{\oplus 2} \oplus \cdots \oplus \QQ^{\oplus 2} \]
as a $\bar {\mathfrak g}$-module. Then the branching rules for $\bar{\mathfrak g} \subset \mathfrak g$ (\S \ref{sec:appendixB_branching}) force $SH^2 (X, \QQ) = V_{(n)}$ as a $\mathfrak g$--representation.
\end{proof}

\begin{remark}
	Bogomolov \cite{bog96} showed that 
	\begin{equation}\label{eq_verbitsky_component}
		SH^2(X, \QQ) \cong \Sym^* (H^2 (X, \QQ)) / (x^{n+1} : x \in H^2 (X, \QQ), \ \bar q(x) = 0) 
	\end{equation}
	as algebras.
\end{remark}

\begin{definition}\label{def_verbitsky_component}
	We call $SH^2(X, \QQ) \cong V_{(n)}$ the \emph{Verbitsky component} of $H(X, \QQ)$.
\end{definition}

Since $\mathfrak g$ is semisimple, the cohomology admits a $\mathfrak g$--module decomposition 
\begin{equation}
H^* (X, \QQ) = V_{(n)} \oplus V' .
\end{equation}
One of our goals in the paper is to describe the complement $V'$ for the known cases of compact hyper-K\"ahler manifolds $X$ (cf.~Section \ref{Sect_compute_LLV}).  For arbitrary hyper-K\"ahler manifolds $X$ we will see that the multiplicity of $V_{(n)}$ in $H^* (X, \QQ)$ is one (Proposition \ref{prop:ver_exhaust_bdry}); equivalently, $V'$ does not contain an irreducible $\mathfrak g$--module of highest weight $\mu = (n)$. 

\begin{remark}
As the proof of Theorem \ref{thm:verbitsky_component} indicates it is sometimes convenient to restrict the $\fg$-action on $H^*(X)$ to a $\bar \fg$-action \eqref{def_rhok}, and apply branching rules. This argument will reappear again throughout the paper.  Often, this restricted action is easier to understand. However, one of our main conclusions here is that it is better to consider the action of the larger algebra $\fg \supset \bar{\mathfrak g}$. This is essentially because the larger algebra encodes more symmetries; as a $\mathfrak g$--module, the cohomology admits fewer irreducible subrepresentations.
\end{remark}

\subsection{Further decompositions of the cohomology}
\label{sec:llv_further_decomp}
We now discuss some finer decompositions of the cohomology which are obtained once certain choices have been made. For instance, the choice of complex structure determines a Hodge structure on $H^*(X)$ (which can be regarded as a decomposition with respect to the Deligne torus $\mathbb S:=\Res_{\CC/\RR}(\mathbb G_m)$). Similarly, the choice of a twistor family (or equivalently a hyper-K\"ahler metric) determines an $\so(4,1)$-decomposition of the cohomology $H^*(X)$, originally discovered by Verbitsky \cite{ver90, ver95}. Either of these finer decompositions factor through the LLV algebra, and in a certain sense the LLV algebra is the smallest subalgebra of $\mathfrak {gl} (H^* (X, \RR))$ containing all these decompositions. More precisely, the LLV algebra $\fg$ is generated by the (generic) Mumford--Tate algebra $\bar \fm$ (see \S\ref{subsec_mt}) and Verbitsky's algebra $\so(4,1)$ (see \S\ref{subsec_Veralg}). 
 Furthermore, the LLV algebra has the advantage of being defined over $\QQ$.

\subsubsection{Complex structures and Looijenga--Lunts--Verbitsky algebra}\label{subsec_cx_structure}
So far, the complex structure on $X$ was not used in our discussion. In this subsection, we would like to take the complex structure into account and understand how it interacts with the LLV algebra $\fg$.  Verbitsky's Global Torelli for compact  hyper-K\"ahler manifolds implies that the complex structure on $X$ is captured by the Hodge structure on the cohomology $H^* (X, \QQ)$, and in fact $H^2(X,\QQ)$, up to some finite ambiguity.

Given a $2n$-dimensional hyper-K\"ahler manifold $X$, we have a degree operator $h\in \fg_0\subset \fg$
\begin{equation} \label{eq:degree_operator}
	h : H^* (X, \QQ) \to H^* (X, \QQ), \qquad x \mapsto (k-2n) x \quad \mbox{for} \quad x \in H^k (X, \QQ).
\end{equation}
Assuming a complex structure on $X$ was fixed, we obtain a second operator $f \in \mathfrak {gl} (H^* (X, \RR))$ defined by
\begin{equation} \label{eq:hodge_operator}
	\begin{aligned}
		f : H^* (X, \RR) \ \ &\to&& H^* (X, \RR), \\
		\qquad x \ \ &\mapsto&& (q-p) \sqrt{-1} x \quad \mbox{for} \quad x \in H^{p,q} (X) ,
	\end{aligned}
\end{equation}
capturing the Hodge structure of the cohomology. While $h$ and $\fg$ are defined over $\QQ$,  $f$ is in general only defined over $\RR$. One sees that  $f \in \mathfrak g_{\RR}\subset \mathfrak {gl} (H^* (X, \RR))$, and, in fact, a stronger statement holds.

\begin{proposition} \label{prop:f}
	The operator $f \in \mathfrak {gl} (H^* (X, \RR))$ in \eqref{eq:hodge_operator} is contained in $\bar {\mathfrak g}_{\RR}$ as a semisimple element.
\end{proposition}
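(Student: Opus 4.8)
The plan is to verify the two claims about $f$ separately: first that $f$ is semisimple, and second that $f\in\bar{\mathfrak g}_\RR$. Semisimplicity is immediate from the definition \eqref{eq:hodge_operator}: on each Hodge summand $H^{p,q}(X)$ the operator $f$ acts as the scalar $(q-p)\sqrt{-1}$, so $f$ is already diagonalized by the Hodge decomposition and hence is a semisimple element of $\mathfrak{gl}(H^*(X,\RR))$. The substantive content is the membership $f\in\bar{\mathfrak g}_\RR$, and this is what I would spend the effort on.

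First I would show $f\in\mathfrak g_\RR$. The natural approach is to realize $f$ as an element of Verbitsky's $\so(4,1)$-subalgebra $\subset\mathfrak g_\RR$ attached to the hyper-K\"ahler metric. Concretely, fixing a hyper-K\"ahler metric induces a triple of complex structures $I,J,K$, each giving Lefschetz/dual-Lefschetz operators whose commutators generate the $\so(4,1)$ of \S\ref{subsec_Veralg}; the Weil operator $f$ (up to the normalization built into \eqref{eq:hodge_operator}) lies in the Cartan of this $\so(4,1)$. Since $\so(4,1)\subset\mathfrak g_\RR$ by construction, this places $f$ in $\mathfrak g_\RR$. Alternatively, and perhaps more cleanly for a first pass, one can argue directly that $f=[L_\omega,\Lambda_\omega]$-type brackets and the Hodge-theoretic identities (the $\mathfrak{sl}(2)\times\mathfrak{sl}(2)$ structure underlying a K\"ahler class together with its conjugate) exhibit $f$ as a combination of operators already known to lie in $\mathfrak g_\RR$.

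The key step is then to pin down the grading: I would show $f$ has degree $0$ with respect to $h$, i.e. $[h,f]=0$, and that $f$ is orthogonal to $h$ inside $\mathfrak g_0$, so that $f\in\bar{\mathfrak g}_\RR$ rather than merely $f\in\mathfrak g_0=\bar{\mathfrak g}\oplus\QQ h$. Degree $0$ is clear because $f$ preserves each $H^k(X)$ (it respects the Hodge decomposition, which refines the degree grading), so $f\in\mathfrak g_0$ by \eqref{eq_decompose_g_deg}. To exclude a component along $h$, I would use the decomposition \eqref{eq:g_0} together with a trace/Killing-form pairing: pairing $f$ against $h$, the contribution is controlled by $\sum_{p,q}(q-p)\cdot(k-2n)\dim H^{p,q}$ summed over $H^k$, and the symmetry of Hodge numbers $h^{p,q}=h^{q,p}$ forces the coefficient of $(q-p)$ to cancel in each degree, so the $h$-component vanishes. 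Hence $f\in\bar{\mathfrak g}_\RR$.

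The main obstacle is the careful identification in the second paragraph — showing $f$ genuinely lands in $\mathfrak g_\RR$ (not just in $\mathfrak{gl}(H^*(X,\RR))$) rather than taking it for granted. The cleanest route is to invoke that $\bar{\mathfrak g}_\RR$ is generated by the generic Mumford--Tate algebras together with Verbitsky's $\so(4,1)$, as asserted in \S\ref{sec:llv_further_decomp}, and observe that the Weil operator attached to the given complex structure is precisely the infinitesimal generator of the circle action $\mathbb S^1\to\bar{\mathfrak g}_\RR$ coming from the Hodge structure; its image lies in $\bar{\mathfrak g}_\RR$ by construction of the Mumford--Tate subalgebra. Once membership in $\bar{\mathfrak g}_\RR$ is secured, semisimplicity and the degree computation are routine.
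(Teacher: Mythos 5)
Your proposal is correct in its main line, and its first half coincides with the paper's proof: both place $f$ in $\mathfrak g_\RR$ via the twistor triple $I,J,K$, using Verbitsky's identity $f=-[L_J,\Lambda_K]$ (which is also what underlies your statement that $f$ lies in the Cartan of the $\so(4,1)$), and both then note that $f$ has degree zero, so $f\in\mathfrak g_{0,\RR}=\bar{\mathfrak g}_\RR\oplus\RR h$. Where you genuinely diverge is in excluding the $h$-component. The paper does this by an explicit bracket computation: it introduces the companion Weil operators $f_J=-[L_K,\Lambda_I]$ and $f_K=-[L_I,\Lambda_J]$ and shows, via repeated Jacobi identities, that $[f_J,f_K]=-2f$, whence $f\in[\mathfrak g_{0,\RR},\mathfrak g_{0,\RR}]=\bar{\mathfrak g}_\RR$. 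You instead use the trace form: writing $f=\bar f+c\,h$ per \eqref{eq:g_0}, the coefficient $c$ is detected by $\mathrm{tr}(fh)=\sum_{p,q}\sqrt{-1}\,(q-p)(p+q-2n)\,h^{p,q}$, which vanishes by Hodge symmetry $h^{p,q}=h^{q,p}$, while $\mathrm{tr}(h^2)>0$. This works, but you should make explicit the one missing ingredient: $h$ is trace-orthogonal to $\bar{\mathfrak g}=[\mathfrak g_0,\mathfrak g_0]$ because $h$ is central in $\mathfrak g_0$, so $\mathrm{tr}(h\,[B,C])=\mathrm{tr}([h,B]\,C)=0$. Your route needs only one Verbitsky bracket identity rather than three, at the price of invoking Hodge symmetry; the paper's route is purely Lie-theoretic once the three identities are granted, and as a bonus exhibits the $\so(3)$ relation $[f_J,f_K]=-2f$ of the twistor sphere. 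Your semisimplicity argument (diagonalizable action on a faithful representation) is the same as the paper's, which uses $H^2(X,\RR)$ in place of $H^*(X,\RR)$.

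One warning: the ``cleanest route'' in your last paragraph is circular in the paper's logical order. The special Mumford--Tate algebra $\bar{\mathfrak m}$ is defined in \S\ref{subsec_mt} as the smallest $\QQ$-algebraic subalgebra of $\mathfrak{gl}(H^*(X,\QQ))$ whose real form contains $f$; by construction it sits in $\mathfrak{gl}$, not a priori in $\bar{\mathfrak g}$. The inclusion $\bar{\mathfrak m}\subset\bar{\mathfrak g}$ is precisely Proposition \ref{prop:mt}(i), whose proof the paper describes as a direct consequence of Proposition \ref{prop:f}; likewise the claim that $\fg$ is generated by $\bar{\mathfrak m}$ and Verbitsky's $\so(4,1)$ is a downstream remark, not an independent input. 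So ``its image lies in $\bar{\mathfrak g}_\RR$ by construction of the Mumford--Tate subalgebra'' presupposes exactly what is to be proved. Drop that fallback and keep your main argument.
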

\begin{proof}
	Fix a  hyper-K\"ahler metric $g$ on $X$ inducing the twistor complex structures $I, J, K$ with $I$ being the original complex structure of $X$. We can associate to the complex structures $I, J, K$ the K\"ahler classes $\omega_I = g(I - , -)$, $\omega_J = g(J -, -)$ and $\omega_K = g(K -, -)$ (with $\omega_I, \omega_J, \omega_K \in H^2 (X, \RR)$). Let $L_I, L_J, L_K$ be the Lefschetz operators and $\Lambda_I, \Lambda_J, \Lambda_K$ the dual Lefschetz operators associated to them.
	
	Let $f \in \mathfrak {gl} (H^* (X, \RR))$ be the Hodge operator as in \eqref{eq:hodge_operator}. Verbitsky \cite{ver90} showed that 
	\[ f = -[L_J, \Lambda_K] = -[L_K, \Lambda_J] \quad\textrm{on}\quad H^* (X, \RR) .\]
	Thus, $f$ is contained in ${\mathfrak g}_{\RR}$ by Definition \ref{def_llv}. Since $f$ is a degree $0$ operator, we have in fact $f \in \mathfrak g_{0, \RR}$. One can similarly define the operators $f_J, f_K \in \mathfrak g_{0, \RR}$ for the Hodge structures of other complex structures $J$ and $K$, with Verbitsky's relations $f_J = -[L_K, \Lambda_I]$ and $f_K = -[L_I, \Lambda_J]$. By symmetry, we get $f_J, f_K \in \mathfrak g_{0, \RR}$. Now using the Jacobi identities and the relations above, we get 
	\begin{align*}
		[f_J, f_K] &= [[L_K, \Lambda_I], [L_I, \Lambda_J]] \\
		&= [L_K, [\Lambda_I, [L_I, \Lambda_J]]] - [\Lambda_I, [L_K, [L_I, \Lambda_J]]] \\
		&= [L_K, [[\Lambda_I, L_I], \Lambda_J]] + [L_K, [L_I, [\Lambda_I, \Lambda_J]]] - [\Lambda_I, [[L_K, L_I], \Lambda_J]] - [\Lambda_I, [L_I, [L_K, \Lambda_J]]] \\
		&= [L_K, [-h, \Lambda_J]] + [L_K, [L_I, 0]] - [\Lambda_I, [0, \Lambda_J]] - [\Lambda_I, [L_I, -f]] \\
		&= 2 [L_K, \Lambda_J] - [\Lambda_I, 0] = -2f.
	\end{align*}
	We conclude $f \in [\mathfrak g_{0, \RR}, \mathfrak g_{0, \RR}] = \bar {\mathfrak g}_{\RR}$. Finally, $f$ is a semisimple element of $\bar {\mathfrak g}_{\RR}$ since $f$ acts diagonalizably on the faithful $\bar{\mathfrak g}_{\RR}$-representation $H^2(X, \RR)$. 
\end{proof}

Now we have two operators $h \in \mathfrak g$ and $f \in \mathfrak g_{\RR}$. The action $h \in \mathfrak g$ on the standard $\mathfrak g$-module $V$ induces an $h$-eigenspace decomposition
\begin{equation} \label{eq:h_decomp}
	V = V_{-2} \oplus V_0 \oplus V_2, \qquad \dim V_{\pm 2} = 1, \ \ V_0 = \bar V .
\end{equation}
Here the lower indexes indicate the eigenvalues of $h$. Similarly, the action $f \in \mathfrak g_{\CC}$ on $V_{\CC}$ induces a $f$-eigenspace decomposition
\begin{equation} \label{eq:f_decomp}
	V_{\CC} = V_{\CC, -2 \sqrt{-1}} \oplus V_{\CC, 0} \oplus V_{\CC, 2 \sqrt{-1}}, \qquad \dim V_{\CC, \pm 2 \sqrt{-1}} = 1 .
\end{equation}
Since $h, f \in \mathfrak g_{\CC}$ are commuting semisimple elements, there exists a Cartan subalgebra $\mathfrak h \subset \mathfrak g_{\CC}$ containing both $h$ and $f$. Recall that $\mathfrak g$ is a simple Lie algebra of rank $r+1$, so its Cartan subalgebra $\mathfrak h$ has dimension $r+1$. We will use the notation $\varepsilon_0, \cdots, \varepsilon_r$ to denote our preferred choice of a basis of $\mathfrak h$ in Appendix \ref{sec:appendixA}. Note that we start the index from $0$. Now the $h$ and $f$-eigenspace decompositions above have the following interpretation. This was already appeared in the discussion of \cite[\S 3.4]{sol18}

\begin{lemma}
	Let $\mathfrak h \subset \mathfrak g_{\CC}$ be a Cartan subalgebra containing both $h$ and $f$. Then we must have
	\[ h = \pm \varepsilon_i^{\vee}, \quad \sqrt{-1} f = \pm \varepsilon_j^{\vee} \qquad \mbox{for some } i \neq j .\]
\end{lemma}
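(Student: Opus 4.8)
The plan is to recover $h$ and $\sqrt{-1}f$ directly from their eigenvalues on the standard module $V$. Recall that every $H \in \mathfrak h$ acts semisimply on $V$ with eigenvalues $\{\pm \varepsilon_k(H)\}_{k=0}^r$ (together with a single $0$ in type $\mathrm B$), the value $\pm\varepsilon_k(H)$ being the eigenvalue on the weight line $V_{\pm\varepsilon_k}$; and that in the normalization of Appendix \ref{sec:appendixA} the coweight $\varepsilon_i^\vee \in \mathfrak h$ is characterized by acting as $+2$ on $V_{\varepsilon_i}$, as $-2$ on $V_{-\varepsilon_i}$, and as $0$ on the remaining weight lines. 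Since $\dim \mathfrak h = \mathrm{rk}\,\mathfrak g = r+1$ equals the number of the $\varepsilon_0, \dots, \varepsilon_r$, these functionals form a basis of $\mathfrak h^*$, so an element of $\mathfrak h$ is determined by the tuple $(\varepsilon_0(H), \dots, \varepsilon_r(H))$.

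First I would extract $h$. By \eqref{eq:h_decomp} the eigenvalues of $h$ on $V$ are $-2,0,2$ with $\dim V_{\pm 2} = 1$. Comparing with $\{\pm\varepsilon_k(h)\}$: at most one index $i$ can satisfy $\varepsilon_i(h) = \pm 2$ (a second such index would make $\dim V_{2} \ge 2$), while every other $\varepsilon_k(h)$ must vanish, since a nonzero value different from $\pm 2$ would create an eigenvalue outside $\{-2,0,2\}$. Hence $\varepsilon_k(h) = \pm 2\,\delta_{ki}$, which by the normalization above is exactly the assertion $h = \pm\varepsilon_i^\vee$. The same argument applies verbatim to $\sqrt{-1}f \in \mathfrak h$: by \eqref{eq:f_decomp} it has eigenvalues $-2,0,2$ on $V_{\CC}$ with one-dimensional $\pm 2$-eigenspaces (as $\sqrt{-1}\cdot(\pm 2\sqrt{-1}) = \mp 2$), giving $\sqrt{-1}f = \pm\varepsilon_j^\vee$ for some index $j$.

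It remains to show $i \neq j$, which is the only step carrying genuine content. Suppose $i = j$. Then $h = \pm\varepsilon_i^\vee$ and $\sqrt{-1}f = \pm\varepsilon_i^\vee$ are proportional, so $h = \pm\sqrt{-1}f$. However, Proposition \ref{prop:f} places $f$, and hence $\sqrt{-1}f$, inside the semisimple part $\bar{\mathfrak g}_{\CC}$, whereas by \eqref{eq:g_0} the degree operator $h$ spans the one-dimensional center of $\mathfrak g_0$ complementary to $\bar{\mathfrak g}$, so that $h \notin \bar{\mathfrak g}_{\CC}$. The relation $h = \pm\sqrt{-1}f$ would then force the nonzero element $h$ into $\bar{\mathfrak g}_{\CC}$, a contradiction; therefore $i \neq j$. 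One can also see this Hodge-theoretically: the $\pm 2\sqrt{-1}$-eigenlines of $f$ are the non-real isotropic lines $V^{2,0} = \CC\sigma$ and $V^{0,2} = \CC\bar\sigma$ of the Mukai Hodge structure, while the $\pm 2$-eigenlines of $h$ are the rational summand $\QQ^2$, so the two pairs of lines cannot coincide; but the structural argument via Proposition \ref{prop:f} is cleaner and is the one I would record. Thus the main obstacle is precisely this separation $i \neq j$: reading off each of $h$ and $\sqrt{-1}f$ is routine weight bookkeeping, and the distinctness of the indices is the one point where one must invoke that $f$ lies in the reduced algebra $\bar{\mathfrak g}$ while $h$ does not.
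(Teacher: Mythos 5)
Your proposal is correct and takes essentially the same route as the paper: both identify $h$ and $\sqrt{-1}f$ by matching their eigenvalue patterns $0,\pm 2$ (with one-dimensional extreme eigenspaces, from \eqref{eq:h_decomp} and \eqref{eq:f_decomp}) against the weight decomposition of the standard module $V$, forcing $h=\pm\varepsilon_i^{\vee}$ and $\sqrt{-1}f=\pm\varepsilon_j^{\vee}$. The only difference is in the final step: where the paper settles $i\neq j$ by simply asserting that $h$ and $f$ are ``certainly linearly independent,'' you supply an explicit structural reason --- $f\in\bar{\mathfrak g}_{\RR}$ by Proposition \ref{prop:f}, while $h$ spans the center of $\mathfrak g_0$ complementary to $\bar{\mathfrak g}$ in \eqref{eq:g_0} --- which is a valid and slightly fuller justification of the same fact.
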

\begin{proof}
	The idea here is essentially the same as in Deligne's approach to the classification of Hermitian symmetric domains (see, e.g., Milne's note \cite[p.12]{mil11}). By definition, the weights $\varepsilon_0, \cdots, \varepsilon_r$ are obtained by the weight decomposition of the standard $\mathfrak g$-module $V$ (see Appendix \ref{sec:appendixA}). More specifically, we have a weight decomposition with respect to the chosen Cartan subalgebra $\mathfrak h \subset \mathfrak g_{\CC}$
	\[ V_{\CC} = V(\pm \varepsilon_0) \oplus \cdots \oplus V(\pm \varepsilon_r) \quad\mbox{or}\quad V(0) \oplus V(\pm \varepsilon_0) \oplus \cdots \oplus V(\pm \varepsilon_r), \qquad \mbox{depending on the parity of } \dim V ,\]
	where $V(\theta)$ denotes the weight $\theta$ subspace of $V_{\CC}$. As an element in $\mathfrak h$, $h$ acts on the weight space $V(\theta)$ by multiplication $\langle \theta, h \rangle$. Now by \eqref{eq:h_decomp}, this implies $\langle \theta, h \rangle = 0, -2, 2$ for $\theta = \pm \varepsilon_0, \cdots, \pm \varepsilon_r$ and there is only one $\varepsilon_i$ with $\langle \varepsilon_i, h \rangle = \pm 2$. This forces $h = \pm \varepsilon_i^{\vee}$ for some $i = 0,\cdots, r$.
	
	Same idea applies to $f$, but this time we need a coefficient $\sqrt{-1}$ as the eigenvalues of $f$ are $0, \pm 2\sqrt{-1}$ in \eqref{eq:f_decomp}. Hence we deduce $\sqrt{-1} f = \pm \varepsilon_j^{\vee}$ for some $j = 0,\cdots, r$. Here $i$ and $j$ cannot be the same, as certainly $h$ and $f$ are linearly independent.
\end{proof}

Thanks to this lemma, after choosing an appropriate positive Weyl chamber, we may assume
\begin{equation} \label{eq:hf_operators}
	h = \varepsilon_0^{\vee}, \qquad \sqrt{-1} f = \varepsilon_1^{\vee} .
\end{equation}
From now on, we fix an appropriate positive Weyl chamber so that we can use this condition freely.

\medskip

Having discussed the complex structure, we may now consider the Hodge diamond of $H^*(X)$. Note that the Hodge diamond is in fact derived from a Hodge structure, which is again captured by the action of the operators $h = \varepsilon_0^{\vee}$ and $f = \frac{1}{\sqrt{-1}} \varepsilon_1^{\vee}$. An interesting conclusion is that \emph{any} $\mathfrak g$-module, or $\bar {\mathfrak g}$-module if we ignore the weight, possesses its own Hodge structure and hence its own Hodge diamond. Let us elaborate this fact a bit more precisely.

Consider the weight decomposition of a $\mathfrak g$-module $V_{\mu}$. It is of the form
\[ V_{\mu, \CC} = \sideset{}{_{\theta}} {\bigoplus} V_{\mu}(\theta) ,\]
where $V_{\mu}(\theta)$ denotes the weight $\theta$ vector subspace of $V_{\mu, \CC}$. The Hodge decomposition is obtained by the $(h, f)$-eigenspace decomposition. Namely, the Hodge decomposition of $V_{\mu}$ is
\begin{equation} \label{eq:hodge_from_g_module}
	V_{\mu, \CC} = \sideset{}{_{p,q}} {\bigoplus} V_{\mu}^{p, q} ,
\end{equation}
where $V_{\mu}^{p,q}$ is the $(h, f)$-eigenspace on which $h$ acts by multiplication $p+q-2n$ and $f$ acts by multiplication $\sqrt{-1} (q-p)$. This Hodge decomposition of $V_{\mu, \CC}$ can be easily deduced from the weight decomposition above. The operators $h$ and $f$ act on the weight subspace $V_{\mu} (\theta)$ by the multiplication $\langle \theta, h \rangle$ and $\langle \theta, f \rangle$, respectively. Hence the Hodge $(p,q)$-component $V_{\mu}^{p,q}$ is just a direct sum of weight subspaces $V_{\mu} (\theta)$ with $\langle \theta, h \rangle = p+q-2n$ and $\langle \theta, f \rangle = \sqrt{-1} (q-p)$.

Recalling \eqref{eq:hf_operators}, if we denote the weight by $\theta = \theta_0 \varepsilon_0 + \cdots + \theta_r \varepsilon_r$, then we have
\begin{equation} \label{eq:hf_weight}
	\langle \theta, h \rangle = \langle \theta, \varepsilon_0^{\vee} \rangle = 2 \theta_0, \qquad \langle \theta, f \rangle = \tfrac{1}{\sqrt{-1}} \langle \theta, \varepsilon_1^{\vee} \rangle = -2 \sqrt{-1} \theta_1 .
\end{equation}
This expresses $p$ and $q$ in terms of $\theta_0$ and $\theta_1$:
\begin{equation} \label{eq:hodge_from_weight}
	p = \theta_0 + \theta_1 + n, \qquad q = \theta_0 - \theta_1 + n .
\end{equation}
Since $\theta_i$ are always mutually integers or half-integers (see \eqref{app:eq:dominant_integral_weights} and \eqref{app:eq:dominant_integral_weights2}), both $p$ and $q$ are integers as we expect. There are several direct consequences of this simple observation.

\begin{proposition} \label{prop:ver_exhaust_bdry}
	The Hodge numbers $h^{p,q}_{(n)} = \mathrm{dim} V^{p,q}_{(n)}$ of the Verbitsky component $V_{(n)} \subset H^* (X, \QQ)$ satisfy $h^{2p,0} = 1$ and $h^{2p+1,0} = 0$ for all $0 \le p \le n$.  In particular, the Verbitsky component occurs with multiplicity one ($m_{(n)} = 1$) in $H^*(X,\QQ)$.
\end{proposition}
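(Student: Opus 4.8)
The plan is to read off the $(p,0)$-Hodge numbers of $V_{(n)}$ directly from its weights, using the dictionary \eqref{eq:hodge_from_weight}. Recall that $V_{(n)}$ has highest weight $n\varepsilon_0$, so every weight $\theta = \theta_0\varepsilon_0 + \cdots + \theta_r\varepsilon_r$ occurring in $V_{(n)}$ lies in the convex hull of the Weyl orbit $\{\pm n\varepsilon_i\}$, i.e.\ satisfies $|\theta_0| + \cdots + |\theta_r| \le n$. By \eqref{eq:hodge_from_weight}, such a $\theta$ contributes to the $(p,q)$-part with $p = \theta_0 + \theta_1 + n$ and $q = \theta_0 - \theta_1 + n$.

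First I would isolate the weights contributing to the column $q = 0$. The condition $q=0$ reads $\theta_1 - \theta_0 = n$, whence $n = |\theta_1 - \theta_0| \le |\theta_0| + |\theta_1| \le n$, so equality holds throughout. This forces $\theta_i = 0$ for $i \ge 2$, the sign pattern $\theta_0 \le 0 \le \theta_1$, and $|\theta_0| + |\theta_1| = n$. Writing $\theta_1 = b$ with $0 \le b \le n$ an integer (so $\theta_0 = b - n$), one computes $p = \theta_0 + \theta_1 + n = 2b$. In particular every $(p,0)$-class has $p$ even, giving $h^{2p+1,0}_{(n)} = 0$ at once, and $h^{2b,0}_{(n)} = \dim V_{(n)}(\theta)$ for the single weight $\theta = (b-n)\varepsilon_0 + b\varepsilon_1$.

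It remains to show this weight occurs with multiplicity exactly one. For the upper bound I would use $V_{(n)} \subset \Sym^n V$: since $\theta$ has $\ell^1$-norm exactly $n$, the only way to express it as an unordered sum of $n$ weights of $V$ (each a $\pm\varepsilon_i$, or $0$ in the odd case) is to take $n-b$ copies of $-\varepsilon_0$ and $b$ copies of $\varepsilon_1$, so $\dim(\Sym^n V)(\theta) = 1$ and a fortiori $\dim V_{(n)}(\theta) \le 1$. For the (harder) lower bound -- that $\theta$ genuinely survives into the irreducible summand $V_{(n)}$ and is not killed in passing from $\Sym^n V$ to its top component -- I would invoke the explicit description of the Verbitsky component: by Theorem \ref{thm:verbitsky_component} and \eqref{eq:verbitsky_component}, the degree-$2b$ part of $V_{(n)} = SH^2(X,\QQ)$ equals $\Sym^b \bar V$ for $b \le n$, and since $\bar V = H^2(X)$ is of K3 type with $\dim H^{2,0}(X) = 1$, its $(2b,0)$-component is $\Sym^b H^{2,0}(X) = \CC\cdot\sigma^b \neq 0$. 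This pins down $h^{2b,0}_{(n)} = 1$ for all $0 \le b \le n$.

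Finally, for the multiplicity statement I would compare with the full cohomology. Since $X$ has trivial canonical bundle, $h^{2n,0}(X) = \dim H^0(X,\Omega_X^{2n}) = \dim H^0(X,\mathcal{O}_X) = 1$. Because the Hodge structure on $H^*(X)$ refines the LLV decomposition \eqref{eq:LLV_decomp}, all Hodge numbers are nonnegative and additive along \eqref{eq:LLV_decomp}, so $1 = h^{2n,0}(X) = \sum_{\mu} m_{\mu}\, h^{2n,0}_{\mu} \ge m_{(n)}\, h^{2n,0}_{(n)} = m_{(n)}$; as $m_{(n)} \ge 1$ by Theorem \ref{thm:verbitsky_component}, we conclude $m_{(n)} = 1$. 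The main obstacle is precisely the lower bound in the weight computation -- ensuring the boundary weights are not annihilated upon projecting to the irreducible component $V_{(n)}$ -- and the cleanest route to it is the symmetric-power description \eqref{eq:verbitsky_component} rather than a direct extremal-weight argument in $\Sym^n V$.
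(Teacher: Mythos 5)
Your proof is correct, and it diverges from the paper's on exactly one half of the argument. The paper's proof runs entirely through the symmetric-power description \eqref{eq:verbitsky_component}: since the degree-$2k$ piece of $V_{(n)}$ is $\Sym^k \bar V$ as a $\bar{\mathfrak g}$-module, and $\bar V$ is of K3 type with one-dimensional extreme Hodge pieces, the extreme piece of $\Sym^k \bar V$ is $\Sym^k \bar V^{1,-1}$, which is \emph{exactly} one-dimensional --- so the vanishing $h^{2p+1,0}_{(n)}=0$ and the equality $h^{2p,0}_{(n)}=1$ drop out in one stroke, with no weight combinatorics at all. You instead obtain the vanishing and the upper bound $h^{2b,0}_{(n)}\le 1$ by a purely weight-theoretic argument (the $\ell^1$-saturation analysis in the weight polytope together with the multiplicity-one count of the extremal weight $(b-n)\varepsilon_0+b\varepsilon_1$ inside $\Sym^n V$), and only then fall back on \eqref{eq:verbitsky_component} for the lower bound. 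Once you quote \eqref{eq:verbitsky_component}, the extremal-weight computation is logically redundant --- you could have read off $h^{2b,0}_{(n)} = \dim \Sym^b H^{2,0}(X) = 1$ directly, as the paper does --- but your detour is sound (the weights of $V_{(n)}$ are integral since $V_{(n)}\subset \Sym^n V$, so your restriction $b\in\ZZ$ is justified) and has the merit of isolating what holds for purely representation-theoretic reasons; in fact your $\ell^1$-ball description of the weight polytope of $V_{(n)}$ is precisely the one the paper exploits later in Lemma \ref{lem:weight_polytope} and Remark \ref{rem_equiv}. Your multiplicity argument coincides in substance with the paper's: the paper phrases it as the Verbitsky component ``exhausting'' the boundary Hodge numbers $h^{2k,0}(X)=1$ of a hyper-K\"ahler manifold, while you use only the single slot $h^{2n,0}(X)=1$ (from $K_X\cong\mathcal{O}_X$) together with additivity of Hodge numbers along the LLV decomposition --- which is legitimate precisely because every $\mathfrak g$-summand is a sub-Hodge structure (Proposition \ref{prop:f} and the discussion of \S\ref{sec:llv_further_decomp}), a point you should state in that form rather than via the vaguer ``the Hodge structure refines the LLV decomposition.''
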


\begin{proof}
	Since the Verbitsky component $V_{(n)} \subset H^* (X, \QQ)$ is a $\mathfrak g$-submodule, it is also a sub-Hodge structure. Ignoring the notion of weight for simplicity, the Hodge decomposition of $\bar V = H^2 (X, \QQ)$ is simply the $f$-eigenspace decomposition
	\[ \bar V_{\CC} = \bar V^{1,-1} \oplus \bar V^{0, 0} \oplus \bar V^{-1,1} ,\]
	where $\dim \bar V^{1,-1} = \dim \bar V^{-1,1} = 1$. Now using the description \eqref{eq:verbitsky_component} of the Verbitsky component, the $\bar {\mathfrak g}$-module structures of each degrees of $V_{(n)}$ are $\Sym^k \bar V$, which from the above Hodge structure on $\bar V$ has the outermost Hodge numbers $1$. Since the boundary Hodge numbers are $h^{2k, 0} = h^{0, 2k} = h^{2k, 2n} = h^{2n, 2k} = 1$ for compact hyper-K\"ahler manifolds $X$, the Verbitsky component already exhausts the boundary Hodge numbers $1$.
\end{proof}

The existence of the Hodge structure on $\mathfrak g$-modules also allows us to put more restrictions on the LLV components arising on the cohomology of $X$. Note that, even without the complex structure, the fact that $h = \varepsilon_0^{\vee}$ captures the degree of the cohomology implies every irreducible component $V_{\mu} \subset H^* (X, \QQ)$ satisfies
\begin{equation}\label{condition_mu0}
	\langle \mu, h \rangle = \langle \mu, \varepsilon_0^{\vee} \rangle = 2 \mu_0 \le 2n .
\end{equation}
Thus, we obtain $\mu_0 \le n$. Taking into account also the Hodge structure, or equivalently $f$, we get a stronger inequality.

\begin{proposition} \label{prop:weaker_condition}
	Every irreducible $\mathfrak g$-module $V_{\mu}$ contained in the full cohomology $H^* (X)$ satisfies either $\mu = (n)$ or $\mu_0 + \mu_1 \le n - 1$.
\end{proposition}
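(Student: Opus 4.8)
The plan is to read the statement off the Hodge-theoretic interpretation of the weights in \eqref{eq:hodge_from_weight}. Given an irreducible summand $V_\mu \subseteq H^*(X)$, its highest weight vector spans a one-dimensional weight space of weight $\theta = \mu$, so by \eqref{eq:hodge_from_weight} it sits in Hodge type $(p,q) = (\mu_0+\mu_1+n,\ \mu_0-\mu_1+n)$. Since every nonzero class on a compact K\"ahler manifold of complex dimension $2n$ has $0\le p\le 2n$, the inequality $p\le 2n$ already yields the weak bound $\mu_0+\mu_1\le n$ (this refines \eqref{condition_mu0}, which used only $h$). Thus the entire content of the Proposition is to rule out the boundary case $\mu_0+\mu_1=n$ with $\mu\neq(n)$, and I would treat this by a dimension count in the extremal Hodge row $p=2n$.

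So suppose $\mu_0+\mu_1=n$. By dominance $\mu_0\ge\mu_1\ge 0$, so either $\mu_1=0$, which forces $\mu_0=n$ and hence $\mu=(n)$, or else $\mu_1\ge 1$ and $\mu_0=n-\mu_1<n$. In the latter case the highest weight vector of $V_\mu$ is a nonzero class of Hodge type $(2n,\,2n-2\mu_1)$; here $q=2n-2\mu_1$ is even and lies in $\{0,2,\dots,2n-2\}$. Next I would compute the relevant Hodge number of $X$ itself: for hyper-K\"ahler $X$ one has $h^{m,0}=1$ for $m$ even and $h^{m,0}=0$ for $m$ odd, with $0\le m\le 2n$ (as recorded in Proposition \ref{prop:ver_exhaust_bdry}), so by Hodge symmetry and Serre duality $h^{2n,q}(X)=h^{0,2n-q}(X)=h^{2n-q,0}(X)=1$ for every even $q$ in this range.

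The final step is to observe that this one-dimensional space is already occupied by the Verbitsky component, which produces the contradiction. Indeed, Proposition \ref{prop:ver_exhaust_bdry} (or a direct inspection of the weights of $V_{(n)}\subset\Sym^n V$: the extremal weights $\theta_0\varepsilon_0+(n-\theta_0)\varepsilon_1$ each occur with multiplicity one and have Hodge type $(2n,2\theta_0)$) shows that $V_{(n)}$ contributes a one-dimensional piece to $H^{2n,q}(X)$ for each even $q$. Since $\mu\neq(n)$, the summand $V_\mu$ is distinct from the Verbitsky component, so in the direct-sum decomposition \eqref{eq:LLV_decomp} the highest weight vector of $V_\mu$ contributes a further dimension to $H^{2n,q}(X)$, forcing $h^{2n,q}(X)\ge 2$ and contradicting $h^{2n,q}(X)=1$. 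Hence $\mu_0+\mu_1=n$ forces $\mu=(n)$, which is exactly the claimed dichotomy.

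I expect the only genuinely delicate point to be the exhaustion claim of the last paragraph, namely that $V_{(n)}$ already saturates $h^{2n,2k}(X)$ for every $k$. Proposition \ref{prop:ver_exhaust_bdry} is phrased for the $(p,0)$ corner, so I would either invoke the Serre-duality and Hodge-symmetry stability of the sub-Hodge structure $V_{(n)}$ to transport exhaustion to the $(2n,2k)$ corner, or argue directly that the weight $\theta_0\varepsilon_0+(n-\theta_0)\varepsilon_1$ has multiplicity one in $V_{(n)}$ and Hodge type $(2n,2\theta_0)$, so that $\dim V_{(n)}^{2n,2k}=1=h^{2n,2k}(X)$. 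Everything else is bookkeeping with \eqref{eq:hodge_from_weight} and the dominance ordering.
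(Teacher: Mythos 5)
Your overall route is the paper's own: Proposition \ref{prop:ver_exhaust_bdry} shows the Verbitsky component exhausts the boundary Hodge numbers, so any summand $V_{\mu}$ with $\mu \neq (n)$ can only have nonzero $(p,q)$-pieces with $1 \le p \le 2n-1$, and applying \eqref{eq:hodge_from_weight} to the highest weight vector yields $\mu_0 + \mu_1 \le n-1$. Your explicit verification that $V_{(n)}$ saturates the $(2n,q)$-edge — the weights $\theta_0\varepsilon_0 + (n-\theta_0)\varepsilon_1$ have multiplicity one in $\Sym^n V$, cannot lie in $\Sym^{n-2}V$, and have Hodge type $(2n, 2\theta_0)$ — is correct, and it is a genuine tightening: you rightly observe that Proposition \ref{prop:ver_exhaust_bdry} as stated covers only the $(2p,0)$ corner, while the paper's proof of the present Proposition tacitly uses exhaustion along the whole boundary.

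There is, however, a gap in your case analysis. The dichotomy ``$\mu_1 = 0$ or $\mu_1 \ge 1$'' silently assumes the coefficients of $\mu$ are integers. By Proposition \ref{prop:even_rep}, every irreducible summand of the \emph{odd} cohomology has all $\mu_i \in \tfrac{1}{2}\ZZ \setminus \ZZ$, and such components genuinely occur (e.g. the spin-type summands $V_{(\frac{3}{2},\frac{1}{2},\frac{1}{2},\frac{1}{2})}$ in $H^*(\Kum_n)$); so $\mu_1 = \tfrac{1}{2}$ is possible, in which case your assertion that $q = 2n - 2\mu_1$ is even fails. The patch is one line, using only facts you already invoke: if $\mu$ is half-integral with $\mu_0 + \mu_1 = n$, then $q = 2n - 2\mu_1$ is odd and
\[
h^{2n,q}(X) \;=\; h^{0,\,2n-q}(X) \;=\; h^{2n-q,\,0}(X) \;=\; 0,
\]
since the odd $(m,0)$ Hodge numbers of a compact hyper-K\"ahler manifold vanish; hence the highest weight vector cannot exist, and the contradiction in this case does not even require the exhaustion by $V_{(n)}$. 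With this case added, your argument is complete and essentially coincides with the paper's proof.
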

\begin{proof}
	By Proposition \ref{prop:ver_exhaust_bdry}, the Verbitsky component $V_{(n)}$ always exhausts all the boundary Hodge numbers of $X$. Thus, if   $\mu \neq (n)$ occurs as a highest weight in the LLV  decomposition, then all the nonzero $(p,q)$-component arising in $V_{\mu}$ satisfy $1 \le p \le 2n - 1$. The highest $\mathfrak g$-module $V_{\mu}$ always contains the weight $\mu$. By \eqref{eq:hodge_from_weight}, all nonzero $(p,q)$-component in $V_{\mu}$ satisfy $p = \mu_0 + \mu_1 + n$ and $q = \mu_0 - \mu_1 + n$. Hence $\mu_0 + \mu_1 = p-n \le n-1$, as needed.
\end{proof}

Similarly, we obtain the following easy restriction on the possible irreducible components of the LLV decomposition on the even and odd cohomology respectively.

\begin{proposition} \label{prop:even_rep}
Let $X$ be a hyper-K\"ahler manifold, and $\fg$ its LLV algebra. 
	\begin{enumerate}
		\item Every irreducible $\mathfrak g$-module component $V_{\mu} \subset H^*_{\even} (X)$ has integer coefficients $\mu_i \in \ZZ$. Similarly, every irreducible $\mathfrak g$-module component  $V_{\mu} \subset H^*_{\odd} (X)$ has half-integer coefficients $\mu_i \in \frac{1}{2} \ZZ \setminus \ZZ$.
		
		\item Every irreducible $\bar {\mathfrak g}$-module component $\bar V_{\lambda} \subset H^{2k} (X)$ has integer coefficients $\lambda_i \in \ZZ$, while every irreducible $\bar V_{\lambda} \subset H^{2k+1} (X)$ has half-integer coefficients $\lambda_i \in \frac{1}{2} \ZZ \setminus \ZZ$.
	\end{enumerate}
\end{proposition}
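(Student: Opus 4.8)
The plan is to deduce both statements from a single dictionary between cohomological degree and the $\varepsilon_0$-coordinate of a weight, combined with the coset structure of the weight lattice of $\fg$. Recall from \eqref{eq:hf_operators} and \eqref{eq:hf_weight} that the degree operator is $h = \varepsilon_0^{\vee}$, so a vector of $\mathfrak h$-weight $\theta = \sum_{i=0}^r \theta_i \varepsilon_i$, on which $h$ acts by $\langle \theta, h\rangle = 2\theta_0$, must, by \eqref{eq:degree_operator}, lie in cohomological degree $d = 2n + 2\theta_0$. Thus the parity of $d$ is governed entirely by whether $\theta_0$ is an integer or a half-integer: $d$ is even precisely when $\theta_0 \in \ZZ$, and odd precisely when $\theta_0 \in \tfrac12\ZZ \setminus \ZZ$.

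Next I would record the relevant lattice fact for $\fg$. By Corollary \ref{prop:simple}, $\fg$ is of type $\mathrm B_{r+1}$ or $\mathrm D_{r+1}$, and (cf.\ \eqref{app:eq:dominant_integral_weights}, \eqref{app:eq:dominant_integral_weights2}) every weight $\theta$ of a finite-dimensional $\fg$-module has coordinates $(\theta_0,\ldots,\theta_r)$ that are either all integers or all half-integers. Moreover the root lattice of $\fg$ consists of integral vectors (the roots are $\pm\varepsilon_i \pm \varepsilon_j$, together with $\pm \varepsilon_i$ in the $\mathrm B$ case), so within a single irreducible $V_\mu$ all weights lie in one coset of the root lattice and are therefore all of the same type as the highest weight $\mu$ --- either all integral or all half-integral coordinate vectors.

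For part (1), these two observations combine at once. If $\mu$ has integral coordinates then every weight $\theta$ of $V_\mu$ has $\theta_0 \in \ZZ$, so by the first paragraph every weight vector sits in even degree and $V_\mu \subset H^*_{\even}(X)$; symmetrically, if $\mu$ has half-integral coordinates then $V_\mu \subset H^*_{\odd}(X)$. Reading this equivalence in reverse gives the claim: a component $V_\mu \subset H^*_{\even}(X)$ must have $\mu_0 \in \ZZ$, which forces the all-integral alternative, and dually for $H^*_{\odd}(X)$.

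For part (2) the same mechanism refines to individual degrees, using that $\bar{\mathfrak g}$ preserves degree \eqref{def_rhok}. Since $h$ acts by the single scalar $d - 2n$ on all of $H^d(X)$, any $\bar{\mathfrak h}$-weight vector in $H^d$ (with $\bar{\mathfrak h} = \mathfrak h \cap \bar{\mathfrak g}$) is automatically a full $\mathfrak h = \bar{\mathfrak h} \oplus \QQ h$-weight vector, of weight $\theta = (\theta_0, \lambda_1, \ldots, \lambda_r)$ with $\theta_0 = d/2 - n$. Applying the lattice fact to this $\fg$-weight $\theta$: if $d = 2k$ is even then $\theta_0 \in \ZZ$, so $\theta$ is integral and in particular $\lambda_i \in \ZZ$; if $d = 2k+1$ is odd then $\theta_0 \in \tfrac12\ZZ\setminus\ZZ$, so $\theta$ is half-integral and $\lambda_i \in \tfrac12\ZZ \setminus\ZZ$. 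Letting $\theta$ range over the weights of a fixed irreducible $\bar V_\lambda \subset H^d(X)$ yields the assertion. The only point requiring care is the bookkeeping of the weight lattice of $\mathrm B_{r+1}/\mathrm D_{r+1}$ in the $\varepsilon$-basis; there is no genuine obstacle beyond correctly invoking the appendix conventions.
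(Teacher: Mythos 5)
Your proof is correct, and while part (1) coincides in substance with the paper's argument, part (2) takes a genuinely different route. For (1), the paper reads the parity of $\mu_0$ from $p+q = 2\mu_0 + 2n$ via \eqref{eq:hodge_from_weight} — the same computation as your $h$-eigenvalue identity $d-2n = 2\theta_0$ — and then spreads integrality from $\mu_0$ to all $\mu_i$ by citing the dominant-weight descriptions \eqref{app:eq:dominant_integral_weights} and \eqref{app:eq:dominant_integral_weights2}, which is equivalent to your root-lattice coset observation. For (2), the paper explicitly notes that $h \notin \bar{\mathfrak g}$ and therefore switches to the Hodge operator $f \in \bar{\mathfrak g}_{\RR}$, reading the integrality of $\lambda_1$ from the parity of $p - q = 2\lambda_1$ in \eqref{eq:hf_weight}; this leans on a chosen complex structure and Proposition \ref{prop:f}. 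You instead observe that although $h$ does not lie in $\bar{\mathfrak g}$, it acts by the single scalar $d-2n$ on all of $H^d(X)$, so every $\bar{\mathfrak h}$-weight vector of $\bar V_\lambda \subset H^d(X)$ is automatically an $\mathfrak h$-weight vector of the ambient $\fg$-module $H^*(X)$ with $\theta_0 = d/2 - n$ forced, after which the all-integral/all-half-integral coset structure of the type $\mathrm B_{r+1}/\mathrm D_{r+1}$ weight lattice pins down the integrality type of every $\lambda_i$. What your route buys is uniformity — both parts follow from one mechanism — and independence of the complex structure, since only the grading operator and the $\fg$-module structure are used; what the paper's route buys is brevity, since $f$ and the dictionary \eqref{eq:hf_weight} are already set up in \S\ref{subsec_cx_structure}. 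The one point you rightly flag as needing care, the compatibility of the $\varepsilon$-coordinates under $\mathfrak h = \bar{\mathfrak h} \oplus \QQ h$, is indeed harmless given the appendix conventions and the decomposition $V = \RR(1) \oplus \bar V \oplus \RR(-1)$.
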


\begin{proof}
	Applying \eqref{eq:hodge_from_weight} to the highest weight $\mu$ of $V_{\mu}$, we have $p = \mu_0 + \mu_1 + n$ and $q = \mu_0 - \mu_1 + n$. If we assume $V_{\mu} \in H^*_{\even} (X)$, then we have an even $p+q = 2\mu_0 + 2n$. This proves $\mu_0 \in \ZZ$, and hence by \eqref{app:eq:dominant_integral_weights} and \eqref{app:eq:dominant_integral_weights2} all the $\mu_i$ are integers. If we assume $V_{\mu} \in H^*_{\odd} (X)$, then similar argument implies $\mu_0$ is a half-integer and hence all $\mu_i$ are half-integers.
	
	For the second statement, we cannot use the the operator $h \not\in \bar {\mathfrak g}$, so we need to go back to \eqref{eq:hf_weight}. From it, we have $p-q = 2\lambda_1$. If $\bar V_{\lambda} \subset H^{2k} (X)$ lives in an even cohomology, then $p-q$ is even so $\lambda_1$ is integer. Hence all $\lambda_i$ are integers. Similar argument proves the case $\bar V_{\lambda} \subset H^{2k+1} (X)$.
\end{proof}

As an immediate corollary, we see that all reduced LLV modules  $H^k (X)$ are faithful $\bar\fg$-modules.

\begin{corollary} \label{cor:rho_inj}
	If $0 < k < 4n$ and $H^k (X) \neq 0$, then the map $\rho_k : \bar {\mathfrak g} \to \End (H^k (X))$ is injective.
\end{corollary}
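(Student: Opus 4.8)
The plan is to reduce the statement to nontriviality of the $\bar{\mathfrak g}$-action and then to produce a nontrivial irreducible summand in each intermediate degree. Since $\bar{\mathfrak g}$ is simple (Corollary \ref{prop:simple}), the kernel $\ker \rho_k$ is an ideal of $\bar{\mathfrak g}$, hence either $0$ or all of $\bar{\mathfrak g}$; so $\rho_k$ is injective if and only if $H^k(X)$ is not a trivial $\bar{\mathfrak g}$-module. Writing $H^k(X) = \bigoplus_\lambda \bar V_\lambda^{\oplus m_\lambda}$, it therefore suffices to exhibit a single summand with $\lambda \neq 0$: any nontrivial irreducible representation of a simple Lie algebra is faithful, so the presence of one such summand already forces $\ker \rho_k = 0$.

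For odd $k$ I would simply invoke the second statement of Proposition \ref{prop:even_rep}: when $H^{k}(X) \neq 0$ with $k$ odd, every summand $\bar V_\lambda$ has half-integer coordinates $\lambda_i \in \tfrac12\ZZ \setminus \ZZ$, so in particular $\lambda \neq 0$ and we are done. For even $k = 2j$ I would instead use the Verbitsky component. By \eqref{eq:verbitsky_component}, its degree-$2j$ part equals $\Sym^{\min(j,\,2n-j)} \bar V$ as a $\bar{\mathfrak g}$-module, and the hypothesis $0 < 2j < 4n$ gives $1 \le j \le 2n-1$, whence $\min(j,\,2n-j) \ge 1$. Thus this part is a nonzero symmetric power of the standard representation, hence a nontrivial $\bar{\mathfrak g}$-module; since the Verbitsky component is a $\mathfrak g$- (hence $\bar{\mathfrak g}$-) submodule of $H^*(X)$ by Theorem \ref{thm:verbitsky_component}, it embeds this nontrivial piece into $H^{2j}(X)$.

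The only point requiring care is the even case. Proposition \ref{prop:even_rep} by itself does not suffice there, since it permits integer-weight — in particular trivial — summands, and such trivial summands genuinely occur (e.g. the abundance of Hodge classes in $H^*(\Kum_n)$). The essential input is therefore that the Verbitsky component is guaranteed to contribute a nontrivial symmetric power in every intermediate even degree; granting that, the remainder of the argument is formal.
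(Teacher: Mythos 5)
Your proof is correct and follows essentially the same route as the paper: simplicity of $\bar{\mathfrak g}$ reduces injectivity to nontriviality of the module, the odd case is handled by Proposition \ref{prop:even_rep}, and the even case by the symmetric-power piece of the Verbitsky component. Your use of $\Sym^{\min(j,\,2n-j)}\bar V$ is slightly more careful than the paper's shorthand $\Sym^{k/2}\bar V$ (which needs the reflection $\Sym^{2n-k/2}\bar V$ for $k > 2n$), but it is the same argument.
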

\begin{proof}
	Since $\bar {\mathfrak g}$ is simple by Proposition \ref{prop:simple}, $\rho_k : \bar {\mathfrak g} \to \mathfrak {gl} (H^k (X))$ is injective unless $H^k (X)$ is a trivial $\bar {\mathfrak g}$-module. For  hyper-K\"ahler manifolds this cannot happen, since if $k$ is odd then we can use Proposition \ref{prop:even_rep}, and if $k$ is even then $\Sym^{k/2} \bar V \subset H^k (X)$ because of the Verbitsky component $V_{(n)}$.
\end{proof}

\subsubsection{Hyper-K\"ahler metrics and twistor families}\label{subsec_Veralg}
Hyper-K\"ahler manifolds admit {\it twistor families}. Let $X = (M, I)$ be a  hyper-K\"ahler manifold, where $M$ is the underlying real manifold and $I$ a complex structure on $M$. A twistor family is a pencil of hyper-K\"ahler manifolds $(M, aI + bJ + cK)$ parameterized by $\{ ai + bj + ck \in \HH : a,b,c \in \RR,\ a^2 + b^2 + c^2 = 1 \} \cong \PP^1$.

More precisely, let $M$ be a compact real manifold of real dimension $4n$, admitting at least one hyper-K\"ahler metric $g$. That is, $g$ is a Riemannian metric with the Holonomy group isomorphic to $\Sp(n) \subset \SO(\RR^{4n}, g)$. Then there exists a family of complex structures on $M$, the twistor family, $\{ aI + bJ + cK : a,b,c \in \RR,\ a^2 + b^2 + c^2 = 1 \}$ such that any of these complex structure with $(M, g)$ consists of a K\"ahler structure. Moreover, if we had two hyper-K\"ahler metrics $g$ and $g'$ inducing the same twistor family, then $g = g'$ by the uniqueness part of Calabi--Yau theorem. This means the choice of a hyper-K\"ahler metric $g$ on $M$ induces a twistor family and vice versa. For more details, see, e.g., \cite[\S 24.2]{huy:hk}.

Now suppose we have a twistor family $(M, I, J, K)$ (with $X = (M, I)$) corresponding to a  hyper-K\"ahler metric $g$ on $M$. There exist three distinguished K\"ahler forms $\omega_I = g(I-,-)$, $\omega_J = g(J-,-)$ and $\omega_K = g(K-,-)$ associated to this situation. These give us a distinguished choice of K\"ahler classes $\omega_I, \omega_J, \omega_K \in H^2 (X, \RR)$. Hence, we have three Lefschetz and three dual Lefschetz operators associated to them
\[ L_I, L_J, L_K, \Lambda_I, \Lambda_J, \Lambda_K \in \mathfrak {gl} (H^* (X, \RR)) .\]
Now Verbitsky's Lie algebra $\mathfrak g_g$ in \cite{ver90, ver95} is a real Lie subalgebra of $\mathfrak {gl} (H^*(X, \RR))$ generated by these six operators. It is shown in loc. cit. that $\mathfrak g_g \cong \mathfrak {so} (4,1)$. By definition of Looijenga--Lunts--Verbitsky algebra, we have an inclusion
\[ \mathfrak {so} (4,1) \cong \mathfrak g_g \subset \mathfrak g_{\RR} .\]
Hence, the choice of a hyper-K\"ahler metric $g$ on $M$ induces the Verbitsky algebra $\mathfrak g_g$. If we vary a hyper-K\"ahler metric $g$ on $M$, then $\mathfrak g_g$ moves inside of the Looijenga--Lunts--Verbitsky algebra $\mathfrak g_{\RR}$.

Finally, let us discuss the relationship between the Verbitsky algebra $\so(4,1)$ and Fujiki's work \cite{fuj87}. Once a  hyper-K\"ahler metric $g$ was fixed, Fujiki constructed an $\Sp(1)$-action on each cohomology $H^k (X, \RR)$ by purely differential geometric methods. He studied the $\Sp(1)$-representation theory on the cohomology $H^* (X, \RR)$ and as a result, obtained Hodge decomposition-type and hard Lefschetz-type theorems. In fact, the associated $\Sp(1)$-decomposition essentially coincides with the decomposition associated to  Verbitsky's $\mathfrak g_g \cong \mathfrak {so} (4,1)$-decomposition (and in particular, factors through the LLV decomposition). The decomposition \eqref{eq_decompose_g_deg} of the LLV algebra $\fg$ induces a degree decomposition for  Verbitsky's algebra $\mathfrak g_g\cong \so(4,1)$: 
\[ \mathfrak g_g = \mathfrak g_{g, -2} \oplus \mathfrak g_{g, 0} \oplus \mathfrak g_{g, 2}, \qquad \mathfrak g_{g, 0} = \bar {\mathfrak g}_g \oplus \RR h ,\]
with $\bar {\mathfrak g}_g \cong \mathfrak {so} (3, \RR)$. Lifting the Lie algebra $\mathfrak {so} (3, \RR)$ to the level of Lie group gives us a simply connected real Lie group $\Spin (3, \RR)$, which is isomorphic to   $\Sp(1)$ by an exceptional isomorphism (corresponding to $\mathrm B_1 \equiv \mathrm C_1$).

\subsection{The Mumford--Tate algebra}\label{subsec_mt}

The Looijenga--Lunts--Verbitsky algebra is a diffeomorphism invariant of a compact  hyper-K\"ahler manifold $X$. A complex structure on $X$ is encoded by the Hodge structure on the cohomology $H^* (X, \QQ)$.  This Hodge structure is in turn given by a semisimple element $f \in \bar {\mathfrak g}_{\RR}$ (Proposition \ref{prop:f}).  To the Hodge structure is associated a (special) Mumford--Tate group. Here, we discuss the relationship between the Mumford--Tate algebra and the Looijenga--Lunts--Verbitsky algebra.

\begin{definition}
	Let $W$ be a $\QQ$-Hodge structure. Define the operators $h \in \mathfrak {gl}(W)$ and $f \in \mathfrak {gl} (W)_{\RR}$ by
	\begin{equation*}
	\begin{aligned}
	h &: W \to W, && \qquad x \mapsto (p+q)x &&\quad\mbox{for}\quad x \in W^{p,q} \\
	f &: W_{\RR} \to W_{\RR}, && \qquad x \mapsto (q-p) \sqrt{-1} x &&\quad \mbox{for} \quad x \in W^{p,q} ,
	\end{aligned}
	\end{equation*}
	as in our previous notation \eqref{eq:degree_operator} and \eqref{eq:hodge_operator}. The \emph{special Mumford--Tate algebra} of $W$ is the smallest $\QQ$-algebraic Lie subalgebra $\overline {\mt}(W)$ of $\mathfrak {gl} (W)$ such that $f \in \overline {\mt}(W)_{\RR}$. The \emph{Mumford--Tate algebra} of $W$ is $\mt_0 (W) = \overline {\mt} (W) \oplus \QQ h$.
\end{definition}

The Mumford--Tate algebra of $W$ is usually defined as the associated Lie algebra of the Mumford--Tate group of $W$. Our definition coincides with this definition by the discussion in \cite[\textsection 0.3.3]{zar83}. The correspondence is as follows. Let $\mathbb S$ be the Deligne torus, a nonsplit $\RR$-algebraic tours of rank $2$. According to Deligne, a $\QQ$-Hodge structure $W$ is a finite dimensional $\QQ$-vector space $W$ equipped with an appropriate $\mathbb S$-module structure on $W_{\RR}$ (e.g. see \cite{moo99}). That is, we have a morphism of $\RR$-algebraic groups
\[ \varphi : \mathbb S \to \GL (W)_{\RR} ,\]
with $\mathbb G_{m, \RR} \subset \mathbb S \to \GL (W)_{\RR}$ defined over $\QQ$. By definition, the Mumford--Tate group $\MT(W)$ of $W$ is the smallest $\QQ$-algebraic subgroup of $\GL(W)$ such that $\varphi (\mathbb S) \subset \MT(W)_{\RR}$. Now take the differential of $\varphi$. We obtain a  homomorphism of $\RR$-Lie algebras
$$\varphi_* : \mathfrak u(1) \oplus \RR \to \mathfrak {gl} (W)_{\RR}.$$ The images the generators of $\mathfrak u(1)$ and $\RR$ are precisely $f$ and $h$ as above, giving the equivalence of the two definitions.

Returning to the hyper-K\"ahler geometry, we can consider the Hodge structures of degree $k$ on $H^k (X, \QQ)$, and also of the full cohomology $H^* (X, \QQ)$. We will simply write
\[ \bar {\mathfrak m} = \overline {\mt} (H^* (X, \QQ)) \]
for the special Mumford-Tate algebra associated to the full cohomology of $X$. It is the $\QQ$-algebraic Lie algebra closure of the one-dimensional real Lie algebra $\RR f \subset \mathfrak {gl} (H^* (X, \RR))$. There is a close relationship between $\bar {\mathfrak m}$, the Mumford--Tate algebras of the individual cohomologies $H^k (X, \QQ)$, and the Looijenga--Lunts--Verbitsky algebra $\mathfrak g$ of $X$. 

\begin{proposition} \label{prop:mt}
	Let $X$ be a compact  hyper-K\"ahler manifold of dimension $2n$.
	\begin{enumerate}
		\item There exists an inclusion $\bar {\mathfrak m} \subset \bar {\mathfrak g}$. Equality holds for very general $X$.
		
		\item If $0 < k < 4n$ and $H^k (X, \QQ) \neq 0$, then $\overline {\mt} (H^k (X, \QQ)) = \bar {\mathfrak m}$.
	\end{enumerate}
\end{proposition}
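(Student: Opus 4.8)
The plan is to treat the inclusion in (1) as a formal consequence of the minimality built into the definition of $\bar{\mathfrak m}$, to deduce (2) from the functoriality of Mumford--Tate groups together with the injectivity of $\rho_k$ supplied by Corollary \ref{cor:rho_inj}, and finally to obtain the equality in (1) by specializing (2) to $k=2$ and invoking the classical maximality of the generic period. For the inclusion $\bar{\mathfrak m}\subset\bar{\mathfrak g}$, recall from Proposition \ref{prop:f} that the Hodge operator $f$ lies in $\bar{\mathfrak g}_\RR$, and from Theorem \ref{prop:ll_isom} that $\bar{\mathfrak g}\cong\so(\bar V,\bar q)$ is a semisimple---hence algebraic---Lie subalgebra of $\mathfrak{gl}(H^*(X,\QQ))$ defined over $\QQ$. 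Since $\bar{\mathfrak m}$ is by definition the smallest $\QQ$-algebraic subalgebra of $\mathfrak{gl}(H^*)$ whose real points contain $f$, the inclusion $\bar{\mathfrak m}\subset\bar{\mathfrak g}$ is immediate.

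For (2), the heart of the matter is the identity $\overline{\mt}(H^k(X,\QQ))=\rho_k(\bar{\mathfrak m})$, where $\rho_k\colon\bar{\mathfrak g}\to\mathfrak{gl}(H^k)$ is the restriction representation of \eqref{def_rhok}; this is legitimate because $\bar{\mathfrak g}\subset\bar{\mathfrak g}_0$ consists of degree-preserving operators, so each $H^k$ is $\bar{\mathfrak g}$-invariant and, by the inclusion just proved, also $\bar{\mathfrak m}$-invariant. Working at the level of algebraic groups, let $u\colon\U(1)\to\bar G_\RR$ be the Hodge cocharacter whose derivative is $f$, so that the special Mumford--Tate group of $H^*$ is the $\QQ$-Zariski closure $M=\overline{u(\U(1))}$ and that of $H^k$ is $M_k=\overline{\rho_k(u(\U(1)))}$. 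Because $\rho_k$ is a $\QQ$-morphism of algebraic groups, $\rho_k(M)$ is already closed, and Zariski closure commutes with $\rho_k$ in the sense $\overline{\rho_k(S)}=\overline{\rho_k(\overline{S})}$; applied to $S=u(\U(1))$ this yields $M_k=\rho_k(M)$, and passing to Lie algebras gives $\overline{\mt}(H^k)=\rho_k(\bar{\mathfrak m})$. Now Corollary \ref{cor:rho_inj} says that for $0<k<4n$ with $H^k\neq 0$ the map $\rho_k$ is injective on $\bar{\mathfrak g}$, hence on $\bar{\mathfrak m}$; thus $\rho_k$ restricts to an isomorphism $\bar{\mathfrak m}\xrightarrow{\sim}\overline{\mt}(H^k)$, and identifying both with their common image inside $\bar{\mathfrak g}$ yields $\overline{\mt}(H^k)=\bar{\mathfrak m}$.

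The equality in (1) for very general $X$ then follows by applying (2) with $k=2$ (note $0<2<4n$ and $H^2\neq 0$), which gives $\overline{\mt}(H^2)=\bar{\mathfrak m}$, together with the classical fact that for a very general member of the deformation family the K3-type Hodge structure on $H^2(X,\QQ)$ has maximal special Mumford--Tate algebra, namely the full $\so(\bar V,\bar q)=\bar{\mathfrak g}$; this maximality can be obtained either from Zarhin-type results on K3-type Hodge structures or from the big-monodromy theorem for hyper-K\"ahler manifolds combined with the André--Deligne theorem that the generic Mumford--Tate algebra contains the connected monodromy algebra. Combining, $\bar{\mathfrak m}=\overline{\mt}(H^2)=\bar{\mathfrak g}$ for very general $X$. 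I expect the main obstacle to be the functoriality identity $\overline{\mt}(H^k)=\rho_k(\bar{\mathfrak m})$: one must verify that the projection onto a Hodge-theoretic direct summand carries the special Mumford--Tate group of $H^*$ \emph{onto} that of $H^k$ (so that one gets an honest equality rather than merely an inclusion $\overline{\mt}(H^k)\subset\rho_k(\bar{\mathfrak m})$), which is precisely what the Zariski-closure commutation above is designed to secure. Everything else reduces to a minimality argument, a direct appeal to Corollary \ref{cor:rho_inj}, and the standard maximality of the generic period.
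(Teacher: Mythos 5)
Your proof is correct and follows essentially the same route as the paper: the inclusion $\bar{\mathfrak m}\subset\bar{\mathfrak g}$ from Proposition \ref{prop:f} plus minimality, part (2) from the injectivity of $\rho_k$ (Corollary \ref{cor:rho_inj}) together with the fact that forming the $\QQ$-algebraic closure of the Hodge operator commutes with $\pi_k$, and the generic equality by applying (2) at $k=2$ and the classical maximality of the special Mumford--Tate algebra for a very general K3-type period (the paper cites \cite{ggk} and local Torelli; your monodromy/Andr\'e--Deligne alternative is an equally valid source for that fact). Your only cosmetic deviation is running the closure-functoriality step at the level of algebraic groups via the Hodge cocharacter and Chevalley's closed-image theorem, where the paper phrases the identical argument directly with $\QQ$-algebraic Lie algebra closures of $f$ and $\pi_k(f)$.
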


\noindent In particular, while Mumford--Tate algebras vary with the choice of complex structure on $X$, they remain subalgebras of the LLV algebra.  In other words, all the Mumford--Tate algebras factor through $\mathfrak g$ (which is a diffeomorphism invariant).

\begin{proof}
	The first statement of (i) is a direct consequence of Proposition \ref{prop:f}. By definition, the special Mumford--Tate algebra $\bar {\mathfrak m} \subset \mathfrak {gl} (H^* (X, \QQ))$ is the smallest Lie algebra with $f \in \bar {\mathfrak m}_{\RR}$. By Proposition \ref{prop:f}, $f \in \bar {\mathfrak g}_{\RR}$. Hence $\bar {\mathfrak m} \subset \bar {\mathfrak g}$.
	
	Before proving the equality assertion of (i), let us first prove (ii).  The $\bar {\mathfrak g}$-module structure \eqref{def_rhok} on $H^k (X, \QQ)$ is the composition
	\[ \rho_k : \bar {\mathfrak g} \subset \mathfrak {gl} (H^* (X, \QQ)) \xrightarrow{\pi_k} \mathfrak {gl} (H^k (X, \QQ)) .\]
	This map $\rho_k$ is injective (Corollary \ref{cor:rho_inj}). The paragraph above shows that $\bar {\mathfrak m}$ is a posteriori the $\QQ$-algebraic Lie algebra closure of $f$ in $\bar {\mathfrak g}_{\RR}$. Note that $\pi_k (f) \in \mathfrak {gl} (H^k (X, \RR))$ is the operator encoding the Hodge structure of $H^k (X, \QQ)$. Thus $\overline \mt (H^k (X, \QQ))$ is the $\QQ$-algebraic Lie algebra closure of $\pi_k (f)$ in $\mathfrak {gl} (H^k (X, \RR))$. But we already have $\pi_k (f) \in \pi_k (\bar {\mathfrak g})_{\RR}$, so by the same reason, $\overline \mt (H^k (X, \QQ))$ is the $\QQ$-algebraic Lie algebra closure of $\pi_k (f)$ in $\pi_k (\bar {\mathfrak g})_{\RR}$. But since $\rho_k$ is injective, $\pi_k$ induces an isomorphism between $\bar {\mathfrak g}$ and $\pi_k (\bar {\mathfrak g})$. Thus $\pi_k : \bar {\mathfrak m} \to \overline \mt (H^k (X, \QQ))$ is an isomorphism.
	
	It remains to prove the equality assertion of (i). It is a general fact in the theory of Mumford--Tate groups that the special Mumford--Tate group of the Hodge structure $\bar V = H^2 (X, \QQ)$ of K3 type is $\SO(\bar V, \bar q)$ outside of the Noether-Lefschetz locus in the period domain of $\bar V$ (see \cite{ggk}). Since compact  hyper-K\"ahler manifolds satisfy local Torelli theorem on second cohomology (or, even more, global Torelli theorem \cite{ver13, huy11}), this means $\overline \mt (H^2 (X, \QQ)) \cong \mathfrak {so} (\bar V, \bar q)$ for very general $X$. Since $\overline \mt (H^2 (X, \QQ)) = \bar {\mathfrak m}$, the equality assertion follows.
\end{proof}

By Proposition \ref{prop:mt}, the full Hodge structure on $H^*(X,\QQ)$ has the same degree of transcendence as the Hodge structure on $H^2(X,\QQ)$ over the special Mumford--Tate algebra $\overline{\mt} (H^2 (X, \QQ)) = \bar {\mathfrak m}$. This was anticipated by the Torelli principle for the second cohomology of hyper-K\"ahler manifolds (e.g., \cite[Cor.~24.5]{huy:hk} or \cite{sol19}). As a byproduct of this proposition, following Zarhin \cite{zar83}, one can classify  the special Mumford--Tate algebra of projective hyper-K\"ahler manifolds.

\section{The LLV decomposition for the known examples of hyper-K\"ahler manifolds}\label{Sect_compute_LLV}

In this section, we determine the LLV decomposition for all known examples of hyper-K\"ahler manifolds (Theorem \ref{thm_llv_decompose}).  We begin with a review of what is known about these cohomology groups.  From our perspective these results are equivalent to describing the structure of $H^*(X)$ as $\bar\fm = \so(3,b_2-4)$ module (\S\ref{subsec_mt}).  This structure is the restriction of an $\fm\cong \so(4,b_2-3)$ module structure (as in the proof of Theorem \ref{prop:ll_isom}).  We then show that this second structure is in turn the restriction to $\fm$ of a $\fg\cong\so(4,b_2-2)$--representation.  This argument works for both the $K3^{[n]}$ types and the $\Kum_n$ types as the necessarily initial module structure is known \cite{gs93}, even in the non-projective cases \cite{dCM}. For the $\OG6$ and $\OG10$ types, we do not have the full $\bar\fm$--module structures, only the Hodge numbers (cf. \cite{MRS} and \cite{dCRS}). For $\OG10$ this suffices, as we are dealing with a big algebra $\fg\cong\so(4,22)$, and a relatively small Euler number $e(X)=176,904$. For $\OG6$, these considerations reduce us to two possible LLV module structures. To identify the correct $\fg$-representation, we need to delve deeper into the geometric construction of \cite{MRS}. The proof of Theorem \ref{thm_llv_decompose} is presented case by case in \S\S\ref{case_llv_k3}--\ref{case_llv_og6}.

\begin{remark} Since our arguments make use of the special Mumford--Tate algebra $\bar \fm$, it is important that the LLV algebra $\fg$ is defined over $\QQ$. However, it is more convenient to work over $\RR$; we will do so throughout (unless $\QQ$ coefficients are needed). For instance, this allows us to write
\[ \mathfrak g = \so (4, b_2(X) - 2), \qquad \bar {\mathfrak g} = \so (3, b_2(X) - 3) \]
ignoring the rational quadratic structure. Similarly, we write $H^* (X) = H^* (X, \RR)$. Everything is however defined over $\QQ$, and the discussion can be easily adapted to $\QQ$ coefficients.
\end{remark}

To our knowledge, very little was previously known on the LLV decomposition for the known cases. The $K3$ surface and the Kummer surface are clear 
(Ex.~\ref{ex_llv_k3}).  Recall that, for any hyper-K\"ahler manifold $X$, the Verbitsky component $V_{(n)}$ occurs with multiplicity $1$ in the LLV decomposition (Proposition~\ref{prop:ver_exhaust_bdry}).  Dimension counts force $H^*(K3^{[2]})=V_{(2)}$.  The only other LLV decompositions that we are aware of are the next simplest cases, $\Kum_2$ (see \cite[Ex 4.6]{ll97}) and $K3^{[3]}$ (see \cite[Example 14]{mar02}).  The main result of the section is stated as Theorem \ref{thm_llv_decompose} in the introduction. For reader's convenience we state two corollaries of this result, namely the explicit LLV decompositions for hyper-K\"ahler manifolds of type $K3^{[n]}$ and $\Kum_n$ respectively for small values of $n$. 

\begin{corollary} \label{cor:g_module}
	Let $\fg\cong \so(4,21)$ be the LLV algebra for hyper-K\"ahler manifolds of $K3^{[n]}$ type (with $n\ge 2$). Then, for $n \in \{2, \cdots, 7\}$, the associated LLV decomposition of the cohomology is as follows:
	\begin{align*}
		H^* (\mathrm{K3}^{[2]}) &= V_{(2)} \\
		H^* (\mathrm{K3}^{[3]}) &= V_{(3)} \oplus V_{(1,1)} \\
		H^* (\mathrm{K3}^{[4]}) &= V_{(4)} \oplus V_{(2,1)} \oplus V_{(2)} \oplus \RR \\
		H^* (\mathrm{K3}^{[5]}) &= V_{(5)} \oplus V_{(3,1)} \oplus V_{(3)} \oplus V_{(2,1)} \oplus V_{(1,1)} \oplus V \\
		H^* (\mathrm{K3}^{[6]}) &= V_{(6)} \oplus V_{(4,1)} \oplus V_{(4)} \oplus V_{(3,1)} \oplus V_{(3)} \oplus V_{(2,2)} \oplus V_{(2,1)} \oplus V_{(2)} {}^{\oplus 2} \oplus V_{(1,1,1)} \oplus V \oplus \RR \\
		H^* (\mathrm{K3}^{[7]}) &= V_{(7)} \oplus V_{(5,1)} \oplus V_{(5)} \oplus V_{(4,1)} \oplus V_{(4)} \oplus V_{(3,2)} \oplus V_{(3,1)} {}^{\oplus 2} \oplus V_{(3)} {}^{\oplus 2} \oplus V_{(2,1,1)} \\
		& \quad \oplus V_{(2,1)} {}^{\oplus 2} \oplus V_{(2)} \oplus V_{(1,1)} {}^{\oplus 2} \oplus V {}^{\oplus 2}
	\end{align*}
\end{corollary}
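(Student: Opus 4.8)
The plan is to read off the corollary from the generating series \eqref{eq_llv_k3n} of Theorem \ref{thm_llv_decompose}(1): I extract the coefficient of $q^n$ and decompose the resulting formal $\so(4,21)$-character into irreducibles. Writing $y_w = e^w$ for the $24$ monomials attached to the weights $w \in \{\pm\varepsilon_i\}_{i=0}^{11}$, each factor of the product expands as $\prod_{m\ge1}(1-y_wq^m)^{-1} = \sum_\lambda y_w^{\,\ell(\lambda)}q^{|\lambda|}$, the sum running over partitions $\lambda$ with $\ell(\lambda)$ parts. Multiplying over all $w$ gives the coefficient of $q^n$ as the explicit finite sum
\[
	\ch\big(H^*(\mathrm{K3}^{[n]})\big) \;=\; \sum_{\substack{(\lambda^{(w)})_w\\ \sum_w|\lambda^{(w)}|=n}}\ \prod_w\, y_w^{\,\ell(\lambda^{(w)})},
\]
an honest Weyl-invariant element of the representation ring $A$ (equivalently, the degree-$n$ part of the plethystic exponential $\mathrm{Exp}\!\left(\chi\,q/(1-q)\right)$ with $\chi = \sum_w y_w$). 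This makes each $\ch(H^*(\mathrm{K3}^{[n]}))$ completely explicit as a sum of weight monomials.

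The second step is to rewrite this character as $\sum_\mu m_\mu \ch(V_\mu)$. I would use the standard peeling algorithm: among the weights occurring with positive multiplicity, take the one maximal for a total order refining the dominance order; it is automatically dominant and is the highest weight of a constituent, and the coefficient of its monomial is its multiplicity $m_\mu$; then subtract $m_\mu\,\ch(V_\mu)$ and repeat. The set of candidate weights is pinned down before any computation. Every monomial in \eqref{eq_llv_k3n} has integral exponents, so by Proposition \ref{prop:even_rep} all constituents lie in even cohomology and all $\mu$ have integral coordinates (in particular no spin/odd modules occur); Proposition \ref{prop:weaker_condition} forces $\mu = (n)$ or $\mu_0 + \mu_1 \le n-1$; and \eqref{condition_mu0} gives $\mu_0 \le n$. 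For $n \le 7$ this leaves exactly the short list of weights displayed in the statement, and the individual $\ch(V_\mu)$ (and their dimensions) are obtained from the Weyl character and dimension formulas.

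The only real difficulty is the bookkeeping once repeated constituents and several weights of equal degree appear, which first happens at $n = 6$ (where $V_{(2)}$ occurs twice) and is most pronounced at $n = 7$. Because $\so(4,21)$ has rank $12$, a single low-degree monomial lies in the weight supports of many different $V_\mu$, so the multiplicities must be propagated carefully through the successive subtractions rather than guessed. I would guard against error with two independent checks that use only previously established data. First, the total dimension $\sum_\mu m_\mu \dim V_\mu$ must equal the coefficient of $q^n$ in $\prod_{m\ge1}(1-q^m)^{-24}$, which is obtained by specializing all $y_w \to 1$ in \eqref{eq_llv_k3n} and is the (total) Betti number of $\mathrm{K3}^{[n]}$. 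Second, the Hodge structure induced on $\bigoplus_\mu V_\mu^{\oplus m_\mu}$ via \eqref{eq:hodge_from_weight} must reproduce the G\"ottsche--Soergel Hodge numbers. Agreement of both checks confirms the tabulated decompositions.
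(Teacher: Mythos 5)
Your proposal is correct and takes essentially the same route as the paper, which obtains Corollary \ref{cor:g_module} exactly this way: extract the $q^n$-coefficient of the generating series \eqref{eq_llv_k3n} (equivalently, use Theorem \ref{thm:k3n}) as an explicit Weyl-invariant character and decompose it into irreducible $\mathrm B_{12}$-characters, checking dimensions and Hodge numbers against the G\"ottsche--Soergel data. One small caveat: the a priori constraints from Proposition \ref{prop:even_rep}, Proposition \ref{prop:weaker_condition} and \eqref{condition_mu0} only cut the candidates down to a \emph{superset} of the displayed list (e.g.\ for $n=4$ the weights $(3)$, $(1,1)$ and $(1)$ pass all three constraints yet occur with multiplicity zero), so it is your peeling computation, not the constraint list, that actually pins down the multiplicities --- which your algorithm does correctly.
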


\begin{remark} \label{rem:K3_3_computation}
	As an illustration, let us discuss  the case of hyper-K\"ahler manifold $X$ of  $\mathrm{K3}^{[3]}$ type. The LLV algebra of $X$  is $\fg\cong\so (4, 21)$, and  the above result says $H^* (X) = V_{(3)} \oplus V_{(1,1)}$ as $\fg$-modules, or equivalently 
	\begin{equation}\label{ex_k33} H^* (X) = V_{3 \varpi_1} \oplus V_{\varpi_2} \end{equation}
	in terms of the  fundamental weights. 	Further decomposing \eqref{ex_k33} as a module of the reduced LLV algebra $\bar {\mathfrak g} = \so (3, 20)$ accounts for disassembling the Mukai completion. By definition, the standard $\mathfrak g$-module $V$ decomposes as $V = \RR(1) \oplus \bar V \oplus \RR(-1)$ when viewed as $\bar {\mathfrak g}$-module. Here $\RR(\pm 1)$ indicates the degree $\mp 2$ parts of $V$;  $\bar V$ has degree $0$. The branching rules (see Appendix \ref{sec:appendixB}) give
	\begin{eqnarray}\label{eq_35}
		H^* (X) &=& \RR(3) \oplus \bar V(2) \oplus (\Sym^2 \bar V \oplus \bar V)(1) \oplus (\Sym^3 \bar V \oplus \wedge^2 \bar V \oplus \RR) 
		\oplus (\Sym^2 \bar V \oplus \bar V)(-1)\\ &&\oplus \bar V(-2) \oplus \RR(-3) ,\notag
	\end{eqnarray}
	which the reader will notice is much more involved than \eqref{ex_k33}. 
	The decomposition \eqref{eq_35} yields $H^0 (X) = \RR$, $H^2(X) = \bar V$, $H^4 (X) = \Sym^2 \bar V \oplus \bar V$, and so on, recovering Markman's computation \cite[Ex. 14]{mar02}. Finally, specializing $X$ to $X = S^{[3]}$ for some $K3$ surface $S$,  the generic Mumford--Tate algebra $\bar \fm$ of $X$ in this locus becomes slightly smaller than $\bar {\mathfrak g}$ (see Proposition \ref{prop:mt}). More specifically, we have $\bar {\mathfrak m} = \so (3, 19)$ contained in $\bar {\mathfrak g} = \so (3, 20)$. Restricting the above identity further to $\bar {\mathfrak m}$, we recover the G\"ottsche--Soergel's formula on Hodge structures \cite{gs93}, which is equivalent to the  $\bar {\mathfrak m}$-module structure.
\end{remark}

Similarly, we have the following formulas for the low dimensional $\Kum_n$ cases. 
\begin{corollary} \label{cor:g_module2}
	Let $\fg\cong \so(4,5)$ be the LLV algebra for hyper-K\"ahler manifolds of $\Kum_n$ type (with $n\ge 2$). Then, for $n \in \{2, \cdots, 5\}$, the associated LLV decomposition of the cohomology is as follows:
	\begin{align*}
		H^* (\mathrm{Kum}_2) & = V_{(2)} \oplus \RR^{\oplus 80} \quad \oplus V_{(\frac{1}{2}, \frac{1}{2}, \frac{1}{2}, \frac{1}{2})} \\
		H^* (\mathrm{Kum}_3) & = V_{(3)} \oplus V_{(1,1)} \oplus V^{\oplus 16} \oplus \RR^{\oplus 240} \quad \oplus V_{(\frac{3}{2}, \frac{1}{2}, \frac{1}{2}, \frac{1}{2})} \\
		H^* (\mathrm{Kum}_4) & = V_{(4)} \oplus V_{(2,1)} \oplus V_{(2)} \oplus V_{(1,1,1)} \oplus V_{(1,1)} \oplus \RR^{\oplus 625} \quad \oplus V_{(\frac{5}{2}, \frac{1}{2}, \frac{1}{2}, \frac{1}{2})} \oplus V_{(\frac{3}{2}, \frac{1}{2}, \frac{1}{2}, \frac{1}{2})} \oplus V_{(\frac{1}{2}, \frac{1}{2}, \frac{1}{2}, \frac{1}{2})} \\
		H^* (\mathrm{Kum}_5) & = V_{(5)} \oplus V_{(3,1)} \oplus V_{(3)} \oplus V_{(2,1,1)} \oplus V_{(2,1)} {}^{\oplus 2} \oplus V_{(2)} {}^{\oplus 16} \oplus V_{(1,1,1,1)} \oplus V_{(1,1)} \\
		&\quad \oplus V^{\oplus 82} \oplus \RR^{\oplus 1200} \quad \oplus V_{(\frac{7}{2}, \frac{1}{2}, \frac{1}{2}, \frac{1}{2})} \oplus V_{(\frac{5}{2}, \frac{1}{2}, \frac{1}{2}, \frac{1}{2})} \oplus V_{(\frac{3}{2}, \frac{3}{2}, \frac{1}{2}, \frac{1}{2})} \oplus V_{(\frac{3}{2}, \frac{1}{2}, \frac{1}{2}, \frac{1}{2})} {}^{\oplus 2} \oplus V_{(\frac{1}{2}, \frac{1}{2}, \frac{1}{2}, \frac{1}{2})} {}^{\oplus 17}
	\end{align*}
\end{corollary}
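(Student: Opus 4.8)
The plan is to deduce Corollary \ref{cor:g_module2} as a purely formal consequence of the generating-series identity \eqref{eq_llv_kumn}, by extracting the coefficient of $q^n$ and decomposing the resulting formal character into irreducible $\so(4,5)$-modules. Writing $B(q) = 1 + \sum_{k\ge 1} b_k q^k$ with $b_k \in A$, one has the formal expansion
\[ \frac{B(q^d)-1}{b_1\,q} \;=\; \sum_{j\ge 1} \frac{b_j}{b_1}\, q^{\,jd-1} , \]
so that matching the coefficient of $q^n$ on both sides of \eqref{eq_llv_kumn} yields, for every $n \ge 2$,
\[ \ch H^*(\Kum_n) \;=\; \sum_{d \mid (n+1)} J_4(d)\, \frac{b_{(n+1)/d}}{b_1} . \]
The entire content of the corollary is therefore the evaluation of this finite sum for $n \in \{2,3,4,5\}$ together with its decomposition into irreducible characters. (Corollary \ref{cor:g_module} is handled the same way, and is in fact easier: the right-hand side of \eqref{eq_llv_k3n} is a clean infinite product with neither a denominator nor a Jordan-totient twist, and no half-integral weights occur since $K3^{[n]}$ has no odd cohomology.)

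Concretely, I would first compute the low-order coefficients $b_1, \dots, b_6$ of $B(q)$ — we only need $b_k$ for $k \le n+1 \le 6$ — by expanding the defining product modulo $q^{7}$; only the factors with $m \le 6$ contribute, so this is a finite manipulation of Laurent polynomials in $x_0^{\pm1}, \dots, x_3^{\pm1}$ and $(x_0\cdots x_3)^{\pm 1/2}$. Substituting these into the displayed formula, with $J_4(1)=1$, $J_4(2)=15$, $J_4(3)=80$, $J_4(4)=240$, $J_4(5)=624$, and $J_4(6)=1200$, produces for each $n$ an explicit element of the representation ring $A$ — the formal character of $H^*(\Kum_n)$ as an $\so(4,5)$-module.

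The last step is to write each such character as $\sum_\mu m_\mu \ch V_\mu$. Since the irreducible characters $\ch V_\mu$ form a $\ZZ$-basis of $A$, this is done by the standard peeling procedure: locate the highest weight $\mu$ (maximal in dominance order) still occurring, read off its multiplicity $m_\mu$ from the coefficient of $x^\mu$, subtract $m_\mu \ch V_\mu$ — computed via Weyl's character formula, or via the branching rules of Appendix \ref{sec:appendixB} — and iterate until the remainder vanishes. By Proposition \ref{prop:even_rep} the integral weights $\mu$ reassemble the even cohomology and the half-integral (spinorial) weights such as $(\tfrac12,\tfrac12,\tfrac12,\tfrac12)$ reassemble the odd cohomology.

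The main obstacle is twofold. First, the individual quotients $b_{(n+1)/d}/b_1$ need not lie in $A$: the factors $\prod_j(1 + x^j q^m)$ run only over the \emph{even}-parity spinor weights, which breaks the sign part of the $B_4$-Weyl symmetry (this reflects the even cohomology $H^{\mathrm{even}}(A)$ of the abelian surface underlying the construction), and it is only the full Jordan-totient-weighted sum that descends into $A[[q]]$; I would therefore verify $\mathfrak W$-invariance of the assembled character before attempting a decomposition. Second, the $B_4$ bookkeeping is delicate, chiefly in tracking the large multiplicities of the trivial representation — governed, as explained in Remark \ref{rem18}, by the dominant contribution $J_4(n+1)\cdot(b_1/b_1)$ with small corrections from the constant terms of the $b_{(n+1)/d}/b_1$ — and of the standard representation $V$, where miscounts are most likely.
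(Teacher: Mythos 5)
Your proposal is correct and is essentially the paper's own route: the corollary is meant to follow from the generating series \eqref{eq_llv_kumn} by coefficient extraction, and indeed the paper records exactly your formula $\ch (H^* (\Kum_n)) = \frac{1}{b_1} \sum_{d \mid n+1} J_4 \left( \frac{n+1}{d} \right) b_d$ in the remark following Theorem \ref{thm:g_module_kumn}, after which the stated decompositions for $n \in \{2,\dots,5\}$ are the finite character computation (Weyl-invariance check, exact division by $b_1$, peeling of dominant weights, with Proposition \ref{prop:even_rep} separating integral from half-integral weights) that you describe. Your caution that the individual $b_d$ are only $\mathrm D_4$-invariant while the assembled character must be $\mathrm B_4$-invariant is a legitimate and worthwhile sanity check, not a gap.
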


\begin{remark}
	We do not have closed formulas for the irreducible LLV decompositions of the general case $K3^{[n]}$ and $\Kum_n$, but as one can see, the cohomology of $\Kum_n$ is fairly complicated. Note in particular, the presence of several spin type representations, and the large number of trivial representations. The number of trivial representations is controlled by the fourth Jordan totient function $J_4(n+1)\sim (n+1)^4$. Specifically note that in the range that we have worked out the representations explicitly ($n\in\{1,\cdots, 5\}$), the values of of $J_4(n+1)$ are $15$, $80$, $240$, $624$, and $1,200$, while the number of trivial representations is $16$, $80$, $240$, $625$, and $1,200$ respectively. Geometrically, this means that a variety of $\Kum_n$ type contains many Hodge cycles (of order $n^4$) even if it is non-projective.
\end{remark}

Other consequences of Theorem \ref{thm_llv_decompose} are formulas for the generating series for the Euler numbers, the Poincar\'e polynomials, and Hodge--Deligne polynomials for the two series $\mathrm{K3}^{[n]}$ and $\Kum_n$. In the case of $K3^{[n]}$, we recover the formulas of G\"ottsche (see esp.~\cite[Thm 2.3.10]{got94} and \cite[Rem 2.3.12]{got94}). 

\begin{corollary}\label{gen_series_k3}
	The generating series for $\mathrm{K3}^{[n]}$ are as follows.
	\begin{enumerate}
		\item The generating series for the Euler numbers of $\mathrm{K3}^{[n]}$ is
		\[ \sum_{n=0}^{\infty} e \left( \mathrm{K3}^{[n]} \right) q^n = \prod_{m=1}^{\infty} \frac{1}{(1 - q^m)^{24}}=\frac{q}{\Delta(q)},\]
		where $\Delta(q)$ is the weight $12$ modular form. 
		
		\item The generating series for the Poincar\'e polynomials of $\mathrm{K3}^{[n]}$ is
		\[ \sum_{n=0}^{\infty} b \left(K3^{[n]}, t \right) q^n = \prod_{m=1}^{\infty} \frac{1}{(1 - t^2 q^m)(1 - t^{-2} q^m) (1-q^m)^{22}} ,\]
		where $b \left( \mathrm{K3}^{[n]}, t \right)$ indicates the Poincar\'e polynomial with the Betti numbers $b_k (\mathrm{K3}^{[n]})$ encoded in the coefficient of $t^{k - 2n}$.
		
		\item The generating series for the Hodge--Deligne polynomials of $\mathrm{K3}^{[n]}$ is
		\[ \sum_{n=0}^{\infty} h \left(K3^{[n]}, s, t \right) q^n = \prod_{m=1}^{\infty} \frac{1}{(1 - st q^m)(1 - st^{-1} q^m) (1 - s^{-1}t q^m)(1 - s^{-1}t^{-1} q^m) (1-q^m)^{20}} ,\]
		where $h \left( \mathrm{K3}^{[n]}, s, t \right)$ indicates the Hodge--Deligne polynomial with the Hodge numbers $h^{p,q} (\mathrm{K3}^{[n]})$ encoded in the coefficient of $s^{p-n} t^{q-n}$.
	\end{enumerate}
\end{corollary}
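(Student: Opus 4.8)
The plan is to derive all three generating series as specializations of the single character identity in Theorem~\ref{thm_llv_decompose}(1), taken in its compact form \eqref{eq_llv_k3n_B}. Recall that $\ch(H^*(\mathrm{K3}^{[n]}))$ lives in the representation ring $A$, a ring of Weyl-invariant Laurent polynomials in $x_0,\dots,x_{11}$, and that a weight vector of weight $\theta = \sum_i \theta_i \varepsilon_i$ contributes the monomial $\prod_i x_i^{\theta_i}$ to the character. Since $\mathrm{K3}^{[n]}$ has no odd cohomology, every irreducible constituent carries integral weights, so that each of the invariants in question is obtained by substituting explicit numerical values for the $x_i$. The key bridge is the dictionary \eqref{eq:hodge_from_weight} between weights and the Hodge bigrading: a class of weight $\theta$ sits in Hodge type $(p,q)$ with $p-n = \theta_0 + \theta_1$ and $q-n = \theta_0 - \theta_1$.

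First I would treat the most refined invariant, the Hodge--Deligne polynomial. I substitute $x_0 \mapsto st$, $x_1 \mapsto st^{-1}$, and $x_i \mapsto 1$ for $2 \le i \le 11$. Under this specialization the monomial $\prod_i x_i^{\theta_i}$ becomes $s^{\theta_0+\theta_1}t^{\theta_0-\theta_1} = s^{p-n}t^{q-n}$, which is exactly the monomial recording $h^{p,q}$ in the convention of the statement. Applying the substitution to the right-hand side of \eqref{eq_llv_k3n_B}, the $i=0$ and $i=1$ factors yield $(1-st q^m)$, $(1-s^{-1}t^{-1}q^m)$, $(1-st^{-1}q^m)$, $(1-s^{-1}t q^m)$, while the ten factors with $2 \le i \le 11$ collapse to $(1-q^m)^{20}$, giving part (iii).

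Next, the Poincar\'e polynomial follows by the further specialization $s = t$ (equivalently $x_0 \mapsto t^2$ and $x_i \mapsto 1$ for $i \ge 1$), since a class in $H^{p,q}$ then contributes $t^{(p-n)+(q-n)} = t^{k-2n}$ with $k=p+q$. The $i=0$ factor becomes $(1-t^2 q^m)(1-t^{-2}q^m)$ and the remaining eleven indices give $(1-q^m)^{22}$, recovering part (ii). Finally, setting $s=t=1$ (all $x_i \mapsto 1$) collapses the $24$ factors to $(1-q^m)^{-24}$; because the odd cohomology vanishes, this total dimension equals the Euler number, yielding $\prod_{m}(1-q^m)^{-24} = q/\Delta(q)$ and part (i).

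The argument is essentially bookkeeping once \eqref{eq_llv_k3n_B} is available, so the substance lies in Theorem~\ref{thm_llv_decompose}, which I am free to assume. The one point requiring genuine care is the parity convention in the Euler number: for a general hyper-K\"ahler type the substitution $x_i \mapsto 1$ returns the \emph{total} Betti number, not the signed Euler characteristic, and one would have to track the half-integral spin weights of the odd cohomology separately. The main obstacle is therefore to confirm that $\mathrm{K3}^{[n]}$ has no odd cohomology, which forces all weights in \eqref{eq_llv_k3n_B} to be integral and guarantees that the three specializations compute precisely the Hodge numbers, Betti numbers, and Euler number with no sign correction; this integrality is already visible in the right-hand side of \eqref{eq_llv_k3n_B}, whose factors involve only integer powers of the $x_i$.
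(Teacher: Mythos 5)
Your proof is correct and follows the paper's argument almost verbatim: the same specializations $x_0 = st$, $x_1 = st^{-1}$, $x_2 = \cdots = x_{11} = 1$ for the Hodge--Deligne series and $x_0 = t^2$, $x_{i\ge 1} = 1$ for the Poincar\'e series, justified by the weight-to-$(p,q)$ dictionary \eqref{eq:hodge_from_weight}. The only deviation is in part (i): the paper sets $t=-1$ in (ii) to form the alternating sum of Betti numbers, whereas you set all $x_i = 1$ and invoke the vanishing of odd cohomology of $\mathrm{K3}^{[n]}$ (itself correctly read off from the integrality of the weights in \eqref{eq_llv_k3n_B}, via Proposition \ref{prop:even_rep}); since only $t^{\pm 2}$ appears in (ii), both substitutions yield the same product $\prod_{m=1}^{\infty}(1-q^m)^{-24}$, and your parity caveat is precisely why the paper's $t=-1$ route is the one that generalizes to types with odd cohomology such as $\Kum_n$.
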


\begin{proof}
	Recall the discussion in \S \ref{sec:llv_further_decomp} that the Hodge structure is captured by the $\mathfrak g$-module structure. Specifically, the Hodge component $W^{p,q}$ of a $\mathfrak g$-module $W$ is the direct sum of the weight spaces $W(\theta)$, for the weights $\theta = \theta_0 \varepsilon_0 + \cdots + \theta_{11} \varepsilon_{11}$ such that
\[ 
	p = \theta_0 + \theta_1 + n, \qquad q = \theta_0 - \theta_1 + n \,,
\]
cf.~\eqref{eq:hodge_from_g_module} and \eqref{eq:hodge_from_weight}.  
	Note that the dimension of $W(\theta)$ is captured by the coefficient of $x_0^{\theta_0} \cdots x_{11}^{\theta_{11}}$ in the formal character. Setting $x_0 = st$, $x_1 = st^{-1}$, $x_2 = \cdots = x_{11} = 1$ gives us the transformation
	\[ x_0^{\theta_0} \cdots x_{11}^{\theta_{11}} = s^{\theta_0 + \theta_1} t^{\theta_0 - \theta_1} = s^{p-n} t^{q-n} ,\]
	whose coefficient contributes to the Hodge number $h^{p,q}$ of $W$. This means setting $x_0 = st$, $x_1 = st^{-1}$, $x_2 = \cdots = x_{11} = 1$ in \eqref{eq_llv_k3n} of Theorem \ref{thm_llv_decompose} gives us the generating series of the Hodge--Deligne polynomial of them. This proves (iii).
	
	A similar argument implies that setting $x_0 = t^2$, $x_1 = x_2 = \cdots = x_{11} = 1$ yields (ii). Finally, for (i), note that the Euler number is an alternating sum of the Betti numbers.  This amounts to setting $t = -1$.
\end{proof}

For the $\Kum_n$ case, specializing the generating series of Theorem \ref{thm_llv_decompose}(2), we obtain the following formula for the Hodge--Deligne polynomials. This formula seems new and  slightly simpler than those existing in the literature, but still not as neat as in the $\mathrm{K3}^{[n]}$ case. 
\begin{corollary} \label{gen_series_kum}
	The generating series of Hodge--Deligne polynomials of $\Kum_n$ is
	\[ \sum_{n=0}^{\infty} h \left( \mathrm{Kum}_n, s, t \right) q^n = \sum_{d=1}^{\infty} J_4 (d) \frac{st (B(q^d) - 1)}{(s+1)^2 (t+1)^2 q} \]
	as in \eqref{eq_llv_kumn}, but with the formal power series $B(q)$ in this case defined by
	\[ B(q) = \prod_{m=1}^{\infty} \frac{(1+sq^m)^2 (1+s^{-1}q^m)^2 (1+tq^m)^2 (1+t^{-1}q^m)^2} {(1-stq^m) (1-st^{-1}q^m) (1-s^{-1}tq^m) (1-s^{-1}t^{-1}q^m) (1-q^m)^4} .\]
\end{corollary}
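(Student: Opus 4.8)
The plan is to obtain this corollary as a specialization of Theorem~\ref{thm_llv_decompose}(2), following verbatim the mechanism in the proof of Corollary~\ref{gen_series_k3}. Recall from \eqref{eq:hodge_from_g_module} and \eqref{eq:hodge_from_weight} that for any $\fg = \so(4,5)$-module $W$ the Hodge $(p,q)$-component is the sum of the weight spaces $W(\theta)$ with $p = \theta_0 + \theta_1 + n$ and $q = \theta_0 - \theta_1 + n$. Since the LLV algebra here has rank $4$ (with weight variables $x_0,x_1,x_2,x_3$), substituting $x_0 = st$, $x_1 = st^{-1}$, $x_2 = x_3 = 1$ turns $x_0^{\theta_0}\cdots x_3^{\theta_3}$ into $s^{\theta_0+\theta_1}t^{\theta_0-\theta_1} = s^{p-n}t^{q-n}$, so this substitution converts every formal character $\ch(W)$ into the Hodge--Deligne polynomial $\sum_{p,q} h^{p,q}(W)\,s^{p-n}t^{q-n}$. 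I would therefore apply it to both sides of \eqref{eq_llv_kumn}. On the left-hand side it produces $\sum_{n\ge 0} h(\Kum_n,s,t)\,q^n$ term by term; one checks directly that the special $q^0$ and $q^1$ coefficients specialize to the Hodge--Deligne polynomials of the point $\Kum_0$ and of the Kummer K3 surface $\Kum_1$ (the latter being $st + s^{-1}t^{-1} + st^{-1} + s^{-1}t + 20$).

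The core of the argument is evaluating $B(q)$ under this substitution. The ``bosonic'' factors $\prod_{i=0}^3 (1-x_iq^m)^{-1}(1-x_i^{-1}q^m)^{-1}$ specialize at once: the $i=0,1$ terms contribute $(1-stq^m)(1-st^{-1}q^m)(1-s^{-1}tq^m)(1-s^{-1}t^{-1}q^m)$ to the denominator, while $i=2,3$ each contribute $(1-q^m)^2$, giving the factor $(1-q^m)^4$. For the ``fermionic'' factors $\prod_j (1 + x_0^{j_0}x_1^{j_1}x_2^{j_2}x_3^{j_3} q^m)$ I would enumerate the eight admissible indices $j \in \{\pm\frac12\}^{\times 4}$ with $j_0+\cdots+j_3 \in 2\ZZ$, namely the all-$+$, all-$-$, and six two-and-two sign patterns. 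Setting $x_2,x_3 \mapsto 1$ collapses each monomial to $s^{j_0+j_1}t^{j_0-j_1}$, and a direct count shows each of $s^{\pm1},t^{\pm1}$ occurs exactly twice, so the product becomes $(1+sq^m)^2(1+s^{-1}q^m)^2(1+tq^m)^2(1+t^{-1}q^m)^2$. This is precisely the $B(q)$ of the corollary.

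Finally I would specialize the normalizer $b_1$, the degree-one coefficient of $B(q)$, which by the $m=1$ term equals $\sum_{i=0}^3(x_i+x_i^{-1}) + \sum_j x_0^{j_0}\cdots x_3^{j_3}$. The same bookkeeping gives, after substitution, $b_1 = (st + s^{-1}t^{-1} + st^{-1} + s^{-1}t + 4) + (2s + 2s^{-1} + 2t + 2t^{-1})$, which factors as $(s+1)^2(t+1)^2/(st)$; hence $1/b_1 = st/((s+1)^2(t+1)^2)$. Feeding the specializations of $B$ and of $1/b_1$ into \eqref{eq_llv_kumn}, and noting that $J_4(d)$ is unaffected, yields the stated identity. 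The only genuine obstacle is the combinatorial bookkeeping in the fermionic product and in $b_1$: one must confirm that collapsing $x_2,x_3\mapsto 1$ merges the sign patterns with exactly the right multiplicities and that the resulting $b_1$ factors as claimed. These are finite, routine verifications rather than conceptual difficulties, the specialization principle being identical to the $K3^{[n]}$ case already treated in Corollary~\ref{gen_series_k3}.
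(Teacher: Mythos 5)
Your proposal is correct and coincides with the paper's own proof, which likewise specializes \eqref{eq_llv_kumn} via $x_0 = st$, $x_1 = st^{-1}$, $x_2 = x_3 = 1$ (as in Corollary \ref{gen_series_k3}) and records that $b_1 = \frac{1}{st}(s+1)^2(t+1)^2$. Your explicit enumeration of the eight admissible sign patterns $j$ and the factorization of $b_1$ are exactly the routine verifications the paper leaves implicit, and they check out.
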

\begin{proof}
	The proof is the same as that of Corollary \ref{gen_series_k3}. Setting $x_0 = st$, $x_1 = st^{-1}$ and $x_2 = x_3 = 1$ gives us the desired result. One can also observe the first coefficient $b_1$ of $B(q)$ is $b_1 = \frac{1}{st} (s+1)^2 (t+1)^2$.
\end{proof}

\subsection{The Mukai completion}

In this subsection, we assume $X$ to be an arbitrary compact  hyper-K\"ahler manifold. Let $\bar {\mathfrak m}$ and $\mathfrak m_0 = \bar {\mathfrak m} \oplus \QQ h$ be the special Mumford--Tate algebra and Mumford--Tate algebra of $X$ respectively (see Section \ref{sec:llv}). By Proposition \ref{prop:mt}, the Mumford--Tate algebra $\bar {\mathfrak m}$ is contained in $\bar {\mathfrak g}$. If we assume $S$ is projective, then we further have a classification of the special Mumford--Tate algebra $\bar {\mathfrak m}$ by Zarhin \cite{zar83}; either $\bar {\mathfrak m} \cong \mathfrak {so}_E (\bar T, \bar q)$ or $\mathfrak {u}_{E_0} (\bar T, \bar q)$ for a totally real or CM number field $E$ (where $E$ is determined by the endomorphisms of the Hodge structure). In particular, if $E = \QQ$ then we have $\bar {\mathfrak m} = \mathfrak {so} (\bar T, \bar q)$. The assumption $E = \QQ$ holds when $X$ is a very general projective  hyper-K\"ahler manifold.

Now, assume we had $\bar {\mathfrak m} \cong \mathfrak {so} (\bar T, \bar q)$ for some sub-Hodge structure $\bar T \subset \bar V$. This assumption is satisfied in the following two cases:
\begin{enumerate}
	\item[(A)] If $X$ is a very general projective  hyper-K\"ahler manifold with a fixed polarization, then the assumption is satisfied with $\bar T$ the transcendental Hodge structure of $\bar V$ with $\dim \bar T = \dim \bar V - 1$, by the above discussion.
	
	\item[(B)] If $X$ is a very general non-projective  hyper-K\"ahler manifold, then the assumption is again satisfied with $\bar T = \bar V$, by Proposition \ref{prop:mt}(1).
\end{enumerate}
Recall the relation between the two Lie algebras $\bar {\mathfrak g}$ and $\mathfrak g$ in Theorem \ref{prop:ll_isom}. In these cases, we can formally imitate this relation to enlarge the Lie algebra $\bar {\mathfrak m}$ to a new Lie algebra $\mathfrak m$. This process is  often used in the theory of moduli of sheaves on K3 surfaces, and called Mukai extension or Mukai completion of the second cohomology.

\begin{definition}
	Let $(\bar T, \bar q)$ be a quadratic space over $\QQ$ and $\bar {\mathfrak m} = \mathfrak {so} (\bar T, \bar q)$ a $\QQ$-Lie algebra. We call $(T, q) = (\bar T \oplus \QQ^2, \bar q \oplus \begin{psmallmatrix} 0&1\\1&0 \end{psmallmatrix})$ the \emph{Mukai completion} of $(\bar T, \bar q)$, and $\mathfrak m = \mathfrak {so} (T, q)$ the \emph{Mukai completion} of $\bar {\mathfrak m}$.
\end{definition}

The proof of Theorem \ref{prop:ll_isom} can be interpreted as saying that one can recover the Lie algebra $\mathfrak g$ as the Mukai completion of the smaller Lie algebra $\bar {\mathfrak g}$. Now consider the special Mumford--Tate algebra $\bar {\mathfrak m}$ of $X$. It is contained in $\bar {\mathfrak g}$. If we apply the Mukai completion to $\bar {\mathfrak m}$, then get an abstract Lie algebra $\mathfrak m$. Since $\mathfrak g$ is also the Mukai completion of $\bar {\mathfrak g}$, one can easily conclude
\begin{equation} \label{eq:mukai_completion}
	\mathfrak m = \mathfrak m_{-2} \oplus \mathfrak m_0 \oplus \mathfrak m_2 \subset \mathfrak g, \qquad \mathfrak m_0 = \bar {\mathfrak m} \oplus \QQ h, \quad \mathfrak m_{\pm 2} = \mathfrak m \cap \mathfrak g_{\pm 2} .
\end{equation}

\begin{lemma} \label{lem:mukai_completion}
	Assume the special Mumford--Tate algebra $\bar {\mathfrak m}$ of $X$ is isomorphic to $\mathfrak {so} (\bar T, \bar q)$, e.g., assume $X$ satisfies either (A) or (B) above. Then its formal Mukai completion $\mathfrak m$ is contained in $\mathfrak g$, and respects the degree of $\mathfrak g$ in the sense of \eqref{eq:mukai_completion}. \qed
\end{lemma}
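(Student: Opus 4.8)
The plan is to exhibit the inclusion $\mathfrak m \subset \mathfrak g$ explicitly, by reusing the bracket description of $\mathfrak g$ established in the proof of Theorem \ref{prop:ll_isom} and checking that the same universal formulas govern $\mathfrak m$. Recall from that proof that $\mathfrak g$ is reconstructed from $\bar{\mathfrak g}$ via the degree decomposition $\mathfrak g = \mathfrak g_{-2}\oplus(\bar{\mathfrak g}\oplus\QQ h)\oplus\mathfrak g_2$, the identifications $\bar{\mathfrak g}\cong\wedge^2\bar V$ and $\mathfrak g_{\pm 2}\cong\bar V$, and the bracket rules (1)--(3) listed there. Since $\bar{\mathfrak m}\cong\so(\bar T,\bar q)$ with $\bar T\subset\bar V$ a nondegenerate sub-quadratic-space (in case (A), $\bar T^\perp=\QQ h$ is spanned by the polarization, which is non-isotropic; in case (B), $\bar T=\bar V$), applying the \emph{same} construction to $(\bar T,\bar q)$ produces the formal Mukai completion $\mathfrak m=\so(T,q)$ with its own grading $\mathfrak m=\mathfrak m_{-2}\oplus(\bar{\mathfrak m}\oplus\QQ h)\oplus\mathfrak m_2$, where $\bar{\mathfrak m}\cong\wedge^2\bar T$, $\mathfrak m_{\pm 2}\cong\bar T$, and the brackets are given by the rules (1)--(3) read with $\bar T$ in place of $\bar V$.

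First I would record how $\bar{\mathfrak m}$ sits inside $\bar{\mathfrak g}$ under $\bar{\mathfrak g}\cong\wedge^2\bar V$. Using the identification $a\wedge b\mapsto\tfrac{1}{2}(\bar q(a,-)\otimes b-\bar q(b,-)\otimes a)$, an element $a\wedge b$ with $a,b\in\bar T$ sends $v\mapsto\tfrac{1}{2}(\bar q(a,v)b-\bar q(b,v)a)$, which vanishes for $v\in\bar T^\perp$; hence $\wedge^2\bar T$ corresponds to exactly the endomorphisms supported on $\bar T$, i.e. to the standard copy of $\so(\bar T,\bar q)\subset\so(\bar V,\bar q)$. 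This identifies $\bar{\mathfrak m}$ with $\wedge^2\bar T\subset\wedge^2\bar V$, compatibly with the inclusion $\bar{\mathfrak m}\subset\bar{\mathfrak g}$ of Proposition \ref{prop:mt}. I then define a graded map $\iota:\mathfrak m\to\mathfrak g$ by the inclusion $\wedge^2\bar T\hookrightarrow\wedge^2\bar V$ on $\bar{\mathfrak m}$, the identity on $\QQ h$, and the inclusion $\bar T\hookrightarrow\bar V$ on $\mathfrak m_{\pm 2}\cong\bar T$. By construction $\iota$ is injective and preserves the $h$-grading, so once it is shown to be a Lie algebra homomorphism the degree compatibility in \eqref{eq:mukai_completion} follows.

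To see $\iota$ is a homomorphism I would check the three families of brackets. Rules (1) and (2) are immediate: the grading relations are preserved because $\iota$ is graded, and the $\bar{\mathfrak g}$-module relations reduce to the fact that $\wedge^2\bar T$ acts on $\bar T$ inside $\bar V$ the same way it acts as $\bar{\mathfrak m}$ on $\mathfrak m_{\pm 2}$, which holds since $\so(\bar T)$ acts on $\bar T$ as the restriction of its action as a subalgebra of $\so(\bar V)$. The one substantive point is the cross-term (3): for $a\in\mathfrak m_{-2}$, $b\in\mathfrak m_2$ identified with elements of $\bar T$, one has $[a,b]=a\wedge b+\bar q(a,b)h$ in $\mathfrak m$ with $a\wedge b\in\wedge^2\bar T$, while the same elements satisfy $[a,b]=a\wedge b+\bar q(a,b)h$ in $\mathfrak g$ with $a\wedge b\in\wedge^2\bar V$. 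These agree precisely because $\iota|_{\bar{\mathfrak m}}$ is the map $\wedge^2\bar T\hookrightarrow\wedge^2\bar V$ and because $\bar q|_{\bar T}$ is the restriction of $\bar q$, so the scalar $\bar q(a,b)$ is the same whether computed in $\bar T$ or in $\bar V$.

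The main (and essentially only) obstacle is this last verification, i.e. confirming that the inclusion $\wedge^2\bar T\subset\wedge^2\bar V$ intertwines the two instances of rule (3); equivalently, that the Mukai completion is functorial for inclusions of nondegenerate sub-quadratic-spaces. Granting this, $\iota$ is a graded Lie algebra embedding, whence $\mathfrak m=\iota(\mathfrak m)\subset\mathfrak g$ with $\mathfrak m_0=\bar{\mathfrak m}\oplus\QQ h$ and $\mathfrak m_{\pm 2}=\mathfrak m\cap\mathfrak g_{\pm 2}$, which is exactly \eqref{eq:mukai_completion}.
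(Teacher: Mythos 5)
Your proof is correct and is essentially the paper's own argument: the paper disposes of this lemma with ``one can easily conclude,'' relying exactly on the observation you make explicit, namely that the bracket rules (1)--(3) from the proof of Theorem \ref{prop:ll_isom} are functorial for the inclusion of the nondegenerate subspace $\bar T \subset \bar V$ (with the only substantive check being the cross-term relation, which holds since $\bar q(a,b)$ and $a\wedge b$ are computed compatibly in $\wedge^2\bar T \subset \wedge^2 \bar V$). One small slip to fix: in case (A) the orthogonal complement of $\bar T$ in $\bar V$ is $\QQ\, l$, spanned by the polarization class $l \in \bar V$, not ``$\QQ h$'' --- $h$ is the grading operator in $\fg_0$, not an element of $\bar V$.
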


\subsection{Cohomology of Hilbert schemes of K3 surfaces}\label{case_llv_k3}
The main result of this subsection is the proof of Theorem \ref{thm_llv_decompose}(1) concerning  the generating series for $K3^{[n]}$. Specifically, we establish: 

\begin{theorem} \label{thm:g_module_k3n}
	Let $\mathfrak g$ be the Looijenga--Lunts--Verbitksy algebra of a hyper-K\"ahler manifold of $\mathrm{K3}^{[n]}$ type with $n \ge 2$. Then the generating series of the formal characters of $\mathfrak g$-modules $H^* (\mathrm{K3}^{[n]})$ is
	\[ 	1 + \left( \sum_{i=0}^{11} (x_i + x_i^{-1}) \right) q + \sum_{n=2}^{\infty} \ch (H^* (\mathrm{K3}^{[n]})) q^n = \prod_{m=1}^{\infty} \prod_{i=0}^{11} \frac {1} {(1 - x_i q^m) (1 - x_i^{-1} q^m)} .\]
\end{theorem}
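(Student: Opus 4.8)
The plan is to reduce to the classical Göttsche–Soergel description of $H^*(S^{[n]})$ and to recognize it as a formal-character identity for the LLV algebra. Since the LLV decomposition is a diffeomorphism invariant, it suffices to compute $\ch(H^*(S^{[n]}))$ for one representative of each deformation class, and I would take $S$ to be a very general \emph{non-projective} K3 surface. This choice is the crux: by \cite{dCM} the Göttsche–Soergel formula still applies, and the Hodge structure on $H^*(S^{[n]})$ is induced from that of $S$, so its special Mumford–Tate algebra equals that of $H^2(S)$, namely $\bar{\mathfrak m}\cong\so(3,19)$ (type $\mathrm D_{11}$, rank $11$), the maximal value for a K3-type Hodge structure (Example \ref{ex_llv_k3}). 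Non-projectivity is exactly what makes the Cartan of $\bar{\mathfrak m}$ fill the full rank-$11$ Cartan of $\bar{\mathfrak g}=\so(3,20)$, so that the $\bar{\mathfrak m}$-weight data determines the character in \emph{all} of $x_1,\dots,x_{11}$ rather than only the Hodge numbers.

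First I would record the seed data. As the standard $\mathfrak m=\so(4,20)$-module, $H^*(S)$ is the Mukai completion $V_S$ of $\bar V_S=H^2(S)$ (with $H^0\oplus H^4$ supplying the hyperbolic plane), whose weights are $\pm\varepsilon_0,\dots,\pm\varepsilon_{11}$; thus $\ch(H^*(S))=\sum_{i=0}^{11}(x_i+x_i^{-1})$ and $\ch(H^*(\mathrm{K3}^{[0]}))=1$. Here $x_0$ records the cohomological degree through $h=\varepsilon_0^\vee$, $x_1$ the Hodge type through $f$, and $x_2,\dots,x_{11}$ the remaining transcendental $\bar{\mathfrak m}$-weights. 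Because Göttsche–Soergel builds $H^*(S^{[n]})$ from $H^*(S)$ by direct sums of symmetric powers (with degree shifts), and these operations are functorial for $\bar{\mathfrak m}$-modules, the formula is an identity of graded $\bar{\mathfrak m}$-modules, equivalently a full-character identity in $x_0,\dots,x_{11}$ once the $h$-grading is remembered. As $H^*(S)$ is purely even, the symmetric powers are bosonic, and passing to characters yields the Fock-space product
\begin{equation*}
	\sum_{n\geq 0}\ch\bigl(H^*(S^{[n]})\bigr)\,q^n=\prod_{m=1}^{\infty}\prod_{\theta\in\mathrm{wt}(V_S)}\frac{1}{1-x^{\theta}q^m}=\prod_{m=1}^{\infty}\prod_{i=0}^{11}\frac{1}{(1-x_iq^m)(1-x_i^{-1}q^m)}.
\end{equation*}
The one bookkeeping point is that the Mukai normalization (tracking $\deg-2n$ via $h$ rather than absolute degree) absorbs Göttsche's $m$-dependent degree shifts into $q$, leaving the internal weight of each level-$m$ excitation independent of $m$; this is the routine substitution matching the Poincaré specialization of Corollary \ref{gen_series_k3}(ii).

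It remains to pass from the $\mathfrak m$-character to the $\mathfrak g$-character. For $n\geq 2$, $S^{[n]}$ is a genuine module over $\mathfrak g=\so(4,21)$ (type $\mathrm B_{12}$), and I would fix $\mathfrak m\subset\mathfrak g$ sharing the common rank-$12$ Cartan $\mathfrak h$ containing $h$ and $f$. A formal character depends only on the restriction to $\mathfrak h$, so $\ch_{\mathfrak g}(H^*(S^{[n]}))=\ch_{\mathfrak m}(H^*(S^{[n]}))$ as Laurent polynomials in $x_0,\dots,x_{11}$; the only additional feature is that the left-hand side is automatically $\mathfrak W(\mathrm B_{12})$-invariant, which is manifest for the product on the right and serves as a consistency check. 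The terms $n=0,1$ are displayed separately precisely because $H^*(\mathrm{K3}^{[0]})$ and $H^*(\mathrm{K3}^{[1]})=H^*(S)$ carry no $\so(4,21)$-action (wrong $b_2$), and enter only formally through the characters recorded above.

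I expect the main obstacle to be conceptual rather than computational: justifying that the essentially Hodge-theoretic Göttsche–Soergel output genuinely computes the $\mathfrak g$-character. The resolution is the observation that a formal character sees only the common Cartan torus of $\mathfrak m\subset\mathfrak g$, so no lifting of the full Lie-algebra action is required at the level of characters; the finer lifting---via injectivity of $\Rep(\mathrm B_{12})\to\Rep(\mathrm D_{12})$ for equal-rank algebras---is needed only later, to extract the explicit irreducible decompositions of Corollary \ref{cor:g_module}.
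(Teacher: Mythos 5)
Your proposal is correct and follows essentially the same route as the paper: specialize to $S^{[n]}$ with $S$ a very general non-projective K3 (invoking \cite{dCM}) so that $\bar{\mathfrak m}\cong\so(3,19)$ is of full rank, read the G\"ottsche--Soergel isomorphism as a graded $\bar{\mathfrak m}$-module identity, and convert the partition sum into the Euler product exactly as in the paper's proof of Theorem \ref{thm:g_module_k3n}. Your one deviation---computing only the character via the common Cartan rather than lifting the module structure from type $\mathrm D_{12}$ to $\mathrm B_{12}$---is a cosmetic reordering rather than a different argument, since the paper's lifting step (Proposition \ref{prop:restriction}) is itself proved by precisely this character/Weyl-group comparison.
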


Let $X$ be a $\mathrm{K3}^{[n]}$ type  hyper-K\"ahler manifold. Since the $\mathfrak g$-module structure on $H^* (X)$ is a diffeomorphism invariant, we may specialize $X$ to $S^{[n]}$ with $S$ a complex K3 surface. Since the statement is a diffeomorphism invariant, we may  also vary the complex structure of $S$ at our convenience. The Looijenga--Lunts--Verbitsky algebras for $S$ and $X$ are different, and we indicate them by 
 $\mathfrak g(S)$ and $\mathfrak g(X)$ respectively. As discussed, 
  $\mathfrak g(S) = \mathfrak {so} (H^* (S, \QQ), q_S)$ where $q_S$ is the Mukai completion of the intersection pairing on the second cohomology of $S$. On the other hand, $\mathfrak g(X) = \mathfrak {so} (V, q)$ where $(V, q)$ is the Mukai completion of the second cohomology $(\bar V=H^2(X), \bar q)$ 
of $X$ endowed with the Beauville--Bogomolov form. The relationship between $(\bar V, \bar q)$ and $H^2(S)$ is well understood. Specifically, 
    $$(\bar V, \bar q) = (H^2 (S, \QQ), \bar q_S) \oplus \langle -2(n-1) \rangle.$$ This implies the inclusion $\bar {\mathfrak g}(S) \subset \bar {\mathfrak g}(X)$, whence the inclusion of Looijenga--Lunts--Verbitsky algebras
\[ \mathfrak g(S) \subset \mathfrak g(X) .\]

The Hodge structure of the hyper-K\"ahler manifold $S^{[n]}$ was determined by G\"ottsche--Soergel \cite{gs93}. We interpret this as giving the decomposition of $S^{[n]}$ as a representation of the Mumford--Tate algebra $\bar \fm\cong \so(3,19)(=\bar \fg(S))$. By considering the grading operator $h$, we can lift this decomposition of $H^*(S^{[n]})$  to a decomposition as a $\fg(S)\cong \so(4,20)$-module. Since $\fg(S)\cong\so(4,20)$ and $\fg(X)\cong \so(4,21)$ have the same rank (type $D_{12}$ and $B_{12}$ respectively), there exists a unique $\fg(X)$-module structure compatible (by restriction) to the $\fg(S)$-module structure that we have determined. We conclude that essentially formally starting from G\"ottsche--Soergel results, we recover the LLV decomposition for $\mathrm{K3}^{[n]}$.

\begin{theorem} \label{thm:k3n}
	Let $S$ be a K3 surface and $X = S^{[n]}$. Denote $W = H^* (S)$ by the standard $\mathfrak g(S)$-module. Then the $\mathfrak g(X)$-module structure on $H^* (X)$ is uniquely determined by the isomorphism of $\mathfrak g(S)$-modules
	\[ H^* (X) \cong \bigoplus_{\alpha \vdash n} \left( \bigotimes_{i=1}^n \Sym^{a_i} W \right) .\]
	Here $\alpha = (1^{a_1}, \cdots, n^{a_n})$ runs through all the partitions of $n = \sum_{i=1}^n ia_i$.
\end{theorem}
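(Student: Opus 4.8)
The plan is to reduce the statement to the Göttsche--Soergel description of $H^*(S^{[n]})$ and then to \emph{bootstrap} that description — which a priori only records a Hodge structure — up to the full $\mathfrak{g}(S)$--action using the degree operator $h$. First I would use that the $\mathfrak{g}(X)$--module structure is a diffeomorphism invariant (hence constant on the $\mathrm{K3}^{[n]}$ deformation class) to specialize $X = S^{[n]}$ with $S$ a \emph{very general non-projective} K3 surface. By Proposition~\ref{prop:mt}(1) this forces $\bar{\mathfrak m}(S) = \bar{\mathfrak g}(S)\cong\so(3,19)$ (type $\mathrm D_{11}$, rank $11$), so that the Hodge structure on $H^*(S)$ already detects the whole reduced algebra. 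The Göttsche--Soergel formula \cite{gs93}, valid in the non-projective case by \cite{dCM}, then gives an isomorphism of graded Hodge structures $H^*(S^{[n]})\cong\bigoplus_{\alpha\vdash n}\bigotimes_i\Sym^{a_i}H^*(S)$; interpreting $W=H^*(S)$ as the standard $\mathfrak{g}(S)$--module (with $H^0\oplus H^2\oplus H^4$ the $h$--graded pieces $\RR(1)\oplus H^2(S)\oplus\RR(-1)$ of \eqref{eq:h_decomp}), this reads as an isomorphism of $\bar{\mathfrak g}(S)$--modules in each cohomological degree.

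A technical point I would verify carefully is that the cohomological degree shifts built into the classical formula agree with the intrinsic grading coming from the \emph{centered} degree operator $h$ on $W$ (which acts by $+2$ on $H^4$, by $-2$ on $H^0$, and by $0$ on $H^2$). Concretely, a part of size $m$ contributes a shift $2(m-1)$, so the summand indexed by $\alpha$ with $\ell(\alpha)=\sum_i a_i$ parts carries a global shift $2(n-\ell(\alpha))$ in the geometric normalization; one checks this is exactly the reindexing $k=h+2n$ applied to $\bigotimes_i\Sym^{a_i}W$ with its $h$--grading. A direct bookkeeping of $h$--eigenvalues shows both give the degree range $[\,2n-2\ell,\ 2n+2\ell\,]$ term-by-term, so no extra twist is needed in the statement (the case $n=2$, where the right side is $\Sym^2 W\oplus W$ of total dimension $324$ with Betti numbers $1,23,276,23,1$, is a convenient sanity check).

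The heart of the argument is the lift from $\bar{\mathfrak g}(S)$ to $\mathfrak{g}(S)$. Here I would use that $\mathfrak g_0(S)=\bar{\mathfrak g}(S)\oplus\QQ h$ is a \emph{maximal-rank} reductive subalgebra of $\mathfrak{g}(S)\cong\so(4,20)$ (type $\mathrm D_{12}$): indeed $\mathrm{rank}\,\bar{\mathfrak g}(S)+1 = 11+1 = 12 = \mathrm{rank}\,\mathfrak{g}(S)$, so a Cartan $\mathfrak h\subset\mathfrak{g}(S)$ may be chosen inside $\mathfrak g_0(S)$. Both sides of the claimed isomorphism are genuine $\mathfrak{g}(S)$--modules — the left because $\mathfrak{g}(S)\subset\mathfrak{g}(X)$ acts on $H^*(X)$, the right because $W$ is the standard $\mathfrak{g}(S)$--module — and the data assembled above (the $\bar{\mathfrak g}(S)$--isotype in each $h$--degree) is precisely the $\mathfrak h$--weight multiplicity, i.e.\ the formal $\mathfrak{g}(S)$--character. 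Since the formal character determines a module over a semisimple Lie algebra, the two $\mathfrak{g}(S)$--modules are isomorphic.

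Finally, for the uniqueness assertion I would invoke that $\mathfrak{g}(S)\cong\so(4,20)$ (type $\mathrm D_{12}$) sits inside $\mathfrak{g}(X)\cong\so(4,21)$ (type $\mathrm B_{12}$) as the stabilizer of the distinguished line $\langle -2(n-1)\rangle\subset V$, an inclusion of equal rank $12$; consequently the restriction functor $\Rep(\mathrm B_{12})\to\Rep(\mathrm D_{12})$ is injective on objects (see the branching rules of \S\ref{sec:appendixB}), so at most one $\mathfrak{g}(X)$--module restricts to the computed $\mathfrak{g}(S)$--module, and it is the one carried by $H^*(X)$. I expect the main obstacle to lie not in the representation theory — which is entirely maximal-rank/character bookkeeping, quotable from the cited results — but in ensuring that the Göttsche--Soergel isomorphism is genuinely $\bar{\mathfrak g}(S)$--equivariant (an isomorphism of $\bar{\mathfrak m}$--modules, realized by algebraic correspondences / Nakajima operators) and not a mere identity of graded Hodge numbers, together with nailing down the degree normalization of the second paragraph.
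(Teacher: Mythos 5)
Your proposal is correct and follows essentially the same route as the paper: specialize to a very general non-projective $S$ (via \cite{dCM}) so that $\bar{\mathfrak m}(S)=\bar{\mathfrak g}(S)\cong\so(3,19)$, read the G\"ottsche--Soergel isomorphism as an $\bar{\mathfrak m}$-module isomorphism, lift to $\mathfrak g(S)_0=\bar{\mathfrak g}(S)\oplus\QQ h$ using the grading, pass to $\mathfrak g(S)$ because the weight lattices of $\mathfrak g(S)_0$ and $\mathfrak g(S)$ coincide (your formal-character argument), and obtain uniqueness of the $\mathfrak g(X)$-structure from the equal-rank inclusion $\mathrm D_{12}\subset\mathrm B_{12}$ and Proposition \ref{prop:restriction}. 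The one point you flag as the main obstacle --- genuine equivariance of the G\"ottsche--Soergel isomorphism --- is settled in the paper's proof exactly along the lines you anticipate: \cite{gs93} is an isomorphism of $\QQ$-Hodge structures (not merely of Hodge numbers), and after identifying $\bar{\mathfrak m}(X)=\bar{\mathfrak m}(S)$ (the special Mumford--Tate algebra of a tensor construction is a quotient of $\bar{\mathfrak m}(S)$, while an embedded summand gives the reverse inclusion) any Hodge-structure isomorphism is automatically $\bar{\mathfrak m}$-equivariant.
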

\begin{proof}
	The main result of  G\"ottsche--Soergel \cite{gs93} is the existence of a canonical isomorphism of $\QQ$-Hodge structures:
	\begin{equation}\label{eq_GS}
		H^* (X)(n) = \bigoplus_{\alpha \vdash n} H^* \left(S^{(a_1)} \times \cdots \times S^{(a_n)}\right) (a_1 + \cdots + a_n) .
	\end{equation}
	Here $S^{(a)} = S^a / \mathfrak S_a$ denotes the $a$-th symmetric power of $S$, and the additional parentheses indicate Tate twistings by $\QQ(n)$ and $\QQ(a_1 + \cdots + a_n)$ respectively. For notational simplicity, we omit the Tate twists\footnote{The Tate twisting has the effect of centering the Hodge weights at $0$, instead instead of the natural $2n$ center for $H^*(X)$. This type of shifting is customary in Hodge theory, and the reason for it is to align the weights arising geometrically (centered at $2n$) to the weight arising from representation theory (centered at $0$).} henceforth. Now using the Hodge structure isomorphism $H^* (S^{(a_i)}) = \Sym^{a_i} W$, we see that the desired identity holds at the level of Hodge structures.

	In \cite{gs93}, $S$ is assumed to be a polarized $K3$ surface. The algebraicity on $S$ is not necessary as shown by de Cataldo--Migliorini \cite{dCM}. This allows us to assume that the Mumford--Tate algebra of $S$ is as big as possible, i.e.,  $\bar {\mathfrak m} (S) \cong \so(3,19)$.

	This isomorphism \eqref{eq_GS} gives that the Mumford--Tate algebras $\bar {\mathfrak m} (S) = \bar {\mathfrak m} (X)$ coincide. Indeed, the Hodge structure of $H^* (X)$ is obtained from a suitable tensor construction applied to the Hodge structure $W = H^* (S)$. By \cite[Rem 1.8]{moo99}, the special Mumford--Tate algebra of a tensor construction of $W$ is an image of the special Mumford--Tate algebra of $W$. This means we have a surjection $\bar {\mathfrak m} (S) \twoheadrightarrow \bar {\mathfrak m} (X)$. On the other hand, choosing $\alpha = (a_1 = 0, \cdots, a_{n-1} = 0, a_n = 1)$, we have a component $L$ on the right hand side. This means $\bar {\mathfrak m} (S) \subset \bar {\mathfrak m} (X)$. Thus we must have $\bar {\mathfrak m} (X) = \bar {\mathfrak m} (S)$ by dimension reasons. We write $\bar \fm$ for both $\bar {\mathfrak m} (S)$ and  $\bar {\mathfrak m} (X)$, and we understand them as identified via \eqref{eq_GS}. 
	
	If follows that the identity \cite{gs93} can be interpreted as an $\bar {\mathfrak m}$-module isomorphism. As discussed, we can assume $\bar {\mathfrak m}$ is as large as possible, i.e. $\bar {\mathfrak m} = \bar \fg(S)\cong \so(3,19)$. Recall that it holds 
	$$\fg(S)_0 =\bar  \fg(S) \oplus \RR h$$
	where $h$ is the grading operator. Since the isomorphism \eqref{eq_GS} respects the natural grading (when the Tate twists are taken into account), we lift \eqref{eq_GS} to an isomorphism of $\mathfrak g(S)_0$-modules.  Since the weight lattices of $\mathfrak g(S)_0$ and $\mathfrak g(S)$ are the same, this is enough to conclude the both hand sides are isomorphic as $\mathfrak g(S)$-modules. (Here the left hand side has a structure of $\fg(X)$-module, which by restriction gives the structure of a $\fg(S)$-module. While the right hand side only has a natural structure of $\fg(S)$-module.)

	Finally, the $\mathfrak g(X)$-module structure on $H^* (X)$ is in fact uniquely determined by its $\mathfrak g(S)$-module structure. Note that $\mathfrak g(S) = \mathfrak {so} (W, q_S)$ and $\mathfrak g(X) = \mathfrak {so} (V, q)$ are type $\mathrm D_{12}$ and $\mathrm B_{12}$ simple Lie algebras. Hence we can apply Proposition \ref{prop:restriction}.
\end{proof}

The above Theorem \ref{thm:k3n} gives us a tool to compute the $\mathfrak g(X)$-module structure of $H^* (X)$, because we can determine its formal character by computing the right hand side of the equality. This method is already very useful to compute the formal character and hence the $\mathfrak g(X)$-module structure of the cohomology of $\mathrm{K3}^{[n]}$ type  hyper-K\"ahler manifold. We can make the formula even better by taking care of them all; we consider the generating function of the formal characters of $\mathrm{K3}^{[n]}$  hyper-K\"ahler manifolds. The advantage of this is that we can get rid of the delicate part of partitions in the formula.

\begin{proof} [Proof of Theorem \ref{thm:g_module_k3n}]
	For simplicity, let us write $s_i = \ch (\Sym^i W)$ for the formal character of the symmetric power of the standard $\mathfrak g(S)$-module $W$. By Theorem \ref{thm:k3n}, we can write down the generating function by
	\begin{align*}
		\sum_{n=0}^{\infty} \ch (\mathrm{K3}^{[n]}) q^n &= \sum_{n=0}^{\infty} \sum_{\alpha \vdash n} s_{a_1} s_{a_2} \cdots s_{a_n} q^n = \sum_{\alpha} s_{a_1} q^{a_1} \cdot s_{a_2} q^{2a_2} \cdot \ \cdots \ \cdot s_{a_n} q^{n a_n},
	\end{align*}
	where $\alpha = (1^{a_1}, \cdots, n^{a_n})$ runs through all the partitions of $n = a_1 + 2a_2 + \cdots + na_n$ for all nonnegative integers $n$. Hence, forgetting about the partition $\alpha$ and just thinking about $a_i$, we can simply rewrite the last expression by
	\begin{equation} \label{eq:factorize}
		\sum_{\alpha} s_{a_1} q^{a_1} \cdot s_{a_2} q^{2a_2} \cdot \ \cdots \ \cdot s_{a_n} q^{n a_n} = \left( \sum_{a_1=0}^{\infty} s_{a_1} q^{a_1} \right) \left( \sum_{a_2=0}^{\infty} s_{a_2} q^{2 a_2} \right) \left( \sum_{a_3=0}^{\infty} s_{a_3} q^{3 a_3} \right) \cdots .
	\end{equation}
	Now setting $A(q) = \sum_{i=0}^{\infty} s_i q^i$, this value is just $A(q) A(q^2) A(q^3) \cdots = \prod_{m=1}^{\infty} A(q^m)$. Moreover, the expression $A(q)$ can be further simplified into
	\begin{align*}
		A(q) &= \sum_{i=0}^{\infty} s_i q^i \\
		&= 1 + \ch W q + \ch (\Sym^2 W) q^2 + \cdots \\
		&= 1 + (x_0 + \cdots + x_{11} + x_0^{-1} + \cdots + x_{11}^{-1}) q + (x_0^2 + x_0 x_1 + \cdots + x_{11}^{-2}) q^2 + \cdots \\
		&= \prod_{i=0}^{11} (1 + x_i q + x_i^2 q^2 + \cdots) (1 + x_i^{-1} q + x_i^{-2} q^2 + \cdots) \\
		&= \prod_{i=0}^{11} \frac{1}{(1 - x_i q) (1 - x_i^{-1} q)} .
	\end{align*}
	The theorem follows.
\end{proof}

\subsection{Cohomology of generalized Kummer varieties}\label{case_llv_kum}
In this subsection, we prove Theorem \ref{thm_llv_decompose}(2).

\begin{theorem} \label{thm:g_module_kumn}
	Let $\mathfrak g$ be the Looijenga--Lunts--Verbitsky algebra of a hyper-K\"ahler manifold of $\Kum_n$ type. Let us define the formal power series
	\begin{equation}\label{eq_bq} B(q) = \prod_{m=1}^{\infty} \left[ \prod_{i=0}^3 \frac{1} {(1 - x_i q^m) (1 - x_i^{-1} q^m)} \prod_j (1 + x_0^{j_0} x_1^{j_1} x_2^{j_2} x_3^{j_3} q^m) \right] ,\end{equation}
	with $j = (j_0, \cdots, j_3) \in \{ -\frac{1}{2}, \frac{1}{2} \}^{\times 4}$ and $j_0 + \cdots + j_3 \in 2\ZZ$.  Let $b_1$ be the degree $1$ coefficient of $B(q) = 1 + b_1\cdot q + b_2\cdot q^2+ \cdots$, and $J_4(d)$ be the fourth Jordan totient function. With these notations, the generating series of the formal characters of the $\mathfrak g$-modules $H^* (\mathrm{Kum}_n)$ is
	\begin{equation}\label{eq_kumn} 1 + \left( \sum_{i=0}^3 (x_i + x_i^{-1}) + 16 \right) q + \sum_{n=2}^{\infty} \ch (H^* (\mathrm{Kum}_n)) q^n = \sum_{d=1}^{\infty} J_4 (d) \frac{B(q^d) - 1}{b_1 \cdot q} .\end{equation}
\end{theorem}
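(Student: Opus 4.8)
The plan is to mimic the proof of Theorem \ref{thm:k3n} for the $K3^{[n]}$ case, replacing the $K3$ surface by a (very general, non-projective) abelian surface $A$ and carefully tracking the two features absent in the $K3^{[n]}$ setting: the odd cohomology of $A$ and the torsion translations governing the sum map. As in the $K3$ case the $\fg$-module structure on $H^*(\Kum_n)$ is a diffeomorphism invariant, so I may specialise $\Kum_n$ to the fibre over $0$ of the sum map $\sigma\colon A^{[n+1]}\to A$ and take $A$ very general, so that $\bar\fm(A)=\bar\fg(A)\cong\so(3,3)$ by Proposition \ref{prop:mt}. The two relevant LLV algebras $\fg(A)\cong\so(4,4)$ (type $\mathrm D_4$) and $\fg(\Kum_n)\cong\so(4,5)$ (type $\mathrm B_4$) have the same rank $4$, and $\fg(A)\subset\fg(\Kum_n)$ via $H^2(A)\subset H^2(\Kum_n)$; hence by Proposition \ref{prop:restriction} it suffices to compute the formal $\mathrm D_4$-character of $H^*(\Kum_n)$ and then lift it uniquely to a $\mathrm B_4$-module. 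The first task is to recognise $B(q)$: Göttsche's formula for the Hilbert schemes of $A$ (the character refinement of \eqref{eq_GS}, valid non-projectively by \cite{dCM}) gives $\sum_{k\ge 0}\ch H^*(A^{[k]})\,q^k = B(q)$ once one identifies, as $\fg(A)$-modules, the even cohomology $H^{\even}(A)$ with the standard representation $V$ (weights $\pm\varepsilon_i$, contributing the factors $\prod_i (1-x_iq^m)^{-1}(1-x_i^{-1}q^m)^{-1}$) and the odd cohomology $H^{\odd}(A)$ with the half-spin representation $S^+$ (weights $\tfrac12(\pm\varepsilon_0\pm\varepsilon_1\pm\varepsilon_2\pm\varepsilon_3)$ with an even number of minus signs, contributing $\prod_j(1+x_0^{j_0}\cdots x_3^{j_3}q^m)$).

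Second, I would establish the elementary relation $\tfrac1{b_1}[q^m]B(q)=\ch H^*(\Kum_{m-1})^{A[m]}$. The sum map $\sigma\colon A^{[m]}\to A$ is an isotrivial fibration with fibre $\Kum_{m-1}$ whose monodromy is realised by the translations in $G:=A[m]$ and hence factors through the finite group $G$ (as $\pi_1(A)=\ZZ^4\twoheadrightarrow G$). The Leray spectral sequence therefore reads $H^*(A^{[m]})=\bigoplus_\chi H^*(A,\mathcal L_\chi)\otimes H^*(\Kum_{m-1})_\chi$, the sum running over the characters $\chi$ of $G$, with $\mathcal L_\chi$ the corresponding rank-one torsion local system on the torus $A$ and $H^*(\Kum_{m-1})_\chi$ the $\chi$-isotypic part. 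Since $H^*(A,\mathcal L_\chi)=0$ for $\chi\neq 1$ (vanishing of the cohomology of a nontrivial torsion local system on a torus) and $=H^*(A)$ for $\chi=1$, this collapses to $H^*(A^{[m]})=H^*(A)\otimes H^*(\Kum_{m-1})^{G}$; dividing characters by $b_1=\ch H^*(A)$ gives the claim.

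Third, and this is the crux, I must pass from the $G$-invariants to the full cohomology by a Möbius inversion over torsion. Writing $\kappa_m=\ch H^*(\Kum_{m-1})$ and $\iota_k=\ch H^*(\Kum_{k-1})^{A[k]}=\tfrac1{b_1}[q^k]B(q)$, the target identity \eqref{eq_kumn} is exactly the generating-function form of
\begin{equation*}
	\kappa_m=\sum_{d\mid m}J_4(d)\,\iota_{m/d},
\end{equation*}
as one checks by expanding $\sum_{d\ge 1}J_4(d)\,(B(q^d)-1)/(b_1q)$ and collecting the coefficient of $q^{m-1}$. To prove this I would decompose $H^*(\Kum_{m-1})$ into its $G$-isotypic pieces, group the characters $\chi$ according to their exact order $d\mid m$ — there being precisely $J_4(d)$ characters of exact order $d$, since $J_4(d)=\sum_{e\mid d}\mu(d/e)e^4$ counts the points of exact order $d$ on $A$ — and identify each order-$d$ isotypic summand with $H^*(\Kum_{(m/d)-1})^{A[m/d]}$. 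The last identification is the genuinely delicate step: it requires comparing $\Kum_{m-1}$ with $\Kum_{(m/d)-1}$ through the isogenies compatible with the sum maps (equivalently, running the Leray argument of the previous paragraph over the intermediate torsion quotients), and tracking how a character of exact order $d$ descends to the trivial character after passing to the level-$(m/d)$ cover. This torsion bookkeeping, which refines Göttsche's orbifold count \cite{got94} and produces the Jordan totient, is where I expect essentially all the difficulty to lie.

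Finally, feeding the relation $\kappa_m=\sum_{d\mid m}J_4(d)\iota_{m/d}$ and $\sum_{k\ge 0}\ch H^*(A^{[k]})q^k=B(q)$ into the generating series yields the $\mathrm D_4$-character identity \eqref{eq_kumn}; since each $H^*(\Kum_n)$ (for $n\ge 2$) is a genuine $\fg(\Kum_n)\cong\mathrm B_4$-module whose restriction to $\fg(A)\cong\mathrm D_4$ is the computed character, Proposition \ref{prop:restriction} promotes this equality of $\mathrm D_4$-characters to the asserted identity of $\mathrm B_4$-characters, completing the proof.
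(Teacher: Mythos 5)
Your overall skeleton is sound, and large parts of it coincide with what the paper actually does: specializing to a very general (non-projective, via \cite{dCM}) abelian surface, identifying $H^*_{\even}(A)$ with the standard and $H^*_{\odd}(A)$ with a half-spin $\fg(A)$-module so that $B(q)=\sum_{k\ge 0}\ch H^*(A^{[k]})\,q^k$, and the final unique lift from $\mathrm D_4$- to $\mathrm B_4$-characters via Proposition \ref{prop:restriction} are all correct and match the paper's strategy (Theorem \ref{thm:kumn} and its proof). Your step 2 is also right: the pullback of $\sigma\colon A^{[m]}\to A$ along multiplication by $m$ is the trivial family $A\times\Kum_{m-1}$ with Galois group $A[m]$, so over $\QQ$ the transfer isomorphism gives $H^*(A^{[m]})=H^*(A)\otimes H^*(\Kum_{m-1})^{A[m]}$ directly (you do not even need degeneration of the Leray spectral sequence), whence $b_m=b_1\iota_m$; and your reduction of \eqref{eq_kumn} to the arithmetic relation $\kappa_m=\sum_{d\mid m}J_4(d)\,\iota_{m/d}$ is an easy and correct manipulation. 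One small point you should make explicit: the $A[m]$-action is by monodromy operators, hence commutes with the $\bar\fg(A)$- (and Mumford--Tate-) action (cf.\ the paper's appeal to \cite{mar02}), so the isotypic pieces really are submodules and all your identities make sense as character identities.

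The genuine gap is exactly where you flag it, and it is not a small one: the assertion that for $\chi$ of exact order $d\mid m$ the $\chi$-isotypic component of $H^*(\Kum_{m-1})$ is isomorphic, as a graded $\fg(A)$-module, to $H^*(\Kum_{(m/d)-1})^{A[m/d]}$ is precisely the content of the Göttsche--Soergel multiplicity $g(\alpha)^4$, which your plan was designed to avoid quoting. To prove it you would have to analyze $H^*(A^{[m]},\sigma^*\mathcal L_\chi)$ (which computes the isotypic piece, up to the factor $H^*(A)$) through the decomposition theorem for $R\sigma_*\QQ$, showing that only the summands pushed forward along multiplication-by-$g(\alpha)$ maps with $d\mid g(\alpha)$ survive tensoring with $\mathcal L_\chi$ --- i.e.\ you would be re-proving \cite{gs93} rather than sidestepping it; ``comparing through the isogenies compatible with the sum maps'' is a description of this task, not an argument. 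The paper instead quotes Göttsche--Soergel's formula $H^*(X\times A)=\bigoplus_{\alpha\vdash m}H^*(A^{(\alpha)})^{\oplus g(\alpha)^4}$ (lifted to $\fg(A)$-modules in Theorem \ref{thm:kumn}) and then disposes of the $\gcd(K)^4$ combinatorics by the purely formal Lemma \ref{lem:pf_g_module_2}, proved by integrating lattice indicator functions over $T^4$. It is worth noting that if you do allow yourself the Göttsche--Soergel input, your framework closes up immediately and even yields a clean alternative to Lemma \ref{lem:pf_g_module_2}: since $A^{(\alpha)}$ and the Tate twists depend only on the multiset of multiplicities $(a_i)$, the part-rescaling bijection $\beta\vdash m/d\mapsto d\beta\vdash m$ identifies $\{\alpha\vdash m: d\mid g(\alpha)\}$ with partitions of $m/d$, so
\begin{equation*}
	b_1\sum_{d\mid m}J_4(d)\,\iota_{m/d}
	\;=\;\sum_{\alpha\vdash m}\Bigl(\sum_{d\mid g(\alpha)}J_4(d)\Bigr)\ch H^*(A^{(\alpha)})
	\;=\;\sum_{\alpha\vdash m}g(\alpha)^4\,\ch H^*(A^{(\alpha)})
	\;=\;b_1\kappa_m
\end{equation*}
by the classical identity $\sum_{d\mid g}J_4(d)=g^4$. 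So the fix is to replace your unproven isotypic identification by the citation of \cite{gs93}; as written, the proposal defers essentially all of the difficulty to a step it does not carry out.
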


\begin{remark} Considering the degree $n$ terms of the identity \eqref{eq_kumn}, we obtain 
\[ \ch (H^* (\Kum_n)) = \frac{1}{b_1} \sum_{d | n+1} J_4 \left( \frac{n+1}{d} \right) \cdot b_d ,\]
where $b_d$ are the coefficients of $B(q) = 1 + b_1 q + b_2 q^2 + \cdots$ given by \eqref{eq_bq}. In particular, if $n+1=p$ is prime, then 
$$\ch (H^* (\Kum_{p-1})) = \frac{b_{p}}{b_1} + J_4 (p)=\frac{b_{p}}{b_1}+(p^4-1)$$  has a simple expression. As previously mentioned, the constant term $p^4-1$ is an indicator of the trivial representations in $H^* (\Kum_{p-1})$. 
\end{remark}

For the proof of Theorem \ref{thm:g_module_kumn}, we follow the same strategy as for $K3^{[n]}$. The only difference here is that the Hodge structure of the generalized Kummer varieties is much more complicated than that of the Hilbert scheme of K3 surfaces, essentially because of the existence of the odd cohomology. Fortunately, the first step, interpreting the G\"ottsche--Soergel \cite{gs93} result in our language,  is fairly straightforward.

\begin{theorem} \label{thm:kumn}
	Let $A$ be a complex torus of dimension $2$ and $X$ be the generalized Kummer variety associated to $A$. Write $W = H^*_{\even} (A)$ and $U = H^*_{\odd} (A)$ as the standard and semi-spin $\mathfrak g(A)$-modules. Then the $\mathfrak g(X)$-module structure on $H^* (X)$ is uniquely determined by the $\mathfrak g(A)$-module isomorphism
	\[ H^* (X) \otimes (W \oplus U) = \bigoplus_{\alpha \vdash n+1} \left[ \bigotimes_{i=1}^{n+1} \left( \bigoplus_{j=0}^{a_i} \Sym^{a_i - j} W \otimes \wedge^j U \right) \right]^{\oplus g(\alpha)^4} ,\]
	where $\alpha = (1^{a_1}, \cdots, (n+1)^{a_{n+1}})$ runs through all the partitions of $n+1 = \sum_{i=1}^{n+1} i a_i$, and $g(\alpha)$ is defined by
	\[ g(\alpha) = \gcd \{ k \ : \ 1 \le k \le n+1, \quad a_k \neq 0 \} .\]
\end{theorem}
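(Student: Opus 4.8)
The plan is to follow the template of Theorem \ref{thm:k3n}: take as geometric input the G\"ottsche--Soergel computation \cite{gs93} of the cohomology of generalized Kummer varieties, and upgrade it step by step from a statement about $\QQ$-Hodge structures, to one about the special Mumford--Tate algebra $\bar{\mathfrak m}(A)$, then $\mathfrak g(A)$, and finally $\mathfrak g(X)$. I would specialize $X$ to an actual generalized Kummer variety $X=K_n(A)$, free to move the complex structure since the $\mathfrak g(X)$-module structure is a diffeomorphism invariant, and I would take $A$ a very general non-projective complex $2$-torus so that its Mumford--Tate algebra is as large as possible. The relevant target is $\bar{\mathfrak m}(A)\cong\so(3,3)$: the weight-$2$ Hodge structure on $H^2(A)=\wedge^2 H^1(A)$ is of K3 type, and for very general $A$ the generic Hodge algebra $\mathfrak{sl}(H^1(A))$ maps under the exceptional isogeny $A_3=D_3$ onto the full $\so(H^2(A),q)\cong\so(3,3)$; by de Cataldo--Migliorini \cite{dCM} the G\"ottsche--Soergel formula remains valid without projectivity. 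The key structural observation, replacing ``$W=H^*(S)$ is the standard representation'' from the K3 case, is $D_4$-triality: the LLV algebra $\mathfrak g(A)\cong\so(4,4)$ of the abelian surface acts on $W=H^*_{\even}(A)$ as the $8$-dimensional vector representation and on $U=H^*_{\odd}(A)=H^1\oplus H^3$ as a semi-spin representation (under $\bar{\mathfrak g}(A)=\so(3,3)\cong\mathfrak{sl}_4$ this is $H^1\oplus H^3=V\oplus V^\vee$, which assembles into a half-spin of $\so(8)$). This is exactly why the super-symmetric powers $\bigoplus_j \Sym^{a_i-j}W\otimes\wedge^j U$ are the natural building blocks.

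The heart of the argument is the geometric input. I would exploit the summation (Albanese) morphism $s\colon A^{[n+1]}\to A$, whose fibre over $0$ is $X=K_n(A)$, together with the degree-$(n+1)^4$ \'etale cover $\nu\colon X\times A\to A^{[n+1]}$, $(\xi,a)\mapsto t_a(\xi)$, with deck group $A[n+1]$ acting by $\epsilon\cdot(\xi,a)=(t_\epsilon\xi,\,a-\epsilon)$. Since translations act trivially on $H^*(A)$, this identifies $H^*(X)\otimes H^*(A)=H^*(X\times A)$ with the full (non-invariant) cohomology of the cover, which is what G\"ottsche--Soergel compute as the $A[n+1]$-equivariant cohomology of $A^{[n+1]}$. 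Tracking the Hilbert--Chow stratification by multiplicity type $\alpha\vdash n+1$ together with the condition that the relevant cluster sums lie over torsion, the stratum labelled by $\alpha$ contributes $\bigotimes_i\bigl(\bigoplus_j\Sym^{a_i-j}W\otimes\wedge^j U\bigr)$ with multiplicity equal to the number $g(\alpha)^4=|A[g(\alpha)]|$ of relevant torsion points, where $g(\alpha)=\gcd\{k:a_k\neq 0\}$. This is precisely the asserted Hodge-structure isomorphism; I would sanity-check it in low degree (for $n=0$ it reads $H^*(A)=W\oplus U$, and for $n=1$ it reproduces $\dim H^*(\Kum_1)\cdot 16=384$, the factor $g(\alpha)^4=16=2^4$ on the partition $(2)$ recording the sixteen two-torsion points, i.e.\ the nodes of the Kummer surface, cf.\ Example \ref{ex_llv_k3}).

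With the Hodge-structure isomorphism in hand, the lifting proceeds as for $\mathrm{K3}^{[n]}$. By \cite[Rem 1.8]{moo99} the tensor construction forces $\bar{\mathfrak m}(X)=\bar{\mathfrak m}(A)\cong\so(3,3)$ (note this is strictly smaller than $\bar{\mathfrak g}(X)=\so(3,4)$, because $X$ carries the extra Hodge class of the exceptional divisor); so I may read the isomorphism as one of $\bar{\mathfrak m}(A)$-modules and, using the grading operator $h$ and the equality of weight lattices of $\mathfrak m(X)_0$ and $\mathfrak m(X)$, promote it to an isomorphism of $\mathfrak m(X)\cong\so(4,4)=\mathfrak g(A)$-modules (Lemma \ref{lem:mukai_completion} places $\mathfrak m(X)$ inside $\mathfrak g(X)$). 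Because the representation ring of the semisimple algebra $\mathfrak g(A)$ is an integral domain and $\ch(W\oplus U)\neq 0$, the factor $H^*(A)=W\oplus U$ cancels, determining the $\mathfrak g(A)$-module structure of $H^*(X)$ itself. Finally $\mathfrak g(A)=\mathfrak m(X)\cong D_4$ and $\mathfrak g(X)=\so(4,5)\cong B_4$ have the same rank, so the restriction functor $\Rep(B_4)\to\Rep(D_4)$ is injective on objects (Proposition \ref{prop:restriction}), yielding a unique $\mathfrak g(X)$-module lift; this is the claimed uniqueness.

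The main obstacle I anticipate is the geometric bookkeeping in the second paragraph: correctly deriving the $g(\alpha)^4$ multiplicities from the interplay of the Hilbert--Chow stratification with the summation-to-zero/torsion condition on the \'etale cover, and organizing the odd cohomology so that the $D_4$-triality (hence the semi-spin role of $U$) is respected throughout the identification. The remaining ingredients---the genericity of $A$ giving $\bar{\mathfrak m}(A)=\so(3,3)$, the Mukai completion, the cancellation of $H^*(A)$, and the same-rank $D_4\subset B_4$ restriction---are direct analogues of the $\mathrm{K3}^{[n]}$ argument and should present no essential difficulty.
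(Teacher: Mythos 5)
Your proposal is correct and follows essentially the same route as the paper: quote G\"ottsche--Soergel \cite{gs93} (valid for non-projective tori by \cite{dCM}) for the Hodge-structure isomorphism together with the symmetric-power formula $H^* (A^{(a)}) = \bigoplus_j \Sym^{a-j} W \otimes \wedge^j U$, identify $\mathfrak g(A) \cong \so(4,4)$ via Looijenga--Lunts and the generic special Mumford--Tate algebra $\so(3,3)$ of a very general $2$-torus through $H^2(A) = \wedge^2 H^1(A)$, lift through the grading operator and Mukai completion to a $\mathfrak g(A)$-module isomorphism, and deduce the unique $\mathfrak g(X)$-lift from the injectivity of the restriction $\Rep(\mathrm B_4) \to \Rep(\mathrm D_4)$ (Proposition \ref{prop:restriction}). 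Your two embellishments --- the \'etale-cover/stratification sketch explaining the $g(\alpha)^4$ multiplicities, which the paper simply imports from \cite{gs93}, and the cancellation of the factor $W \oplus U$ using that $K_{\CC}(\mathfrak g(A)) \cong \ZZ[\Lambda]^{\mathfrak W}$ is an integral domain, which the paper instead performs at the character level by dividing by $b_1$ in the proof of Theorem \ref{thm:g_module_kumn} --- are both sound but do not constitute a different argument.
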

\begin{proof}
	Again, the following isomorphism is proved in \cite{gs93} on the level of $\QQ$-Hodge structures (ignoring Tate twists as in Theorem \ref{thm:k3n}).
	\[ H^* (X \times A) = \bigoplus_{\alpha \vdash n+1} H^* (A^{(a_1)} \times \cdots \times A^{(a_{n+1})})^{\oplus g(\alpha)^4} .\]
	Here $A^{(a)} = A^a / \mathfrak S_a$ indicates the symmetric power of $A$. Since $A$ in this case has an odd cohomology, the Hodge structure of $A^{(a)}$ has a more complicated form
	\[ H^* (A^{(a)}) = \bigoplus_{j=0}^a \Sym^{a - j} W \otimes \wedge^j U .\]
	The proof of it can be found, for example, in \cite{mss11}.
	
	In Theorem \ref{thm:k3n}, as K3 surfaces are also  hyper-K\"ahler manifolds, we could avoid the discussion of Looijenga--Lunts--Verbitsky algebra and Mumford--Tate algebra of them. In this case, since $A$ is not a hyper-K\"ahler manifold, we first need to (1) compute the Looijenga--Lunts--Verbitsky algebra of $A$ and (2) compute the special Mumford--Tate algebra of $A$. Fortunately,  both issues can be handled without much difficulty. First, Looijenga and Lunts \cite{ll97} already computed the Looijenga--Lunts--Verbitsky algebra for an arbitrary complex torus: 
	$$\mathfrak g(A) \cong \mathfrak {so} (H^1 (A, \QQ) \oplus (H^1 (A, \QQ))^{\vee}, (,))$$ where $(,)$ is the canonical pairing. For dimension $2$, this coincides with $\mathfrak {so} (U^{\oplus 4}) = \mathfrak {so} (H^2 (A, \QQ) \oplus \QQ^2, q_A \oplus \begin{psmallmatrix} 0&1\\1&0 \end{psmallmatrix})$. Hence, the theory of Looijenga--Lunts--Verbitsky algebra of complex tori of dimension $2$ coincides with that of hyper-Kah\"aler manifolds (with $b_2=6$). Second, the special Mumford--Tate Lie algebra of $H^* (A, \QQ)$ is that of $H^1 (A, \QQ)$ because $H^* (A, \QQ) = \wedge^* H^1 (A, \QQ)$. This also coincides with the special Mumford--Tate algebra of $H^2 (X, \QQ) = \wedge^2 H^1 (A, \QQ)$, which is a Hodge structure of K3 type, so we can also apply the same argument for complex tori of dimension $2$.
	
	Now lifting the Hodge structure isomorphism to an $\mathfrak m$-module isomorphism can be done as in the proof of Theorem \ref{thm:k3n}. Also, using the Torelli theorem for complex tori, we can vary the complex structure of $A$ to enhance this isomorphism to a $\mathfrak g(A)$-module isomorphism. (Again, we have made use of \cite{dCM} to be able to work with non-projective complex tori.)
\end{proof}

The second part of the theorem requires a new idea. This is because of the additional wedge product terms appearing in Theorem \ref{thm:kumn}, and also because of the delicate term $g(\alpha)^4$.

\begin{proof} [Proof of Theorem \ref{thm:g_module_kumn}]
	Write $s_i = \ch (\Sym^i W)$ and $w_i = \ch (\wedge^i U)$. Observe that $U$ is of dimension $8$, so we have only $w_1, \cdots, w_8$. By Theorem \ref{thm:kumn}, we can directly compute
	\begin{align*}
		&\sum_{n=0}^{\infty} \ch (H^* (\mathrm{Kum}_n)) (s_1 + w_1) q^{n+1} \\
		=& \sum_{n=0}^{\infty} \sum_{\alpha \vdash n+1} g(\alpha)^4 (s_{a_1} + s_{a_1-1} w_1 + s_{a_1-2} w_2 + \cdots) \cdots (s_{a_{n+1}} + s_{a_{n+1}-1} w_1 + s_{a_{n+1}-2} w_2 + \cdots) q^{n+1} \\
		=& \sum_{\alpha \neq 0} g(\alpha)^4 (s_{a_1} + s_{a_1-1} w_1 + \cdots) q^{a_1} (s_{a_2} + s_{a_2-1} w_1 + \cdots) q^{2a_2} (s_{a_3} + s_{a_3-1} w_1 + \cdots) q^{3a_3} \cdots .
	\end{align*}
	Here in the last expression, $\alpha = (1^{a_1}, 2^{a_2}, \cdots)$ runs through all \emph{nonempty} partition and we used $n+1 = a_1 + 2a_2 + \cdots$. Now, as we did in the proof of Theorem  \ref{thm:g_module_k3n}, we would like to transform the expression without involving the partition $\alpha$. This cannot be in the same way because of the problematic term $g(\alpha)^4$. Let us introduce a set $K = \{ k : a_k \neq 0 \}$ associated to the partition $\alpha$. This set is nonempty because $\alpha$ cannot be an empty partition. Now running $\alpha$ through all nonempty partition corresponds to running $K$ for all \emph{nonempty finite} subset of $\ZZ$, and varying the multiplicity $a_k \in \ZZ_{\ge1}$ for all elements $k \in K$. Moreover, the notation $g(\alpha)$ is converted simply to $\gcd (K)$. Hence, we can convert the last expression by
	\begin{equation} \label{eq:kumn_proof_eq1}
	\begin{aligned}
		&\sum_{\alpha \neq 0} g(\alpha)^4 (s_{a_1} + s_{a_1-1} w_1 + \cdots) q^{a_1} (s_{a_2} + s_{a_2-1} w_1 + \cdots) q^{2a_2} (s_{a_3} + s_{a_3-1} w_1 + \cdots) q^{3a_3} \cdots \\
		=& \sum_K \sum_{(a_k)_{k \in K}} \prod_{k \in K} \gcd(K)^4 (s_{a_k} + s_{a_k - 1} w_1 + \cdots ) q^{ka_k}
		= \sum_K \gcd(K)^4 \sum_{(a_k)} \prod_{k \in K} (s_{a_k} + s_{a_k - 1} w_1 + \cdots ) q^{ka_k} ,
	\end{aligned}
	\end{equation}
	where the tuple $(a_k)_{k \in K}$ runs through all the possible functions $K \to \ZZ_{\ge 1}$. Now one can apply the same factorization technique we used in \eqref{eq:factorize} to simplify the last expression into
	\begin{align*}
		\sum_{(a_k)} \prod_{k \in K} (s_{a_k} + s_{a_k - 1} w_1 + \cdots ) q^{k a_k}
		=& \prod_{k \in K} \left( (s_1 + w_1) q^k + (s_2 + s_1 w_1 + w_2) q^{2k} + \cdots \right) .
	\end{align*}
	For simplicity, let us define $A(q) = (s_1 + w_1) q + (s_2 + s_1 w_1 + w_2) q^2 + \cdots$. Then we can write down the last expression in \eqref{eq:kumn_proof_eq1} simply by
	$$\sum_K \gcd(K)^4 \prod_{k \in K} A(q^k).$$
	It is surprising to observe this expression admits a further simplification.
	
	\begin{lemma} \label{lem:pf_g_module_2}
		Let $A(q)$ be an arbitrary formal power series on $q$. Then we have an identity
		\[ \sum_K \gcd(K)^4 \prod_{k \in K} A(q^k) = \sum_{d=1}^{\infty} J_4 (d) (B(q^d) - 1) ,\]
		where $K$ runs through all the nonempty finite subset of $\ZZ$, $J_4(d)$ denotes the fourth Jordan totient function and $B(q) = (1+A(q)) (1+A(q^2)) (1+A(q^3)) \cdots$.
	\end{lemma}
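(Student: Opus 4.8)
The plan is to start from the right-hand side, expand the infinite product defining $B$, and then reorganize the resulting double sum so that the Jordan totient is collected by a standard divisor-sum identity.

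The one arithmetic input I would record first is the M\"obius-type identity for the Jordan totient,
\[ \sum_{d \mid n} J_4(d) \ = \ n^4 , \]
the degree-$4$ analogue of $\sum_{d\mid n}\varphi(d)=n$. This is the only number-theoretic fact needed, and it follows at once from the multiplicativity of $J_4$ together with $J_4(p^a)=p^{4a}-p^{4(a-1)}$ (so that $\sum_{j=0}^a J_4(p^j)=p^{4a}$).

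Next I would expand $B(q^d)-1$. Since $B(q)=\prod_{m\ge 1}(1+A(q^m))$, we have $B(q^d)=\prod_{m\ge 1}(1+A(q^{dm}))$, and expanding the product over the choices of $1$ versus $A(q^{dm})$ in each factor gives a sum indexed by finite subsets of $\ZZ_{\ge 1}$; removing the empty subset (which contributes the constant $1$),
\[ B(q^d)-1 \ = \ \sum_{\emptyset\neq S\subseteq \ZZ_{\ge1}} \ \prod_{m\in S} A(q^{dm}) . \]
The substitution $K = dS := \{dm : m\in S\}$ is a bijection between the nonempty finite subsets $S\subseteq\ZZ_{\ge1}$ and the nonempty finite subsets $K\subseteq\ZZ_{\ge1}$ all of whose elements are divisible by $d$; the latter condition is exactly $d\mid\gcd(K)$, and under it $\prod_{m\in S}A(q^{dm})=\prod_{k\in K}A(q^k)$. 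Substituting into the right-hand side and interchanging the order of summation then yields
\[ \sum_{d=1}^\infty J_4(d)\,\bigl(B(q^d)-1\bigr) \ = \ \sum_{\emptyset\neq K} \biggl(\sum_{d\mid\gcd(K)} J_4(d)\biggr)\prod_{k\in K}A(q^k) \ = \ \sum_{\emptyset\neq K}\gcd(K)^4\prod_{k\in K}A(q^k), \]
the final equality being the Jordan totient identity applied with $n=\gcd(K)$. This is precisely the left-hand side, and the lemma follows.

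The only point requiring care is the legitimacy of these manipulations in the formal power series ring. In the application $A(q)$ has vanishing constant term, so $A(q^k)$ has order $\ge k$ and $\prod_{k\in K}A(q^k)$ has order $\ge\sum_{k\in K}k$; hence only finitely many $K$ contribute to each fixed power of $q$, the infinite product defining $B$ converges $(q)$-adically, and the same order bound restricts the range of $d$ relevant to each coefficient. The interchange of summation is therefore valid coefficient-by-coefficient. I expect this bookkeeping, rather than the Jordan totient identity (which is classical), to be the only genuine content to verify.
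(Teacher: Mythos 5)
Your proof is correct, but it takes a genuinely different route from the paper's. The paper captures the factor $\gcd(K)^4$ by an integration trick: it introduces the indicator functions $\delta_k$ of the $k^4$-torsion lattices in the $4$-torus $T^4$, writes $\gcd(K)^4 = \int_{T^4}\prod_{k\in K}\delta_k\,d\mu$ with the counting measure, pulls the integral through the sum over $K$ so the sum factors as $-1+\prod_m(1+A(q^m)\delta_m)$, and then evaluates by grouping the lattice points $(\tfrac{c_1}{d},\dots,\tfrac{c_4}{d})$ according to $\gcd(c_1,\dots,c_4,d)=1$, which produces $J_4(d)$ copies of $B(q^d)-1$ by the very definition of the Jordan totient. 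You instead start from the right-hand side, expand $B(q^d)-1$ over nonempty finite subsets, reindex by $K=dS$ (noting $d\mid\gcd(K)$ is exactly the divisibility condition), interchange sums, and invoke the classical divisor identity $\sum_{d\mid n}J_4(d)=n^4$. The two arguments encode the same arithmetic fact --- the paper's point count on $T^4$ is precisely a geometric proof of that divisor identity, partitioned by the order of the torsion point --- but yours is the more elementary and transparent version, replacing the measure-theoretic bookkeeping with a direct M\"obius-style reindexing. A further merit of your write-up is that you address formal convergence: the lemma as stated allows an ``arbitrary'' formal power series $A(q)$, but both the product defining $B$ and the sum over $K$ only make sense coefficientwise when $A$ has vanishing constant term (as it does in the application, where $A(q)=(s_1+w_1)q+\cdots$); the paper leaves this implicit. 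One cosmetic point: the lemma's phrase ``finite subsets of $\ZZ$'' should be read as subsets of $\ZZ_{\ge 1}$, as you correctly do.
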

	\begin{proof} [Proof of Lemma \ref{lem:pf_g_module_2}]
		Let $S^1 = \RR / \ZZ$ be the circle group and $T^4 = (S^1)^4$ the $4$-torus. We define the following character function $\delta_k : T^4 \to \{ 0, 1\}$ of the $k^4$-lattice on $T^4$
		\[ \delta_k (x,y,z,w) = \begin{cases}
			1 \quad & \mbox{if } x,y,z,w \in \left( \tfrac{1}{k} \ZZ \right) / \ZZ \\
			0 \quad & \mbox{otherwise}
		\end{cases} .\]
		Let $d\mu$ be the counting measure on $T^4$. The idea is to capture the nuisance term $\gcd(K)^4$ by the integration of a multiplication of the character functions
		\[ \gcd(K)^4 = \int_{T^4} \left( \prod_{k \in K} \delta_k \right) d\mu .\]
		Using this, the left hand side of the identity can be transformed in the following way.
		\begin{align*}
			\sum_{K \neq \emptyset} \gcd(K)^4 \prod_{k \in K} A(q^k) &= \int_{T^4} \left[ \sum_{K \neq \emptyset} \left( \prod_{k \in K} A(q^k) \delta_k \right) \right] d\mu \\
			&= \int_{T^4} \left[ -1 + (1 + A(q) \delta_1) (1 + A(q^2) \delta_2) (1 + A(q^3) \delta_3) \cdots \right] d\mu .
		\end{align*}
		Let us compute this last expression by evaluating the integral at the $d^4$-lattice points $((\frac{1}{d} \ZZ) / \ZZ)^4 \subset T^4$, inductively starting from the lower values of $d$. At the point $(\frac{c_1}{d} ,\cdots, \frac{c_4}{d}) \in T^4$, if $\gcd(c_1, c_2, c_3, c_4, d) > 1$, then this point was already counted when we considered the $(d')^4$-lattice points with $d' < d$. When $\gcd(c_1, c_2, c_3, c_4, d) = 1$, the evaluation of the integral at this point gives precisely the formal power series
		\[ -1 + (1 + A(q^d)) (1 + A(q^{2d}) (1 + A(q^{3d})) \cdots = B(q^d) - 1 .\]
		The number of points $(\frac{c_1}{d} ,\cdots, \frac{c_4}{d})$ with $\gcd(c_1, c_2, c_3, c_4, d) = 1$ is by definition the fourth Jordan totient function value $J_4 (d)$. This proves the lemma.
	\end{proof}
	
	Returning to the proof of Theorem \ref{thm:g_module_kumn}, we now have
	\[ \sum_{n=0}^{\infty} \ch (H^* (\mathrm{Kum}_n)) (s_1 + w_1) q^{n+1} = \sum_{d=1}^{\infty} J_4 (d) (B(q^d) - 1) .\]
	In our case, we have the further identities
	\begin{align*}
		1 + A(q) &= 1 + (s_1 + w_1) q + (s_2 + s_1 w_1 + w_2) q^2 + (s_3 + s_2 w_1 + s_1 w_2 + w_3) q^3 + \cdots \\
		&= (1 + s_1 q + s_2 q^2 + \cdots) (1 + w_1 q + \cdots + w_8 q^8) \\
		&= \prod_{i=0}^3 \frac{1}{(1 - x_i q) (q - x_i^{-1} q)} \cdot \prod_j (1 + x_0^{j_0} x_1^{j_1} x_2^{j_2} x_3^{j_3} q) .
	\end{align*}
	We finally note $A(q) = 1 + (s_1 + w_1) q + \cdots$, so that $B(q) = (1 + A(q)) (1 + A(q^2)) \cdots = 1 + (s_1 + w_1) q + \cdots$. Hence we can set $s_1 + w_1 = b_1$, where $b_1$ denotes the first $q$-coefficient of $B(q)$. This completes the proof of the theorem.
\end{proof}

\subsection{Cohomology of O'Grady's 10-dimensional example}\label{case_llv_og10} The case of hyper-K\"ahler manifolds of $\OG10$ type is in some sense the most interesting case, as it shows the power of the LLV decomposition of the cohomology. To start, we recall the very recent result of de Cataldo--Rapagnetta--Sacc\`a \cite{dCRS}.

\begin{theorem}[{de Cataldo--Rapagnetta--Sacc\`a \cite{dCRS}}]\label{thm_dCRS}
Let $X$ be a hyper-K\"ahler manifold of $\OG10$ type. Then
\begin{itemize}
\item[i)] There is no odd cohomology ($H^*_{odd}(X)=0$). 
\item[ii)] The Hodge numbers of $H^*_{even}(X)$ are as follows (we list only the first quadrant): 

	\centering
	\begin{equation}\label{hodge_diamond_og10}
	\begin{tabular}{llllll}
		1 &&&&& \\
		22 & 1 &&&& \\
		254 & 22 & 1 &&& \\
		2,299 & 276 & 23 & 1 && \\
		16,490 & 2,531 & 276 & 22 & 1 & \\
		88,024 & 16,490 & 2,299 & 254 & 22 & 1
	\end{tabular}
	\end{equation}
\end{itemize}
\end{theorem}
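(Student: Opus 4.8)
\medskip

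The plan is to deduce Theorem \ref{thm_dCRS} entirely from the LLV formalism of Section \ref{sec:llv}, taking as external input only the vanishing of the odd cohomology together with the Betti numbers of $X$, both of which are available from \cite{FFZ} independently of \cite{dCRS}. For part (i) there is nothing for the LLV theory to contribute on its own: the generalized Kummer varieties show that a large LLV algebra does not by itself preclude odd cohomology, so I would simply record $H^*_{\odd}(X)=0$ as input from \cite{FFZ}. The whole point is that, once the odd cohomology is known to vanish, the enormous size of $\fg\cong\so(4,22)$ relative to $e(X)=176{,}904$ leaves essentially no freedom in the LLV decomposition, so that part (ii) becomes a representation-theoretic matching problem.

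To carry this out, note first that $n=5$ and $b_2(X)=24$, so $\fg$ has rank $13$ and is of type $\mathrm D_{13}$, with weights written $\mu=(\mu_0,\dots,\mu_{12})$. By Theorem \ref{thm:verbitsky_component} and Proposition \ref{prop:ver_exhaust_bdry} the Verbitsky component $V_{(5)}=V_{5\varpi_1}$ occurs with multiplicity one, and its degree-$2k$ graded piece is $\Sym^{\min(k,\,10-k)}\bar V$, so its contribution to each Betti number is explicit. Write $H^*(X)\cong V_{(5)}\oplus V'$. Since the odd cohomology vanishes, every irreducible constituent $V_\mu$ of $V'$ has integral weights (Proposition \ref{prop:even_rep}), and as $\mu\neq(5)$ it satisfies $\mu_0+\mu_1\le 4$ (Proposition \ref{prop:weaker_condition}). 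Subtracting the graded dimensions of $V_{(5)}$ from the Betti numbers of $X$ shows that $V'$ is concentrated in the five central degrees $H^6,\dots,H^{14}$; equivalently every constituent has $\mu_0\le 2$, because a module with $\mu_0\ge 3$ would already contribute to $H^4$.

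The remaining task is to match the graded dimensions of $V'$, namely $(299,4600,27876,4600,299)$ across the $h$-eigenvalue layers $\theta_0=-2,\dots,2$, using admissible modules with $\mu_0\le 2$ and $\mu_0+\mu_1\le 4$. Working outward from the extreme layer $\theta_0=\pm 2$, to which only modules with $\mu_0=2$ can contribute, and propagating inward, one checks that the only nonnegative combination reproducing this vector is a single copy of $V_{(2,2)}=V_{2\varpi_2}$, of dimension $37{,}674=176{,}904-139{,}230$. With the decomposition $H^*(X)\cong V_{(5)}\oplus V_{(2,2)}$ in hand, the Hodge diamond \eqref{hodge_diamond_og10} is then purely mechanical: each $V_\mu$ carries the Hodge structure induced by $h=\varepsilon_0^\vee$ and $f$, a weight $\theta$ of $V_\mu$ sitting in bidegree $(p,q)=(\theta_0+\theta_1+5,\ \theta_0-\theta_1+5)$ by \eqref{eq:hodge_from_weight}; summing the weight multiplicities of $V_{(5)}$ and $V_{(2,2)}$ over the fibres of $\theta\mapsto(\theta_0,\theta_1)$ yields the tabulated $h^{p,q}$.

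I expect the uniqueness step to be the main obstacle. Unlike the $\OG6$ case, where two combinatorial solutions survive and extra geometric input is needed, here one must genuinely verify that no collection of the small admissible modules — the trivial representation, $V$, $V_{(2)}$, $V_{(1,1)}$, $V_{(1,1,1)}$, $V_{(2,1)}$, and so on — can be assembled to reproduce the central graded dimensions. This is a finite but delicate weight-multiplicity calculation for $\mathrm D_{13}$; the cleanest route is to compute the $\theta_0$-graded (i.e.\ $h$-eigenspace) dimensions of each candidate module, reduce uniqueness to a nonnegative integer linear system, and solve it layer by layer starting from $\theta_0=\pm2$, where the dimension $299$ already constrains the number of $\mu_0=2$ constituents.
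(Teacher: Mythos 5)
Your strategy for part (i) coincides with the paper's (Remark \ref{rem_LF}: the odd vanishing is imported from \cite{FFZ}), and your representation-theoretic matching for part (ii) is essentially sound as far as it goes; in fact your anticipated ``main obstacle'' resolves cleanly. With the residual graded vector $(299,4600,27876,4600,299)$ in degrees $6,\dots,14$, if $m_{(2,2)}=0$ then the $H^6$-layer forces $24\,m_{(2,1)}+m_{(2)}=299$, and the $H^8$-layer reads $576\,m_{(2,1)}+24\,m_{(2)}+2024\,m_{(1,1,1,1)}+276\,m_{(1,1,1)}+24\,m_{(1,1)}+m_{(1)}=4600$, whose left-hand side already equals at least $576\,m_{(2,1)}+24\,(299-24\,m_{(2,1)})=7176>4600$; hence $m_{(2,2)}=1$ and all other multiplicities vanish. (You should, however, make the dimension-bound step explicit --- Weyl dimension formula plus Lemma \ref{lem:dim_ineq}, as in Lemma \ref{lem:OG10_weight_list} --- to certify that your list of ``small admissible modules'' is complete, e.g.\ to exclude $V_{(2,1,1)}$, of dimension $52{,}325$, and $\wedge^5 V$, of dimension $65{,}780$.)

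The genuine gap is in your declared inputs. You take ``the Betti numbers of $X$'' from \cite{FFZ}, but \cite{FFZ} yields (per Remark \ref{rem_LF}) only the vanishing of the odd cohomology; the even Betti numbers $b_4,\dots,b_{10}$ of $\OG10$ are precisely part of what \cite{dCRS} established, so as stated your proof of (ii) is circular. Your argument leans on them at two load-bearing points: the pruning to $\mu_0\le 2$ uses $b_4(X)=300$, and the layer values $299$ and $4600$ are read off from $b_6$ and $b_8$. The paper's Theorem \ref{thm_og10} is engineered exactly to avoid this: its only inputs are $b_2=24$ (Rapagnetta \cite{rapagnetta}), $e(X)=176{,}904$ (Mozgovoy \cite{moz06}), and the odd vanishing, and it substitutes for the unknown Betti numbers the universal Salamon relation \eqref{eq_salamon2}, valid for every hyper-K\"ahler manifold. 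There $b_2$ alone kills $m_{(4)}$, and uniqueness follows by comparing the Euler-characteristic equation \eqref{eq_euler_og10}, rescaled by $\tfrac{2}{13}$, coefficientwise against Salamon's equation \eqref{eq_salamon_og10}. To repair your proposal, replace the assumed Betti numbers by the Euler number together with Salamon's relation; these two global linear constraints (in place of your layer-by-layer system) yield the same unique solution $H^*(X)=V_{(5)}\oplus V_{(2,2)}$, after which the Hodge diamond follows from \eqref{eq:hodge_from_weight} exactly as you describe.
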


While in the other cases we have made heavy use of the knowledge of the Hodge numbers, it turns out that in the $\OG10$ case the existence of the LLV decomposition with respect to $\so(4,22)$ is a very constraining condition. In fact, all that we need to prove Theorem \ref{thm_llv_decompose}(4) is the vanishing of the odd cohomology.  In particular, we obtain that item (ii) of Theorem \ref{thm_dCRS} is a corollary of item (i). To emphasize this fact, we state the following somewhat artificial result:
\begin{theorem}\label{thm_og10}
	Let $X$ be a $10$-dimensional hyper-K\"ahler manifold. Assume the following 
	\begin{enumerate}
		\item[(1)] $b_2(X)=24$.
		\item[(2)] $e(X)=176,904$.
		\item[(3)] There is no odd cohomology ($H^*_{odd}(X)=0$). 
	\end{enumerate}
	Then $X$ has the following LLV decomposition as a $\fg=\so(4,22)$-module:
	\begin{equation}\label{eq_llv_OG10b}
		H^*(X)=V_{(5)}\oplus V_{(2,2)}.
	\end{equation}
	In particular, the Hodge numbers are as in \eqref{hodge_diamond_og10}. 
\end{theorem}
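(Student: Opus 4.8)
The plan is to exploit that $\mathfrak g = \so(4,22)$ is of type $\mathrm D_{13}$ with a $26$-dimensional standard representation $V$ and that $n=5$, so that relative to the modest Euler number there is almost no room beyond the Verbitsky component. Write $H^*(X) = V_{(5)} \oplus V'$. By Proposition~\ref{prop:ver_exhaust_bdry} the Verbitsky component $V_{(5)}$ occurs with multiplicity one and saturates the boundary of the Hodge diamond, and since $H^*_{\odd}(X)=0$ every irreducible summand has integral dominant highest weight (Proposition~\ref{prop:even_rep}(1)); by Proposition~\ref{prop:weaker_condition} every non-Verbitsky summand $V_\mu$ moreover satisfies $\mu_0+\mu_1\le n-1 = 4$. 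The first reduction is to bound $\mu_0$. The Verbitsky component already realises $H^0=\RR$ and $H^2=\bar V$; since hypothesis (1) gives $\dim H^2 = b_2 = 24 = \dim\bar V$, no other summand can contribute to $H^2$. Now $V_\mu$ meets $H^2$ iff it has a weight $\theta$ with $\langle\theta,h\rangle = 2-2n = -8$, i.e.\ $\theta_0=-4$, which for integral $\mu$ occurs precisely when $\mu_0\ge 4$; together with $\mu_0+\mu_1\le4$ this forces $\mu=(4)$, and $V_{(4)}$ does contribute to $H^2$. Hence every non-Verbitsky summand has $\mu_0\le3$.

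Next comes the dimension bookkeeping. The harmonic-polynomial formula for $\so(26)$ gives
\[
  \dim V_{(5)} \;=\; \binom{30}{5}-\binom{28}{3} \;=\; 142506-3276 \;=\; 139230 ,
\]
and, because the odd cohomology vanishes, $e(X)=\dim H^*(X)$, so $\dim V' = 176904-139230 = 37674$. On the other hand $V_{(2,2)}$ is the traceless summand of $\Sym^2(\wedge^2 V) = V_{(2,2)}\oplus V_{(1,1,1,1)}\oplus V_{(2)}\oplus\RR$; from $\dim \Sym^2(\wedge^2 V)=\binom{326}{2}=52975$, $\dim V_{(1,1,1,1)}=\binom{26}{4}=14950$ and $\dim V_{(2)}=350$ one finds
\[
  \dim V_{(2,2)} \;=\; 52975-14950-350-1 \;=\; 37674 .
\]
Thus $\dim V_{(2,2)}$ matches $\dim V'$ on the nose; the content of the theorem is that $V'=V_{(2,2)}$ and nothing else.

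The uniqueness step is where the real work lies. Among integral dominant weights with $\mu_0\le3$ and $\mu_0+\mu_1\le4$, those of dimension at most $37674$ form the short list $\RR, V, V_{(1,1)}, V_{(2)}, V_{(1,1,1)}, V_{(3)}, V_{(2,1)}, V_{(1,1,1,1)}, V_{(2,2)}$, of dimensions $1,26,325,350,2600,3250,5824,14950,37674$; every representation with at least five boxes, as well as $V_{(2,1,1)}$ and $V_{(3,1)}$, already exceeds $37674$ and is excluded. Since the trivial representation is on this list, the dimension identity alone does \emph{not} isolate $V_{(2,2)}$, and one must bring in the grading. Here the plan is to proceed degree by degree past $H^2$: the only surviving candidate reaching $H^4$ is $V_{(3)}$ (it is the unique one with $\mu_0=3$), while $V_{(2,2)}$ first appears in $H^6$; showing that the Verbitsky component already exhausts $H^4$, i.e.\ $b_4 = \dim\Sym^2\bar V = 300$ (equivalently that $H^{\le 4}(X)$ is generated by $H^2$, cf.\ \eqref{eq:verbitsky_component}), eliminates $V_{(3)}$, and an analogous analysis in $H^6, H^8$ using the Hodge structure each $V_\mu$ carries (Remark~\ref{rem_19}) against $h^{2p,0}=1$, $h^{2p+1,0}=0$ and the self-duality of the diamond eliminates the remaining low-weight and trivial summands. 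The main obstacle is precisely this: unlike $H^0$ and $H^2$, the groups $H^4,H^6,\dots$ are \emph{not} provided by the hypotheses, so the argument must supply exactly enough low-degree Betti or Hodge data (or the generation statement) to rule out the finite competitor list. Once $V'=V_{(2,2)}$ is secured, reading off the weight multiplicities through \eqref{eq:hodge_from_weight} produces the Hodge diamond \eqref{hodge_diamond_og10}.
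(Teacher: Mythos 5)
Your setup is sound and coincides with the paper's through the candidate list: the dimension counts ($\dim V_{(5)}=139{,}230$, $\dim V'=37{,}674$, $\dim V_{(2,2)}=37{,}674$) are all correct, the exclusion of $V_{(4)}$ via $b_2=24$ is exactly the paper's step, and your nine surviving weights agree with Lemma \ref{lem:OG10_weight_list}. But the uniqueness step --- which you rightly identify as ``where the real work lies'' --- is genuinely missing, and the route you sketch cannot be completed from hypotheses (1)--(3). The decomposition $H^*(X)=V_{(5)}\oplus\RR^{\oplus 37{,}674}$ satisfies every constraint your argument can access: it has $b_0=1$, $b_2=24$, no odd cohomology, the correct Euler number, it respects the boundary Hodge numbers $h^{2p,0}=1$, $h^{2p+1,0}=0$, and the self-duality of the diamond, because each trivial summand sits entirely in $H^{10}$ with Hodge type $(5,5)$ and is invisible to all of these tests. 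Your fallback of establishing $b_4=300$ (equivalently that $H^{\le 4}$ is generated by $H^2$) is not available: this is not a general fact about hyper-K\"ahler manifolds and for $\OG10$ it is part of the conclusion being proven; moreover, even granted $b_4=300$, it would only eliminate $V_{(3)}$ and would say nothing about the trivial summands or the lower-weight candidates, since the hypotheses supply no information about $b_6$, $b_8$, $b_{10}$ individually.

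The ingredient the paper uses to close this gap, and which your proposal never invokes, is Salamon's relation --- the universal linear constraint $\sum_{k=0}^{n}(n-k)^2 b_{2k}=\frac{n}{24}\,e(X)$, stated as \eqref{eq_salamon2} --- which for $\OG10$ yields the independent equation $25b_0+16b_2+9b_4+4b_6+b_8=36{,}855$. Translated into multiplicities via the per-degree Betti data of each $V_\mu$ (Table \ref{table:betti}), this gives equation \eqref{eq_salamon_og10} alongside the Euler-characteristic equation \eqref{eq_euler_og10}, and a short positivity argument finishes: rescaling \eqref{eq_euler_og10} by $\tfrac{2}{13}$ makes every coefficient weakly dominate the corresponding coefficient of \eqref{eq_salamon_og10} (strictly so for every $\mu\ne(2,2)$, while the right-hand sides are both $5{,}796$), so subtracting forces $m_\mu=0$ for all $\mu\ne(2,2)$ and then $m_{(2,2)}=1$. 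Note that this step uses hyper-K\"ahler geometry once more (Salamon's relation is not a consequence of the LLV module structure plus (1)--(3)), which is precisely why no amount of representation theory and Hodge-diamond bookkeeping of the kind you propose can substitute for it.
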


\begin{remark}
	An alternative notation for this result, the one written in the introduction, is $H^* (X) = V_{5 \varpi_1} \oplus V_{2 \varpi_2}$. Here $\varpi_1$ is the fundamental weight associated to the standard representation $V$ (and thus $V_{5\varpi_1}$ is the leading representation in $\Sym^5 V$), and $\varpi_2$ is the fundamental weight associated to the irreducible $\mathfrak g$-module $\wedge^2 V$.
\end{remark}

\begin{corollary}
If $X$ is a hyper-K\"ahler manifold of $\OG10$ type, then its LLV decomposition is given by \eqref{eq_llv_OG10b} and the Hodge numbers are as in \eqref{hodge_diamond_og10}. 
\end{corollary}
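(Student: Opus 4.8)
The plan is to treat this as a rigidity (squeeze) problem for the finitely many admissible highest weights, exploiting how small the Euler number is relative to the size of $\fg$. First I would fix the setup: by Corollary~\ref{prop:simple} the hypothesis $b_2(X)=24$ makes $\fg=\so(4,22)$ a simple Lie algebra of type $\mathrm D_{13}$, whose standard representation $V$ has $\dim V = b_2+2 = 26$. Since $H^*_{\odd}(X)=0$, Proposition~\ref{prop:even_rep} forces every irreducible summand $V_\mu$ of $H^*(X)$ to have integral highest weight $\mu=(\mu_0,\dots,\mu_{12})\in\ZZ^{13}$. The Verbitsky component $V_{(5)}$ occurs with multiplicity one (Proposition~\ref{prop:ver_exhaust_bdry}), and its dimension is $\dim V_{(5)} = \dim\Sym^5 V - \dim\Sym^3 V = \binom{30}{5}-\binom{28}{3} = 142506 - 3276 = 139230$. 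Because there is no odd cohomology, $e(X)$ equals $\dim_{\RR} H^*(X)$, so the complement $V'$ in the decomposition $H^*(X)=V_{(5)}\oplus V'$ satisfies $\dim V' = 176904-139230 = 37674$. The whole problem is therefore to prove $V' = V_{(2,2)}$, and one notes at the outset that $\dim V_{(2,2)} = 37674$ as well: it is the largest irreducible summand of $\Sym^2(\wedge^2 V)$, the other summands there being $\wedge^4V\oplus V_{(2)}\oplus\RR$.

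Next I would cut the list of possible summands of $V'$ down to a handful. By Proposition~\ref{prop:weaker_condition} every $V_\mu\subseteq V'$ satisfies $\mu_0+\mu_1\le n-1 = 4$. Moreover, by Proposition~\ref{prop:ver_exhaust_bdry} together with \eqref{eq:verbitsky_component}, the Verbitsky component already accounts for all of $H^0$ and $H^2$ (its degree-$2$ piece is $\Sym^1\bar V=\bar V$, of dimension $24=b_2$); hence $V'$ contributes nothing in cohomological degrees $0,2$ and, by Poincar\'e duality, nothing in degrees $18,20$. Since $V_\mu$ occupies exactly the cohomological degrees $[\,2n-2\mu_0,\ 2n+2\mu_0\,]$ (the range of $h$-eigenvalues of its weights, via $h=\varepsilon_0^\vee$ and $\langle\theta,h\rangle=2\theta_0$), this forces $\mu_0\le 3$ for every $V_\mu\subseteq V'$. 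Combined with integrality and $\mu_0+\mu_1\le 4$, only finitely many weights remain, each with a computable dimension. The core of the argument is then the uniqueness step: among non-negative integer combinations of these admissible irreducibles, the only one of total dimension $37674$ whose degree-by-degree contributions (read off from the weight multiplicities via \eqref{eq:hodge_from_weight}) stay consistent with Poincar\'e duality and with the already-fixed low-degree Betti numbers is $V'=V_{(2,2)}$. I expect this enumeration to be the main obstacle: a bare dimension count does not suffice, since the exterior powers $\wedge^kV = V_{(1^k)}$ are also admissible and sizeable, so one must track the contribution of each candidate to the individual graded pieces $H^{2k}(X)$ and invoke positivity of all resulting Hodge numbers to eliminate the spurious combinations.

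Finally, having identified $H^*(X)=V_{(5)}\oplus V_{(2,2)}$, I would recover the Hodge diamond \eqref{hodge_diamond_og10} by a direct computation. Each $V_\mu$ carries its own Hodge structure (\S\ref{sec:llv_further_decomp}), so its Hodge numbers are obtained by sorting the weights $\theta$ of $V_\mu$ according to $p=\theta_0+\theta_1+n$ and $q=\theta_0-\theta_1+n$ as in \eqref{eq:hodge_from_weight}. Summing the contribution of $V_{(5)}$ (whose degree-$2k$ part is $\Sym^{\min(k,\,10-k)}\bar V$) and that of $V_{(2,2)}$ then yields the tabulated Hodge numbers, exhibiting item (ii) of Theorem~\ref{thm_dCRS} as a formal consequence of item (i).
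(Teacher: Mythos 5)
Your setup and endgame match the paper's proof of Theorem \ref{thm_og10} (which is the real content behind this corollary): the reduction to the three inputs $b_2(X)=24$ \cite{rapagnetta}, $e(X)=176{,}904$ \cite{moz06}, and $H^*_{\odd}(X)=0$ \cite{dCRS}; the dimension counts $\dim V_{(5)}=139{,}230$ and $\dim V'=\dim V_{(2,2)}=37{,}674$; the integrality of weights from Proposition \ref{prop:even_rep}; the finite list of admissible weights via the Weyl dimension formula; and the final read-off of the Hodge diamond from the weights via \eqref{eq:hodge_from_weight}. The genuine gap is in your uniqueness step. The constraints you propose there --- total dimension $37{,}674$, Poincar\'e duality, positivity of Hodge numbers, and the already-fixed low-degree Betti numbers --- do not pin down $V'$. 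Poincar\'e duality and positivity are vacuous in this setting: every irreducible $\fg$-module has a PD-symmetric Betti distribution (its weight set is stable under the Weyl reflection $\theta_0\mapsto-\theta_0$), and any nonnegative combination of such modules automatically has nonnegative Hodge numbers in every bidegree; and since you correctly refuse to assume the Hodge diamond (that is what you want to \emph{output}), the only fixed Betti data are $b_0=1$ and $b_2=24$, which merely force $m_{(4)}=0$ --- exactly your $\mu_0\le 3$ cut, already spent. Concretely, $V'=V_{(3)}\oplus\RR^{\oplus 34{,}424}$ has dimension $3{,}250+34{,}424=37{,}674$, contributes nothing in degrees $0$ and $2$, is PD-symmetric, and has nonnegative Hodge numbers everywhere, so it passes every test you impose; so does $V'=\RR^{\oplus 37{,}674}$, among many others. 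Tracking the graded pieces degree by degree generates no further equations, because $b_4,\dots,b_{10}$ are unknowns, not constraints.

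The missing ingredient, which is the paper's key input (C), is \emph{Salamon's relation} $2\sum_{i=1}^{2n}(-1)^i(3i^2-n)\,b_{2n-i}=n\,b_{2n}$, which in the absence of odd cohomology becomes $\sum_{k=0}^{n}(n-k)^2 b_{2k}=\tfrac{n}{24}\,e(X)$ as in \eqref{eq_salamon2}. This is a genuinely extra linear equation on the Betti numbers, valid for all compact hyper-K\"ahler manifolds and independent of the Euler characteristic. Applied to OG10 it yields \eqref{eq_salamon_og10}, and the paper shows that the pair \eqref{eq_euler_og10}, \eqref{eq_salamon_og10} admits the unique nonnegative integer solution $m_{(2,2)}=1$ and all other $m_\mu=0$: if $m_{(2,2)}=0$, rescaling the Euler equation by $\tfrac{2}{13}$ makes every coefficient strictly dominate the corresponding Salamon coefficient while the right-hand sides agree, a contradiction. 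Your spurious solution indeed dies only at this step: $V_{(3)}$ contributes $405$ to the weighted sum in \eqref{eq_salamon_og10} and the trivial summands contribute $0$, so the left-hand side is $405\neq 5{,}796$. Without Salamon's relation (or some equivalent substitute constraining the Betti numbers beyond $b_0,b_2,e$), the squeeze you describe cannot close.
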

\begin{proof}
The three conditions of Theorem \ref{thm_og10} were established by Rapagnetta \cite{rapagnetta}, Mozgovoy \cite{moz06} (see also \cite{hls19}), and de Cataldo--Rapagnetta--Sacc\'a \cite{dCRS} (Theorem \ref{thm_dCRS}(i)) respectively. 
\end{proof}

\begin{remark}\label{rem_LF}
	Lie Fu noted that the arguments of \cite{FFZ} (especially Theorem 1.3 in loc. cit.) directly imply the vanishing of the odd cohomology for hyper-K\"ahler manifolds of $\OG10$ type. Essentially, if one follows O'Grady's original geometric construction (\cite{OG10}) for an $\OG10$ hyper-K\"ahler manifold $X$ starting from a projective K3 surface $S$, it can be shown that the Hodge structure $H^*(X)$ can be realized by a tensor construction starting from the Hodge structure $H^2 (S)$. It follows that $X$ has no odd cohomology.  
	Combined with Theorem \ref{thm_og10}, one obtains an independent proof of Theorem \ref{thm_dCRS}.
\end{remark}

\begin{remark}\label{rem_van_odd}
It is interesting to note that in the $\OG10$ case, the vanishing of the odd cohomology is equivalent to Theorem \ref{thm_nagai2}. Specifically, assuming no odd cohomology, we obtain the LLV decomposition \eqref{eq_llv_OG10b}, which obviously satisfies the condition \eqref{eq:conj} of Conjecture \ref{main_conj}. Conversely, assuming \eqref{eq:conj}, we conclude that there is no odd cohomology. Namely,  
for OG10, the rank of the LLV algebra $\mathfrak g$ is $13$. Any irreducible  $\mathfrak g$-module occurring in the odd cohomology $V_{\mu} \subset H^*_{\odd} (X, \QQ)$ has all the coefficients of $\mu = (\mu_0, \cdots, \mu_{12})$ half-integers. But then, 
 $$\mu_0 + \cdots + \mu_{11} + | \mu_{12} | \ge \frac{13}{2} > 5,$$ violating the  the inequality \eqref{eq:conj}. The same argument applies more generally. Namely, the condition \eqref{eq:conj} forces the vanishing of odd cohomology for $2n$-dimensional hyper-K\"ahler manifolds satisfying 
\begin{equation} \label{eq_b2_bound}
	b_2(X) \ge 4n.
\end{equation}
\end{remark}

The rest of the section is concerned with the proof of Theorem \ref{thm_og10}. In addition to the numerical assumptions of the theorem, we are using the following three general facts about the cohomology of hyper-K\"ahler manifolds. 
\begin{enumerate}
\item[(A)] {\it $H^*(X)$ admits an action by the LLV algebra $\fg\cong \so(4,b_2-2)$.} In this situation, the assumptions of Theorem \ref{thm_og10} give $\fg\cong \so(4,22)$ and $H^*(X)=H^*_{even}(X)$ has dimension $176,904$. The main point here is that this dimension is relatively small with respect to $\fg$.  
\item[(B)] {\it The Verbitsky component $V_{(5)}$ occurs in $H^*(X)$.} Since $\dim V_{(5)}=139,230$, we obtain that the other irreducible $\fg$-representations occurring in $H^*(X)$ have total dimension $37,674$. 
\item[(C)] {\it A $2n$-dimensional hyper-K\"ahler manifold satisfies Salamon's relation:}
\begin{equation*}
	2 \sum_{i=1}^{2n} (-1)^i (3i^2 - n) b_{2n-i} = n b_{2n},
\end{equation*}
which we find convenient to rewrite as 
$\sum_{i=1}^{2n} (-1)^i i^2 b_{2n-i} = \frac{n}{6}\cdot e(X)$.
 Assuming no odd cohomology, this gives us
\begin{equation}\label{eq_salamon2}
\sum_{k=0}^{n} (n-k)^2 b_{2k} =\frac{n}{24} \cdot e(X)
\end{equation}
(e.g. in dimension $2=2n$, this reads $b_0=\frac{e(X)}{24}$, which is equivalent to Noether's formula for hyper-K\"ahler[$\equiv K3$] surfaces).
\end{enumerate}

In the particular case considered here, we obtain the following four equations for the six even Betti numbers:
\begin{equation*}
\begin{aligned}
	b_0 \ \ =\ \  1, \qquad \qquad b_2 \ \ & = && 24\\
	25b_0 + 16b_2 + 9b_4 + 4b_6 + b_8 \ \ & = && 36,855\\
	2b_0 + 2b_2 + 2b_4 + 2b_6 + 2b_8 + b_{10} \ \ & = && 176,904
\end{aligned}
\end{equation*}
There are finitely many non-negative integer solutions $b_{2i}$ to the above equations. It turns out that there is a unique solution compatible with the LLV structure. Specifically, we have 
\begin{equation} \label{eq:decomp_temp_OG10}
	H^*_{\even} (X) = V_{(5)} \oplus V' ,
\end{equation}
for some $\so(4, 22)$-module $V'$. The dimension bound discussed above greatly limits the possibilities for the irreducible summands of $V'$. 
\begin{lemma} \label{lem:OG10_weight_list}
	The possible dominant integral $\mathfrak {so}(4, 22)$-weights $\mu$ such that $V_{\mu}$ can be contained in $V'$ are
	\[ S = \{ (4), \ (3), \ (2,2), \ (2,1), \ (2), \ (1,1,1,1), \ (1,1,1), \ (1,1), \ (1), \ (0) \} . \]
\end{lemma}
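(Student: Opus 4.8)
The plan is to read the lemma as a finite enumeration governed by two a priori representation-theoretic constraints together with one dimension cutoff. First I would pin down the numerics. Since $X$ has no odd cohomology (assumption (3)), the Euler number equals the total even dimension, so $\dim H^*_{\even}(X) = e(X) = 176{,}904$; combined with $\dim V_{(5)} = 139{,}230$ from (B), the complement in \eqref{eq:decomp_temp_OG10} has $\dim V' = 37{,}674$. Hence any irreducible constituent $V_\mu \subset V'$ satisfies $\dim V_\mu \le 37{,}674$. Next I would impose the structural restrictions already available. As $V' \subset H^*_{\even}(X)$, Proposition \ref{prop:even_rep}(1) forces every $\mu_i \in \ZZ$. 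As $V_{(5)}$ has been split off and occurs with multiplicity one (Proposition \ref{prop:ver_exhaust_bdry}), each constituent of $V'$ is $\ne (5)$, so Proposition \ref{prop:weaker_condition} gives $\mu_0 + \mu_1 \le n-1 = 4$. Together with dominance $\mu_0 \ge \mu_1 \ge \cdots \ge |\mu_{12}| \ge 0$, this leaves only $(\mu_0,\mu_1) \in \{(0,0),(1,0),(2,0),(3,0),(4,0),(1,1),(2,1),(3,1),(2,2)\}$, with the tail $(\mu_2,\ldots,\mu_{12})$ a weakly decreasing integer sequence bounded by $\mu_1 \le 2$; in particular $\mu_{12}=0$, so no half-spin or self-dual phenomena of type $\mathrm D_{13}$ arise and every candidate $V_\mu$ is an ordinary (tensor) representation.

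These constraints still permit infinitely many candidates, because the tail of $1$'s or $2$'s can be arbitrarily long; the dimension cutoff must kill all but finitely many. I would make the reduction finite using monotonicity of $\dim V_\mu$ under enlarging the Young diagram: in the stable range $N = 26 \gg |\mu|$ the dimension $\dim V_\mu$ is strictly increasing along inclusions of diagrams (evident from the hook--content formula for $\dim \Sym_\mu(\CC^{26})$ together with the fact that the Littlewood trace corrections are of strictly lower dimension). Thus within each family it suffices to locate the first weight whose dimension exceeds $37{,}674$ and discard it and everything above it.

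Concretely I would compute the boundary dimensions, most efficiently by taking the $\GL(26)$ Schur dimension $\dim \Sym_\lambda(\CC^{26})$ and subtracting the trace terms dictated by the Littlewood restriction $\Res^{\GL(26)}_{\mathrm O(26)} \Sym_\lambda = \bigoplus_\mu \bigl(\sum_{\beta} c^\lambda_{\mu\beta}\bigr) V_\mu$, with $\beta$ ranging over even-row partitions. The antisymmetric family $V_{(1^k)} = \wedge^k V$ has $\dim = \binom{26}{k}$, jumping from $\binom{26}{4} = 14{,}950$ to $\binom{26}{5} = 65{,}780$, so exactly $(1),(1,1),(1,1,1),(1,1,1,1)$ survive; the symmetric family $V_{(k)}$ has dimensions $1,\,26,\,350,\,3250,\,23{,}400$ for $k=0,\dots,4$, all admissible. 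In the mixed families one finds $\dim V_{(2,1)} = 5850 - 26 = 5{,}824$ (admissible), whereas $\dim V_{(2,1,1)} = 52{,}650 - 325 = 52{,}325$, $\dim V_{(3,1)} = 61{,}425 - 675 = 60{,}750$, and $\dim V_{(2,2,1)}$ are all far above $37{,}674$; finally the borderline case is $\dim V_{(2,2)} = \dim \Sym_{(2,2)}(\CC^{26}) - \dim V_{(2)} - \dim V_{(0)} = 38{,}025 - 350 - 1 = 37{,}674$, which is admissible and in fact exactly exhausts $V'$. Collecting the survivors yields precisely $S = \{(4),(3),(2,2),(2,1),(2),(1,1,1,1),(1,1,1),(1,1),(1),(0)\}$.

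I expect the main obstacle to be arithmetic rather than conceptual: the identification hinges on computing $\dim V_{(2,2)}$ with the correct $\mathfrak{so}(26)$-value (i.e. with the proper trace corrections from the Littlewood rule, not the raw $\GL$-dimension $38{,}025$), since the admissibility of this weight is decided by whether that number is $\le 37{,}674$ -- and it turns out to equal it exactly. A secondary point requiring care is making the monotonicity reduction airtight, so that the a priori infinite list of possible tails is genuinely reduced to the handful of explicit boundary checks above.
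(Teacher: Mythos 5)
Your proposal is correct and follows essentially the same route as the paper: the paper's proof is exactly the combination of the dimension bound $\dim V' = 176{,}904 - 139{,}230 = 37{,}674$, integrality of the $\mu_i$ from Proposition \ref{prop:even_rep}, and a finite dimension check (the paper cites the Weyl dimension formula together with the monotonicity Lemma \ref{lem:dim_ineq}, where you instead use $\GL(26)$ hook--content dimensions with Littlewood trace corrections, plus the extra pruning $\mu_0+\mu_1\le 4$ from Proposition \ref{prop:weaker_condition}). All of your boundary dimensions agree with Table \ref{table:betti}, including the decisive value $\dim V_{(2,2)} = 38{,}025 - 350 - 1 = 37{,}674$.

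One concrete misstep: the assertion ``in particular $\mu_{12}=0$'' does not follow from $\mu_0+\mu_1\le 4$ and dominance --- the weight $(1,1,\dots,1)$ with thirteen entries satisfies $\mu_0+\mu_1=2\le 4$ and has $\mu_{12}=\pm 1$. The conclusion is nevertheless safe: any integral dominant weight with $|\mu_{12}|\ge 1$ satisfies $\mu - (1,\dots,1,\pm 1)$ dominant, so Lemma \ref{lem:dim_ineq} gives $\dim V_\mu \ge \tfrac12\binom{26}{13} \gg 37{,}674$ (equivalently, its diagram contains $(1^5)$, which your antisymmetric-family cutoff already kills); this also disposes of the half-spin/self-dual issue you wanted $\mu_{12}=0$ for. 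A second, softer point: your justification of box-monotonicity (``hook--content minus corrections of strictly lower dimension'') is not a valid inequality derivation as stated, since both the $\GL$-dimension and the trace corrections grow when a box is added; to make it airtight one should either invoke a product formula for orthogonal dimensions in the stable range or, as the paper does, combine Lemma \ref{lem:dim_ineq} with direct evaluation of the handful of minimal excluded weights $(1^5)$, $(3,1)$, $(2,1,1)$, $(2,2,1)$ --- which is precisely the finite list your enumeration reduces to, and whose dimensions you compute correctly.
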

\begin{proof} As discussed, $\dim V'=37,674$. On the other hand, by Proposition \ref{prop:even_rep}, $\mu = (\mu_0, \cdots, \mu_{12})$ has integer coefficients $\mu_i$. Using Weyl dimension formula and Lemma \ref{lem:dim_ineq}, one can check that $S$ is the complete list of dominant integral weights for the  $\mathfrak {so}(4, 22)$-modules satisfying these constraints. \end{proof}

\begin{table}[htb]
	\centering
	\begin{tabular}{|c||r|r|r|r|r|r||r||r|} \hline
		& \multicolumn{1}{|c|}{$b_0$} & \multicolumn{1}{|c|}{$b_2$} & \multicolumn{1}{|c|}{$b_4$} & \multicolumn{1}{|c|}{$b_6$} & \multicolumn{1}{|c|}{$b_8$} & \multicolumn{1}{|c||}{$b_{10}$} & \multicolumn{1}{|c||}{Dimension} & \multicolumn{1}{|c|}{{\small $\sum (5-i)^2 b_{2i}$}} \\ \hline \hline
		$V_{(5)}$ & 1 & 24 & 300 & 2,600 & 17,550 & 98,280 & 139,230 & 31,059 \\ \hline
		$V_{(4)}$ & & 1 & 24 & 300 & 2,600 & 17,550 & 23,400 & 4,032 \\ \hline
		$V_{(3)}$ & & & 1 & 24 & 300 & 2,600 & 3,250 & 405 \\ \hline
		$V_{(2,2)}$ & & & & 299 & 4,600 & 27,876 & 37,674 & 5,796 \\ \hline
		$V_{(2,1)}$ & & & & 24 & 576 & 4,624 & 5,824 & 672 \\ \hline
		$V_{(2)}$ & & & & 1 & 24 & 300 & 350 & 28 \\ \hline
		$V_{(1,1,1,1)}$ & & & & & 2,024 & 10,902 & 14,950 & 2,024 \\ \hline
		$V_{(1,1,1)}$ & & & & & 276 & 2,048 & 2,600 & 276 \\ \hline
		$V_{(1,1)}$ & & & & & 24 & 277 & 325 & 24 \\ \hline
		$V_{(1)}$ & & & & & 1 & 24 & 26 & 1 \\ \hline
		$V_{(0)}$ & & & & & & 1 & 1 & 0 \\ \hline
	\end{tabular}
	
	\medspace
	\medspace
	
	\caption{The relevant irreducible $\mathfrak {so}(4, 22)$-modules for $\OG10$}
	\label{table:betti}
\end{table}

As discussed in Section \ref{sec:llv}, each of the $\so(4, 22)$-modules $V_\mu$ carry a Hodge structure (induced by $h,f\in \fg=\so(4, 22)$), and hence each $V_{\mu}$ admits its own Betti numbers. We list the relevant Betti numbers in Table \ref{table:betti}. Writing 
\[ V' = \bigoplus_{\mu \in S} V_{\mu}^{\oplus m_{\mu}} \]
for the irreducible decomposition of $V'$, and using Table \ref{table:betti}, we obtain the following constraints:

\begin{itemize}
	\item[i)]  The betti number $b_2=24$ forces $m_{(4)} = 0$.
	
	\item[ii)] The Euler characteristic yields
	\begin{equation}\label{eq_euler_og10}
	\begin{multlined}[c][.8\displaywidth]
		3,250 m_{(3)} + 37,674 m_{(2,2)} + 5,824 m_{(2,1)} + 350 m_{(2)} + 14,950 m_{(1,1,1,1)} \\
		+ 2,600 m_{(1,1,1)} + 325 m_{(1,1)} + 26 m_{(1)} + m_{(0)} = 37,674 .
	\end{multlined}
	\end{equation}
	
	\item[iii)] Salamon's relation gives us
	\begin{equation}\label{eq_salamon_og10}
		\begin{multlined}[c][.8\displaywidth]
		405 m_{(3)} + 5,796 m_{(2,2)} + 672 m_{(2,1)} + 28 m_{(2)} + 2,024 m_{(1,1,1,1)} \\
		+ 276 m_{(1,1,1)} + 24 m_{(1,1)} + m_{(1)} = 5,796 .
	\end{multlined}
	\end{equation}
\end{itemize}
It turns out that this system of equations has a unique (obvious) solution. 

\begin{lemma}
	The above equations admit a unique nonnegative integer solution
	$$ m_{(2,2)} = 1, \quad m_{(4)} = m_{(3)} = m_{(2,1)} = \cdots = m_{(0)} = 0 .$$
\end{lemma}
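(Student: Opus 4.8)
The plan is to solve the linear system using only the two displayed constraints --- the Euler/dimension identity \eqref{eq_euler_og10} and Salamon's relation \eqref{eq_salamon_og10} --- together with the nonnegativity and integrality of the multiplicities $m_\mu$ (recall $m_{(4)}=0$ is already excluded, so these two variables no longer appear). The obvious solution $m_{(2,2)}=1$ with all other multiplicities zero is visible immediately, so the real content is \emph{uniqueness}, and I would obtain it from a single well-chosen linear combination rather than a case analysis.

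First I would record the structural coincidence that drives everything: in each of \eqref{eq_euler_og10} and \eqref{eq_salamon_og10} the coefficient of $m_{(2,2)}$ equals the right-hand side of that same equation (namely $\dim V_{(2,2)} = 37{,}674$ and its Salamon weight $\sum_i(5-i)^2 b_{2i}=5{,}796$). From \eqref{eq_salamon_og10} alone, since every coefficient is a positive integer and every $m_\mu\ge 0$, the term $5{,}796\,m_{(2,2)}$ cannot exceed the total $5{,}796$, giving the upper bound $m_{(2,2)}\le 1$.

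Then I would form the combination \eqref{eq_euler_og10} $-\,7\cdot$\eqref{eq_salamon_og10}. Because of the coincidence above, both the coefficient of $m_{(2,2)}$ and the right-hand side of the result are $37{,}674 - 7\cdot 5{,}796 = -2{,}898$, so after moving the $m_{(2,2)}$ term across one gets
\[
	\sum_{\mu \neq (2,2)} \Bigl(\dim V_\mu - 7\textstyle\sum_i (5-i)^2 b_{2i}(V_\mu)\Bigr)\, m_\mu \ = \ 2{,}898\,(m_{(2,2)} - 1).
\]
The point of the multiplier $7$ is that $V_{(2,2)}$ has by far the smallest ratio of dimension to Salamon weight among the representations in Table \ref{table:betti} (it equals $37{,}674/5{,}796 = 6.5$, while the next smallest, for $V_{(1,1,1,1)}$, is $14{,}950/2{,}024 \approx 7.39$), so $7$ lies strictly inside the window $(6.5,\,7.39]$. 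A direct check against the table confirms that every coefficient on the left-hand side is then a strictly positive integer (explicitly $415, 1120, 154, 782, 668, 157, 19$, and $1$ for $m_{(0)}$). Hence the left side is nonnegative, which forces $m_{(2,2)} \ge 1$; combined with $m_{(2,2)}\le 1$ this yields $m_{(2,2)} = 1$, after which the right side vanishes and the strict positivity of the remaining coefficients forces $m_{(3)} = m_{(2,1)} = \cdots = m_{(0)} = 0$.

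The only genuine decision is the choice of multiplier, and I do not expect it to be a real obstacle: the feasible interval for $\lambda$ is guaranteed by the gap between the two smallest dimension-to-weight ratios in Table \ref{table:betti}, and any $\lambda \in (6.5,\,7.39]$ would work, with $\lambda = 7$ chosen merely to keep the arithmetic integral. Everything else is a routine verification against the table.
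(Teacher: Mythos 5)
Your proof is correct and is essentially the paper's argument: the paper rescales the Euler equation \eqref{eq_euler_og10} by $\tfrac{2}{13}$ (i.e., divides by the critical ratio $37{,}674/5{,}796 = 6.5$) and observes that the resulting coefficients strictly dominate those of Salamon's relation \eqref{eq_salamon_og10} while the right-hand sides agree, which is the same positivity comparison that your single combination $\eqref{eq_euler_og10} - 7\cdot\eqref{eq_salamon_og10}$ packages into one step, dispensing with the paper's case split into $m_{(2,2)}\ge 1$ (forced by dimension) and $m_{(2,2)}=0$ (contradiction). One small nit in an aside: the admissible window for the multiplier is the open interval $\bigl(6.5,\ 14{,}950/2{,}024\bigr)$ with $14{,}950/2{,}024 \approx 7.3864 < 7.39$, so $\lambda = 7.39$ in fact makes the $m_{(1,1,1,1)}$ coefficient negative --- though this does not affect your actual proof, since $\lambda = 7$ lies safely inside the window and all your listed coefficients $415, 1{,}120, 154, 782, 668, 157, 19, 1$ check out.
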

\begin{proof}
	By dimension reasons, $m_{(2,2)} \ge 1$ forces the solution listed in the lemma. Thus, we can assume $m_{(2,2)}=0$. Rescaling the Euler characteristic equation \eqref{eq_euler_og10} by $\frac{2}{13}$, we get 
	\begin{equation*}
	\begin{multlined}[c][.8\displaywidth]
		500 m_{(3)} + 896 m_{(2,1)} + \left( 53 + \tfrac{11}{13} \right) m_{(2)} + 2,300 m_{(1,1,1,1)} \\
		+ 400 m_{(1,1,1)} + 50 m_{(1,1)} + 4m_{(1)} + \tfrac{2}{13} m_{(0)} = 5,796.
	\end{multlined}
	\end{equation*}
	Notice that the coefficients of this equation are all larger than the corresponding ones in the  Salamon's relation \eqref{eq_salamon_og10} (while the value on the right-hand-side stays the same). We conclude that there is no non-negative solution with $m_{(2,2)}=0$. The lemma follows. 
\end{proof}

This concludes the proof of  Theorem \ref{thm_og10} (and thus Theorem \ref{thm_llv_decompose}(4)).

\subsection{Cohomology of O'Grady's $6$-dimensional example}\label{case_llv_og6}
We now prove the $\OG6$ case of Theorem \ref{thm_llv_decompose}. Specifically, we prove:

\begin{theorem}\label{thm_llv_og6}
	Let $X$ be a  hyper-K\"ahler manifold of $\mathrm{OG6}$ type and $\mathfrak g\cong\so(4,6)$ its Looijenga--Lunts--Verbitsky algebra. Then the $\mathfrak g$-module irreducible decomposition of the cohomology of $X$ is
	\[ H^* (X) = V_{(3)} \oplus V_{(1,1,1)} \oplus V^{\oplus 135} \oplus \RR^{\oplus 240} .\]
\end{theorem}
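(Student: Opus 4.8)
I would begin exactly as in the $\OG10$ case of \S\ref{case_llv_og10}. The hypotheses give $b_2=8$, so $\fg\cong\so(4,6)$ is simple of type $\mathrm D_5$ and rank $5$, and $\dim X=6$ forces $n=3$ with Verbitsky component $V_{(3)}$ (of dimension $210$), occurring with multiplicity one by Proposition \ref{prop:ver_exhaust_bdry}. From \cite{MRS} I would import the full Hodge diamond of $\OG6$; in particular $H^*_{\odd}(X)=0$, $e(X)=1920$, and the even Betti numbers $b_0=1$, $b_2=8$, $b_4=199$, $b_6=1504$. Since the cohomology is purely even, Proposition \ref{prop:even_rep} restricts attention to integer-weight $V_\mu$; combined with the grading bound $\mu_0\le n$, with Proposition \ref{prop:weaker_condition} ($\mu_0+\mu_1\le 2$ for $\mu\ne(3)$), and with the fact that in the decomposition $H^*(X)=V_{(3)}\oplus V'$ the complement satisfies $b_2(V')=b_2(X)-b_2(V_{(3)})=0$ (which forces $\mu_0\le 1$), the only weights that $V'$ can involve are
\[ (0),\ (1),\ (1,1),\ (1,1,1),\ (1,1,1,1),\ (1,1,1,1,\pm 1), \]
i.e.\ $\RR$, $V$, $\wedge^2V$, $\wedge^3V$, $\wedge^4V$, and the two halves of $\wedge^5V$.

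\textbf{The linear system.} For each such $V_\mu$ I would compute, via the $(h,f)$-bigrading \eqref{eq:hodge_from_weight} applied to the weight multiplicities of $V_\mu$, its individual Betti and Hodge numbers, producing the $\mathrm D_5$-analogue of Table \ref{table:betti}. Matching $b_4$, $b_6$, the Euler characteristic, and Salamon's relation \eqref{eq_salamon2} yields a linear system in the multiplicities $m_\mu$. The crucial difference with $\OG10$ is that here $\fg$ is small relative to $e(X)=1920$, so these numerical constraints do \emph{not} isolate a unique solution; imposing the \emph{entire} Hodge diamond of \cite{MRS} cuts the solution set down, and the resulting $h^{3,1}$-constraint in degree $4$ forces $m_{(1,1,1,1)}=m_{(1,1,1,1,\pm1)}=0$, leaving exactly two non-negative integral solutions,
\[ \text{(A)}\ \ V_{(3)}\oplus V_{(1,1,1)}\oplus V^{\oplus 135}\oplus\RR^{\oplus 240}, \qquad \text{(B)}\ \ V_{(3)}\oplus V_{(1,1)}^{\oplus 6}\oplus V^{\oplus 115}\oplus\RR^{\oplus 290}. \]
Both have dimension $1920$ and the \emph{same} Hodge diamond, so no purely numerical information can separate them.

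\textbf{Separating (A) from (B) geometrically.} The two candidates are distinguished by the $\bar\fg$-module structure of $H^4(X)$: after restricting to $\bar\fg$ and removing the Verbitsky contribution $\Sym^2\bar V$, solution (A) contains the irreducible module $\wedge^2\bar V$ (dimension $28$), whereas (B) contains six copies of the standard module $\bar V$ together with additional trivial modules. These remain inequivalent after further restriction to the special Mumford--Tate algebra $\bar{\mathfrak m}=\so(\bar T,\bar q)$ (since $\wedge^2\bar T$ is not a sum of copies of $\bar T$), so it suffices to pin down the Hodge-theoretic structure of $H^4(X)$. To do this I would revisit the construction of $\OG6$ as a symplectic resolution $\tilde M\to M_{(2,0,-2)}(A)$ of a moduli space of sheaves on an abelian surface $A$, fibered over $A\times\hat A$ by its Albanese map, following \cite{rapog6} and \cite{MRS}. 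From their computation of $H^*(X)$ in terms of $H^*(A\times\hat A)$ and the exceptional loci of the resolution, I would read off $H^4(X)$ as a $\bar{\mathfrak m}$-module, exhibit the summand $\wedge^2\bar V$, and thereby exclude (B).

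\textbf{Main obstacle.} The genuinely hard step is this last one: the numerical invariants, even the complete Hodge diamond of \cite{MRS}, provably cannot distinguish (A) from (B), so one is forced to extract the finer $\bar\fg$- (equivalently $\bar{\mathfrak m}$-) module structure of a specific cohomology group directly from the geometry of the O'Grady resolution. Controlling the contribution of the exceptional divisors to $H^4(X)$ precisely enough to detect the \emph{irreducible} $\wedge^2\bar V$ --- rather than an accidental collection of standard and trivial modules carrying identical Hodge numbers --- is where the real work lies, and is precisely the reason the $\OG6$ case is more delicate than $\OG10$.
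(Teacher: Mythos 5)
Your proposal is correct and follows essentially the same route as the paper: reduce by numerical and Hodge-diamond constraints to the same two candidate decompositions (Proposition \ref{prop_2cases_og6}), then separate them by the $\bar{\mathfrak m}$-module structure of $H^4(X)$, where (A) is detected by the irreducible summand of type $\wedge^2$ while (B) only has standard and trivial summands. The paper executes your final geometric step via Rapagnetta's model $X\sim\widetilde{Y/\iota}$ with $Y$ of $K3^{[3]}$ type — lifting the \cite{MRS} Hodge-number computations to the Hodge-structure isomorphism $H^*(X,\QQ)=H^*(Y,\QQ)^{\sigma}\oplus H^*_{\even}(A\times A^{\vee},\QQ)(-1)\oplus 256\,\QQ(-3)$, with the $\bar W_{(1,1)}$ summand coming from $H^1(A)\otimes H^1(A^{\vee})$ and the $H^4(Y)^{\sigma}$ piece controlled by the monodromy-equivariance of the LLV structure — which is the same computation you outline through the $A\times\hat A$ factor of the construction.
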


\begin{remark}
	An alternative notation for this result, the one written in the introduction, is $H^* (X) = V_{3 \varpi_1} \oplus V_{\varpi_3} \oplus V^{\oplus 135} \oplus \RR^{\oplus 240}$. Here $\varpi_1$ is the fundamental weight associated to the standard representation $V$ and $\varpi_3$ is the fundamental weight associated to the irreducible $\mathfrak g$-module $\wedge^3 V$.
\end{remark}

The starting point of our result is the Hodge numbers computed by Mongardi--Rapagnetta--Sacc\`a \cite{MRS}. Again, there is no odd cohomology, and the relevant Hodge numbers are given in Table \ref{table:hodge_diamond_OG6}. 

\begin{table}[htb]
	\centering
	\begin{tabular}{llll}
		1 &&& \\
		6 & 1 && \\
		173 & 12 & 1 & \\
		1,144 & 173 & 6 & 1
	\end{tabular}
	$\quad - \quad$
	\begin{tabular}{llll}
		1 &&& \\
		6 & 1 && \\
		22 & 6 & 1 & \\
		62 & 22 & 6 & 1
	\end{tabular}
	$\quad = \quad$
	\begin{tabular}{llll}
		0 &&& \\
		0 & 0 && \\
		151 & 6 & 0 & \\
		1,082 & 151 & 0 & 0
	\end{tabular} 
	\caption{The Hodge diamond of $\OG6$, the component $V_{(3)}$, and the residual component $V'$.}
	\label{table:hodge_diamond_OG6}
\end{table}

Splitting off the Verbitsky component from the cohomology of $\OG6$ hyper-K\"ahler manifold $X$
$$H^*(X)=V_{(3)}\oplus V',$$
it remains to understand the residual component $V'$ (see Table \ref{table:hodge_diamond_OG6} for the numerics). We proceed as for the $\OG10$ case. Unfortunately, it turns out that there are two possible solutions to the numerical constraints satisfied by $\OG6$ (even when the Hodge numbers are taken into account). 

\begin{proposition}\label{prop_2cases_og6} The LLV decomposition of $H^*(\OG6)$ as a $\so(4,6)$-module is either
\begin{equation} \label{eq:OG6_possibilities}
	H^* (X) = V_{(3)} \oplus V_{(1,1,1)} \oplus V^{\oplus 135} \oplus \RR^{\oplus 240} \quad \mbox{or} \quad V_{(3)} \oplus V_{(1,1)}^{\oplus 6} \oplus V^{\oplus 115} \oplus  \RR^{\oplus 290} .
\end{equation}
\end{proposition}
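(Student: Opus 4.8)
The plan is to follow the $\OG 10$ strategy of \S\ref{case_llv_og10}: peel off the Verbitsky component, use the constraints of \S\ref{sec:llv} to reduce to a short list of possible irreducible constituents, and then match Hodge numbers. The difference is that here the numerical data will not determine $V'$ uniquely, and the content of the Proposition is exactly that the constraints cut the possibilities down to the two modules in \eqref{eq:OG6_possibilities}. To set up, recall that $V_{(3)}$ occurs with multiplicity one (Proposition \ref{prop:ver_exhaust_bdry}, Definition \ref{def_verbitsky_component}), so we write $H^*(X) = V_{(3)} \oplus V'$. Computing the Hodge numbers of $V_{(3)}$ from its weights via \eqref{eq:hodge_from_weight} and subtracting from the Mongardi--Rapagnetta--Sacc\`a numbers (Table \ref{table:hodge_diamond_OG6}) exhibits $V'$ as concentrated in cohomological degrees $4,6,8$, with $h^{3,1}(V')=6$, $h^{2,2}(V')=151$, $h^{3,3}(V')=1082$ and $\dim V' = 1710$.

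Next I would bound the constituents $V_\mu \subset V'$. By Proposition \ref{prop:even_rep} each $\mu$ is integral, and since $V'$ contains no copy of $V_{(3)}$, Proposition \ref{prop:weaker_condition} forces $\mu_0+\mu_1 \le 2$; this already makes the set of dominant $\mu$ finite. Because $V'$ vanishes in degree $2$ while the lowest $h$-weight of $V_\mu$ is $-2\mu_0$ (cohomological degree $6-2\mu_0$), any constituent must have $\mu_0 \le 1$, which rules out $(2)$ (whose lowest weight $-2\varepsilon_0$ lives in degree $2$). The surviving list is
\[
(0),\ (1),\ (1,1),\ (1,1,1),\ (1,1,1,1),\ (1,1,1,1,1),\ (1,1,1,1,-1),
\]
i.e. the trivial and standard modules, $\Sym$- and $\wedge$-type modules $\wedge^2V,\wedge^3V,\wedge^4V$, and the two halves of $\wedge^5V$.

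The key elimination is via the small extremal Hodge number $h^{3,1}(V')=6$. For each candidate I would compute $h^{3,1}(V_\mu)$ by counting weight vectors with $(\theta_0,\theta_1)=(-1,1)$, using \eqref{eq:hodge_from_weight}; the outcome is $0,0,1,6,15,10,10$ for the list above (in particular $h^{3,1}(\wedge^4V)=15$, and each half of $\wedge^5V$ contributes $10$, since the $\mathrm{D}_5$ symmetry $\varepsilon_4\mapsto-\varepsilon_4$ splits the $20$ weights of $\wedge^5V$ with $(\theta_0,\theta_1)=(-1,1)$ evenly). As $15,10>6$, the modules $\wedge^4V$ and both summands of $\wedge^5V$ cannot occur, leaving only $(0),(1),(1,1),(1,1,1)$. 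Writing $m_{(1)},m_{(1,1)},m_{(1,1,1)},m_{(0)}$ for the remaining multiplicities and reading off $h^{3,1},h^{2,2},h^{3,3}$ of each from the weights, the three Hodge-number identities for $V'$ become
\begin{align*}
 m_{(1,1)} + 6\,m_{(1,1,1)} &= 6, \\
 m_{(1)} + 6\,m_{(1,1)} + 16\,m_{(1,1,1)} &= 151, \\
 6\,m_{(1)} + 17\,m_{(1,1)} + 32\,m_{(1,1,1)} + m_{(0)} &= 1082.
\end{align*}
The first equation forces $(m_{(1,1)},m_{(1,1,1)})\in\{(0,1),(6,0)\}$, and the remaining two then determine $m_{(1)}$ and $m_{(0)}$ uniquely in each case, producing precisely the two decompositions of \eqref{eq:OG6_possibilities}.

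The main obstacle is Step three: the weight-multiplicity bookkeeping for the antisymmetric powers. One must remember that a wedge of weight vectors may contain a full cancelling pair $e_{\varepsilon_i}\wedge e_{-\varepsilon_i}$, which still contributes to a prescribed $(\theta_0,\theta_1)$; omitting such terms undercounts the $h^{p,q}(V_\mu)$ and would spuriously admit extra solutions (or fail to eliminate $\wedge^4V$ and $\wedge^5V$). Once these multiplicities are computed correctly, the remainder is elementary linear algebra, and the nonuniqueness is genuine — it is resolved only later by revisiting the geometric construction of $\OG6$.
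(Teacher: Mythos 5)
Your proposal is correct and takes essentially the same approach as the paper: the paper's own proof of Proposition \ref{prop_2cases_og6} is exactly the omitted ``straight-forward manipulations of the Hodge numbers, similar to the $\OG10$ case,'' and your argument---splitting off $V_{(3)}$, cutting the candidate weights down via Propositions \ref{prop:even_rep} and \ref{prop:weaker_condition} together with the vanishing of $V'$ in degree $2$, and then solving the linear system coming from $h^{3,1}(V')=6$, $h^{2,2}(V')=151$, $h^{3,3}(V')=1082$---is precisely that bookkeeping. Your weight-multiplicity counts (including the cancelling pairs in the wedge powers, e.g.\ $h^{3,1}(\wedge^4 V)=15$ and $h^{3,3}(\wedge^2 V)=17$) check out, and the two nonnegative integer solutions you obtain are exactly the two decompositions in \eqref{eq:OG6_possibilities}.
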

\begin{proof}
	Straight-forward manipulations of the Hodge numbers, similar to the $\OG10$ case (see \S\ref{case_llv_og10}). We omit the details. 
\end{proof}

In order to decide which of the two possibilities of \eqref{eq:OG6_possibilities} actually occurs in the LLV decomposition of the $\OG6$ example, we need to investigate further the geometric construction of \cite{MRS}. First, it is not hard to lift the computations of Hodge numbers in loc. cit. to a statement about Hodge structures (Proposition \ref{lem:OG6_lem1}). This allows us to  understand the decomposition with respect to the Mumford--Tate algebra $\bar {\mathfrak m}$ of $H^4(X)$ (Proposition \ref{prop:OG6_prop1}). Finally, we complete the proof in \S\ref{subsec_complete_og6}, by considering the possible restriction representations of the two cases in \eqref{eq:OG6_possibilities} from $\fg$-representations to $\bar \fm$-representations (recall $\bar \fm\subset \bar \fg\subset \fg$).  

\begin{remark}
There are two heuristic reasons why the situation is more complicated in the $\OG6$ case versus the $\OG10$ case. First the LLV algebra is much smaller in this case $\so(4,6)$ (vs. $\so(4,22)$). Secondly, $\OG6$ is an exceptional case of the $\Kum_n$ series, meaning that multiple trivial representations will occur, which in turn means less rigidity for the numerical constraints. 
\end{remark}

\subsubsection{Review of \cite{MRS} construction}
Let $X$ be a hyper-K\"ahler manifold of $\OG6$ type. The basic topological invariants of $X$ were found by Rapagnetta \cite{rapog6} by realizing $X$ as the resolution of the quotient of some companion $K3^{[3]}$ hyper-K\"ahler manifold $Y$ by a birational involution $\iota$. This model was then used by Mongardi--Rapagnetta--Sacca \cite{MRS} for the computation of Hodge numbers. We review their construction, and extract some further consequences. 

Let $X=\widetilde{Y/\iota}$ as above (N.B. since the involution is only birational, the equality should be understand as contacting $Y$ to a singular model on which the involution is regular, followed by a symplectic resolution of the quotient). To avoid working with birational involutions and singular models, one considers a blow-up $\hat Y$ of $Y$ on which the involution lifts to a regular involution $\hat \iota$. The quotient $\tilde X=\hat Y/\hat \iota$ is a blow-up of the $\OG6$ manifold $X$. More specifically, one has the following diagram:
\begin{equation} \label{diag:OG6_construction}
\begin{tikzcd}
\hat Y \arrow[d, "\mathrm{blowup } \ \Delta"'] \arrow[dr, "/ \hat \iota"] & \\
\tilde Y \arrow[d, "\mathrm{blowup \ 256} \ \PP^3"'] & \tilde X \arrow[d, "\mathrm{blowdown \ 256 \ quadrics}"] \\
Y\arrow[r,dashed, "/\iota"] & X
\end{tikzcd}
\end{equation}
The following facts (cf. \cite{rapog6,MRS}) will be needed in our arguments:
\begin{enumerate}
	\item[(0)] Let $A$ be a  \emph{very general} principally polarized abelian surface. Let $S= S(A)=\widetilde{A/\pm 1}$ be the Kummer K3 surface associated to $A$ (it contains $16$ disjoint $\PP^1$). The $K3^{[3]}$ hyper-K\"ahler manifold $Y$ is birational to $S^{[3]}$, and it contains $256$ disjoint $\PP^3$. The $\OG6$ manifold $X$ is obtained as a moduli of sheaves on $A$ and resolving (as in \cite{OG6}). By construction there is a birational involution $\iota$ on $Y$ such that birationally $Y/\iota\cong_{bir} X$. 
	\item[(1)] The blow-up $\hat Y\to Y$ is the composition of the blow-up $\tilde Y$ of the $256$ copies of $\PP^3$ in $Y$, followed by the blow-up of the strict transform $\Delta\subset \tilde Y$ of a certain diagonal locus.   
	\item[(2)] The center $\Delta$ of the blow-up $\hat Y\to \tilde Y$ is smooth and isomorphic to the blow-up of $256$ nodes of $(A\times A^\vee)/\pm 1$. In particular, the exceptional divisor $\hat \Delta\subset \hat Y$ is a $\PP^1$ bundle over $\Delta$. 
	\item[(3)] The involution $\iota$ lifts to a regular involution $\hat \iota$ on $\hat Y$. The quotient variety $\tilde X=\hat Y/\hat \iota$ is smooth, and the divisor $\hat \Delta \subset \hat Y$ is $\hat\iota$-invariant.
	\item[(4)]  The $\OG6$ manifold $X$ is obtained from $\tilde X$ by contracting $256$ disjoint smooth threefolds, each isomorphic to a quadric threefold. 	
 	\end{enumerate}

In \cite{MRS}, the Hodge numbers of $X$ are obtained from the knowledge of 
the Hodge numbers of $Y$ and tracing through the diagram \eqref{diag:OG6_construction} using the above mentioned facts. While not explicitly mentioned in loc. cit., a careful reading of \cite[Sect. 6]{MRS} gives the following statement about the relationship between the Hodge structures on $X$ and $Y$. The key fact to notice here is the factor $A\times A^\vee$ arising from the blow-up of the diagonal divisor $\Delta$. For the following discussion about the $\QQ$-Hodge structures and Mumford--Tate algebras, it is necessary to work with the field $\QQ$. 

\begin{proposition} \label{lem:OG6_lem1}
	There exists an isomorphism of $\QQ$-Hodge structures
	\[ H^* (X, \QQ) = H^* (Y, \QQ)^{\sigma} \oplus H^*_{\even} (A \times A^{\vee}, \QQ)(-1) \oplus 256 \QQ(-3) .\]
	Here $H^* (Y, \QQ)^{\sigma}$ indicates the invariant cohomology of $H^* (Y, \QQ)$ with respect to an appropriate involution $\sigma$.
\end{proposition}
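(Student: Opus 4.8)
The plan is to compute $H^*(X,\QQ)$ as a $\QQ$-Hodge structure by tracing the diagram \eqref{diag:OG6_construction} from top to bottom, using only two standard tools: the blow-up formula (for a smooth center $Z\subset W$ of codimension $c$ in a smooth projective variety, $H^*(\mathrm{Bl}_Z W,\QQ)\cong H^*(W,\QQ)\oplus\bigoplus_{i=1}^{c-1}H^{*-2i}(Z,\QQ)(-i)$ as Hodge structures) and the finite-quotient formula $H^*(W/G,\QQ)\cong H^*(W,\QQ)^G$. This is exactly the bookkeeping by which \cite{MRS} obtained the Hodge \emph{numbers}; the only new point is to keep track of the Hodge \emph{structures}, which the blow-up and quotient formulas do automatically.

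First I would compute $H^*(\hat Y,\QQ)$ by composing the two blow-ups of fact (1). The blow-up $\tilde Y\to Y$ of the $256$ disjoint $\PP^3$'s (codimension $3$ in the $6$-fold $Y$) contributes $256$ copies of $H^{*-2}(\PP^3)(-1)\oplus H^{*-4}(\PP^3)(-2)$, and the blow-up $\hat Y\to\tilde Y$ of the codimension-$2$ center $\Delta$ contributes $H^{*-2}(\Delta,\QQ)(-1)$. Here the crucial input is fact (2): $\Delta$ is the blow-up of $(A\times A^\vee)/\pm1$ at its $256$ nodes, so
\[
H^*(\Delta,\QQ)\;\cong\;H^*\big((A\times A^\vee)/\pm1,\QQ\big)\oplus(\text{node contributions})\;\cong\;H^*_{\even}(A\times A^\vee,\QQ)\oplus(\text{node contributions}),
\]
where the identification $H^*((A\times A^\vee)/\pm1,\QQ)=H^*(A\times A^\vee,\QQ)^{\pm1}=H^*_{\even}(A\times A^\vee,\QQ)$ uses that $-1$ acts on $H^k$ by $(-1)^k$. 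This is precisely the mechanism that produces the summand $H^*_{\even}(A\times A^\vee,\QQ)(-1)$, the $(-1)$-twist coming from the exceptional divisor $\hat\Delta$.

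Next I would pass to $\tilde X=\hat Y/\hat\iota$ via $H^*(\tilde X,\QQ)=H^*(\hat Y,\QQ)^{\hat\iota}$ (fact (3)), and finally descend along the blow-down $\tilde X\to X$ of the $256$ quadric threefolds (fact (4)), which removes the corresponding exceptional Tate classes. On the main summand, $\hat\iota$ descends to the advertised involution $\sigma$ on $H^*(Y,\QQ)$ (defined via the split inclusion $H^*(Y,\QQ)\hookrightarrow H^*(\hat Y,\QQ)$, which $\hat\iota^*$ preserves), yielding $H^*(Y,\QQ)^\sigma$. On the remaining, purely Tate, exceptional contributions — the $\PP^3$'s of $\tilde Y\to Y$, the node resolutions inside $\Delta$, and the quadrics removed by $\tilde X\to X$ — I would check that, after taking $\hat\iota$-invariants and performing the blow-down, everything cancels except $256$ classes of type $\QQ(-3)$.

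The main obstacle is exactly this last matching of Tate classes: one must determine how $\hat\iota$ permutes and acts on the $256$ exceptional $\PP^3$'s and on $\hat\Delta$, and how the contraction $\tilde X\to X$ interacts with these exceptional loci, in order to see that the surplus $\QQ(-i)$ summands ($i=1,2$ from the blow-ups, together with the node and quadric classes) collapse to precisely $256\,\QQ(-3)$ and leave the $H^*_{\even}(A\times A^\vee)(-1)$ summand untouched. This is where a careful reading of \cite[Sect.~6]{MRS} is needed; the Hodge-structure statement then follows once the numerical cancellation of \cite{MRS} is upgraded to an identification of the surviving classes, which is automatic since all the exceptional contributions in question are of Tate type, so the transcendental (K3-type) part enters only through $H^*(Y)$ and $H^*(A\times A^\vee)$.
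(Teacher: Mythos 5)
Your proposal is correct and takes essentially the same route as the paper: the paper's proof simply observes that the computations of \cite[Sect.~6]{MRS} --- the same blow-up and finite-quotient steps you trace through diagram \eqref{diag:OG6_construction} --- remain valid at the level of $\QQ$-Hodge structures, and composes the resulting three intermediate isomorphisms (Lemmas 6.2(1), 6.2(2)/6.3, and 6.5(2) of loc.~cit.), Tate-class bookkeeping included. Your identification of the summand $H^*_{\even}(A\times A^{\vee},\QQ)(-1)$ as coming from the exceptional divisor over $\Delta$, and your remark that the upgrade from Hodge numbers to Hodge structures is automatic because all exceptional contributions are of Tate type, is precisely the point the paper makes.
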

\begin{proof}
	This follows from Section 6 of \cite{MRS}. Their statements are formulated in terms of Hodge numbers, but in fact all their proofs apply at the level of Hodge structures. First, by \cite[Lemma 6.2(1)]{MRS} we get
		\[ H^* (\tilde X, \QQ) = H^* (X, \QQ) \oplus 256 \QQ(-1) \oplus 512\QQ(-2) \oplus 512\QQ(-3) \oplus 512\QQ(-4) \oplus 256\QQ(-5) .\]
	Lemmas 6.2(2) and 6.3 in \cite{MRS} give the Hodge structure isomorphism
	\[ H^* (\tilde X, \QQ) = H^* (\tilde Y, \QQ)^{\iota} \oplus H^*_{\even} (A \times A^{\vee}, \QQ)(-1) \oplus 256 \QQ(-2) \oplus 256\QQ(-3) \oplus 256\QQ(-4) .\]
	Finally, \cite[Lemma 6.5(2)]{MRS}  states the Hodge structure isomorphism
	\[ H^* (\tilde Y, \QQ)^{\iota}= H^* (Y, \QQ)^{\sigma} \oplus 256\QQ(-1) \oplus 256\QQ(-2) \oplus 512\QQ(-3) \oplus 256\QQ(-4) \oplus 256\QQ(-5) .\]
	Combining the three isomorphisms, we get the desired identification.
\end{proof}

\subsubsection{The Mumford--Tate decomposition} Recall that $A$ was a very general principally polarized abelian surface. Hence, the Mumford--Tate algebra of $A$ is
\[ \bar {\mathfrak m} = \mathfrak {so} (\bar T, \bar q_A) ,\]
where $\bar T \subset H^2 (A, \QQ)$ is the transcendental Hodge structure of the second cohomology of $A$ and $\bar q_A$ is the intersection form. Note that $\dim_{\QQ} \bar T = 5$ and the signature of $\bar q_A$ is $(2,3)$. This implies the real form of $\bar {\mathfrak m}$ is isomorphic to $\so (2,3)$.

\begin{lemma} \label{lem:OG6_lem2}
	The special Mumford--Tate algebras of $X$ and $Y$ are both isomorphic to $\bar {\mathfrak m}$.
\end{lemma}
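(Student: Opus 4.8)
The plan is to reduce both computations to the single weight-two Hodge structure $\bar T$ (equivalently, to the very general abelian surface $A$), exploiting that $\bar{\mathfrak m} = \so(\bar T, \bar q_A)$, having real form $\so(2,3)$, is \emph{simple}. The point is that any nonzero tensor construction on a Hodge structure with simple special Mumford--Tate algebra has special Mumford--Tate algebra a \emph{faithful} image of that algebra; hence every geometric piece built out of $A$ will contribute exactly a copy of $\bar{\mathfrak m}$, and the special Mumford--Tate algebra of a direct sum of such pieces is again $\bar{\mathfrak m}$.

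First I would treat $Y$. Since $Y$ is of $K3^{[3]}$ type and birational to $S^{[3]}$ with $S = \Kum(A)$, the birational map induces an isometry of weight-two Hodge structures $H^2(Y,\QQ) \cong H^2(S^{[3]},\QQ)$. By Proposition \ref{prop:mt}(ii) the special Mumford--Tate algebra of the full cohomology agrees with that of $H^2$, so $\overline{\mt}(Y) = \overline{\mt}(S^{[3]})$. Theorem \ref{thm:k3n} presents $H^*(S^{[3]})$ as a tensor construction (symmetric powers) on $H^*(S)$, so by \cite[Rem 1.8]{moo99} there is a surjection $\overline{\mt}(S) \twoheadrightarrow \overline{\mt}(S^{[3]})$; the partition $\alpha = (3^1)$ recovers $H^*(S)$ as a direct summand, and simplicity of $\overline{\mt}(S)$ forces this surjection to be an isomorphism. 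Finally the Kummer construction gives a rational Hodge isometry between the transcendental lattices of $A$ and of $S$, whence $\overline{\mt}(S) = \overline{\mt}(A) = \so(\bar T, \bar q_A) = \bar{\mathfrak m}$.

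Next I would treat $X$ using Proposition \ref{lem:OG6_lem1}, which writes $H^*(X,\QQ)$ as the direct sum of $H^*(Y,\QQ)^{\sigma}$, a Tate twist of $H^*_{\even}(A\times A^{\vee},\QQ)$, and several copies of $\QQ(-3)$. The Hodge operator $f$ on $H^*(X)$ is block-diagonal, so $\overline{\mt}(X)$ is the $\QQ$-algebraic Lie closure of $\rho(f_A)$, where $\rho = \bigoplus_i \rho_i$ collects the representations through which each summand's Mumford--Tate action factors. The summand $H^*_{\even}(A\times A^{\vee})$ is a tensor construction on $H^1(A)$ (using $A^{\vee}\cong A$ since $A$ is principally polarized), and $H^*(Y)^{\sigma}$ is a sub-Hodge structure of $H^*(Y)$, hence a sub-quotient of a tensor construction on $H^*(S)$; both therefore factor through $\MT(A)$, while the $\QQ(-3)$ summands carry the trivial action. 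Since $\rho$ is an algebraic homomorphism one has $\overline{\mt}(X) = \rho(\overline{\mt}(A)) = \rho(\bar{\mathfrak m})$, and because $H^2(A)$ sits inside $H^*_{\even}(A\times A^{\vee})$ the map $\rho$ is nonzero; simplicity of $\bar{\mathfrak m}$ then makes $\rho$ faithful, so $\overline{\mt}(X) \cong \bar{\mathfrak m}$.

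The step needing the most care is verifying that the three summands of Proposition \ref{lem:OG6_lem1} genuinely factor through one common Mumford--Tate group $\MT(A)$, i.e.\ that they are all tensor constructions on the single Hodge structure $\bar T$. For the $A\times A^{\vee}$ piece this is transparent; the delicate side is $Y$, where one must chain together birational invariance of $H^2$, the transfer from $Y$ to $S^{[3]}$, and the identification $\overline{\mt}(S) = \overline{\mt}(A)$ via the Kummer construction, all the while using that $\sigma$-invariance keeps $H^*(Y)^{\sigma}$ a sub-Hodge structure (so that its Mumford--Tate algebra really does embed in $\overline{\mt}(Y)$). Once the three summands are recognized as $\MT(A)$-representations, simplicity of $\bar{\mathfrak m}$ makes the remaining bookkeeping automatic.
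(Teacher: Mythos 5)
Your proposal is correct and takes essentially the same route as the paper, whose own proof of this lemma merely says the $Y$-case is standard from the Kummer construction and the $X$-case follows from Proposition \ref{lem:OG6_lem1}, omitting all details. The details you supply---birational invariance of $H^2$ plus Proposition \ref{prop:mt}(ii) for $Y$, and the tensor-construction surjection of \cite[Rem 1.8]{moo99} combined with simplicity of $\bar{\mathfrak m}$ for $X$---are precisely the mechanism the paper itself deploys in the proof of Theorem \ref{thm:k3n}, so your write-up is a faithful expansion of the intended argument.
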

\begin{proof} Since $Y$ is obtained from the Kummer surface $S=A/\pm 1$, the statement is standard. Using Proposition \ref{lem:OG6_lem1}, the statement follows also for $X$. We omit further details. 
\end{proof}

Using Lemma \ref{lem:OG6_lem2} and a more careful inspection of the involution $\sigma$ from \cite[Sect. 6]{MRS}, we obtain the decomposition of the cohomology of $H^4(X, \QQ)$ as a $\bar {\mathfrak m}$-module. 

\begin{proposition} \label{prop:OG6_prop1}
	Let $\bar W$ be the standard $\bar {\mathfrak m}$-module. Then the fourth cohomology of $X$ has the $\bar {\mathfrak m}$-module decomposition
	\[ H^4 (X, \QQ) = \bar W_{(2)} \oplus \bar W_{(1,1)} \oplus 6 \bar W \oplus 145 \QQ .\]
\end{proposition}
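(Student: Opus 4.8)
The plan is to extract the weight-$4$ graded piece of the $\QQ$-Hodge structure isomorphism of Proposition \ref{lem:OG6_lem1} and then decompose each surviving summand as a $\bar{\mathfrak m}$-module. Since the Tate twist $(-1)$ raises the Hodge weight by $2$, the weight-$4$ part of
\[ H^* (X, \QQ) = H^* (Y, \QQ)^{\sigma} \oplus H^*_{\even} (A \times A^{\vee}, \QQ)(-1) \oplus 256 \QQ(-3) \]
reads $H^4 (X, \QQ) = H^4 (Y)^\sigma \oplus H^2(A\times A^\vee)(-1)$: the term $256\QQ(-3)$ sits in weight $6$ and so meets only $H^6$, while the abelian summand contributes through $H^2(A\times A^\vee)(-1)$. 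By Lemma \ref{lem:OG6_lem2} all three varieties share the special Mumford--Tate algebra $\bar{\mathfrak m} = \so(\bar T,\bar q_A)$, so this is an identity of $\bar{\mathfrak m}$-modules and it suffices to treat the two summands separately.

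For the abelian summand I would use the K\"unneth decomposition $H^2(A\times A^\vee) = H^2(A) \oplus \big(H^1(A)\otimes H^1(A^\vee)\big) \oplus H^2(A^\vee)$. As $A$ is very general, $H^2(A) = \bar T \oplus \QQ\cdot\theta = \bar W \oplus \QQ$, and likewise $H^2(A^\vee) = \bar W\oplus\QQ$. The interesting term is the cross term: under the exceptional isomorphism $\so(5)\cong\mathfrak{sp}(4)$ identifying $\bar T = \wedge^2_0 H^1(A)$, the space $H^1(A)$ becomes the self-dual $4$-dimensional symplectic standard module $V$, so $H^1(A)\otimes H^1(A^\vee)\cong V\otimes V = \Sym^2 V \oplus \wedge^2 V$, where $\Sym^2 V$ is the adjoint $\bar W_{(1,1)}$ and $\wedge^2 V = \bar W\oplus\QQ$. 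Assembling gives $H^2(A\times A^\vee)(-1) = \bar W_{(1,1)} \oplus 3\bar W \oplus 3\QQ$ as a $\bar{\mathfrak m}$-module. The crucial point, which will distinguish the two possibilities of \eqref{eq:OG6_possibilities}, is that the adjoint type $\bar W_{(1,1)}$ arises \emph{precisely} from this abelian factor.

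For the Hilbert-scheme summand I would use that $Y$ is birational, hence Hodge-isometric, to $S^{[3]}$ with $S$ the Kummer surface of $A$, so $H^4(Y)\cong H^4(S^{[3]})$. The Kummer structure gives $H^2(S) = \bar W\oplus 17\QQ$ as a $\bar{\mathfrak m}$-module, which I feed into the degree-$4$ part of the G\"ottsche--Soergel decomposition (Theorem \ref{thm:k3n}). A degree count shows that $H^4(S^{[3]})$ involves at most two transcendental factors $\bar T = \bar W$, and that a two-factor contribution occurs only through the fully symmetric partition $(1^3)$, producing $\Sym^2\bar T = \bar W_{(2)}\oplus\QQ$ and never $\wedge^2\bar T$. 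Hence $H^4(S^{[3]})$ is a sum of copies of $\bar W_{(2)}$ (with multiplicity one), $\bar W$, and $\QQ$ only, and contains no $\bar W_{(1,1)}$; taking $\sigma$-invariants cannot create new types. To see that the multiplicity-one summand $\bar W_{(2)}$ survives, note that the holomorphic $4$-form lies in $\Sym^2\bar T\subset\bar W_{(2)}$, whereas the abelian summand contributes nothing in bidegree $(4,0)$ (its $(-1)$-twist sends $H^{2,0}$ to $(3,1)$); since $h^{4,0}(X)=1$ by Table \ref{table:hodge_diamond_OG6}, the form must lie in $H^4(Y)^\sigma$, forcing $\bar W_{(2)}\subset H^4(Y)^\sigma$.

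Combining the two computations, $H^4(X)$ involves only the types $\bar W_{(2)}, \bar W_{(1,1)}, \bar W, \QQ$, with $\bar W_{(2)}$ and $\bar W_{(1,1)}$ each of multiplicity one. To pin down the remaining multiplicities $c$ of $\bar W$ and $d$ of $\QQ$ without analysing $\sigma$ on the large multiplicity spaces, I would invoke purity: since $H^4(X)$ is pure of weight $4$, every copy of $\bar W$ must occur as $\bar W(-1)$ and every trivial summand as $\QQ(-2)$, whose Hodge numbers are known. Then $\dim H^4(X)=199$ gives $14+10+5c+d=199$, and reading $h^{3,1}(X)=12$ off Table \ref{table:hodge_diamond_OG6} (to which $\bar W_{(2)},\bar W_{(1,1)},\bar W(-1)$ contribute $3,3,1$ respectively, and $\QQ(-2)$ nothing) gives $6+c=12$; hence $c=6$, $d=145$, yielding $H^4 (X, \QQ) = \bar W_{(2)} \oplus \bar W_{(1,1)} \oplus 6 \bar W \oplus 145 \QQ$. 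The main obstacle will be the control of the involution $\sigma$ on $H^4(Y)$: a direct computation of $H^4(Y)^\sigma$ requires knowing $\sigma$ on the full multiplicity spaces, which is delicate. The $h^{4,0}$-argument together with the Hodge-number bookkeeping above is precisely what lets me avoid this, reducing the needed geometric input from \cite{MRS} to the single fact that the holomorphic $4$-form is $\sigma$-invariant.
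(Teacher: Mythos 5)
Your proposal is correct, and while it starts identically to the paper (extracting the weight-$4$ piece of Proposition \ref{lem:OG6_lem1} and decomposing $H^2(A\times A^\vee)(-1) = \bar W_{(1,1)} \oplus 3\bar W \oplus 3\QQ$ via K\"unneth and the $\so(5)\cong\mathfrak{sp}(4)$ tensor-square — the paper phrases this as $(\bar W_{(\frac12,\frac12)})^{\otimes 2}$, which is the same computation), you handle the summand $H^4(Y)^\sigma$ by a genuinely different route. The paper determines the involution $\sigma$ \emph{completely} on $H^2(Y)$ by comparing the degree-$2$ parts of Proposition \ref{lem:OG6_lem1} (getting $H^2_+(Y) = \bar W \oplus 2\QQ$ and $H^2_-(Y) = 16\QQ$), and then, using that monodromy operators respect the $\bar{\mathfrak g}(Y)$-module structure $H^4(Y) = \Sym^2 H^2(Y) \oplus H^2(Y)$ (cf.\ Remark \ref{rem:K3_3_computation} and \cite{mar02}), computes the invariants exactly: $H^4(Y)^\sigma = \Sym^2 H^2_+ \oplus \Sym^2 H^2_- \oplus H^2_+ = \bar W_{(2)} \oplus 3\bar W \oplus 142\QQ$, whence the result by direct addition. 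You instead extract only two structural facts — that $H^4(Y) = \bar W_{(2)} \oplus 19\bar W \oplus 190\QQ$ contains no $\bar W_{(1,1)}$ and a unique $\bar W_{(2)}$ (your partition analysis of the G\"ottsche--Soergel formula is right: in degree $4$ the two-transcendental-factor terms arise only from $\alpha = (1^3)$ via $\Sym^2 H^2(S)$, since the $H^2\otimes H^2$ part of the $(1,2)$-partition lands in degree $6$ after the Tate shift), and that $\sigma = +1$ on the multiplicity-one $\bar W_{(2)}$-isotypic component by Schur's lemma and the $\sigma$-invariance of the holomorphic $4$-form — and then determine the residual multiplicities from $b_4(X) = 199$ and $h^{3,1}(X) = 12$. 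What the paper's method buys is the full $\sigma$-isotypic data on $H^4(Y)$ without invoking the Hodge numbers of $X$ beyond degree $2$; what yours buys is needing almost nothing about $\sigma$ (only its action on the $(4,0)$-line), at the cost of importing $b_4$ and $h^{3,1}$ of $X$ from \cite{MRS} — a legitimate input, since the paper uses the same table for Proposition \ref{prop_2cases_og6}. One caveat worth making explicit: the Hodge numbers of $X$ alone cannot do this job, since both candidates in \eqref{eq:OG6_possibilities} have identical Hodge diamonds ($\bar W_{(1,1)} \oplus 6\bar W \oplus 5\QQ$ and $9\bar W \oplus 10\QQ$ have the same dimension and the same $h^{3,1}$-contribution); your bookkeeping is valid precisely because you first pinned the multiplicities of $\bar W_{(2)}$ and $\bar W_{(1,1)}$ structurally, and your argument does establish both correctly.
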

\begin{proof}
	Proposition \ref{lem:OG6_lem1} gives
	\[ H^4 (X, \QQ) = H^4 (Y, \QQ)^{\sigma} \oplus H^2(A \times A^{\vee}, \QQ)(-1) .\]
	Let us first compute the second component. Applying K\"unneth and standard representation theory, we obtain
	\begin{equation} \label{eq:OG6_HS1}
	\begin{aligned}
		H^2 (A \times A^{\vee}, \QQ) &= H^2 (A, \QQ) \oplus H^2(A^{\vee}, \QQ) \oplus \left[ H^1 (A, \QQ) \otimes H^1 (A, \QQ) \right] \\
		&= 2\bar W \oplus 2\QQ \oplus (\bar W_{(\frac{1}{2}, \frac{1}{2})})^{\otimes 2}
		= \bar W_{(1,1)} \oplus 3 \bar W \oplus 3 \QQ .
	\end{aligned}
	\end{equation}

	Next, we need to compute $H^4 (Y, \QQ)^{\sigma}$. To do so, we imitate the trick used in the proof of \cite[Lem 6.6]{MRS}. We first compare the second cohomology of the identification of Proposition \ref{lem:OG6_lem1}. This gives us the Hodge structure isomorphism
	\[ H^2 (X, \QQ) = H^2 (Y, \QQ)^{\sigma} \oplus \QQ(-1) .\]
	But we already know what the Hodge structure of $H^2 (X, \QQ)$ is by Lemma \ref{lem:OG6_lem2} with our old Proposition \ref{prop:mt}(2). Both $H^2 (X, \QQ)$ and $\bar W$ are $\bar {\mathfrak m}$-modules and their Hodge numbers are $(1,6,1)$ and $(1,3,1)$, respectively. This forces an $\bar {\mathfrak m}$-module isomorphism $H^2 (X, \QQ) = \bar W \oplus 3\QQ$. Hence, we get 
	$$ H^2 (Y, \QQ)^{\sigma} = \bar W \oplus 2\QQ $$
	as $\bar {\mathfrak m}$-modules. By similar Hodge number argument, we have $H^2 (Y, \QQ) = \bar W \oplus 18 \QQ$. Writing $H^2_+ (Y, \QQ) = H^2 (Y, \QQ)^{\sigma}$ and $H^2_- (Y, \QQ)$ by the $\pm1$ eigenspaces of the involution $\sigma$ on $H^2 (Y, \QQ)$, we have $\bar {\mathfrak m}$-module isomorphisms
	\[ H^2_+ (Y, \QQ) = H^2 (Y, \QQ)^{\sigma} = \bar W \oplus 2\QQ, \qquad H^2_- (Y, \QQ) = 16\QQ .\]
	The involution  $\sigma$ was constructed as a monodromy operator on the space $Y$ (see \cite[\textsection 6]{MRS}). 
	Since the monodromy action respects the (reduced) Looijenga--Lunts--Verbitsky algebra $\bar {\mathfrak g}$-structure on each cohomology (cf. \cite{mar02}), it follows that
	$$ H^4 (Y, \QQ) = \Sym^2 H^2 (Y, \QQ) \oplus H^2 (Y, \QQ) $$
	as $\bar {\mathfrak g}(Y)$-modules by the computation in Remark \ref{rem:K3_3_computation}. This means $H^4 (Y, \QQ)^{\sigma}$ is precisely
	\begin{equation} \label{eq:OG6_HS2}
	\begin{aligned}
		H^4 (Y, \QQ)^{\sigma} &= \Sym^2 H^2_+ (Y, \QQ) \oplus \Sym^2 H^2_- (Y, \QQ) \oplus H^2_+ (Y, \QQ) \\
		&= \Sym^2 (\bar W \oplus 2\QQ) \oplus \Sym^2 (16\QQ) \oplus \bar W \oplus 2\QQ = \bar W_{(2)} \oplus 3 \bar W \oplus 142 \QQ.
	\end{aligned}
	\end{equation}
	Combining \eqref{eq:OG6_HS1} and \eqref{eq:OG6_HS2}, we deduce the result.
\end{proof}

\subsubsection{Completion of the proof of Theorem \ref{case_llv_og6}}\label{subsec_complete_og6} We complete the computations of the LLV decomposition in the $\OG6$ case by studying the possible restrictions of the $\fg$-representations occurring in \eqref{eq:OG6_possibilities} to $\bar \fm$-representations. For reader's convenience let's recall the inclusions of algebras $\bar \fm\subset \bar \fg\subset \fg$,
with $\fg \cong \so(4,6)$ the LLV algebra, $\bar \fg \cong \so(3,5)$ the reduced LLV algebra, and finally $\bar \fm \cong \so(2,3)$ the Mumford-Tate algebra.
\[ \begin{tabular}{ccccc}
	$\bar {\mathfrak m}$ & $\subset$ & $\bar {\mathfrak g}$ & $\subset$ & $\mathfrak g$ \\ 
	$\parallel$ &  & $\parallel$ &  & $\parallel$ \\ 
	$\mathfrak {so} (2, 3)$ & $\subset$ & $\mathfrak {so} (3, 5)$ & $\subset$ & $\mathfrak {so} (4, 6)$
\end{tabular} \]

We also recall that $\bar {\mathfrak g}$ (and thus also $\bar {\mathfrak m}$) respect the cohomological degree. We focus on degree $4$ cohomology $H^4(X)$ as the first non-obvious piece for the $\bar {\mathfrak g}$-action. First we investigate the restriction of the two cases of \eqref{eq:OG6_possibilities} from $\fg$ to $\bar\fg$-modules.

\begin{lemma} \label{lem:OG6_lem3}
	Let $X$ be a hyper-K\"ahler $6$-fold with $b_2(X) = 8$. 
	\begin{enumerate}
		\item Assume that the LLV decomposition of $H^*(X)$ is  
		$$H^* (X) = V_{(3)} \oplus V_{(1,1,1)} \oplus 135V \oplus 240\RR$$
		as $\fg\cong \so(4,6)$-modules. Then
		\begin{equation}\label{case1_og6}
			H^4 (X) = \bar V_{(2)} \oplus \bar V_{(1,1)} \oplus 136 \RR .
		\end{equation}
		as $\bar {\mathfrak g} \cong \so(3,5)$-modules. 
		
		\item Assume that the LLV decomposition of $H^*(X)$ is  
		$$H^* (X) = V_{(3)} \oplus 6 V_{(1,1)} \oplus 115 V \oplus 290 \RR$$ as $\fg\cong \so(4,6)$-modules. Then
		\begin{equation}\label{case2_og6}
			H^4 (X) = \bar V_{(2)} \oplus 6 \bar V \oplus 116 \RR .
		\end{equation}
		as $\bar {\mathfrak g} \cong \so(3,5)$-modules. 
	\end{enumerate}
\end{lemma}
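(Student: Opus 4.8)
The plan is to exploit that $\bar{\mathfrak g}\subset\mathfrak g_0$ acts on $H^*(X)$ by degree-preserving operators, so the branching computation decouples across cohomological degrees and I only need the piece of each $\mathfrak g$-irreducible landing in $H^4(X)$. Since $h=\varepsilon_0^\vee$ records the cohomological degree via $h|_{H^{2k}}=2(k-n)$, and here $n=3$, the summand $H^4(X)$ is precisely the $h=-2$ eigenspace, i.e.\ by \eqref{eq:hodge_from_weight} the span of weight vectors with $\theta_0=-1$. Thus for each $V_\mu$ appearing in the two possibilities \eqref{eq:OG6_possibilities} I extract its degree $-2$ graded piece and restrict it to $\bar{\mathfrak g}\cong\so(3,5)$; summing these with the stated multiplicities yields $H^4(X)$. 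This is the same grading-refined branching already illustrated for $\mathrm K3^{[3]}$ in Remark \ref{rem:K3_3_computation}.

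The mechanism is the $\bar{\mathfrak g}$-decomposition of the standard module
\[
	V \ = \ \RR(1)\oplus\bar V\oplus\RR(-1),
\]
whose three summands sit in degrees $-2,0,+2$. Since forming exterior and symmetric powers is compatible with this grading, each degree $-2$ piece is obtained by selecting exactly one factor $\RR(1)$ together with factors of degree $0$. For the Verbitsky component $V_{(3)}$ I instead read the grading directly off \eqref{eq:verbitsky_component}: its degree $4$ part is $\Sym^2\bar V$, which decomposes under $\so(\bar V)$ as $\Sym^2\bar V=\bar V_{(2)}\oplus\RR$. For $V_{(1,1,1)}\cong\wedge^3 V$ the degree $-2$ piece is $\RR(1)\otimes\wedge^2\bar V=\wedge^2\bar V=\bar V_{(1,1)}$; for $V_{(1,1)}\cong\wedge^2 V$ it is $\RR(1)\otimes\bar V=\bar V$; for the standard module $V=V_{(1)}$ it is $\RR(1)=\RR$; and the trivial module $V_{(0)}$ is concentrated in the middle degree $H^6$ and contributes nothing to $H^4$.

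Assembling these contributions then gives both statements. In case (1) the cohomology is $V_{(3)}\oplus V_{(1,1,1)}\oplus 135\,V\oplus 240\,\RR$, so $H^4(X)=(\bar V_{(2)}\oplus\RR)\oplus\bar V_{(1,1)}\oplus 135\,\RR$, which is \eqref{case1_og6}. In case (2) the cohomology is $V_{(3)}\oplus 6\,V_{(1,1)}\oplus 115\,V\oplus 290\,\RR$, so $H^4(X)=(\bar V_{(2)}\oplus\RR)\oplus 6\,\bar V\oplus 115\,\RR$, which is \eqref{case2_og6}.

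The computation is essentially bookkeeping; the only genuine inputs are the identifications $V_{(1,1)}\cong\wedge^2 V$ and $V_{(1,1,1)}\cong\wedge^3 V$ as $\mathfrak g$-modules, the classical orthogonal decompositions $\Sym^2\bar V=\bar V_{(2)}\oplus\RR$ and $\wedge^2\bar V=\bar V_{(1,1)}$, and the observation that the trivial representation lives purely in $H^6$. The one point to guard against is overlooking a further $\bar{\mathfrak g}$-summand in some graded piece; I would confirm completeness either against the branching tables of the appendix or, more cheaply, by checking that the resulting Betti and Hodge numbers of $H^4$ agree with Table \ref{table:hodge_diamond_OG6}.
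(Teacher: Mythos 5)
Your proposal is correct and takes essentially the same approach as the paper: the paper's proof also restricts each $\mathfrak g$-summand along $V = \RR(1) \oplus \bar V \oplus \RR(-1)$ as in Remark \ref{rem:K3_3_computation}, the only cosmetic difference being that it writes out the full graded $\bar{\mathfrak g}$-decomposition of each $V_\mu$ while you extract only the degree-$4$ (i.e.\ $h=-2$, $\theta_0=-1$) piece. Your identifications $\Sym^2 \bar V = \bar V_{(2)} \oplus \RR$ and $\wedge^2 \bar V = \bar V_{(1,1)}$, together with the observation that the trivial summands sit in $H^6$, reproduce \eqref{case1_og6} and \eqref{case2_og6} exactly as in the paper.
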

\begin{proof}
	We proceed as in Remark \ref{rem:K3_3_computation} (see also  Appendix \ref{sec:appendixB}). Recall that the standard representation $V$ of $\fg$ is the Mukai completion of the standard representation $\bar V$ of $\bar \fg$. Regarding $V$ as a $\bar \fg$ module gives 
	\[ V = \RR(1) \oplus \bar V \oplus \RR(-1) ,\]
	where we indicate the twist to keep track of the cohomological degree. It is immediate to see 
	\begin{align*}
		V_{(3)} &= \RR(3) \oplus \bar V(2) \oplus \Sym^2 \bar V(1) \oplus \Sym^3 \bar V \oplus \Sym^2 \bar V(-1) \oplus \bar V(-2) \oplus \RR(-3) ,\\
		V_{(1,1,1)} &= \wedge^2 \bar V(1) \oplus \left[ \wedge^3 \bar V \oplus \bar V \right] \oplus \wedge^2 \bar V (-1) ,\\
		V_{(1,1)} &= \bar V(1) \oplus \left[ \wedge^2 \bar V \oplus \RR \right] \oplus \bar V(-1) .
	\end{align*}
	It follows that 
	\[ H^* (X) = \RR(3) \oplus \bar V (2) \oplus \left[ \Sym^2 \bar V \oplus \wedge^2 \bar V \oplus 135 \RR \right](1) \oplus \left[ \Sym^3 \bar V \oplus \wedge^3 \bar V \oplus 136 \bar V \oplus 240 \RR \right] \oplus \cdots ,\]
	for the first case, and
	\[ H^* (X) = \RR(3) \oplus \bar V(2) \oplus \left[ \Sym^2 \bar V \oplus 6 \bar V \oplus 115 \RR \right](1) \oplus \left[ \Sym^3 \bar V \oplus 6 \wedge^2 \bar V \oplus 116 \bar V \oplus 296 \RR \right] \oplus \cdots ,\]
	for the second case. The lemma follows.
\end{proof}

Finally, we restrict from $\bar \fg$-representation on $H^4(X)$ to a $\bar \fm$-representation.

\begin{proposition} \label{prop:OG6_prop2}
	With notations and assumptions as in Lemma \ref{lem:OG6_lem3}
	\begin{enumerate}
		\item If \eqref{case1_og6} holds, then  		
		\[ H^4 (X) = \bar W_{(2)} \oplus \bar W_{(1,1)} \oplus 6 \bar W \oplus 145 \RR \]
		as $\bar {\mathfrak m}\cong\so(2,3)$-modules.
		
		\item If \eqref{case2_og6} holds, then 
		\[ H^4 (X) = \bar W_{(2)} \oplus 9 \bar W \oplus 140 \RR .\]
		as $\bar {\mathfrak m}\cong\so(2,3)$-modules.
	\end{enumerate}
\end{proposition}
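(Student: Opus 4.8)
The plan is to reduce everything to a single branching rule for the standard representation and then push it through the symmetric and exterior squares that cut out the representations in Lemma \ref{lem:OG6_lem3}. First I would record how the standard module restricts from $\bar\fg \cong \so(3,5)$ to $\bar\fm \cong \so(2,3)$. Since $\bar V = H^2(X)$ is the standard $\bar\fg$-module and the proof of Proposition \ref{prop:OG6_prop1} already identified $H^2(X) = \bar W \oplus 3\RR$ as an $\bar\fm$-module, we have
\[ \bar V|_{\bar\fm} = \bar W \oplus 3\RR. \]
The structural point to extract here is that this splitting is \emph{orthogonal} for the Beauville--Bogomolov form $\bar q$: since $\bar q$ is $\bar\fm$-invariant, $\bar W$ is irreducible and nontrivial, and $3\RR$ is trivial, Schur's lemma forces $\bar W \perp 3\RR$, with $\bar q|_{\bar W}$ the defining form of $\bar\fm = \so(\bar W)$ and $\bar q|_{3\RR}$ nondegenerate. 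This orthogonality is precisely what makes the contraction (trace) maps defining $\bar V_{(2)}$ and $\bar W_{(2)}$ compatible with the restriction.

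Next I would compute the restriction of each irreducible building block appearing in Lemma \ref{lem:OG6_lem3}, namely $\bar V_{(2)}$ (the kernel of the contraction $\Sym^2 \bar V \to \RR$, cf.\ Notation \ref{notation_varpi}), $\bar V_{(1,1)} = \wedge^2 \bar V$, and $\bar V$ itself. Using the orthogonal splitting together with $\Sym^2 \bar W = \bar W_{(2)} \oplus \RR$ and $\wedge^2 \bar W = \bar W_{(1,1)}$, one finds
\begin{align*}
\Sym^2 \bar V|_{\bar\fm} &= \Sym^2 \bar W \oplus (\bar W \otimes \RR^3) \oplus \Sym^2 \RR^3 = \bar W_{(2)} \oplus 3\bar W \oplus 7\RR, \\
\wedge^2 \bar V|_{\bar\fm} &= \wedge^2 \bar W \oplus (\bar W \otimes \RR^3) \oplus \wedge^2 \RR^3 = \bar W_{(1,1)} \oplus 3\bar W \oplus 3\RR.
\end{align*}
Because the single trivial summand removed in passing from $\Sym^2 \bar V$ to $\bar V_{(2)}$ is the class of $\bar q$, which by orthogonality lies among the $7\RR$ above, we obtain $\bar V_{(2)}|_{\bar\fm} = \bar W_{(2)} \oplus 3\bar W \oplus 6\RR$, while $\bar V_{(1,1)}|_{\bar\fm} = \bar W_{(1,1)} \oplus 3\bar W \oplus 3\RR$. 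The dimension checks $14+15+6 = 35 = \dim \bar V_{(2)}$ and $10+15+3 = 28 = \dim \bar V_{(1,1)}$ confirm that no summand is lost.

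Finally I would substitute these restrictions into the two $\bar\fg$-module descriptions of $H^4(X)$ provided by Lemma \ref{lem:OG6_lem3} and collect like terms. In case (1), restricting $\bar V_{(2)} \oplus \bar V_{(1,1)} \oplus 136\RR$ yields
\[ \bar W_{(2)} \oplus \bar W_{(1,1)} \oplus (3+3)\bar W \oplus (6+3+136)\RR = \bar W_{(2)} \oplus \bar W_{(1,1)} \oplus 6\bar W \oplus 145\RR, \]
and in case (2), restricting $\bar V_{(2)} \oplus 6\bar V \oplus 116\RR$ (using $\bar V|_{\bar\fm} = \bar W \oplus 3\RR$) yields
\[ \bar W_{(2)} \oplus (3+6)\bar W \oplus (6+18+116)\RR = \bar W_{(2)} \oplus 9\bar W \oplus 140\RR, \]
exactly the two asserted decompositions. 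I expect no serious obstacle: the only conceptual ingredient is the orthogonality of the splitting $\bar V = \bar W \perp \RR^3$, which pins down precisely which trivial summand disappears when the trace is removed; the remainder is routine bookkeeping of symmetric and exterior powers, validated by the dimension counts above.
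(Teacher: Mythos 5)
Your proposal is correct and takes essentially the same route as the paper: the paper's proof also consists of restricting through the decomposition $\bar V|_{\bar{\mathfrak m}} = \bar W \oplus 3\,\RR$ (justified there by comparing the Hodge diamonds $(1,6,1)$ of $\bar V$ and $(1,3,1)$ of $\bar W$, the same comparison you invoke via Proposition \ref{prop:OG6_prop1}) and then collecting terms in the two cases of Lemma \ref{lem:OG6_lem3}. Your one extra ingredient, the orthogonality of the splitting $\bar V = \bar W \perp \RR^3$, is true but not needed: any trivial $\bar{\mathfrak g}$-summand restricts to a trivial $\bar{\mathfrak m}$-summand, so semisimplicity alone already pins down which copy of $\RR$ is removed in passing from $\Sym^2 \bar V$ to $\bar V_{(2)}$.
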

\begin{proof}
	This follows directly from the result of Lemma \ref{lem:OG6_lem3} with the decomposition $\bar V = \bar W \oplus 3 \RR$. The latter fact follows from the comparison of the Hodge diamond of $\bar V$ and $\bar W$, which are $(1,6,1)$ and $(1,3,1)$ respectively.
\end{proof}

\begin{proof}[Proof Theorem \ref{thm_llv_og6}]
Using the numerical restrictions on $\OG6$ type, we have determined two compatible LLV $\fg=\so(4,6)$-decompositions of the cohomology (Proposition \ref{prop_2cases_og6}). In Proposition \ref{prop:OG6_prop2}, we have determined the restrictions of these two cases as representations of the Mumford--Tate algebra $\bar \fm = \so(2,3)$. Only one of them matches the geometric possibility identified in Proposition \ref{prop:OG6_prop1}. The claim follows. 
\end{proof}

\section{Period maps, monodromy, and the LLV algebra} \label{sect_periods}

As discussed in \S\ref{subsec_cx_structure}, the Hodge structure of the cohomology of $X$ is determined by two operators $h$, $f$ contained in the LLV algebra $\fg$. The purpose of this section is to discuss the behavior of the Hodge structure of hyper-K\"ahler manifolds in families, and more precisely to discuss the higher degree period maps. Given that the Torelli theorem holds for the second cohomology of hyper-K\"ahler manifolds, it is no surprise that the higher degree period maps and monodromy operators are determined by those for $H^2(X)$. Our main results (Theorems \ref{thm_higher_periods} and \ref{thm:total_log_mon}) of this section achieve precisely these. We note the recent papers of Soldatenkov \cite{sol18,sol19} cover similar ground. Theorem \ref{thm:total_log_mon} is already proved in \cite[Prop.~3.4]{sol18}, and here we provide its alternative Hodge theoretic proof, more related to the spirit of Torelli theorem and our discussion on LLV decomposition. We also note that many of the discussions here are already conceptually treated in \cite[Ch. III--IV]{ggk}, though our two main results about higher period maps of hyper-K\"ahler manifolds are not discussed there.

\subsection{Higher degree period maps} \label{S:Phik}\footnote{The arguments in this section follow a suggestion by an anonymous referee. They replace our original infinitesimal approach.}
Throughout this subsection, we fix a compact hyper-K\"ahler manifold $X$. Consider any smooth proper family $\mathfrak X / S$ of hyper-K\"ahler manifolds over a complex manifold $S$, whose fiber at $0 \in S$ is isomorphic to $X$. For each degree $0 \le k \le 4n$, one associates the period map
\[ \Phi_k : \tilde S \to D_k \]
from the universal cover $\tilde S$ of $S$ to the classifying space $D_k$ of Hodge structures with specified Hodge numbers, matching those of $H^k(X)$. Verbitsky's global Torelli theorem \cite{ver13,huy11} says that a compact hyper-K\"ahler manifold is essentially recovered from its second Hodge structure. Thus, one expects that the $k$-th period map $\Phi_k$ is recovered from the second period map $\Phi_2$. Here, we make this more precise.

To start, we define the \emph{$k$-th period variety} (for $k > 2$) to be the symmetric space parameterizing Hodge flags with specified Hodge numbers
\[ \check D_k = \mathrm {Flag} (H^k (X, \CC), (f^{\bullet})) ,\]
where $(f^{\bullet})$ indicates the dimensions of the Hodge filtration of the $k$-th cohomology of $X$. It is a smooth projective variety, on which the general linear group $\GL (H^k (X, \CC))$ acts transitively. Let us fix a reference point $o_k \in \check D_k$ corresponding to the Hodge structure of the original hyper-K\"ahler manifold $X$. Then we have an identification
\[ \check D_k = \GL (H^k (X, \CC)) / P_k ,\]
where $P_k \subset \GL (H^k (X, \CC))$ is the stabilizer at $o_k$. The case $k=2$ is special as we take into account the Beauville--Bogomolov form (giving a polarization).  The structure group reduces to the special orthogonal group, and we can define the \emph{second period variety}, or the \emph{second compact dual} (of the \emph{period domain}) by $\check D_2 = \SO(\bar V, \bar q)_{\CC} / P_2$. For our purpose, it will be convenient to replace the group $\SO (\bar V, \bar q)$ by its degree $2$ universal cover $\Spin (\bar V, \bar q)$ and represent the compact dual period domain by the quotient of a Spin group
\[ \check D_2 = \Spin(\bar V, \bar q)_{\CC} / P_2 .\]

The \emph{second period domain} $D_2$ can be realized by an open subdomain in $\check D_2$ as a $\Spin (\bar V, \bar q)_{\RR}$-orbit.  In the absence of a polarization on $H^k(X)$, $k \ge 3$, there is no naturally defined period domain $D_k \subset \check D_k$.  Nevertheless, we still have the holomorphic period map $\Phi_k : \tilde S \to \check D_k$. 

With these preliminaries, we can state our first main result of this section:

\begin{theorem} \label{thm_higher_periods}
	For each $k$, there exists a morphism $\psi_k : \check D_2 \to \check D_k$ with the following property. Let $\mathfrak X / S$ be a smooth proper family of hyper-K\"ahler manifolds whose fiber at $0 \in S$ is isomorphic to $X$. Let $\tilde S$ be the universal covering of $S$, and $\Phi_2 : \tilde S \to \check D_2$ and $\Phi_k : \tilde S \to \check D_k$ the second and $k$-th period maps associated to the family. Then we have a factorization $\Phi_k = \psi_k \circ \Phi_2$.
	\begin{equation} \label{diag:period}
	\begin{tikzcd}
	\tilde S \arrow[r, "\Phi_2"] \arrow[rr, bend right=40, "\Phi_k"] & \check D_2 \arrow[r, "\psi_k"] & \check D_k
	\end{tikzcd}
	\end{equation}
\end{theorem}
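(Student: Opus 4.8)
The plan is to realize $\psi_k$ as the map of flag varieties induced by the reduced LLV representation, and to identify both period maps as orbit maps for that representation. First I would recall the mechanism of \S\ref{subsec_cx_structure}: the Hodge structure on $H^k(X)$ is encoded by the two commuting semisimple operators $h$ and $f$, where the degree operator $h$ is scalar on $H^k$ (it records $p+q$) and $f \in \bar{\mathfrak g}_\RR$ records $p-q$ through its eigenspace decomposition, cf.~\eqref{eq:hodge_from_weight}. Since $\bar{\mathfrak g} = \so(\bar V,\bar q)$ acts on each $H^k(X)$ in a degree-preserving, diffeomorphism-invariant fashion via $\rho_k$ \eqref{def_rhok}, and since $f$ lies in $\bar{\mathfrak g}_\RR$ (Proposition \ref{prop:f}), the entire Hodge flag on $H^k$ is recovered from the single element $f$ together with the fixed $\bar{\mathfrak g}$-module structure. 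The period point $\tilde\Phi_2(\tilde s)$ records exactly (the $H^2$-avatar of) this element $f$, so the whole problem is to promote the fixed representation $\rho_k$ to a morphism of compact duals. Because odd-degree cohomology carries half-integral weights (Proposition \ref{prop:even_rep}), the $\bar{\mathfrak g}$-action does not descend to $\SO(\bar V,\bar q)$ but integrates to the universal cover; writing $G=\Spin(\bar V,\bar q)$ I would let $\rho_k : G_\CC \to \GL(H^k(X,\CC))$ be the resulting algebraic representation, which is also why $\Spin$ appears in $\check D_2 = G_\CC/P_2$.

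Next I would construct $\psi_k$. Fix $o_2\in\check D_2$ and take the reference flag $o_k\in\check D_k$ to be the Hodge flag that the recipe of \S\ref{subsec_cx_structure} assigns to $o_2$. Writing $P_2\subset G_\CC$ for the stabilizer of $o_2$ and $P_k\subset\GL(H^k)_\CC$ for the stabilizer of $o_k$, the essential point is the containment $\rho_k(P_2)\subseteq P_k$. This holds because $P_2$ is the parabolic $P(\mu)$ attached to the Hodge cocharacter $\mu:\mathbb{G}_m\to G_\CC$ of the reference structure, while $o_k$ is by construction attached to $\rho_k\circ\mu$; if $\lim_{t\to 0}\mu(t)\,g\,\mu(t)^{-1}$ exists then, $\rho_k$ being algebraic, $\lim_{t\to 0}\rho_k(\mu(t))\,\rho_k(g)\,\rho_k(\mu(t))^{-1}=\rho_k\bigl(\lim_{t\to 0}\mu(t)\,g\,\mu(t)^{-1}\bigr)$ exists as well, so $\rho_k(g)\in P(\rho_k\circ\mu)=P_k$. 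Granting this, the assignment
\[
  \psi_k : \check D_2 = G_\CC/P_2 \longrightarrow \GL(H^k)_\CC/P_k = \check D_k,
  \qquad gP_2 \longmapsto \rho_k(g)\,P_k,
\]
is a well-defined $G_\CC$-equivariant morphism of projective varieties.

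For the factorization I would trivialize the local system $R^2\pi_*\QQ$ over the simply connected base $\tilde S$, so that $\tilde\Phi_2(\tilde s)=g_s\cdot o_2$ for a holomorphically varying $g_s\in G_\CC$. Because the local system of reduced LLV algebras is flat and monodromy-invariant, the Hodge operator $f_s$ of $X_s$ on $H^2$ equals $\mathrm{Ad}(g_s)\,f_{o}$ inside one fixed copy of $\bar{\mathfrak g}_\RR$. The degree-$k$ reduced LLV structure being the same $\bar{\mathfrak g}$-module for every $s$, the operator governing $H^k(X_s)$ is $\rho_k(f_s)=\mathrm{Ad}(\rho_k(g_s))\,\rho_k(f_o)$, whence the $k$-th Hodge flag is $\rho_k(g_s)\cdot o_k$. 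Therefore $\tilde\Phi_k(\tilde s)=\rho_k(g_s)\cdot o_k=\psi_k(g_s\cdot o_2)=\psi_k(\tilde\Phi_2(\tilde s))$, as required.

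The hard part will be precisely the identification used in the last step: one must know that the genuine geometric Hodge structure on $H^k(X_s)$ coincides, uniformly in $s$, with the representation-theoretic Hodge structure induced from $H^2(X_s)$ through the fixed $\bar{\mathfrak g}$-module structure. Pointwise this is Proposition \ref{prop:f} together with Proposition \ref{prop:mt}, but in families it further requires the local constancy of the LLV local system, so that all the operators $f_s$ sit in a single copy of $\bar{\mathfrak g}_\RR$ and vary only through the $H^2$-frame $g_s$; once this compatibility is secured, both the parabolic containment $\rho_k(P_2)\subseteq P_k$ and the orbit-map description of the period maps are formal.
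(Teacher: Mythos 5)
Your construction of $\psi_k$ coincides with the paper's (Proposition \ref{prop:period_morphism}): both take the reference Hodge flag $o_k$ induced by $\rho_k(f)$ and extend equivariantly, checking $\rho_k(P_2)\subseteq P_k$ — your cocharacter/limit argument and the paper's Lie-algebra computation $\rho_k(\mathfrak p_2)\subseteq\mathfrak p_k$ are interchangeable here. For the factorization itself, however, you take a genuinely different route. The paper proves the \emph{infinitesimal} identity $d\tilde\Phi_k=d\psi_k\circ d\tilde\Phi_2$ (Lemma \ref{lem:higher_periods}): the differential of each period map factors through Kodaira--Spencer and is given by the contraction operator $\iota_\eta$, and the key input is Lemma \ref{lem:contraction}, a harmonic-forms computation showing $\iota_\eta=[L_{\iota_\eta(\sigma)},\Lambda_\sigma]\in\bar{\mathfrak g}_\CC$; local Torelli identifies $H^1(X,T_X)\cong\bar{\mathfrak g}_\CC/\mathfrak p_2$, and analytic continuation from agreement at the base point finishes the proof. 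You instead argue \emph{pointwise and globally}: flat (Gauss--Manin) trivialization of $H^*$ over $\tilde S$ identifies all the LLV algebras with one fixed copy (legitimate, since $\bar{\mathfrak g}$ is constructed from the cup product, which is flat — this is also how the paper knows monodromy respects $\bar{\mathfrak g}$), and then Proposition \ref{prop:f} fiberwise, together with injectivity of $\rho_2$ (Corollary \ref{cor:rho_inj}), pins down $f_s\in\bar{\mathfrak g}_\RR$ from the $H^2$-period point and propagates it to $H^k$ through $\rho_k$. Your approach is more elementary in that it avoids the local-coordinate computation with harmonic representatives, at the cost of invoking Proposition \ref{prop:f} (hence a hyper-K\"ahler metric) on every fiber; the paper's approach yields as byproducts the infinitesimal statement and the horizontality of $\psi_k$, which are used later.

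One step needs repair. You take a \emph{holomorphically varying} lift $g_s\in G_\CC$ with $\tilde\Phi_2(\tilde s)=g_s\cdot o_2$ and assert $f_s=\mathrm{Ad}(g_s)f_o$. For a complex $g_s$ this is neither well-defined nor true: $g_s$ is determined only modulo $P_2$, and $P_2$ does not centralize $f_o$, while $\mathrm{Ad}(g_s)f_o$ need not even be real. The fix is to notice that the identity you are proving, $\tilde\Phi_k(\tilde s)=\psi_k(\tilde\Phi_2(\tilde s))$, is a pointwise statement, so no holomorphic lift is needed: since every $\tilde\Phi_2(\tilde s)$ lies in the open real orbit $D_2\subset\check D_2$, choose for each $s$ separately a \emph{real} representative $g_s\in G_\RR$. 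The real stabilizer of $o_2$ preserves the Hodge decomposition (filtration plus real structure) and hence centralizes $f_o$, so $\mathrm{Ad}(g_s)f_o$ is well-defined and equals the genuine Hodge operator $f_s$; then $\rho_k(f_s)=\mathrm{Ad}(\rho_k(g_s))\rho_k(f_o)$ gives $\tilde\Phi_k(\tilde s)=\rho_k(g_s)\cdot o_k=\psi_k(\tilde\Phi_2(\tilde s))$ exactly as you intend. (The two-fold $\Spin$-versus-$\SO$ ambiguity in $g_s$ is harmless, as $\rho_k$ of the center acts by $\pm1$ and preserves every flag.) With this substitution your argument is complete and correct.
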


The only nontrivial part of the theorem is the construction of such a morphism $\psi_k$. After it, the proof will be a formal consequence of our discussions in Section~\ref{sec:llv}. As one can expect, the essential point is the existence of the reduced LLV representation $\rho_k : \bar {\mathfrak g} \to \mathfrak {gl} (H^k (X, \QQ))$ in \eqref{def_rhok}. Throughout, we will abuse the notation and denote its algebraic group version by the same symbol
\[ \rho_k : \Spin (\bar V, \bar q) \to \GL (H^k (X, \QQ)) .\]
Recall that we have fixed the base points $o_k$ of the period varieties $\check D_k$ that correspond to the original hyper-K\"ahler manifold $X$. Note that the theorem claims (when $S$ is a point)  $\psi_k (o_2) = o_k$.

\begin{proposition} \label{prop:period_morphism}
	There exists a unique equivariant morphism $\psi_k : \check D_2 \to \check D_k$ with $\psi_k(o_2) = o_k$.
\end{proposition}
\begin{proof}
	Recall from above that the period varieties can be described as $\check D_2 = \Spin(\bar V, \bar q)_{\CC}/P_2$ and $\check D_k = \GL(H^k(X, \CC))/P_k$. We would like to construct a morphism $\psi_k$ by a quotient of $\rho_k$:
	\[\begin{tikzcd}[sep = small]
		\Spin (\bar V, \bar q)_{\CC} \arrow[d] \arrow[r, "\rho_k"] & \GL (H^k (X, \CC)) \arrow[d] \\
		\check D_2 \arrow[r, "\psi_k"] & \check D_k
	\end{tikzcd}.\]
	Here the identity elements in the first row are sent to $o_2$ and $o_k$ respectively, giving $\psi_k(o_2) = o_k$. To descend to a quotient map as desired, we need to prove $\rho_k(P_2) \subset P_k$. By the Lie algebra--algebraic group correspondence, it suffices to prove its Lie algebra version $\rho_k(\mathfrak p_2) \subset \mathfrak p_k$.
	
	Let $f \in \bar {\mathfrak g}_{\RR}$ be the Hodge operator associated to the complex structure of $X$. The Lie algebra homomorphism $\rho_k : \bar {\mathfrak g}_{\RR} \to \gl (H^k (X, \RR))$ sends the operator $f$ to $\rho_k (f)$, the operator defining the Hodge structure of $H^k (X)$. Since $\rho_k$ is a Lie algebra homomorphism, it in particular respects the (adjoint) $f$-action. In other words, $\rho_k$ is a Hodge structure homomorphism. By standard Hodge theory, the Lie algebras of the stabilizers $\mathfrak p_2$ and $\mathfrak p_k$ admit their own Hodge decompositions
	\[ \mathfrak p_2 = \bar {\mathfrak g}^{1, -1} \oplus \bar {\mathfrak g}^{0, 0} , \qquad \mathfrak p_k = \gl (H^k (X))^{k, -k} \oplus \cdots \oplus \gl (H^k (X))^{0, 0} .\]
	Hence $\rho_k$ sends $\mathfrak p_2$ into $\mathfrak p_k$ and the claim follows. Here one can further prove $\psi_k$ is a horizontal map, but we omit its proof as we will not need this fact in our future discussion.
\end{proof}

Our definition of the morphism $\psi_k$ is formal. In order to prove the more geometric Theorem~\ref{thm_higher_periods}, we need to understand it more concretely. Let $\sigma \in D_2 \subset \check D_2$. This corresponds to a Hodge structure on $H^2 (X, \QQ)$ with $H^{2,0} = \CC \sigma$, $H^{0,2} = \CC \bar \sigma$ and $H^{1,1} = \CC \{ \sigma, \bar \sigma \}^{\perp}$. This gives rise to a Hodge operator $f_{\sigma} \in \bar {\mathfrak g}_{\RR}$ by our discussion in Section~\ref{sec:llv}. Now consider the element $\rho_k (f_{\sigma}) \in \gl (H^k (X, \RR))$ given by the reduced LLV representation. Its eigenspaces define a Hodge structure on $H^k (X, \QQ)$.

\begin{lemma} \label{lem:psi}
	Let $\sigma \in D_2$, and   $f_{\sigma} \in \bar {\mathfrak g}_{\RR}$  be as above. Then $\psi_k(\sigma) \in \check D_k$ defines a Hodge structure corresponding to the Hodge operator $\rho_k(f_{\sigma}) \in \mathfrak {gl} (H^k (X, \RR))$.
\end{lemma}
\begin{proof}
	The idea is to use the equivariance of the morphism $\psi_k$ in Proposition~\ref{prop:period_morphism}. Note that $\Spin(\bar V, \bar q)_{\RR}$ acts on the period domain $D_2$ transitively, so we may assume $\sigma = g . o_2$ for some $g \in \Spin(\bar V, \bar q)_{\RR}$. Now use the equivariance of $\psi_k$ to write $\psi_k(\sigma) = \rho_k(g) . \psi_k(o_2) = \rho_k(g) . o_k$.
	
	Let $F^{\bullet}_2$ and $F^{\bullet}_k$ be the reference Hodge filtrations associated to the base points $o_2$ and $o_k$. The Hodge operators corresponding to them are $f \in \bar {\mathfrak g}_{\RR}$ and $\rho_k(f) \in \gl(H^k(X, \RR))$. Hence the Hodge filtrations associated to $g.o_2$ and $\rho_k(g).o_k$ are $g.F^{\bullet}_2$ and $\rho_k(g) . F^{\bullet}_k$. Their associated Hodge operators are $f_{\sigma} = g.f$ and $\rho_k(g).\rho_k(f)$, where the actions here are the adjoint actions (conjugations). The claim follows from $\rho_k(g).\rho_k(f) = \rho_k(g.f) = \rho_k (f_{\sigma})$.
\end{proof}

The remaining step is to relate our formal construction to the definition of period maps.

\begin{proof} [Proof of Theorem 4.1]
	Let $s \in \tilde S$ be any point. Let us first review the definition of the period mappings $\Phi_2(s)$ and $\Phi_k(s)$. First, fix any path connecting $0$ and $s$. Diffeomorphically trivialize the given family $\mathfrak X/S$ along the path, so that we have a ``parallel transport'' identification between the fibers $\operatorname{PT} : X = X_0 \to X_s$. The map $\operatorname{PT}$ is a diffeomorphism. Hence, (1) it induces an isomorphism of LLV algebras $\mathfrak g(X_s)$ and $\mathfrak g(X)$; and (2) the pullback $\operatorname{PT}^* : H^* (X_s, \QQ) \to X^* (X, \QQ)$ becomes an LLV module isomorphism. However, $\operatorname{PT}^*$ is not a Hodge structure isomorphism as it is not biholomorphic. We define $\Phi_k(s) \in \check D_k$ by the Hodge structure of $H^k (X_s, \QQ)$ transported into the vector space $H^k (X, \QQ)$ under the identification $\operatorname{PT}^*$. For hyper-K\"ahler manifolds, the Hodge structure on $H^k (X_s, \QQ)$ is captured by the Hodge operator $\rho_k(f_s) \in \bar {\mathfrak g}(X_s)_{\RR}$ by discussions in Section~\ref{sec:llv}. Hence $\Phi_k(s)$ is captured by $\operatorname{PT}^* (\rho_k(f_s)) \in \bar {\mathfrak g}(X)_{\RR}$.
	
	Note that $\operatorname{PT}^*$ is an LLV module isomorphism. Hence $\operatorname{PT}^* (\rho_k(f_s)) = \rho_k (\operatorname{PT}^* (f_s))$. By definition of $\Phi_2$ (in the previous paragraph), $\operatorname{PT}^* (f_s) \in \bar {\mathfrak g}_{\RR}$ is the Hodge operator associated to $\Phi_2(s)$. Hence by Lemma~\ref{lem:psi}, the Hodge operator $\rho_k (\operatorname{PT}^* (f_s))$ is associated to $\psi_k (\Phi_2(s))$. This proves $\Phi_k (s) = \psi_k (\Phi_2(s))$.
\end{proof}

So far, we did not assume the family $\mathfrak X / S$ is projective and everything was proved without the projectivity assumption. Now if we further assume the family is projective, then we can restrict our commutative diagram \eqref{diag:period} further to the period domains. This can be done with the aid of the theory of Mumford--Tate subdomains of period varieties \cite{ggk}. Let us assume $X$ is a projective hyper-K\"ahler manifold and fix a polarization
\begin{equation} \label{eq:polarization}
l \in H^2 (X, \ZZ) .
\end{equation}
Set $D_2^l \subset \check D_2$ and $D_k^l \subset \check D_k$ by the period domain associated to the primitive Hodge structures $H^2_{\prim} (X, \QQ)$ and $H^k_{\prim} (X, \QQ)$ with respect to the polarization $l$.

\begin{corollary} \label{cor:polarized_period}
	Assume further in Theorem \ref{thm_higher_periods} that $\mathfrak X / S$ is a smooth projective family of hyper-K\"ahler manifolds with a polarization $l$. Then the following diagram commutes.
	\begin{equation} \label{diag:polarized_period}
	\begin{tikzcd}
	\tilde S \arrow[r, "\Phi_2"] \arrow[rr, bend right=40, "\Phi_k"] & D_2^l \arrow[r, "\psi_k"] & D_k^l
	\end{tikzcd}
	\end{equation}
\end{corollary}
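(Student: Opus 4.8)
The starting point is that Theorem~\ref{thm_higher_periods} already furnishes the factorization $\tilde\Phi_k = \psi_k\circ\tilde\Phi_2$ as maps into the \emph{compact duals} $\check D_2$ and $\check D_k$, with no projectivity hypothesis. To upgrade this to the polarized period domains, my plan is to verify two points: first, that the period maps of a projective family actually land in $D_2^l$ and $D_k^l$; and second, that the morphism $\psi_k$ carries all of $D_2^l$ into $D_k^l$, so that the diagram \eqref{diag:polarized_period} is meaningful as drawn. Once both are in hand the corollary follows by simply restricting the factorization of Theorem~\ref{thm_higher_periods}.

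The first point is standard Hodge theory. For a smooth projective family with polarization $l$, each fiber $X_s$ carries a polarized Hodge structure on its primitive cohomology $H^k_{\prim}(X_s,\QQ)$, the polarization being the Hodge--Riemann form $Q_k(\alpha,\beta) = \pm\int_{X_s} l^{2n-k}\cup\alpha\cup\beta$. Hence the points $\tilde\Phi_2(s)$ and $\tilde\Phi_k(s)$ satisfy the Hodge--Riemann bilinear relations, i.e.\ they lie in $D_2^l\subset\check D_2$ and $D_k^l\subset\check D_k$ respectively. In particular $\tilde\Phi_k(\tilde S)\subset D_k^l$, so the composite $\psi_k\circ\tilde\Phi_2$ already takes values in $D_k^l$ along the image of $\tilde\Phi_2$; the only remaining issue is the behaviour of $\psi_k$ on \emph{all} of $D_2^l$.

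For the second point I would use the equivariance of $\psi_k$ together with the theory of Mumford--Tate subdomains \cite{ggk}. Fix the orthogonal splitting $\bar V = \QQ l\oplus\bar T$ with $\bar T = l^\perp$, and let $G = \Spin(\bar T,\bar q|_{\bar T})\subset\Spin(\bar V,\bar q)$ be the stabilizer of $l$; its real form is $\Spin(2,b_2-3)$, and $D_2^l$ is an open $G_\RR$--orbit in $\check D_2$. Since $G$ fixes $l$ and the adjoint action of $\bar{\mathfrak g}$ on $\mathfrak g_2\cong\bar V$ is the standard representation, the operator $L_l\in\mathfrak g_2$ is centralized by $\rho_k(G)$, which therefore preserves the primitive subspace $H^k_{\prim}=\ker L_l^{\,2n-k+1}$. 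Moreover the $\bar{\mathfrak g}$--action respects cup products as a derivation \eqref{eq:ll_respects_cup}, so $G$ acts on $H^*(X)$ by algebra automorphisms fixing $l$ and acting trivially on the one-dimensional top cohomology; consequently $G$ preserves the pairing $Q_k$, and $\rho_k(G_\RR)$ acts on $H^k_{\prim}(X,\RR)$ through $\mathrm{Aut}(H^k_{\prim},Q_k)$ and hence preserves $D_k^l$. Now by Proposition~\ref{prop:period_morphism} the map $\psi_k$ is $G_\RR$--equivariant, $\psi_k(g\cdot o_2) = \rho_k(g)\cdot\psi_k(o_2)$. Taking the base point $o_2 = \tilde\Phi_2(s_0)\in D_2^l$, the first point gives $\psi_k(o_2)=o_k\in D_k^l$; since $D_2^l = G_\RR\cdot o_2$ and $\rho_k(G_\RR)$ preserves $D_k^l$, we conclude $\psi_k(D_2^l)\subset D_k^l$.

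Combining the two points, $\psi_k$ restricts to a holomorphic map $D_2^l\to D_k^l$ and the factorization of Theorem~\ref{thm_higher_periods} descends to the commutative diagram \eqref{diag:polarized_period}. The one genuinely nontrivial step is the second, and within it the crux is identifying $\rho_k(G_\RR)$ as a group of $Q_k$--isometries of primitive cohomology: this is exactly what confines $\psi_k(D_2^l)$ to the polarized domain rather than merely to the compact dual, and it relies essentially on the compatibility \eqref{eq:ll_respects_cup} of the LLV action with cup product (to preserve $Q_k$) and on the equivariance of Proposition~\ref{prop:period_morphism} (to propagate the single base-point computation over the whole orbit).
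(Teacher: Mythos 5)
Your proposal is correct, and its skeleton coincides with the paper's: both reduce the corollary to the containment $\psi_k(D_2^l)\subset D_k^l$, realize $D_2^l$ as the orbit of a real group through the base point $\tilde\Phi_2(s_0)$, and propagate the single base-point computation by the equivariance of $\psi_k$ from Proposition \ref{prop:period_morphism}. Note first that your group $G=\Spin(l^\perp,\bar q|_{l^\perp})$ is literally the paper's $\overline{\MT}_2^{\sim}$ (the preimage of $\SO(H^2_{\prim},\bar q)$ under $\Spin(\bar V,\bar q)\to\SO(\bar V,\bar q)$ is exactly $\Spin(l^\perp)$, since the latter already contains the kernel of the isogeny). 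Where you genuinely diverge is in how the containment is certified. The paper invokes the theory of Mumford--Tate subdomains of \cite{ggk}: via Proposition \ref{prop:mt} it identifies the $k$-th generic special Mumford--Tate group as $\rho_k(\overline{\MT}_2^{\sim})$ and concludes that $\psi_k(D_2^l)$ is the $k$-th Mumford--Tate subdomain, which lies in $D_k^l$ because Mumford--Tate groups of polarized Hodge structures preserve the polarization --- a fact outsourced to the general machinery. You instead verify this by hand: the derivation property \eqref{eq:ll_respects_cup} makes $G$ act on $H^*(X)$ by ring automorphisms fixing $l$ and (by semisimplicity) acting trivially on $H^{4n}$, hence commuting with $L_l$ (so preserving primitivity) and preserving $Q_k=\pm\int l^{2n-k}\cup(-)\cup(-)$, so that $\rho_k(G_\RR)$ lands in the isometry group of $(H^k_{\prim},Q_k)$ and preserves $D_k^l$. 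This buys a self-contained argument exposing exactly which structural features of the LLV action (cup-product compatibility, triviality on the top class) force the polarized statement; the paper's route is shorter but leans on \cite{ggk} and on Proposition \ref{prop:mt}.

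Two minor corrections to your wording, neither affecting the argument. First, $D_2^l$ is \emph{not} an open $G_\RR$-orbit in $\check D_2$: it has dimension $b_2-3$, while $\check D_2$ (the isotropic quadric in $\PP(\bar V_\CC)$) has dimension $b_2-2$. It is open only in its own compact dual, the quadric in $\PP(l^\perp\otimes\CC)$, i.e.\ it is a positive-codimension Mumford--Tate subdomain of $\check D_2$; your proof uses only the orbit description $D_2^l=G_\RR\cdot o_2$ and equivariance, so nothing breaks. Second, since $\Spin(2,b_2-3)_\RR$ is connected, its orbit is a single connected component of the two-component locus of $\bar q$-polarized K3-type Hodge structures on $l^\perp$; with the usual convention that $D_2^l$ denotes the component containing the image of the (connected) $\tilde S$, this is harmless, and the paper's Mumford--Tate orbit carries the same implicit convention.
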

\begin{proof}
	It is enough to show $\psi_k(D_2^l) \subset D_k^l$. Let $\overline {\MT}_2 = \SO (H^2_{\prim} (X, \QQ), \bar q)$ be the generic special Mumford--Tate group of the Hodge structures $H^2 (X, \QQ)$ with $l \in H^2 (X, \QQ) \cap H^{1,1} (X)$. The period domain $D_2^l$ can be obtained as a Mumford--Tate subdomain of $\check D_2$, the $\overline {\MT}_{2, \RR}$-orbit of a point $\Phi_2 (s_0) \in \check D_2$. By Proposition \ref{prop:mt}, the $k$-th generic special Mumford--Tate group is
	\[ \overline {\MT}_k = \rho_k (\overline {\MT}_2^{\sim}) ,\]
	where $\overline {\MT}_2^{\sim}$ is the inverse image of $\overline {\MT}_2$ by the degree $2$ isogeny $\Spin (\bar V, \bar q) \to \SO (\bar V, \bar q)$. Hence the $k$-th Mumford--Tate subdomain in $\check D_k$ is the orbit of $\rho_k (\overline {\MT}_2^{\sim})_{\RR}$, which is just $\psi_k (D_2^l)$ as $\psi_k$ is an equivariant map by construction. This proves $\psi_k (D_2^l) \subset D_k^l$.
\end{proof}

\subsection{Higher degree monodromy operators}
Let us now consider a one-parameter projective degeneration $\mathfrak X / \Delta$ of a hyper-K\"ahler manifold $X$. By this, we mean a flat projective morphism $\mathfrak X \to \Delta$ over the unit disk $\Delta$ such that its restriction $\mathfrak X^* \to \Delta^*$ to the punctured disk $\Delta^*$ is smooth. In this situation, one gets a monodromy operator $T_k \in \GL (H^k (X))$ associated to the smooth family $\mathfrak X^* / \Delta^*$ for each cohomological degree $0 \le k \le 4n$. Since each $T_k$ is a quasi-unipotent operator by the monodromy theorem (e.g. \cite{sch73}), we can define \emph{the $k$-th log monodromy} operator by its logarithm
\begin{equation} \label{def_nk}
	N_k = \tfrac{1}{m} \log \big( (T_k)^m \big) \in \End (H^k (X)) ,
\end{equation}
where $m\in \ZZ_{>0}$ is such that $(T_k)^m$ is unipotent.

Any diffeomorphism of $X$ respects the Beauville--Bogomolov form on $H^2(X)$. Thus, for the second monodromy we can further say $T_2 \in \SO (\bar V, \bar q)$ and hence $N_2 \in \so (\bar V, \bar q) = \bar {\mathfrak g}$. Recall we had the reduced LLV representation $\rho_k : \bar {\mathfrak g} \to \End (H^k(X))$, so we have two elements $N_k$ and $\rho_k (N_2)$ in $\End (H^k (X))$. It was already observed by Soldatenkov that these two elements coincide. Here we recover his result using our results on the relation between the second and higher degree period maps (Theorem \ref{thm_higher_periods} and Corollary \ref{cor:polarized_period}); this is the second main theorem of this section.

\begin{theorem} [Soldatenkov \cite{sol18}] \label{thm:total_log_mon}
	For any one-parameter projective degeneration $\mathfrak X / \Delta$ of hyper-K\"ahler manifolds, the $k$-th log monodromy is determined by the second log monodromy by the relation
	\[ N_k = \rho_k (N_2) \qquad \mbox{for all } \ 0 \le k \le 4n .\]
\end{theorem}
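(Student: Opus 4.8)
The plan is to prove the stronger group-level identity $T_k=\rho_k(\tilde T_2)$, where $\tilde T_2\in\Spin(\bar V,\bar q)$ is the unipotent lift of the second monodromy $T_2\in\SO(\bar V,\bar q)$, and then to take logarithms. After a finite base change $t\mapsto t^m$ (which alters neither side of the asserted identity $N_k=\rho_k(N_2)$) we may assume every $T_k$ is unipotent, so that $T_k=\exp(N_k)$ and $T_2=\exp(N_2)$ with $N_2\in\bar\fg=\so(\bar V,\bar q)$, and $\rho_k(\tilde T_2)=\exp(\rho_k(N_2))$.

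First I would record a purely topological constraint coming from the fact that the reduced LLV algebra is a diffeomorphism invariant. Over $\Delta^*$ the Lefschetz operators $L_x$ (for $x$ a section of $R^2=R^2f_*\QQ$) vary flatly, and the dual operators $\Lambda_x$ are canonically determined from $L_x$ and the degree operator by Jacobson--Morozov; hence $\bar\fg$ spreads out to a local system $\underline{\bar\fg}\subset\End(R^*)$ of Lie algebras whose monodromy is $\operatorname{Ad}(T_2)$, and the action map $\underline{\bar\fg}\otimes R^k\to R^k$ is flat. Flatness means parallel transport around the loop intertwines the two factors, i.e. $T_k\,\rho_k(\xi)\,T_k^{-1}=\rho_k(\operatorname{Ad}(T_2)\xi)=\rho_k(\tilde T_2)\,\rho_k(\xi)\,\rho_k(\tilde T_2)^{-1}$ for every $\xi\in\bar\fg$. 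Therefore $g:=\rho_k(\tilde T_2)^{-1}T_k$ centralizes $\rho_k(\bar\fg)$. Since $\rho_k$ is faithful for $0<k<4n$ (Corollary \ref{cor:rho_inj}), Schur's lemma applied to the isotypic decomposition $H^k=\bigoplus_\lambda \bar V_\lambda\otimes M_\lambda$ shows that $g=\bigoplus_\lambda \mathrm{id}_{\bar V_\lambda}\otimes g_\lambda$ acts only on the multiplicity spaces $M_\lambda$. Moreover $g$ commutes with $\rho_k(\tilde T_2)$, as it centralizes all of $\rho_k(\Spin(\bar V,\bar q))$, so $g=\rho_k(\tilde T_2)^{-1}T_k$ is unipotent.

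It remains to prove $g=\mathrm{id}$, which is the heart of the argument and exactly where Hodge theory and the projectivity hypothesis enter. By the discussion of \S\ref{subsec_cx_structure}, made global by Theorem \ref{thm_higher_periods}, the Hodge structure on every $H^k(X_t)$ is the one induced through $\rho_k$ by the single semisimple operator $f\in\bar\fg$ attached to the Hodge structure on $H^2(X_t)$. As $f$ acts on the isotypic piece $\bar V_\lambda\otimes M_\lambda$ as $\rho_\lambda(f)\otimes\mathrm{id}_{M_\lambda}$, the induced Hodge structure on the multiplicity space $M_\lambda$ is \emph{independent of $t$}; that is, $\underline{M_\lambda}$ underlies a constant variation of Hodge structure whose monodromy is precisely $g_\lambda$. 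Because $X_t$ is projective, $H^k(X_t)$ is polarized and the isotypic decomposition is orthogonal (the $\bar\fg$-action is by infinitesimal isometries, \eqref{eq:ll_respects_cup}), so each $M_\lambda$ inherits a polarization preserved by $g_\lambda$. A unipotent automorphism of a polarized $\QQ$-Hodge structure lies in a compact group and is therefore trivial, whence $g_\lambda=\mathrm{id}$ for all $\lambda$. Thus $g=\mathrm{id}$, $T_k=\rho_k(\tilde T_2)=\exp(\rho_k(N_2))$, and taking logarithms gives $N_k=\rho_k(N_2)$.

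I would flag that the main obstacle is precisely this last step: the period-map factorization of Theorem \ref{thm_higher_periods} (equivalently, the equivariance of $\psi_k$ in Proposition \ref{prop:period_morphism}) forces $T_k$ and $\rho_k(\tilde T_2)$ to agree only on the \emph{image} of the period map, and upgrading this to equality of operators requires controlling the multiplicity spaces. The cleanest route I see is the rigidity input above, namely that a constant polarized variation of Hodge structure has trivial unipotent monodromy. An alternative, should the constancy be awkward to justify directly, is to invoke that the algebraic monodromy group is contained in the derived Mumford--Tate group and to use Corollary \ref{cor:polarized_period} together with the faithfulness of the standard representation on $H^2$ to identify $T_k=\rho_k(\tilde T_2)$ inside $\rho_k(\overline{\MT}{}_2^{\sim})$.
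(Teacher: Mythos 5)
Your proposal is essentially viable but takes a genuinely different route from the paper's. The paper deduces the theorem from the stronger Theorem \ref{thm:monodromy_theorem}: the factorization $\tilde\Phi_k=\psi_k\circ\tilde\Phi_2$ (Theorem \ref{thm_higher_periods}, Corollary \ref{cor:polarized_period}) shows only that $T_k$ and $\rho_k(\tilde T_2)$ translate the period point identically, and the paper closes exactly the gap you flag \emph{arithmetically}: $(T_k)^{-m}\rho_k(\tilde T_2)^m$ fixes a point of the period domain, hence lies in the finite set $G_\ZZ\cap K$, and the pigeonhole Lemma \ref{lem:weak_monodromy} yields $(T_k)^d=\rho_k(\tilde T_2)^d$ --- equality of powers only, which suffices after taking logarithms. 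You instead centralize: conjugation by $T_k$ preserves $\rho_k(\bar\fg)$ and agrees with $\rho_k\circ\mathrm{Ad}(\tilde T_2)$ (correct; check it on the generators $L_x,\Lambda_x$), so $g=\rho_k(\tilde T_2)^{-1}T_k$ centralizes $\rho_k(\bar\fg)$, is unipotent, and is killed by polarized Hodge-theoretic rigidity, giving the on-the-nose identity $T_k=\rho_k(\tilde T_2)$ after unipotent base change. This is closer in spirit to Soldatenkov's original algebraic argument than to the paper's; the paper's route buys a uniform bound $d=d(l)$ over an arbitrary base $S$ with no unipotence reduction, yours buys the sharper group-level equality. (Minor points: faithfulness of $\rho_k$ is not needed for the Schur step, only semisimplicity; and $k=0,4n$ are trivial since $\bar\fg$ acts trivially on one-dimensional spaces.)

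There is, however, one step that fails as justified: the parenthetical claim that the isotypic decomposition is orthogonal because ``the $\bar\fg$-action is by infinitesimal isometries, \eqref{eq:ll_respects_cup}''. The derivation property \eqref{eq:ll_respects_cup} does \emph{not} make $\bar\fg$ infinitesimally isometric for the polarization form $Q_l(x,y)=\int_X l^{2n-k}\cup x\cup y$: since $\bar\fg$ acts trivially on $H^{4n}$, one gets $Q_l(e.x,y)+Q_l(x,e.y)=-\int_X (e.l^{2n-k})\cup x\cup y$, and $\bar\fg\cong\so(\bar V,\bar q)$ moves $l$, so $Q_l$ is not $\bar\fg$-invariant, distinct isotypic pieces need not be $Q_l$-orthogonal, and $Q_l$ may restrict degenerately to a multiplicity space $M_\lambda$ --- so your compact-group argument on $M_\lambda$ does not launch (and ``constant VHS'' by itself does not force trivial monodromy; it is polarization plus unipotence doing the work). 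The repair is to bypass the multiplicity spaces and show $g$ preserves $Q_l$ directly: the connected group $\Spin(\bar V,\bar q)$ integrates the derivations \eqref{eq:ll_respects_cup}, hence acts by ring automorphisms of $H^*(X)$, trivially on $H^{4n}$, and $\tilde T_2$ fixes $l$ (the relatively ample class is monodromy-invariant); therefore $\rho_k(\tilde T_2)$ preserves $Q_l$, as does $T_k$, so $g$ does. Moreover, running your centralizer argument on the total cohomology shows that $\rho(\tilde T_2)^{-1}T_*$ centralizes all of $\rho(\fg)$ --- in particular it commutes with $L_l$, $\Lambda_l$, and $f$ --- hence preserves the Lefschetz-primitive and Hodge decompositions. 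On each primitive summand $g$ is then a unipotent automorphism of a polarized Hodge structure, hence trivial, which completes your proof.
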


In fact, we will prove a slightly stronger result over an arbitrary complex manifold $S$. For its statement, consider the degree $2$ universal covering $\Spin (\bar V, \bar q) \to \SO (\bar V, \bar q)$. Among the two preimages of $T_2 \in \SO (\bar V, \bar q)$, let us make a choice and denote it by
\begin{equation} \label{eq:T2_preimage}
\tilde T_2 \in \Spin (\bar V, \bar q) .
\end{equation}
The choice will be not a big issue because the square of the two preimages are the same; our result is about sufficient powers of the monodromies.

\begin{theorem} \label{thm:monodromy_theorem}
	Let $\mathfrak X / S$ be a smooth projective family of hyper-K\"ahler manifolds with a fixed polarization $l$. Let $\gamma \in \pi_1 (S, s_0)$ and $T_k$ the image of $\gamma$ by the monodromy representation $\pi_1 (S, s_0) \to \GL (H^k (X))$. Let $\tilde T_2 \in \Spin (\bar V, \bar q)$ be the lifting of $T_2$ as in \eqref{eq:T2_preimage}. Then there exists a positive integer $d = d(l) \in \ZZ_{>0}$, solely depending on $l$, such that
	\[ (T_k)^d = \rho_k (\tilde T_2)^d \qquad \mbox{for all } \ 0 \le k \le 4n .\]
\end{theorem}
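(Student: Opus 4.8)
The plan is to reduce everything to a single element and prove that it is torsion of uniformly bounded order. Set
\[ g \ := \ \rho_k(\tilde T_2)^{-1}\,T_k \ \in \ \GL(H^k(X)) . \]
I will show that $g$ commutes with $\rho_k(\tilde T_2)$ and that $g$ has finite order, bounded only in terms of the deformation type (equivalently $l$). Granting these two facts, writing $T_k=\rho_k(\tilde T_2)\,g$ and choosing $d=d(l)$ to be a common multiple of these orders over all $k$ gives $(T_k)^d=\rho_k(\tilde T_2)^d\,g^d=\rho_k(\tilde T_2)^d$ simultaneously for every $0\le k\le 4n$.

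First I would isolate the one structural input: the reduced LLV representation $\rho_k:\bar{\mathfrak g}\to\End(H^k(X))$ is \emph{flat} for the Gauss--Manin connection. Indeed $\bar{\mathfrak g}=\mathfrak{so}(R^2\pi_*\QQ)$ and $\End(R^k\pi_*\QQ)$ are local systems on $S$; the generators $L_x$ (cup product with $x\in H^2$) are parallel because cup product is topological, and the dual operators $\Lambda_x$ are canonically determined from $L_x$ and $h$ via Jacobson--Morozov, hence parallel as well. Since $\bar{\mathfrak g}$ is generated by these, $\rho_k$ is a morphism of local systems, so transporting around a loop $\gamma$ intertwines monodromies: $T_k\,\rho_k(e)\,T_k^{-1}=\rho_k(\mathrm{Ad}(T_2)e)$ for all $e\in\bar{\mathfrak g}$. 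On the other hand $\rho_k(\tilde T_2)\,\rho_k(e)\,\rho_k(\tilde T_2)^{-1}=\rho_k(\mathrm{Ad}(\tilde T_2)e)=\rho_k(\mathrm{Ad}(T_2)e)$, since $\ker(\Spin\to\SO)$ acts trivially in the adjoint representation. Comparing, $g$ centralizes $\rho_k(\bar{\mathfrak g})$; as the centralizer of a Lie subalgebra equals that of the connected subgroup it generates, $g$ in fact commutes with the whole image $\rho_k(\Spin(\bar V,\bar q))$, in particular with $\rho_k(\tilde T_2)$.

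Next I would prove finiteness of $g$. Decomposing $H^k(X)=\bigoplus_\lambda \bar V_\lambda\otimes M_\lambda$ into $\bar{\mathfrak g}$-isotypic pieces, the centralizing property forces $g=\bigoplus_\lambda \mathrm{id}_{\bar V_\lambda}\otimes g_\lambda$ with $g_\lambda\in\GL(M_\lambda)$. Both $T_k$ (topological monodromy) and $\rho_k(\tilde T_2)$ (image of $\Spin$) preserve the polarization $Q_k$ on $H^k(X)$, hence so does $g$; since the irreducible $\mathfrak{so}$-modules $\bar V_\lambda$ are self-dual, $Q_k$ is $\bar{\mathfrak g}$-invariant and splits as $\bigoplus_\lambda Q_{\bar V_\lambda}\otimes Q_{M_\lambda}$, so each $g_\lambda$ preserves $Q_{M_\lambda}$. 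The decisive point is that the multiplicity space $M_\lambda$ carries a \emph{Tate} (pure $(p,p)$) Hodge structure in the LLV decomposition (Remark \ref{rem_19}, via \eqref{eq:hodge_from_weight}), whence by the Hodge--Riemann relations $Q_{M_\lambda}$ is \emph{definite} and $g_\lambda$ lies in a compact orthogonal group. Finally $g$ stabilizes a lattice, since $T_k$ preserves $H^k(X,\ZZ)$ and $\rho_k(\tilde T_2)$ preserves $H^k(X,\QQ)$. An element of a compact group stabilizing a lattice has finite order, bounded purely in terms of the lattice and the form, i.e.\ in terms of $l$; taking $d=d(l)$ accordingly finishes the argument.

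I expect the finiteness step to be the main obstacle: one must pin down exactly the group in which $g$ lives and exploit that $g$ moves only the multiplicity spaces, on which the polarization is definite — this is what forces finite, and uniformly bounded, order, and it is where projectivity (the existence of $Q_k$ and the definiteness on Tate pieces) is genuinely used. As a cross-check I would rederive the key commutation geometrically from the period-map factorization $\tilde\Phi_k=\psi_k\circ\tilde\Phi_2$ of Corollary \ref{cor:polarized_period}: the $\rho_k$-equivariance of $\psi_k$ (Proposition \ref{prop:period_morphism}), together with the fact that $T_2$ and its lift $\tilde T_2$ induce the same action on $\check D_2$, shows $g\cdot\tilde\Phi_k(s)=\tilde\Phi_k(s)$ for all $s$. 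However, the flatness argument above already yields the commutation with $\rho_k(\bar{\mathfrak g})$ directly, which is precisely what the finiteness step consumes, so it provides a self-contained route to the theorem.
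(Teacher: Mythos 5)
Your reduction to $g:=\rho_k(\tilde T_2)^{-1}T_k$ and the commutation step are sound, and in fact the flatness argument (the $L_x$ are parallel for Gauss--Manin, the $\Lambda_x$ are canonically determined, hence $T_k\rho_k(e)T_k^{-1}=\rho_k(\mathrm{Ad}(T_2)e)$, and $\mathrm{Ad}(\tilde T_2)=\mathrm{Ad}(T_2)$) is a nice alternative to the paper's derivation, which instead gets everything from the period-map factorization $\tilde\Phi_k=\psi_k\circ\tilde\Phi_2$ of Corollary \ref{cor:polarized_period}. But your finiteness step has two genuine gaps. First, the assertion that ``$Q_k$ is $\bar{\mathfrak g}$-invariant and splits as $\bigoplus_\lambda Q_{\bar V_\lambda}\otimes Q_{M_\lambda}$'' is false: $\bar{\mathfrak g}$ acts by derivations of the cup product but does \emph{not} annihilate $l$, and for $e\in\bar{\mathfrak g}$ one computes $Q_k(ex,y)+Q_k(x,ey)=-(2n-k)\int_X x\cup y\cup(e.l)\cup l^{2n-k-1}$, which vanishes only for the stabilizer subalgebra $\so(l^\perp,\bar q)\subsetneq\bar{\mathfrak g}$. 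So distinct $\bar{\mathfrak g}$-isotypic pieces may pair nontrivially under $Q_k$ and the tensor splitting is unjustified; moreover Hodge--Riemann gives definiteness only on \emph{primitive} $(p,q)$-pieces with signs alternating across the Lefschetz decomposition, not on the full $(p,p)$-part, so the claimed definiteness of $Q_{M_\lambda}$ does not follow as stated. (Note that $T_k$ and $\rho_k(\tilde T_2)$ individually \emph{do} preserve $Q_k$ --- the monodromy because it fixes $l$, and $\rho_k(\tilde T_2)$ because the connected group $\Spin(\bar V,\bar q)$ acts by ring automorphisms fixing the top class and $T_2 l=l$ --- but that is invariance under a single element of $\Spin$, not under all of $\bar{\mathfrak g}$.)

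Second, and more decisively, ``$g$ stabilizes a lattice, since $T_k$ preserves $H^k(X,\ZZ)$ and $\rho_k(\tilde T_2)$ preserves $H^k(X,\QQ)$'' is a non sequitur, and rationality plus compactness does not force finite order: the rational rotation $\tfrac15\begin{psmallmatrix}3&-4\\4&3\end{psmallmatrix}$ lies in a compact orthogonal group and in $\GL_2(\QQ)$ yet has infinite order. You genuinely need $g$ to lie in a \emph{discrete} (arithmetic) subgroup, i.e. you must show $\rho_k(\tilde T_2)$ preserves a lattice commensurable with $H^k(X,\ZZ)$ with bounded denominators --- and this arithmeticity is also the only place a bound $d=d(l)$ independent of the family can come from. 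The repair is already in your final ``cross-check'' paragraph: by Corollary \ref{cor:polarized_period}, $g$ fixes the period point $\tilde\Phi_k(s)\in D_k^l$, hence lies in the \emph{compact} isotropy group of the polarized period domain (this supersedes the whole multiplicity-space discussion), and then the paper's Lemma \ref{lem:weak_monodromy} finishes: for two elements $T,T'$ of an arithmetic group $G_\ZZ$ whose powers move the base point identically, the elements $T^{-m}(T')^m$ all lie in the finite set $G_\ZZ\cap K$, and pigeonhole yields $T^d=(T')^d$ with $d\le|G_\ZZ\cap K|$ --- note this argument does not even require your commutation step. That is precisely the paper's proof; your route is salvageable, but only by substituting this mechanism for your definiteness-on-$M_\lambda$ and rational-lattice claims.
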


Theorem \ref{thm:total_log_mon} follows directly from Theorem \ref{thm:monodromy_theorem} by letting $S = \Delta^*$ and taking the logarithm of the equality. For the proof of Theorem \ref{thm:monodromy_theorem}, the following simple lemma is crucial.

\begin{lemma} \label{lem:weak_monodromy}
	Let $D$ be a real homogeneous space on which a semisimple real Lie group $G_{\RR}$ is acting transitively. Fix any point $o \in D$ and assume its stabilizer $K \subset G_{\RR}$ is a compact subgroup. Let $G_{\ZZ} \subset G_{\RR}$ be an arithmetic subgroup, and suppose we have two elements $T, T' \in G_{\ZZ}$ with the property
	\[ T^m . o = (T')^m . o \qquad \mbox{for all } \ m \in \ZZ .\]
	Then there exists a positive integer $0 < d \le | G_{\ZZ} \cap K |$ such that $T^d = (T')^d$.
\end{lemma}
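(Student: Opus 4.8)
The plan is to reduce the statement to a pigeonhole argument inside the finite set $F := G_{\ZZ} \cap K$. The first step is to record that $F$ is finite: since $G_{\ZZ}$ is arithmetic it is discrete in $G_{\RR}$, while $K$ is compact, so $G_{\ZZ} \cap K$ is a discrete subset of a compact set and is therefore finite. Write $N = |F|$.

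The central idea is to package the hypothesis into a single family of elements of $F$. For each $m \in \ZZ$ set $g_m := (T')^{-m} T^m$. Because $T, T' \in G_{\ZZ}$ and $G_{\ZZ}$ is a group, each $g_m$ lies in $G_{\ZZ}$. The hypothesis $T^m . o = (T')^m . o$ says precisely that $g_m . o = (T')^{-m} T^m . o = (T')^{-m} (T')^m . o = o$, i.e.\ $g_m$ fixes $o$ and hence lies in the stabilizer $K$. Thus $g_m \in F$ for every $m$.

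With this in hand I would apply the pigeonhole principle to the $N+1$ elements $g_0, g_1, \dots, g_N \in F$: two of them must agree, say $g_s = g_t$ with $0 \le s < t \le N$. This reads $(T')^{-s} T^s = (T')^{-t} T^t$; left-multiplying by $(T')^t$ and then right-multiplying by $T^{-s}$, and using only that powers of a fixed element commute, collapses this to $(T')^{t-s} = T^{t-s}$. Setting $d = t - s$ gives $0 < d \le N = |G_{\ZZ} \cap K|$ together with $T^d = (T')^d$, as desired.

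I do not anticipate a genuine obstacle: the argument is elementary once the right sequence $g_m$ has been written down. The only points requiring a word of care are the finiteness of $F$ (which is where discreteness of the arithmetic subgroup and compactness of $K$ enter, and which is also what makes the bound $d \le |G_{\ZZ} \cap K|$ meaningful) and the legitimacy of the final cancellation, which is valid because $T^s, T^t$ are powers of a single element and hence commute, as do the powers of $T'$. It is worth noting that the semisimplicity of $G_{\RR}$ and the full homogeneous-space structure of $D$ play no essential role in \emph{this} lemma beyond ensuring the objects are well defined; all that is really used is that $o$ has compact stabilizer and that $G_{\ZZ}$ is discrete.
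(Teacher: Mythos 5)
Your proof is correct and follows essentially the same route as the paper's: both note that $G_{\ZZ} \cap K$ is finite (discrete meets compact), observe that the elements $(T')^{-m} T^m$ (the paper uses the inverse $T^{-m}(T')^m$) all fix $o$ and hence lie in this finite set, and apply the pigeonhole principle followed by the same cancellation to get $T^d = (T')^d$ with $d \le |G_{\ZZ} \cap K|$. Your closing remark that semisimplicity and transitivity are not really needed is accurate but does not change the argument.
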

\begin{proof}
	Since $K \subset G_{\RR}$ is compact and $G_{\ZZ} \subset G_{\RR}$ is discrete, $G_{\ZZ} \cap K$ is a finite group. From the given assumption, we obtain $(T^{-m} (T')^m) . o = o$ and hence $T^{-m} (T')^m \in G_{\ZZ} \cap K$ for all $m$. Since $G_{\ZZ} \cap K$ is finite, by pigeonhole principle there exist two distinct integers $0 \le m_1 < m_2 \le | G_{\ZZ} \cap K |$ so that $T^{-m_1} (T')^{m_1} = T^{-m_2} (T')^{m_2}$. This implies $T^{m_2 - m_1} = (T')^{m_2 - m_1}$.
\end{proof}

\begin{proof} [Proof of Theorem \ref{thm:monodromy_theorem}]
	Consider the diagram \eqref{diag:polarized_period} in Corollary \ref{cor:polarized_period}. By construction, the period maps certainly satisfy the following relations for all $s \in \tilde S$:
	\[ \Phi_k (\gamma . s) = T_k . \Phi_k (s), \qquad \Phi_2 (\gamma . s) = T_2 . \Phi_2 (s) = \tilde T_2 . \Phi_2 (s) .\]
	Now from the commutativity $\Phi_k = \psi_k \circ \Phi_2$ of the diagram, we have a sequence of identities
	\begin{equation*}
	\Phi_k (\gamma . s) = \psi_k (\Phi_2(\gamma . s)) = \psi_k (\tilde T_2 . \Phi_2 (s)) = \rho_k (\tilde T_2) . \psi_k (\Phi_2 (s)) = \rho_k (\tilde T_2) . \Phi_k (s) .
	\end{equation*}
	This gives us the condition
	\[ (T_k)^m . \Phi_k (s) = \Phi_k (\gamma^m . s) = \rho_2 (\tilde T_2)^m . \Phi_k (s) \quad \mbox{for all } \ m \in \ZZ .\]
	Now we can apply Lemma \ref{lem:weak_monodromy} to $T_k$ and $\rho_k (\tilde T_2)$. It follows $(T_k)^d = \rho_k (\tilde T_2)^d$ for some $0 < d \le A$, where $A$ denotes the cardinality of the finite set $\GL (H^k_{\prim} (X, \ZZ)) \cap K_k$. Hence $(T_k)^{A!} = \rho_k (\tilde T_2)^{A!}$ and $A!$ does not depend on the degeneration and $\gamma$.
	
	Refining the argument gives us a smaller constant. We can replace the period domain $D_k^l$ in Corollary \ref{cor:polarized_period} by the Mumford--Tate subdomain $\psi_k (D_2^l)$. Note that we can still apply Lemma \ref{lem:weak_monodromy} in this situation, because the monodromy $T_k$ is contained in the Mumford--Tate group $\overline {\MT}_k = \rho_k (\overline {\MT}_2^{\sim})$. Hence we can replace the constant $A$ by the cardinality of $\GL (H^2 (X, \ZZ)) \cap K_2$, which is much smaller than the previous one. Also, one can replace $A!$ by $\mathrm{lcm} \{ d : 1 \le d \le A \}$.
\end{proof}

\section{Nagai's conjecture and the Looijenga--Lunts--Verbitsky decomposition}
\label{sec:Nagai_LLV}
Let $\calX/\Delta$ be a projective one-parameter degeneration of hyper-K\"ahler manifolds. Consider the associated monodromy operators $N_k$ on the degree $k$-cohomology (cf. \eqref{def_nk}), and let $\nu_k$ be their nilpotency indices. Nagai \cite{nagai08} conjectured that $\nu_2$ determines the higher nilpotency indices by 
\begin{equation}
	\nu_{2k}=k\cdot \nu_2 \quad\textrm{for}\quad k=1,\dots, n \tag{\ref{nagaieq} (restated)}\end{equation}
(see \cite{sol18} for a partial discussion of the odd cohomology case). The purpose of this section is to show that the equation above is in fact equivalent to a non-trivial condition on the LLV decomposition of hyper-K\"ahler manifolds. Specifically,  let us write the LLV decomposition on the even cohomology part:
\begin{equation} \label{eq:decomp2}
	H^*_{\even} (X) \cong \sideset{}{_{\mu \in S}} {\bigoplus} V_{\mu}^{\oplus m_{\mu}} ,
\end{equation}
where $\mu = (\mu_0, \cdots, \mu_r)$ indicates a dominant integral weight of $\mathfrak g$ and $V_{\mu}$ its associated highest weight module. 

Also, recall that the Type of the degeneration is I, II and III, depending on the nilpotency index $\nu_2\in \{0,1,2\}$ of the second log monodromy. Nagai's conjecture was already established for the Type I and III cases by geometric methods in \cite{klsv18}. Hence, the Type II degeneration is the remaining interest. With these preliminaries, we can state the main result of the section. 

\begin{theorem} \label{thm:criterion}
	Let $X$ be a projective hyper-K\"ahler manifold with $b_2(X) \ge 5$. Suppose that every irreducible $\mathfrak g$-module component $V_{\mu}$ appearing in \eqref{eq:decomp2} satisfies the inequality
	\begin{equation}\label{eq_mu_cond3} \mu_0 + \mu_1 + \mu_2 \le n .\end{equation}
	Then Nagai's conjecture \eqref{nagaieq} holds for any one-parameter projective degeneration of $X$. Conversely, if Nagai's conjecture holds for a single Type II degeneration of $X$ then the inequality \eqref{eq_mu_cond3} holds.
\end{theorem}

\begin{remark} \label{rmk:typeII_degeneration} 
	In other words, Nagai's conjecture is essentially equivalent to the 
	 condition \eqref{eq_mu_cond3}, except for the hypothetical situation when there is no Type II degeneration (in which case, Nagai's conjecture would be trivially true). To understand this case, let us recall that the moduli space of polarized hyper-K\"ahler manifolds is a locally symmetric variety $D/\Gamma$, with $D$ a Type IV Hermitian symmetric domain and $\Gamma$ an arithmetic group. In this set-up, the existence of a Type II degeneration is equivalent to the existence of a rank $2$ totally isotropic sublattice in the Beauville--Bogomolov lattice $H^2 (X, \ZZ)$. For signature reasons, such an isotropic sublattice can not exist if $b_2(X)\le 4$. On the other hand, for $b_2(X) \ge 7$,  general lattice theory guarantees the existence of rank $2$ totally isotropic sublattices, and thus the equivalence of Nagai's conjecture to condition \eqref{eq_mu_cond3}. Hence, the only ambiguous cases (where the existence of Type II degenerations is unclear) are $b_2(X)=5$ or $6$. Of course, at this point, no such examples of  hyper-K\"ahler manifolds are known. Note also that the condition $b_2(X)\ge 7$ occurs naturally in \cite{vGV}. \end{remark}

The rest of this section will be devoted to the proof of Theorem \ref{thm:criterion}. We divide the proof into three cases depending on the Type of the degeneration. As mentioned, Nagai's conjecture for the Type I and III cases were already established in \cite{klsv18}. From our perspective, Type I is trivial as $N_{2k}$ is determined (see Section \ref{sect_periods}) via the LLV decomposition from $N_2(=0$ for Type I). The argument for Type III is similar to that in \cite{klsv18} (essentially the Verbitsky component $V_{(n)}$ is always present in the LLV decomposition). Finally, the Type II case requires a more delicate representation theoretic argument.

\subsection{Type I and III degenerations}
In Section \ref{sect_periods} we have discussed the interplay between the LLV decomposition and the period map. In particular, we have seen in Theorem~\ref{thm:total_log_mon} that in the case of one-parameter degenerations, the second monodromy operator $N = N_2 \in \bar \fg$ determines (for simplicity, we write $N$ instead of $N_2$ from now on) all the monodromy operators $N_k$ by 
\[ N_k=\rho_k(N) \]
where $\rho_k : \bar {\mathfrak g} \to \End(H^k(X))$ is the representation of the reduced LLV algebra. As an immediate consequence we obtain the Type I and III cases of Nagai's conjecture without any further restrictions. 

\begin{proposition} \label{prop:nu02}
	Nagai's conjecture \eqref{nagaieq} holds for type I and III degenerations of projective hyper-K\"ahler manifolds.
\end{proposition}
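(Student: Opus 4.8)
The plan is to funnel everything through Theorem \ref{thm:total_log_mon}, which gives $N_{2k} = \rho_{2k}(N)$ with $N = N_2 \in \bar{\mathfrak g} \cong \so(\bar V,\bar q)$. This turns Nagai's condition \eqref{nagaieq} into a purely representation-theoretic statement: I must compute the index of nilpotency $\nu_{2k}$ of the single nilpotent operator $N$ acting on $H^{2k}(X)$ via the reduced LLV representation $\rho_{2k}$. The Type I case ($\nu_2 = 0$) is then immediate: $\nu_2 = 0$ means $N_2 = 0$ as an endomorphism of $H^2(X) = \bar V$, and since $\bar V$ is the standard (hence faithful) $\bar{\mathfrak g}$-representation this forces $N = 0$ in $\bar{\mathfrak g}$; therefore $N_{2k} = \rho_{2k}(0) = 0$ and $\nu_{2k} = 0 = k\cdot\nu_2$ for every $k$.

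For Type III ($\nu_2 = 2$) I would establish matching lower and upper bounds $2k \le \nu_{2k} \le 2k$. For the lower bound I use the Verbitsky component: by \eqref{eq:verbitsky_component} the degree-$2k$ part of $SH^2(X,\QQ)$ is $\Sym^k \bar V \subseteq H^{2k}(X)$ for $0 \le k \le n$, and by \eqref{eq:ll_respects_cup} the element $N$ acts on the subalgebra $SH^2(X,\QQ)$ as a derivation, i.e. as the induced operator on $\Sym^k \bar V$. Choosing a size-$3$ Jordan block $e_0, e_1, e_2$ of $N$ on $\bar V$ (available precisely because $\nu_2 = 2$, so $N^2 e_2 = e_0 \neq 0$), one computes $N^{2k}(e_2^k) = \tfrac{(2k)!}{2^k}\, e_0^k \neq 0$ in $\Sym^k \bar V$, whence $\nu_{2k} \ge 2k$. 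For the upper bound I observe that $H^{2k}(X_t)$ is a pure Hodge structure of weight $2k$, so by Schmid's theory the relative monodromy weight filtration of its limiting mixed Hodge structure is supported in degrees $[0,4k]$ with isomorphisms $N_{2k}^{\,j}\colon \mathrm{Gr}_{2k+j} \xrightarrow{\sim} \mathrm{Gr}_{2k-j}$; a nonzero $N_{2k}^{\,j}$ needs $\mathrm{Gr}_{2k+j}\neq 0$, i.e. $j \le 2k$, so $N_{2k}^{\,2k+1}=0$ and $\nu_{2k}\le 2k$. Combining the two bounds yields $\nu_{2k} = 2k = k\cdot\nu_2$, which is exactly Nagai's conjecture for Type III.

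The one genuinely non-formal ingredient is the upper bound, where I invoke the Hodge-theoretic weight estimate ($N^{w+1}=0$ on a weight-$w$ limiting mixed Hodge structure); this is where projectivity of the degeneration is used. I expect this to be the only real obstacle, and it is mild. I would note explicitly why this easy two-sided estimate suffices for Types I and III but \emph{not} for Type II: for $\nu_2 = 1$ the Verbitsky component only gives $\nu_{2k}\ge k$ while the generic Hodge-theoretic bound gives $\nu_{2k}\le 2k$, leaving a gap. Closing that gap is precisely what forces the finer weight condition \eqref{eq_mu_cond3}, so Type II must be handled separately by the representation-theoretic criterion rather than by this argument. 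As an alternative to Schmid's theory one could derive the upper bound $\nu_{2k}\le 2k$ representation-theoretically, by completing $N$ to an $\mathfrak{sl}(2)$-triple $(N,Y,N^+)$ in $\bar{\mathfrak g}$ and bounding the top eigenvalue of $\rho_{2k}(Y)$ on each irreducible component $V_\mu$ using Proposition \ref{prop:weaker_condition}; I would mention this as a self-contained substitute that keeps the proof within the LLV framework.
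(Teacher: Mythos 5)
Your proposal is correct and follows essentially the same route as the paper: Type I via $N_{2k}=\rho_{2k}(N)=0$ from Theorem \ref{thm:total_log_mon}, and Type III by the two-sided bound, where your explicit Jordan-block computation $N^{2k}(e_2^k)=\tfrac{(2k)!}{2^k}e_0^k\neq 0$ on $\Sym^k\bar V$ is exactly the content of the paper's Lemma \ref{lem:sym_index}, and your Schmid weight estimate is the paper's appeal to the monodromy theorem for the bound $\nu_{2k}\le 2k$. Your closing remarks (why the gap remains for Type II, and the optional $\mathfrak{sl}(2)$-triple substitute for the upper bound) are sound but inessential additions to the same argument.
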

\begin{proof}
	Type I degeneration is equivalent to  $\nu_2 = 0$, i.e.  $N = 0$. Since $N_k=\rho_k(N)$, we conclude $N_k=0$ for all $k$ (compare \cite[Cor. 3.2]{klsv18}). In particular, $\nu_{2k}=0$ as needed. 
	
For Type III degenerations ($\nu_2=2$), on one hand, we have the general bound on the index on nilpotency on $H^{2n}$ by the monodromy theorem, i.e. $\nu_{2k} \le 2k$. Conversely, we recall that the LLV decomposition of $H^*(X)$ always contains the  Verbitsky component $V_{(n)}$. From Appendix \ref{sec:appendixB}, Verbitsky component splits as a direct sum of $\Sym^k \bar V \subset H^{2k} (X)$ as a $\bar {\mathfrak g}$-module. Hence we have a $\bar {\mathfrak g}$-module decomposition
	\[ H^{2k} (X) = \Sym^k \bar V \oplus \bar V_{2k}' ,\]
	or equivalently $\rho_{2k}$ splits as $\Sym^k \rho_2 \oplus \rho_{2k}'$, where $\Sym^k \rho_2 : \bar {\mathfrak g} \to \End (\Sym^k \bar V)$ and $\rho'_{2k}$ is some residual representation. Since $N_{2k}=\rho_{2k}(N)$, and the nilpotency index on $\Sym^k \rho_2$ is $2k$ (cf. Lemma \ref{lem:sym_index}), the claim follows. 
\end{proof}

The following lemma is standard. For completeness, we include the proof. 
\begin{lemma} \label{lem:sym_index}
	Let $\Sym^k \rho_2 : \bar {\mathfrak g} \to \End (\Sym^k \bar V)$ be the $\bar {\mathfrak g}$-module structure on $\Sym^k \bar V$. Then $\Sym^k \rho_2 (N)$ has nilpotency index $k \cdot \nu_2$.
\end{lemma}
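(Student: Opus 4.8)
The plan is to reduce the statement to elementary $\s(2)$--representation theory. Since $N \in \bar\fg$ is nilpotent and $\bar\fg$ is semisimple, the Jacobson--Morozov theorem supplies an $\s(2)$--triple $\{N, A, M\} \subset \bar\fg$ with $A$ semisimple and $[A,N] = 2N$, $[A,M] = -2M$, $[N,M]=A$. Through $\rho_2$ the standard module $\bar V$ becomes an $\s(2)$--module on which $N$ acts as the raising operator, carrying the $A$--weight space of weight $w$ into that of weight $w+2$. The first step is to record the well-known dictionary between nilpotency index and top weight: writing $R_d$ for the $(d+1)$--dimensional irreducible $\s(2)$--module (with $A$--weights $d, d-2, \ldots, -d$), the operator $N$ sends the lowest weight vector of $R_d$ to the highest in exactly $d$ steps, so $N$ has index $d$ on $R_d$. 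Decomposing $\bar V = \bigoplus_j R_{d_j}$, it follows that $\nu_2 = \max_j d_j$ is precisely the largest $A$--weight occurring in $\bar V$.

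Next I would transport this to the symmetric power. The module $\Sym^k \bar V$ is again an $\s(2)$--module under $\Sym^k\rho_2$, the $A$-- and $N$--actions being extended by the Leibniz rule, so that $A$ and $N$ are graded of degree $0$ and $+2$ for the induced weight grading. The key computation is to pin down the top $A$--weight of $\Sym^k\bar V$. Choosing a highest weight vector $v$ of a summand $R_{\nu_2} \subseteq \bar V$, so that $Nv = 0$ and $Av = \nu_2\, v$, the Leibniz rule gives
\[
N(v^k) \;=\; \sum_{i=1}^k v^{\,i-1}\,(Nv)\,v^{\,k-i} \;=\; 0,
\qquad
A(v^k) \;=\; k\nu_2\, v^k .
\]
Thus $v^k \in \Sym^k\bar V$ is a highest weight vector of weight $k\nu_2$, generating a copy of $R_{k\nu_2}$; on this summand $N$ has index $k\nu_2$, which shows $\bigl(\Sym^k\rho_2(N)\bigr)^{k\nu_2} \neq 0$.

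Finally I would establish the matching upper bound. Since every $A$--weight of $\bar V$ lies in the interval $[-\nu_2,\nu_2]$, every $A$--weight of $\Sym^k\bar V$ lies in $[-k\nu_2,\, k\nu_2]$. As $N$ raises the $A$--weight by $2$, the operator $\bigl(\Sym^k\rho_2(N)\bigr)^{k\nu_2+1}$ carries any weight vector to one of weight exceeding $k\nu_2$, hence to $0$. Therefore the nilpotency index of $\Sym^k\rho_2(N)$ is exactly $k\nu_2 = k\cdot\nu_2$, as claimed. There is no real obstacle here; the only point requiring a little care is the bookkeeping that identifies the nilpotency index with the top $A$--weight consistently for both $\bar V$ and $\Sym^k\bar V$, after which the two inequalities combine to give the equality.
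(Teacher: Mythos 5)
Your proof is correct, but it takes a genuinely different route from the paper's. The paper argues by bare-hands computation: since $\Sym^k \rho_2(N)$ acts as a derivation, the Leibniz (multinomial) expansion gives $\bigl(\Sym^k \rho_2(N)\bigr)^{k\nu_2}(x^k) = c\,\bigl((N_2)^{\nu_2}x\bigr)^k$ with $c$ a nonzero multinomial coefficient, which is nonzero once $x$ is chosen with $(N_2)^{\nu_2}x \neq 0$; and in any term of $\bigl(\Sym^k \rho_2(N)\bigr)^{k\nu_2+1}(x_1\cdots x_k)$ the pigeonhole principle forces some factor to absorb at least $\nu_2+1$ copies of $N_2$, so every term vanishes. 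You instead invoke Jacobson--Morozov to complete $N$ to an $\s(2)$-triple in $\bar\fg$ and convert both inequalities into weight bookkeeping for the semisimple element $A$: the nilpotency index of the raising operator on a finite-dimensional $\s(2)$-module equals its top weight, and the top weight of $\Sym^k \bar V$ is $k$ times that of $\bar V$, witnessed by $v^k$. Both arguments are complete. Yours is conceptually tidier and matches the weight-filtration viewpoint the paper uses elsewhere (Section 5); the paper's is more elementary, needing only the Leibniz rule and no structure theory, and it covers the degenerate case $N=0$ without comment, whereas your appeal to Jacobson--Morozov tacitly assumes $N \neq 0$ (when $N=0$ the statement is trivial since $\nu_2 = 0$, but this deserves one line). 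Two small points you use correctly but silently: that $v^k \neq 0$ in $\Sym^k \bar V$ (characteristic zero), and that a vector of weight $k\nu_2$ annihilated by the raising operator generates a copy of $R_{k\nu_2}$, which relies on finite-dimensionality of $\Sym^k \bar V$.
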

\begin{proof}
	The operator $\Sym^k \rho_2(N)$ acts on $\Sym^k \bar V$ as follows:
	\[ (\Sym^k \rho_2(N)) (x_1 \cdots x_k) = \sum_{i=1}^k x_1 \cdots x_{i-1} N (x_i) x_{i+1} \cdots x_k .\]
	Here we considered $N \in \End (\bar V)$ as a linear operator on $\bar V$. Recall by definition of the nilpotency index $\nu_2$, we have $N^{\nu_2 + 1} = 0$ but $N^{\nu_2} \neq 0$. One computes
	\[ (\Sym^k \rho_2(N))^{k \nu_2} (x^k) = (\mathrm{const.}) (N^{\nu_2} (x))^k ,\qquad (\Sym^k \rho_2(N))^{k \nu_2 + 1} = 0 ,\]
which establishes the claim.
\end{proof}

\subsection{Type II degeneration}

Consider the reduced LLV decomposition the $2k$-th cohomology
\begin{equation} \label{eq:gbar_decomp}
	H^{2k} (X) \cong \sideset{}{_{\lambda}} {\bigoplus} \bar V_{\lambda}^{\oplus n_{\lambda}} .
\end{equation}
Here $\lambda = (\lambda_1, \cdots, \lambda_r)$ denotes a dominant integral weight of $\bar {\mathfrak g}$ and $\bar V_{\lambda}$ denotes a highest $\bar {\mathfrak g}$-module of weight $\lambda$. Proposition \ref{prop:even_rep} tells us that every $\lambda$ in this decomposition has integer coefficients $\lambda_i$. For each of such components $\bar V_{\lambda}$'s, we can in fact compute the nilpotency index of the log monodromy $N_{2k}$ on this component. This is the content of the next lemma, which is the core computation used in the proof of Theorem \ref{thm:criterion}.

\begin{lemma} \label{lem:main}
	Assume $b_2 (X) \ge 5$. Let $\lambda = (\lambda_1, \cdots, \lambda_r)$ be a dominant integral weight of $\bar {\mathfrak g}$ with $\lambda_i \in \ZZ$ and $\rho_{\lambda} : \bar {\mathfrak g} \to \End (\bar V_{\lambda})$ the highest $\bar {\mathfrak g}$-module associated to it.
	\begin{enumerate}
		\item If $\nu_2 = 1$, then $\rho_{\lambda} (N)$ has nilpotency index $\lambda_1 + \lambda_2$.
		\item If $\nu_2 = 2$, then $\rho_{\lambda} (N)$ has nilpotency index $2\lambda_1$.
	\end{enumerate}
\end{lemma}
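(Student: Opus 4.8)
The plan is to reduce the computation to $\mathfrak{sl}_2$-representation theory via the Jacobson--Morozov theorem, and then to a weight computation in $\bar{\mathfrak g}$. First I would complete $N$ to an $\mathfrak{sl}_2$-triple $(N, H, N^+)$ inside $\bar{\mathfrak g}_{\CC}$, with $H$ semisimple and $[H,N] = -2N$. The elementary input is that for any finite-dimensional $\bar{\mathfrak g}$-module $W$, decomposing $W$ as an $\mathfrak{sl}_2$-module into irreducibles of dimensions $m_i+1$, the nilpotency index of $\rho_W(N)$ equals $\max_i m_i$, which is precisely the largest eigenvalue of $\rho_W(H)$ on $W$. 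Thus it suffices to (i) determine $H$ (equivalently, its eigenvalues on the standard module $\bar V$) from $\nu_2$, and (ii) compute the largest eigenvalue of $\rho_\lambda(H)$ on $\bar V_\lambda$.

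For step (i), the key point is that $N = N_2$ is not an arbitrary nilpotent: as the log-monodromy of a one-parameter degeneration it underlies a polarized limiting mixed Hodge structure of weight $2$ of K3 type, with $h^{2,0}=1$ and $h^{3,0}=0$. The monodromy weight filtration $W_\bullet$ centered at $2$ then pins down the Jordan type of $N$ on $\bar V$. In the Type III case ($\nu_2=2$) the graded piece $\mathrm{Gr}^W_4$ receives $F^2 = \langle\sigma\rangle$ and is therefore one-dimensional, forcing Jordan type $(3,1^{b_2-3})$; in the Type II case ($\nu_2=1$) one has $\mathrm{Gr}^W_4 = 0$ while $\mathrm{Gr}^W_3$ carries a weight-$3$ Hodge structure with $h^{2,1}=h^{1,2}=1$ and $h^{3,0}=0$, whence $\dim \mathrm{Gr}^W_3 = 2$ and Jordan type $(2^2,1^{b_2-4})$. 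Consequently the eigenvalues of $H$ on $\bar V$ are $\{2,0^{b_2-2},-2\}$ for $\nu_2=2$ and $\{1^2,0^{b_2-4},(-1)^2\}$ for $\nu_2=1$. Conjugating $H$ into the Cartan subalgebra and ordering the $\varepsilon$-coordinates, I may take $H = 2\varepsilon_1^\vee$, i.e.\ $\langle\varepsilon_i,H\rangle = 2\delta_{i1}$, when $\nu_2=2$, and $H = \varepsilon_1^\vee + \varepsilon_2^\vee$, i.e.\ $\langle\varepsilon_i,H\rangle = \delta_{i1}+\delta_{i2}$, when $\nu_2=1$.

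For step (ii), the weights of $\bar V_\lambda$ are the $\theta = \sum_i \theta_i\varepsilon_i$ of its weight system, and $\rho_\lambda(H)$ acts on the $\theta$-weight space by $\langle\theta,H\rangle$. When $\nu_2=2$ this is $2\theta_1$, whose maximum over the Weyl-invariant weight system is $2\lambda_1$, since $\max_\theta \theta_1 = \max_i|\lambda_i| = \lambda_1$; this yields index $2\lambda_1$, proving (2). When $\nu_2=1$ the eigenvalue is $\theta_1+\theta_2$, and a linear functional is maximized on the weight polytope at an extreme (Weyl-orbit) point, giving $\max_\theta(\theta_1+\theta_2) = \lambda_1 + |\lambda_2|$; for $b_2\ge 5$ one has $\lambda_2\ge 0$ and the maximum $\lambda_1+\lambda_2$ is attained at $\theta=\lambda$, proving (1) in that range.

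The main obstacle, and the only point requiring genuine care, is the borderline case $b_2 = 4$ flagged in the Remark, where $\bar{\mathfrak g}_{\CC} \cong \mathfrak{sl}_2 \oplus \mathfrak{sl}_2$ (type $\mathrm D_2$) and $\lambda_2$ may be negative. Here the rank-two nilpotents with $N^2=0$ split into two distinct $\SO$-orbits $(N_1,0)$ and $(0,N_1)$ supported in the two factors, acting on $\bar V_\lambda = V_{\lambda_1-\lambda_2}\boxtimes V_{\lambda_1+\lambda_2}$ with nilpotency indices $\lambda_1-\lambda_2$ and $\lambda_1+\lambda_2$ respectively. I would resolve this by fixing the labeling of the spinor node (equivalently the sign of $\varepsilon_2$) so that $H=\varepsilon_1^\vee+\varepsilon_2^\vee$ matches the factor containing the given $N$; the resulting index is then $\lambda_1 + |\lambda_2| = \max(\lambda_1-\lambda_2,\lambda_1+\lambda_2)$, which is exactly the absolute-value normalization appearing in the statement. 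Away from this case everything is routine $\mathfrak{sl}_2$-theory together with the extremal-weight principle, so the entire argument hinges on the Hodge-theoretic determination of the nilpotent orbit of $N$ from $\nu_2$ and on the low-rank sign bookkeeping.
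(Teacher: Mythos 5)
Your proposal is correct in substance and takes a genuinely different route from the paper's. The paper proceeds by explicit normalization: it uses the limit mixed Hodge structure to put $N$ into a normal form adapted to a basis of $\bar V$ (Lemma \ref{prop:ss}), chooses a Cartan subalgebra compatible with that basis (Lemma \ref{lem:basis}), shows the index on $\wedge^i \bar V$ is $2$ (Lemma \ref{lem:submain}), gets the upper bound $\lambda_1+|\lambda_2|$ (resp.\ $2\lambda_1$) by embedding $\bar V_\lambda$ into $\Sym^{a_1}\bar V \otimes \Sym^{a_2}(\wedge^2\bar V)\otimes\cdots\otimes\Sym^{a_r}(\wedge^r\bar V)$, and gets the lower bound by computing $\rho(N)^{\bullet}$ on an explicit highest weight vector. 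You instead invoke Jacobson--Morozov together with the standard fact that the nilpotency index on any module is the largest eigenvalue of the semisimple member $H$ of the triple; the limit mixed Hodge structure enters only to pin down the Jordan type of $N$ on $\bar V$ (namely $(2^2,1^{b_2-4})$ for $\nu_2=1$ and $(3,1^{b_2-3})$ for $\nu_2=2$, exactly the content of Table \ref{table:LMHS_diamond}), hence $H$ up to conjugacy, after which the index is the maximum of a linear functional on the weight polytope, attained on the Weyl orbit of $\lambda$. Your route buys conceptual clarity: no adapted basis, no tensor-embedding trick, and the upper and lower bounds come out simultaneously; the paper's route stays entirely elementary, and its normal form is reused verbatim elsewhere (e.g.\ the identity $\rho(N)^3=0$ on $\wedge^i\bar V$ in the Type III case of Lemma \ref{lem:submain} is only verified in those coordinates).

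One localized criticism: your resolution of the borderline case $b_2=4$ does not work as stated. In type $\mathrm{D}_2$ the two square-zero rank-two nilpotent orbits lie in the two $\mathfrak{sl}_2$-factors, but relabeling the spinor node (replacing $\varepsilon_2$ by $-\varepsilon_2$) flips the labels of the \emph{modules} at the same time as it moves $N$: for a fixed labeled module $\bar V_{(\lambda_1,\lambda_2)}$ with $\lambda_2\neq 0$, the index is $\lambda_1+\lambda_2$ or $\lambda_1-\lambda_2$ according to which factor contains $N$, and no labeling convention can produce the value $\lambda_1+|\lambda_2|$ simultaneously for $\bar V_{(\lambda_1,\lambda_2)}$ and $\bar V_{(\lambda_1,-\lambda_2)}$. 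The correct observation is that this case is vacuous for the monodromy at hand: $N$ is a real (indeed rational) element of $\so(\bar V,\bar q)$ with $\bar q$ of signature $(3,1)$, and $\so(3,1)$ contains no nonzero nilpotent with $N^2=0$ on the standard representation --- the only nonzero nilpotent orbit there has signed Young diagram of type $(3,1)$; equivalently, over $\CC$ a real element is of the form $N=(A,\bar A)$ with the two factors swapped by conjugation, and $N^2=2A\otimes\bar A\neq 0$ as soon as $A\neq 0$. So $\nu_2=1$ forces $b_2\geq 5$, where your argument applies with no bookkeeping ($\lambda_2\geq 0$ and the dominant conjugate of $H$ is unique). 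It is worth noting that the paper's own lower-bound computation has the same soft spot: for $b_2=4$ and $\lambda_2<0$ its highest weight vector is $e_1^{a_1}\otimes(e_1\wedge e_2')^{a_2}$, whose wedge factors are killed by a single application of the normalized $N$, so the claimed nonvanishing fails there and is likewise rescued only by the vacuousness just described.
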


We note that the proof of this lemma is \textit{not} purely representation theoretic. The fact that $N$ is obtained from a degeneration of Hodge structure, and hence associated to a limit mixed Hodge structure of the Hodge structure $\bar V$ of K3 type, will be crucially used. For an arbitrary choice of an element $N \in \bar {\mathfrak g}$, the lemma would not hold.

The proof of Lemma \ref{lem:main} is quite lengthy, so we would like to devote the rest of this subsection for its proof. The proof of  Theorem \ref{thm:criterion}  is then completed in \S\ref{completion_pf_criterion}. Before getting into the proof, note that the statement of the lemma does not depend on the base field. Hence, it is enough to prove the lemma over $\CC$. For simplicity, let us omit the base change index $\CC$ and assume everything is complexified from now on. 

The first step is to give a normalization of the monodromy action on $\bar V=H^2(X)$.  Since we are working over $\CC$, we can assume that the quadratic space $(\bar V, \bar q)$ has one of the following standard form:
\begin{equation} \label{eq:standard_matrix}
	\begin{pmatrix}
		0 & \id_{r \times r} & 0 \\ \id_{r \times r} & 0 & 0 \\ 0 & 0 & 1
	\end{pmatrix}
	\qquad\mbox{or}\qquad
	\begin{pmatrix}
		0 & \id_{r \times r} \\ \id_{r \times r} & 0
	\end{pmatrix}
\end{equation}
depending on the parity of the dimension of $\bar V$. The content of the following proposition is to say that both $N$ and $\bar q$ can be suitably normalized.

\begin{lemma} \label{prop:ss}
	Assume $b_2(X) = \dim \bar V \ge 5$. Let $N = N_2 \in \End(\bar V)$ be the second log monodromy and $\nu_2$ its nilpotency index.
	\begin{enumerate}
		\item If $\nu_2 = 1$, then $\dim (\im N) = 2$. Moreover, there exists a basis $$\{e_1,\cdots, e_r, e_1', \cdots, e_r' \ \ (, e_{r+1})\}$$ of $\bar V$ such that $\bar q$ with respect to it has a matrix form \eqref{eq:standard_matrix}, and
		\begin{equation}\label{normal_form_n1} N \left( \sum_{i=1}^r a_i e_i + \sum_{i=1}^r a_i' e_i' \quad (+ a_{r+1} e_{r+1}) \right) = -a_2 e_1' + a_1 e_2' .
		\end{equation}
		
		\item If $\nu_2 = 2$, then $\dim (\im N) = 2$ and $\dim (\im N^2) = 1$. Moreover, there exists a basis $$\{e_1,\cdots, e_r, e_1', \cdots, e_r' \ \ (, e_{r+1})\}$$ of $\bar V$ such that $\bar q$ with respect to it has a matrix form \eqref{eq:standard_matrix}, and
		\begin{equation}\label{normal_form_n2}N \left( \sum_{i=1}^r a_i e_i + \sum_{i=1}^r a_i' e_i' \quad (+ a_{r+1} e_{r+1}) \right) = a_1 e_2 -(a_2 + a_2') e_1' + a_1 e_2' .\end{equation}
	\end{enumerate}
\end{lemma}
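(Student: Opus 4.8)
The plan is to reduce the statement to two tasks: (a) identifying the Jordan type of the nilpotent operator $N = N_2$, and (b) producing a basis of $\bar V$ that is simultaneously adapted to $N$ and to $\bar q$. Task (b) is the standard normal form for a nilpotent element of an orthogonal Lie algebra, so the real content is (a), which is exactly where the Hodge-theoretic origin of $N$ enters. First I would record the purely linear-algebraic constraints coming from $N \in \bar{\mathfrak g} = \so(\bar V, \bar q)$: skew-symmetry $\bar q(Nx, y) = -\bar q(x, Ny)$ gives $\ker N = (\im N)^{\perp}$ and, more generally, $\bar q(N^i x, N^j y) = (-1)^i \bar q(x, N^{i+j} y)$; in particular $\im N^2$ is isotropic, and when $N^2 = 0$ one has $\im N \subseteq \ker N$ isotropic. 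The Jordan type of a nilpotent in $\so$ is a partition of $\dim \bar V$ in which every even part occurs with even multiplicity, and the two formulas \eqref{normal_form_n1} and \eqref{normal_form_n2} are precisely the adapted normal forms for the partitions $2^2\,1^{\dim \bar V - 4}$ and $3\,1^{\dim \bar V - 3}$. Thus it suffices to identify these two partitions in the cases $\nu_2 = 1$ and $\nu_2 = 2$, and then to exhibit the basis.

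For the Jordan type I would invoke the theory of limiting mixed Hodge structures. Since $N$ is a log monodromy of a degeneration of the weight-$2$ Hodge structure $\bar V$, it carries a monodromy weight filtration $W_\bullet$ centered at $2$, and $N^k$ induces isomorphisms $\mathrm{Gr}^W_{2+k} \xrightarrow{\sim} \mathrm{Gr}^W_{2-k}$; hence the number of Jordan blocks of size $k+1$ equals $\dim \mathrm{Gr}^W_{2+k}$. The decisive input is the K3-type hypothesis $h^{2,0}(X) = 1$: the limiting Hodge filtration satisfies $\dim F^2_\infty = 1$, and since $\bar V$ has weight $2$ one has $I^{p,q} = 0$ unless $0 \le p, q \le 2$, so $\dim I^{2,2} + \dim I^{2,1} + \dim I^{2,0} = 1$. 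In the type II case ($N \neq 0$, $N^2 = 0$) there is no $(2,2)$ part, and $I^{2,0} = 1$ would force $N = 0$; hence $\dim I^{2,1} = 1$, giving $\dim \mathrm{Gr}^W_3 = 2$, i.e. exactly two blocks of size $2$ and $\dim \im N = 2$. In the type III case ($N^2 \neq 0$, $N^3 = 0$) one has $\dim I^{2,2} \ge 1$, forcing $\dim I^{2,2} = 1$ and $\dim I^{2,1} = 0$; thus there is exactly one block of size $3$ and none of size $2$, so $\dim \im N = 2$ and $\dim \im N^2 = 1$. This is exactly where the hypothesis that $N$ comes from a genuine degeneration (rather than an arbitrary nilpotent of $\bar{\mathfrak g}$) is used: pure representation theory would permit $2a$ blocks of size $2$, or extra blocks of size $3$.

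It remains to build the basis. For $\nu_2 = 1$, $W := \im N$ is a $2$-dimensional isotropic subspace with $\ker N = W^{\perp}$; choosing a complementary isotropic plane $W'$ dual to $W$ and setting $W_0 = (W \oplus W')^{\perp}$ (on which $N$ vanishes, since $W_0 \subset W^{\perp} = \ker N$) gives an orthogonal decomposition. A hyperbolic basis $e_1, e_2$ of $W'$ and $e_1', e_2'$ of $W$ with $\bar q(e_i, e_j') = \delta_{ij}$ can be normalized, using the skew-symmetry relations above, so that $Ne_1 = e_2'$ and $Ne_2 = -e_1'$; completing with a standard basis of $W_0$ yields \eqref{normal_form_n1} and puts $\bar q$ into the form \eqref{eq:standard_matrix}. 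For $\nu_2 = 2$, I would pick a generator $v$ of the size-$3$ block; the span $\langle v, Nv, N^2 v\rangle$ is nondegenerate of dimension $3$, and I would choose hyperbolic coordinates $e_1, e_1', e_2, e_2'$ so that this block is distributed over the two hyperbolic planes $\langle e_1, e_1'\rangle$ and $\langle e_2, e_2'\rangle$ exactly as in \eqref{normal_form_n2} (with $v = e_1$, $Nv = e_2 + e_2'$, $N^2 v = -2e_1'$, and $e_2 - e_2' \in \ker N$ accounting for the remaining anisotropic orthogonal line), again completing with an $N$-trivial standard basis of the orthogonal complement.

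The main obstacle is the second paragraph: correctly extracting from the limiting mixed Hodge structure that $h^{2,0} = 1$ forces the minimal possible number of large Jordan blocks, which needs the vanishing $I^{p,q} = 0$ for $p,q \notin [0,2]$ together with the polarization to control the Hodge--Deligne diamond in each type. The basis construction of the third paragraph is routine but requires care with signs and rescalings to land on the precise normal forms \eqref{normal_form_n1} and \eqref{normal_form_n2} rather than an arbitrary orthogonal Jordan form; the small-rank case $b_2(X) = 4$ (so $r = 2$) merely reduces the number of vectors $e_i$ and does not affect the argument.
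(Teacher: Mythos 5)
Your proposal is correct and takes essentially the same route as the paper's proof: the limit mixed Hodge structure of the K3-type degeneration pins down the Hodge--Deligne diamond of $\bar V_{\lim}$ (hence the Jordan type $2^2\,1^{b_2-4}$, resp.\ $3\,1^{b_2-3}$), after which one constructs a $\bar q$-adapted basis over $\CC$ --- your Deligne-splitting computation with $\dim F^2_\infty = 1$ is a spelled-out version of the paper's appeal to the unique LMHS diamond in Table \ref{table:LMHS_diamond}, and your basis construction for $\nu_2=1$ (isotropic plane $W = \im N_2$ with dual plane $W'$, skew-normalized so that $Ne_1 = e_2'$, $Ne_2 = -e_1'$) matches the paper's explicit choice $e_1 = x$, $e_2 = y$, $e_1' = -Ny$, $e_2' = Nx$ following \cite{ss17}. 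Your explicit Type III basis (with $v = e_1$, $Nv = e_2 + e_2'$, $N^2v = -2e_1'$) supplies details the paper omits as ``similar and standard,'' and the only point to flag is that your block-counting rule (number of size-$(k{+}1)$ blocks equals $\dim \mathrm{Gr}^W_{2+k}$) is valid here only because $\mathrm{Gr}^W_{2+k+2} = 0$ in each of the two cases at hand.
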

\begin{proof} Our arguments follow closely \cite[Prop 4.1]{ss17} (they go back to the study of degenerations of $K3$ surface, e.g. in \cite{FS} even a normalization over $\ZZ$ is given). For completeness and notational consistency, we give a proof here.

Assume first that we have a Type II degeneration. The one-parameter degeneration produces a limit mixed Hodge structure $\bar V_{\lim}$. It is a degeneration of the second cohomology $\bar V$ of K3 type. The nilpotency index is $\nu_2=1$, so we have the monodromy weight filtration 
$$0 \subset W_1 \subset W_2 \subset W_3 = \bar V_{\lim}$$ with $W_1 = \im N$ and $W_2 = \ker N$. Since it is a degeneration of a K3 type Hodge structure, there is only one possibility of the Hodge diamond of $\bar V_{\lim}$ as in Table \ref{table:LMHS_diamond}. From it, we deduce $\dim W_1 = 2$ and $\dim W_2 = b_2(X) - 2$. This proves $\dim (\im N) = 2$. Next, we choose two elements $x, y \in \bar V$ in as follows:
	\begin{itemize}
		\item[(i)] Choose any $x \notin \ker N$. Choose any $y \in x^{\perp} \setminus (Nx)^{\perp}$. Since $Nx \in \ker N$, $x$ and $Nx$ are linearly independent.
		
		\item[(ii)] Adjust $y$ so that $\bar q_{| \CC\{ y, Nx \}} = \begin{psmallmatrix} 0 & 1 \\ 1 & 0 \end{psmallmatrix}$. This is possible because $\bar q(Nx) = -(x, N^2 x) = 0$.
		
		\item[(iii)] Adjust $x$ so that $\bar q_{| \CC \{ x, -Ny \}} = \begin{psmallmatrix} 0 & 1 \\ 1 & 0 \end{psmallmatrix}$. This is possible because $\bar q(Ny) = -(y, N^2 y) = 0$, $(x, -Ny) = (Nx, y) = 1$, and we can replace the original $x$ by $x - \frac{q(x)}{2} Ny$ (this doesn't change the value of $Nx$, preserving (ii) above). 
	\end{itemize}
	
We define $e_1 = x$,  $e_2 = y$, $e_1' = -Ny$, and  $e_2' = Nx$. By construction, the intersection pairing on the subspace spanned by $\{e_1,e_2,e_1',e_2'\}$ is $U^{\oplus 2}$ as needed. For the orthogonal space $\langle e_1,e_2,e_1',e_2'\rangle^\perp$ we choose a basis $\{e_3,\cdots, e_r, e_3',\cdots,e_r',e_{r+1}\}$ as needed in needed in the normal form \eqref{eq:standard_matrix} (since working over $\CC$, this can be accomplished). Finally note that  since $e_3,\cdots, e_r, e_3',\cdots,e_r',e_{r+1}$ are all perpendicular to $e_1'$ and $e_2'$, they are perpendicular to $\im N$, and thus contained in $\ker N = (\im N)^{\perp}$. The formula \eqref{normal_form_n1} for $N$ follows.
	\begin{table}[t]
		\centering
		\begin{tabular}{c}
			$0$ \\ 
			$1 \qquad 1$\\ 
			$0 \quad b_2 - 4 \quad 0$ \\ 
			$1 \qquad 1$ \\ 
			$0$ \\ 
		\end{tabular}
		\qquad \qquad
		\begin{tabular}{c}
			$1$ \\ 
			$0 \qquad 0$\\ 
			$0 \quad b_2 - 2 \quad 0$ \\ 
			$0 \qquad 0$ \\ 
			$1$ \\ 
		\end{tabular}
		
		\caption{The limit mixed Hodge structure $\bar V_{\lim}$ for $\nu_2 = 1$ and $2$ respectively. }
		
		\label{table:LMHS_diamond}
	\end{table}
	
	The Type III case is similar (and standard). We omit the details. 
	\end{proof}
	
Once a choice of basis as in the lemma above has been made, we can further adjust the Cartan subalgebra and simple roots of $\bar {\mathfrak g}$ so that it becomes compatible with this choice of basis. (N.B. this choice of a Cartan subalgebra is only for this situation, and unrelated to the previous one containing the operator $f$ in \eqref{eq:hodge_operator}.)

\begin{lemma} \label{lem:basis}
	We can chose a Cartan subalgebra $\bar\fh\subset \bar \fg$ and simple roots of $\bar {\mathfrak g}$ so that the basis elements $e_1, \cdots, e_r, e_1', \cdots, e_r' \ \ (\mbox{and } e_{r+1})$ in Lemma \ref{prop:ss} are the weight vectors associated to the weights $\varepsilon_1, \cdots, \varepsilon_r, -\varepsilon_1, \cdots, -\varepsilon_r \ \ (\mbox{and } 0)$ of the standard $\bar {\mathfrak g}$-module $\bar V$. \qed
\end{lemma}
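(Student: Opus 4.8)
The plan is to produce the Cartan subalgebra concretely as the ``diagonal torus'' attached to the split basis of Lemma \ref{prop:ss}, and then to read the weights off directly from the action on $\bar V$. Recall that in that basis the Gram matrix of $\bar q$ is one of the standard forms \eqref{eq:standard_matrix}: each $e_i$ is hyperbolically paired with $e_i'$, distinct pairs are orthogonal, and in the odd case $e_{r+1}$ is orthogonal to everything with $\bar q(e_{r+1})=1$.

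First I would set $\bar\fh$ to be the subspace of operators acting diagonally in this basis, i.e. the operators
\[ h_t \colon\quad e_i\mapsto t_i e_i,\qquad e_i'\mapsto -t_i e_i',\qquad (e_{r+1}\mapsto 0), \qquad t=(t_1,\dots,t_r)\in\CC^r. \]
A one-line check against the Gram matrix shows that a diagonal operator lies in $\bar\fg=\so(\bar V,\bar q)$ precisely when the eigenvalues on each hyperbolic pair $e_i,e_i'$ are opposite and the eigenvalue on the (odd-case) self-paired vector $e_{r+1}$ vanishes; hence $\bar\fh=\{h_t : t\in\CC^r\}$ is exactly the space of diagonal elements of $\bar\fg$, and in particular $\bar\fh\subset\bar\fg$.

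Next I would verify $\bar\fh$ is a Cartan subalgebra. It is abelian and $r$-dimensional, and every $h_t$ is semisimple (diagonal by construction), so $\bar\fh$ is a toral subalgebra. Since $\so(\bar V,\bar q)$ has rank $r=\lfloor\dim\bar V/2\rfloor$ (cf.~Corollary \ref{prop:simple}), a toral subalgebra of dimension $r$ is maximal, hence a Cartan subalgebra. Defining $\varepsilon_i\in\bar\fh^*$ by $\varepsilon_i(h_t)=t_i$, the displayed action shows at once that $e_1,\dots,e_r,e_1',\dots,e_r'$ (and $e_{r+1}$) are weight vectors of weights $\varepsilon_1,\dots,\varepsilon_r,-\varepsilon_1,\dots,-\varepsilon_r$ (and $0$), recovering the weight decomposition of the standard module $\bar V$ used throughout.

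The only remaining freedom is the choice of positive system, which I would fix to be the standard one --- simple roots $\varepsilon_1-\varepsilon_2,\dots,\varepsilon_{r-1}-\varepsilon_r,\varepsilon_r$ in the $\mathrm B_r$ case and $\varepsilon_1-\varepsilon_2,\dots,\varepsilon_{r-1}-\varepsilon_r,\varepsilon_{r-1}+\varepsilon_r$ in the $\mathrm D_r$ case --- so that the $\{\varepsilon_i\}$ produced here agree with those of Appendix \ref{sec:appendixA}. The argument is routine; the only genuine point of care is bookkeeping: matching the indexing and signs of the $\{\varepsilon_i\}$ to the appendix's conventions, and, in the $\mathrm D_r$ (even $b_2$) case, the outer-automorphism ambiguity in labelling $\pm\varepsilon_r$. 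The latter is harmless, since it is absorbed into the choice of positive Weyl chamber and does not affect the assignment of weights to the basis vectors.
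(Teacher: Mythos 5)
Your proposal is correct and matches the argument the paper intends: the lemma is stated with a \qed{} (its proof is omitted as immediate), the implicit justification being exactly your construction of the diagonal torus attached to the hyperbolic basis of $(\bar V,\bar q)$ from Lemma \ref{prop:ss}. Your verification supplies the omitted details correctly --- diagonality against the Gram matrix \eqref{eq:standard_matrix} forces opposite eigenvalues on each pair $e_i,e_i'$ and a zero eigenvalue on $e_{r+1}$, a toral subalgebra of dimension $r=\mathrm{rk}\,\bar{\mathfrak g}$ is maximal and hence Cartan, the weights $\pm\varepsilon_i$ (and $0$) are read off directly, and fixing the standard simple roots aligns with the conventions of Appendix \ref{sec:appendixA}, with the $\mathrm D_r$ sign ambiguity on $\varepsilon_r$ harmlessly absorbed into the choice of positive Weyl chamber (which is all that the subsequent proof of Lemma \ref{lem:main} needs, since it only uses that $e_1\wedge\cdots\wedge e_i$ is a highest weight vector).
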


The normalizations above allow us to compute explicitly the nilpotency index as stated in Lemma \ref{lem:main}. First, we have the following special case of Lemma \ref{lem:main}.

\begin{lemma} \label{lem:submain}
	Assume $b_2 (X) = \dim \bar V \ge 5$. Consider the $\bar {\mathfrak g}$-module $\rho : \bar {\mathfrak g} \to \End (\wedge^i \bar V)$ with $2 \le i \le r$.
	\begin{enumerate}
		\item If $\nu_2 = 1$, then $\rho(N)$ has nilpotency index $2$.
		\item If $\nu_2 = 2$, then $\rho(N)$ also has nilpotency index $2$.
	\end{enumerate}
\end{lemma}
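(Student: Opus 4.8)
The plan is to reduce the statement to an explicit linear-algebra computation using the two normalizations already established. By Lemma \ref{prop:ss} the operator $N = N_2 \in \bar{\mathfrak g}$ has an explicit matrix form in a suitable basis $\{e_1,\dots,e_r,e_1',\dots,e_r'\,(,e_{r+1})\}$, and by Lemma \ref{lem:basis} we may take these to be weight vectors. Concretely, for $\nu_2 = 1$ we have $N e_1 = e_2'$, $N e_2 = -e_1'$ and $N$ annihilates all other basis vectors, so $\im N = \langle e_1', e_2'\rangle$ is two-dimensional and contained in $\ker N$; for $\nu_2 = 2$ we have $N e_1 = e_2 + e_2'$, $N e_2 = N e_2' = -e_1'$ and $N$ annihilates the rest, so that $\im N = \langle e_2 + e_2', e_1'\rangle$ is two-dimensional with $\im N^2 = \langle e_1'\rangle$ one-dimensional. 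Since $\bar{\mathfrak g}$ acts on $\wedge^i \bar V$ as derivations, $\rho(N)$ acts by the Leibniz rule $\rho(N)(v_1 \wedge \cdots \wedge v_i) = \sum_j v_1 \wedge \cdots \wedge N v_j \wedge \cdots \wedge v_i$; I would compute $\rho(N)^2$ and $\rho(N)^3$ on decomposable elements directly from this.

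The first step is to show $\rho(N)^3 = 0$. Expanding $\rho(N)^3$ on a wedge of basis vectors, and using that $N^3 = 0$ on $\bar V$, the only surviving contributions are: (b) three distinct factors each hit once by $N$; and, in the $\nu_2 = 2$ case, (a) one factor hit by $N^2$ and a second hit by $N$. Type (b) vanishes in both cases because the three resulting factors all lie in the two-dimensional space $\im N$, hence wedge to zero. For type (a), the key observation is that $N^2$ is nonzero only on $e_1$ (with $N^2 e_1 = -2e_1' \in \im N^2$), while $N v$ has a component outside $\im N^2 = \langle e_1'\rangle$ only when $v$ has an $e_1$-component; thus a nonzero type (a) term would require two distinct factors both equal to $e_1$, which is impossible in a basis wedge. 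This gives $\rho(N)^3 = 0$, and I expect this ruling-out of the mixed term in the Type III case to be the one genuinely delicate point, as it is exactly where the refined structure $\dim \im N^2 = 1$ (rather than pure nilpotence) is used.

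The second step is to exhibit an element on which $\rho(N)^2 \ne 0$, establishing that the index is exactly $2$ rather than $\le 1$. Here the hypothesis $2 \le i \le r$ is what provides enough room. In the $\nu_2 = 1$ case I would take $\alpha = e_1 \wedge e_2 \wedge w$ where $w$ is a wedge of $i-2$ vectors drawn from $\{e_3,\dots,e_r\}\cap\ker N$, and compute $\rho(N)^2 \alpha = \pm 2\, e_2' \wedge e_1' \wedge w \ne 0$. In the $\nu_2 = 2$ case I would take $\alpha = e_1 \wedge w$ with $w$ a wedge of $i-1$ kernel vectors avoiding $e_1'$ (possible since $2 \le i \le r$), giving $\rho(N)^2 \alpha = N^2 e_1 \wedge w = -2\, e_1' \wedge w \ne 0$. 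Combining the two steps yields nilpotency index exactly $2$ in each case, which is the assertion of Lemma \ref{lem:submain}. The only bookkeeping to watch is that the auxiliary kernel vectors can always be chosen linearly independent from the image vectors appearing after applying $N^2$, which is where the bound $i \le r$ is needed.
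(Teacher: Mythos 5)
Your proposal is correct and follows essentially the same route as the paper's proof: expand $\rho(N)^3$ via the Leibniz rule, kill the triple first-order terms using $\dim \im N_2 = 2$, check the mixed $N_2^2\wedge N_2$ term on the normalized basis of Lemma \ref{prop:ss} (your ``two factors equal to $e_1$'' observation is exactly the basis computation the paper performs, and the paper likewise remarks it could not avoid choosing a basis there), and then exhibit an explicit basis wedge with $\rho(N)^2 \neq 0$. Your Type III witness $e_1\wedge w$ with $w$ built from kernel vectors is a minor simplification of the paper's $e_1\wedge\cdots\wedge e_i$ (avoiding the cancellation of cross-terms), but it is the same argument in substance.
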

\begin{proof}
	Note that
	\[ (\rho (N))(x_1 \wedge \cdots \wedge x_i) = \sum_{j=1}^i x_1 \wedge \cdots \wedge N (x_j) \wedge \cdots \wedge x_i .\]
	Assume $\nu_2 = 1$. From Lemma~\ref{prop:ss}, we have $\dim (\im N) = 2$ and $N^2 = 0$. Hence
	\begin{align*}
	(\rho (N))^2 (x_1 \wedge \cdots \wedge x_i) &= 2 \sum N (x_1) \wedge N (x_2) \wedge x_3 \wedge \cdots \wedge x_i ,\\
	(\rho (N))^3 (x_1 \wedge \cdots \wedge x_i) &= 0 .
	\end{align*}
	This proves the nilpotency index of $\rho(N)$ is  at most $2$. Using the basis of Lemma \ref{prop:ss}, we get 
	\[ (\rho (N))^2 (e_1 \wedge \cdots \wedge e_i) = 2 e_1' \wedge e_2' \wedge e_3 \wedge \cdots \wedge e_i \neq 0 .\]
	This proves the nilpotency index of $\rho(N)$ is precisely $2$.
	
	Assume $\nu_2 = 2$. Again from Lemma~\ref{prop:ss}, we have $\dim (\im N) = 2$, $\dim (\im N^2) = 1$ and $N^3 = 0$. Thus
	\begin{align*}
	(\rho(N))^2 (x_1 \wedge \cdots \wedge x_i) &= \sum N^2 (x_1) \wedge x_2 \wedge \cdots \wedge x_i + 2 \sum N (x_1) \wedge N (x_2) \wedge x_3 \wedge \cdots \wedge x_i ,\\
	(\rho(N))^3 (x_1 \wedge \cdots \wedge x_i) &= 3 \sum N^2 (x_1) \wedge N (x_2) \wedge x_3 \wedge \cdots \wedge x_i .
	\end{align*}
	Now using the preferred basis, we can further compute
	\begin{align*}
	(\rho(N))^2 (e_1 \wedge \cdots \wedge e_i) &= 2 e_1' \wedge e_2' \wedge e_3 \wedge \cdots \wedge e_i \neq 0 ,\\
	\rho(N)^3 &= 0 .
	\end{align*}
(We have been unable to 	see the last identity without working with a suitable basis). The lemma follows.
\end{proof}

One subtlety that needs an attention is that $\wedge^r \bar V$ is \textit{not} an irreducible $\bar {\mathfrak g}$-module for $b_2(X) = \dim \bar V$ even. In fact, in that case, it holds  $\wedge^r \bar V = \bar V_{2\varpi_{r-1}} \oplus \bar V_{2\varpi_r}$. See Appendix \ref{sec:appendixA}. With this in mind, we  complete the proof of Lemma \ref{lem:main}.

\begin{proof}[Proof of Lemma \ref{lem:main}]
	Set
	\[ a_i = \lambda_i - \lambda_{i+1} \quad \mbox{for} \quad 1 \le i \le r-1, \qquad a_r = \lambda_r .\]
	Consider a $\bar {\mathfrak g}$-module
	\[ W = \Sym^{a_1} \bar V \otimes \Sym^{a_2} (\wedge^2 \bar V) \otimes \cdots \otimes \Sym^{a_r} (\wedge^r \bar V) .\]
	The highest weight of this module becomes exactly $\lambda$ (there are two highest weights when $b_2(X)$ is even and $\lambda_r \neq 0$, the other one being $(\lambda_1,\cdots,\lambda_{r-1},-\lambda_r)$). Hence $\bar V_{\lambda}$ should be contained in $W$. Using Lemma \ref{lem:submain}, This proves the nilpotency index of $\rho_{\lambda} (N)$ is at most $a_1 + 2a_2 + \cdots + 2a_r = \lambda_1 + \lambda_2$ for $\nu_2 = 1$, and $2a_1 + \cdots + 2a_r = 2\lambda_1$ for $\nu_2 = 2$.
	
	To prove the nilpotency index is precisely the desired value, we need Lemma \ref{prop:ss} with Lemma \ref{lem:basis}. Using these results, one can see that $\wedge^i V$ has a unique highest weight vector $e_1 \wedge \cdots \wedge e_i$ up to scalar (there are two when $b_2(X)$ is even and $i = r$, the other one being $e_1 \wedge \cdots \wedge e_{r-1} \wedge e_r'$). Hence $W$ has a unique (two) highest weight vector, up to scalar,
	\[ x := e_1^{a_1} \otimes (e_1 \wedge e_2)^{a_2} \otimes \cdots \otimes (e_1 \wedge \cdots \wedge e_r)^{a_r} ,\]
	and this $x$ is contained in $\bar V_{\lambda}$.
	
	Assume $\nu_2 = 1$. Then the computations in the proof of Lemma \ref{lem:submain} shows
	\[ (\rho(N))^{a_1 + 2a_2 + \cdots + 2a_r} (x) = (e_1')^{a_1} \otimes (2e_1' \wedge e_2')^{a_2} \otimes \cdots \otimes (2e_1' \wedge e_2' \wedge e_3 \wedge \cdots \wedge e_r)^{a_r} \neq 0 .\]
	Assume $\nu_2 = 2$. Then again, computations in Lemma \ref{lem:submain} shows
	\[ (\rho(N))^{2a_1 + 2a_2 + \cdots + 2a_r} (x) = (-2e_1')^{a_1} \otimes (2e_1' \wedge e_2')^{a_2} \otimes \cdots \otimes \left( 2e_1' \wedge e_2' \wedge e_3 \wedge \cdots \wedge e_r \right)^{a_r} \neq 0 .\]
	This proves the nilpotency indexes are precisely as stated.
\end{proof}

\subsection{Completion of the proof of Theorem \ref{thm:criterion}}\label{completion_pf_criterion}
Now that we know Lemma \ref{lem:main} holds, we can compute the nilpotency index $\nu_{2k}$ explicitly.

\begin{proposition} \label{prop:nilp_index}
	Assume $b_2(X) \ge 5$ and $\nu_2 = 1$. Let $H^{2k} (X) = \bigoplus_{\lambda \in S} \bar V_{\lambda}^{\oplus n_{\lambda}}$ be a $\bar {\mathfrak g}$-module irreducible decomposition of the $2k$-th cohomology. Then
	\[ \nu_{2k} = \max \{ \lambda_1 + \lambda_2 \ : \ \lambda = (\lambda_1, \cdots, \lambda_r) \in S \} .\]
\end{proposition}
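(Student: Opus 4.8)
The plan is to deduce this directly from the representation-theoretic computation of Lemma \ref{lem:main}, so that essentially no new work is required. First I would invoke Theorem \ref{thm:total_log_mon} to rewrite the $2k$-th log monodromy as $N_{2k} = \rho_{2k}(N)$, where $N = N_2 \in \bar{\mathfrak g}$ and $\rho_{2k} : \bar{\mathfrak g} \to \End(H^{2k}(X))$ is the reduced LLV representation. Since the given decomposition $H^{2k}(X) = \bigoplus_{\lambda \in S} \bar V_\lambda^{\oplus n_\lambda}$ is a decomposition into $\bar{\mathfrak g}$-submodules and $N$ is an element of $\bar{\mathfrak g}$, the operator $N_{2k}$ preserves each isotypic summand, acting on each copy of $\bar V_\lambda$ as the operator $\rho_\lambda(N)$. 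Thus $N_{2k}$ is block-diagonal with respect to this decomposition.

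The next step is the elementary observation that the nilpotency index of a direct sum of nilpotent operators equals the maximum of the nilpotency indices of the summands: for any $m$ one has $\left( \bigoplus_\lambda \rho_\lambda(N)^{\oplus n_\lambda} \right)^m = \bigoplus_\lambda \left( \rho_\lambda(N)^m \right)^{\oplus n_\lambda}$, which vanishes precisely when $\rho_\lambda(N)^m = 0$ for every $\lambda \in S$. Hence $\nu_{2k}$ equals the maximum over $\lambda \in S$ of the nilpotency index of $\rho_\lambda(N)$.

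Finally, to evaluate each summand I would appeal to Lemma \ref{lem:main}(1). Before doing so I would record that every $\lambda$ appearing in the decomposition of the \emph{even} cohomology $H^{2k}(X)$ has integer coefficients $\lambda_i \in \ZZ$, which is exactly the content of Proposition \ref{prop:even_rep}(2); this is what licenses the application of Lemma \ref{lem:main}, whose hypothesis is precisely $\lambda_i \in \ZZ$. Since $\nu_2 = 1$ by assumption, Lemma \ref{lem:main}(1) gives that $\rho_\lambda(N)$ has nilpotency index $\lambda_1 + |\lambda_2|$, and substituting this into the displayed maximum yields $\nu_{2k} = \max\{\lambda_1 + |\lambda_2| : \lambda = (\lambda_1,\ldots,\lambda_r) \in S\}$, as claimed.

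I do not expect any genuine obstacle here: the two substantive inputs (the identification $N_{2k} = \rho_{2k}(N)$ and the per-summand index $\lambda_1 + |\lambda_2|$) are already established in Theorem \ref{thm:total_log_mon} and Lemma \ref{lem:main} respectively, so the present proposition is a purely formal bookkeeping consequence. The only point requiring a moment's care is confirming that the hypotheses of Lemma \ref{lem:main} apply to every $\lambda \in S$ — namely the integrality of the weights and the standing assumption $b_2(X) \ge 4$ — which is handled by Proposition \ref{prop:even_rep}(2) together with the hypothesis of the proposition.
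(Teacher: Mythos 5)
Your proof is correct and matches the paper's argument: the paper likewise writes $\rho_{2k}$ as a direct sum over the isotypic decomposition, observes that the nilpotency index of $\rho_{2k}(N)$ is the maximum of the indices of the $\rho_\lambda(N)$, and concludes via Lemma \ref{lem:main}(1). Your explicit citations of Theorem \ref{thm:total_log_mon} (for $N_{2k}=\rho_{2k}(N)$) and Proposition \ref{prop:even_rep}(2) (for integrality of the weights) are exactly the inputs the paper uses implicitly in the surrounding discussion.
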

\begin{proof}
	The representation $\rho_{2k}: \bar {\mathfrak g} \to \End (H^{2k} (X))$ decomposes into
	\[ \rho_{2k} : \bar {\mathfrak g} \to \sideset{}{_{\lambda \in S}} {\bigoplus} \End (\bar V_{\lambda})^{\oplus n_{\lambda}} \subset \End (H^{2k} (X)) .\]
	Hence $\rho_{2k} (N)$ is the direct sum of each $\rho_{\lambda} (N)$, and its nilpotency index is the maximum of those of $\rho_{\lambda} (N)$. Thus the statement follows from Lemma \ref{lem:main}.
\end{proof}

By Lemma \ref{lem:sym_index}, if $\nu_2 = 1$ then we always have $\nu_{2k} \ge k$. Thus, it is enough to show every irreducible $\bar {\mathfrak g}$-module component $\bar V_{\lambda}$ of $H^{2k} (X)$ satisfies $\lambda_1 + \lambda_2 \le k$.

\begin{corollary} \label{cor:gbar_criterion}
	Assume $b_2(X) \ge 5$ and $\nu_2 = 1$. Then $\nu_{2k} = k$ for all $0 \le k \le n$ if and only if every highest $\bar \fg$-weight $\lambda$ appearing in \eqref{eq:gbar_decomp} satisfies the inequality $\lambda_1 + \lambda_2 \le k$ for all $0 \le k \le n$. \qed
\end{corollary}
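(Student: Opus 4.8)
The plan is to read the corollary off directly from Proposition \ref{prop:nilp_index}, combined with the universal lower bound $\nu_{2k}\ge k$. Fix $k$ with $0\le k\le n$. Proposition \ref{prop:nilp_index} identifies the nilpotency index as
\[
	\nu_{2k} \ = \ \max\{\lambda_1 + |\lambda_2| \ : \ \lambda \in S\},
\]
where $S$ is the set of highest $\bar\fg$-weights occurring in the decomposition \eqref{eq:gbar_decomp} of $H^{2k}(X)$. Thus the whole statement reduces to comparing this maximum with $k$, and the argument is pure bookkeeping once the two ingredients are in place.

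First I would record the lower bound. The Verbitsky component $V_{(n)}$ always occurs in $H^*(X)$ (Theorem \ref{thm:verbitsky_component}), and by \eqref{eq:verbitsky_component} its degree-$2k$ part is exactly $\Sym^k \bar V$ for $0\le k\le n$. The highest weight of $\Sym^k \bar V$ is $\lambda = (k,0,\ldots,0)$, for which $\lambda_1 + |\lambda_2| = k$; hence this value is always attained in the maximum above, giving $\nu_{2k}\ge k$. (Equivalently, this is precisely Lemma \ref{lem:sym_index} applied to the distinguished summand $\Sym^k\rho_2 \subset \rho_{2k}$.)

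With the lower bound in hand, the biconditional for a single $k$ is immediate. Since the maximum is at least $k$ and this value is realized by the Verbitsky summand, one has $\nu_{2k}=k$ if and only if no other summand pushes the maximum strictly above $k$, i.e.\ if and only if every $\lambda\in S$ satisfies $\lambda_1 + |\lambda_2| \le k$. Taking the conjunction over all $0\le k\le n$ then yields the corollary exactly as stated.

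I do not expect any genuine obstacle here: the substantive content — the explicit evaluation of the nilpotency index of $\rho_\lambda(N)$ in terms of $\lambda_1$ and $|\lambda_2|$ — has already been carried out in Lemma \ref{lem:main} and packaged into Proposition \ref{prop:nilp_index}. The only point warranting a moment's care is that the whole discussion is confined to the Type II case $\nu_2 = 1$, matching the hypotheses of Proposition \ref{prop:nilp_index}; the Type I and III cases of Nagai's conjecture are disposed of separately in Proposition \ref{prop:nu02}, so no additional work is needed on those fronts.
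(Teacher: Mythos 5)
Your proposal is correct and follows essentially the same route as the paper: the paper likewise deduces the corollary immediately from Proposition \ref{prop:nilp_index} together with the lower bound $\nu_{2k}\ge k$, which it obtains from Lemma \ref{lem:sym_index} applied to the summand $\Sym^k\bar V\subset H^{2k}(X)$ coming from the Verbitsky component. Your explicit identification of the weight $(k,0,\ldots,0)$ realizing the maximum is just a slightly more spelled-out version of the same bookkeeping.
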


The final step now is to lift the condition $\lambda_1 + \lambda_2 \le k$ (in terms of the $\bar\fg$-module structure on $H^k(X)$) to a condition in terms of the $\mathfrak g$-module structure on $H^*_{\even} (X)$. Recall that the $\bar {\mathfrak g}$-module structure on $H^{2k} (X)$ was induced from a more rigid $\mathfrak g$-module structure on $H^*_{\even} (X)$. The $\mathfrak g$-module irreducible decomposition of the full even cohomology was
\begin{equation*}
	H^*_{\even} (X) \cong \sideset{}{_{\mu \in S}} {\bigoplus} V_{\mu}^{\oplus m_{\mu}} , \tag{\ref{eq:decomp2} (restated)}
\end{equation*}
where $\mu = (\mu_0, \cdots, \mu_r)$ indicates a dominant integral weight of $\mathfrak g$ and $V_{\mu}$ indicates the associated $\mathfrak g$-module. 

Let us start from the lifting of $\bar {\mathfrak g}$-module structure to the $\mathfrak g_0$-module structure. Recall the definition $\mathfrak g_0 = \bar {\mathfrak g} \oplus \RR h$ in \eqref{eq:g_0}. Assume $0 \le k \le n$. The $\bar {\mathfrak g}$-module $\bar V_{\lambda}$ contained in $H^{2k} (X)$ can be think of an irreducible $\mathfrak g_0$-module of highest weight $(k-n) \varepsilon_0 + \lambda$ contained in $H^*_{\even} (X)$. This is because the operator $h = \varepsilon_0^{\vee}$ acts on $H^{2k} (X)$ by the multiplication $2k-2n$, whence giving us the coefficient $(k-n) \varepsilon_0$. We abuse our notation and write this $\mathfrak g_0$-module as
\[ \bar V_{(k-n) \varepsilon_0 + \lambda} .\]
Note that the Cartan subalgebra and weight lattices of $\mathfrak g$ and $\mathfrak g_0$ are exactly the same. The difference between their representation theory comes from their Weyl group. The Weyl group $\mathfrak W_0$ of $\mathfrak g_0$ is strictly smaller than the Weyl group $\mathfrak W$ of $\mathfrak g$; the Weyl group $\mathfrak W_0$ loses all the symmetries coming from the weight $\varepsilon_0$. This explains why $V_{\mu}$ decomposes further as a $\mathfrak g_0$-module.

Fix a Cartan subalgebra $\mathfrak h \subset \mathfrak g_{\CC}$ of $\mathfrak g$. The weights of $\mathfrak g$ live in the space $\mathfrak h^{\vee}_{\RR}$. We define the weight polytope $\mathrm{WP} (V_{\mu})$ of $V_{\mu}$ as the smallest convex hull in $\mathfrak h^{\vee}_{\RR}$ containing all the weights of $V_{\mu}$. The following simple lemma about the weight polytope will be useful.

\begin{lemma} \label{lem:weight_polytope}
	Let us define a subset of $\mathfrak h^{\vee}_{\RR}$ by
	\[ K = \{ \sum_{i=0}^r t_i \varepsilon_i \in \mathfrak h^{\vee}_{\RR} \ : \ t_i \in \RR \ \ \mbox{for} \ \ 0 \le i \le r, \quad |t_0| + |t_1| + | t_2 | \le n \} .\]
	If a dominant integral weight $\mu$ of $\mathfrak g$ is contained in $K$, then the whole weight polytope $\mathrm{WP}(V_{\mu})$ is contained in $K$.
\end{lemma}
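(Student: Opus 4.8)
The goal is to show that the set $K$, defined by the inequality $|t_0|+|t_1|+|t_2|\le n$ on the coordinates $t_i$ of a weight $\theta = \sum_i t_i\varepsilon_i$, is stable in the sense that containment of the highest weight $\mu$ forces containment of the entire weight polytope $\mathrm{WP}(V_\mu)$. The plan is to exploit two features of $K$: it is convex (being cut out by a single inequality built from absolute values, which is a finite union of linear conditions — equivalently, $K$ is the preimage of an $\ell^1$-type ball under the projection $\theta\mapsto(t_0,t_1,t_2)$), and, more importantly, it is invariant under the Weyl group $\mathfrak W$ of $\mathfrak g$.

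The key observation is the following. The Weyl group of a type $\mathrm B$ or $\mathrm D$ root system acts on $\mathfrak h^\vee_\RR$ by signed permutations of the coordinates $(t_0,\ldots,t_r)$ (all sign changes in the $\mathrm B$ case, and sign changes in even number in the $\mathrm D$ case); see the appendix conventions \eqref{app:eq:dominant_integral_weights} and \eqref{app:eq:dominant_integral_weights2}. Since the quantity $|t_0|+|t_1|+|t_2|$ is the sum of the three \emph{largest} values among $\{|t_0|,\ldots,|t_r|\}$ when $\theta$ is dominant — but more to the point, since any signed permutation only permutes the multiset $\{|t_0|,\ldots,|t_r|\}$ and changes signs — I first claim $K$ is \emph{not} literally $\mathfrak W$-invariant as stated (it singles out indices $0,1,2$), so the correct route is to pass through the \emph{dominant} representative. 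The standard fact I would invoke is that every weight $\theta$ of $V_\mu$ lies in the convex hull of the $\mathfrak W$-orbit $\mathfrak W\cdot\mu$ of the highest weight; equivalently $\mathrm{WP}(V_\mu) = \mathrm{conv}(\mathfrak W\cdot\mu)$. Hence it suffices to show that if $\mu\in K$ then $\mathfrak W\cdot\mu\subset K$, and then invoke convexity of $K$.

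So the two steps are: first, reduce to showing $\mathfrak W\cdot\mu\subset K$ using the classical description $\mathrm{WP}(V_\mu)=\mathrm{conv}(\mathfrak W\cdot\mu)$ together with convexity of $K$; second, verify $\mathfrak W$-stability of $K$ on orbit points. For the second step I would argue as follows. For a dominant $\mu=(\mu_0,\ldots,\mu_r)$ we have $\mu_0\ge\mu_1\ge\cdots\ge\mu_{r-1}\ge|\mu_r|\ge0$, so $|\mu_0|+|\mu_1|+|\mu_2|=\mu_0+\mu_1+\mu_2$ is exactly the sum of the three largest absolute values of the coordinates. Any $w\in\mathfrak W$ sends $\mu$ to a signed permutation $w\mu$; the coordinates $(w\mu)_0,(w\mu)_1,(w\mu)_2$ are three of the $\pm\mu_j$, so $|(w\mu)_0|+|(w\mu)_1|+|(w\mu)_2|$ is a sum of three of the values $|\mu_j|$, which is bounded above by the sum of the three largest, namely $\mu_0+\mu_1+\mu_2 = |\mu_0|+|\mu_1|+|\mu_2|\le n$. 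Therefore $w\mu\in K$, establishing $\mathfrak W\cdot\mu\subset K$.

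I expect the main (though modest) obstacle to be the bookkeeping around the distinguished roles of the indices $0,1,2$: the inequality defining $K$ is \emph{not} $\mathfrak W$-invariant pointwise, yet the weaker statement — that $K$ contains the full orbit of any \emph{dominant} point it contains — does hold, precisely because dominance arranges $|\mu_0|,|\mu_1|,|\mu_2|$ to be the three largest coordinates. The cleanest write-up frames this as: (i) for dominant $\mu$, $\sum_{i=0}^2|\mu_i|$ equals the maximum over all size-three index subsets $I$ of $\sum_{i\in I}|\mu_i|$; (ii) every orbit element's triple $(t_0,t_1,t_2)$ is a signed selection of three coordinates, hence its $\ell^1$-value over the first three slots is at most this maximum; (iii) convexity of $K$ then upgrades orbit-containment to hull-containment. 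The only genuinely external input is the fact $\mathrm{WP}(V_\mu)=\mathrm{conv}(\mathfrak W\cdot\mu)$, which is standard highest-weight theory and may be cited directly.
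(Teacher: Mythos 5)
Your proof is correct and follows essentially the same route as the paper's own: dominance places the three largest absolute values of the coordinates in slots $0,1,2$, so every signed-permutation image $w.\mu$ satisfies the defining inequality of $K$, and convexity of $K$ together with $\mathrm{WP}(V_{\mu})=\mathrm{conv}(\mathfrak W\cdot\mu)$ (vertices of the weight polytope are the Weyl orbit of $\mu$, as in the paper's appendix) upgrades this to the whole polytope. One trivial slip: when $r=2$ in type $\mathrm D$ one can have $\mu_2<0$, so the identity $|\mu_0|+|\mu_1|+|\mu_2|=\mu_0+\mu_1+\mu_2$ can fail, but your steps (i)--(iii) work with absolute values throughout and are unaffected.
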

\begin{proof}
	Note that a dominant integral weight $\mu = \sum_{i=0}^r \mu_i \varepsilon_i$ satisfies $\mu_0 \ge \cdots \ge \mu_{r-1} \ge | \mu_r | \ge 0$. Thus, $|\mu_0| + |\mu_1| + | \mu_2 | \le n$ implies $|\mu_i| + |\mu_j| + |\mu_k| \le n$ for all different $i,j,k$. Now, a Weyl group action $w \in \mathfrak W$ of type BD acts on $\mu$ by permutation of coefficients $\mu_i$ and changing their signs. Hence the sum of absolute value of the first three coefficients of $w . \mu$ is always $|\mu_i| + |\mu_j| + |\mu_k| \le n$. This proves all the vertices of $\mathrm{WP} (V_{\mu})$ is contained in $P$. Since the weight polytope $\mathrm{WP} (V_{\mu})$ is a convex hull generated by its vertices and $K$ is a convex set, we conclude the statement.
\end{proof}

We now conclude the proof of Theorem \ref{thm:criterion}.

\begin{proof} [Proof of Theorem \ref{thm:criterion}]
	Assume $\mu_0 + \mu_1 + \mu_2 \le n$ for all $\mu \in S$. Consider any $\bar V_{\lambda} \subset H^{2k} (X)$. We lift it to a $\mathfrak g_0$-module $\bar V_{(k-n) \varepsilon_0 + \lambda} \subset H^*_{\even} (X)$. Then there exists a unique irreducible $\mathfrak g$-submodule $V_{\mu} \subset H^*_{\even} (X)$ containing $\bar V_{(k-n) \varepsilon_0 + \lambda}$. For such $\mu$, we certainly have $(k-n) \varepsilon_0 + \lambda \in \mathrm{WP} (V_{\mu})$. Now Lemma \ref{lem:weight_polytope} says $(n-k) + \lambda_1 + \lambda_2 \le n$. Hence $\lambda_1 + \lambda_2 \le k$, and now Nagai's conjecture follows from Corollary \ref{cor:gbar_criterion}.
	
	Conversely, assume there exists $\mu \in S$ with $\mu_0 + \mu_1 + \mu_2 > n$. Define a dominant integral weight $\lambda$ of $\bar {\mathfrak g}$ by
	\[ \lambda = \begin{cases}
	\mu_1 \varepsilon_1 + \cdots + \mu_r \varepsilon_r & \mbox{if } b_2(X) \mbox{ is odd}, \\
	\mu_1 \varepsilon_1 + \cdots + \mu_{r-1} \varepsilon_{r-1} - \mu_r \varepsilon_r &\mbox{if } b_2 (X) \mbox{ is even} .
	\end{cases} \]
	Consider a dominant integral $\mathfrak g_0$-weight $-\mu_0 \varepsilon_0 + \lambda$. It is also a $\mathfrak g$-weight since the weight lattices of $\mathfrak g_0$ and $\mathfrak g$ are the same. Let us define the Weyl group action $w \in \mathfrak W$ as follows: if $b_2(X)$ is odd then $w$ changes the sign of $\varepsilon_0$, and if $b_2(X)$ is even then $w$ changes the sign of both $\varepsilon_0$ and $\varepsilon_r$. Regardless of the parity of $b_2(X)$, we always have $-\mu_0 \varepsilon_0 + \lambda = w.\mu$. Since $\mu \in \mathrm{WP} (V_{\mu})$, we have $-\mu_0 \varepsilon_0 + \lambda \in \mathrm{WP} (V_{\mu})$ as one of the vertices. This forces $\bar V_{-\mu_0 \varepsilon_0 + \lambda} \subset V_{\mu} \subset H^*_{\even} (X)$ as $\mathfrak g_0$-submodules. But then by the discussion above, we have $\bar V_{\lambda} \subset H^{2k} (X)$ for $k = -\mu_0 + n$ with the property $\lambda_1 + \lambda_2 = \mu_1 + \mu_2 > -\mu_0 + n = k$. Again by Corollary \ref{cor:gbar_criterion}, this proves Nagai's conjecture fails for any Type II degeneration.
\end{proof}

\section{Nagai's Conjecture for the known examples of hyper-K\"ahler manifolds}
\label{sect_complete_proof}

At this point, we conclude with the proof of Nagai's conjecture (Theorem \ref{nagai_thm}) for the known cases of hyper-K\"ahler manifolds. In fact, as announced in the introduction, a stronger representation theoretic condition holds for all known cases. Specifically, the following holds (also stated as Theorem \ref{thm_nagai2} in the introduction):

\begin{theorem} \label{thm:stronger_ineq}
	Let $X$ be a $2n$-dimensional hyper-K\"ahler manifold of $\mathrm{K3}^{[n]}$, $\Kum_n$, $\OG6$, or $\OG10$ type. Then any irreducible $\mathfrak g$-module component $V_{\mu}$ occurring in the LLV decomposition of $H^* (X)$ satisfies
	\begin{equation} \label{ineq_mu}
		\mu_0 + \dots+ \mu_{r-1} + | \mu_r| \le n .
	\end{equation}
\end{theorem}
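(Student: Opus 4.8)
The plan is to reduce the inequality \eqref{ineq_mu} to a single uniform statement about $\ell^1$-norms of weights, and then to read off or estimate these norms from the characters already computed in Theorem \ref{thm_llv_decompose}. For a dominant weight $\mu=(\mu_0,\dots,\mu_r)$ the left-hand side of \eqref{ineq_mu} is exactly $\|\mu\|_1:=\sum_{i=0}^r|\mu_i|$, since $\mu_0\ge\cdots\ge\mu_{r-1}\ge|\mu_r|\ge0$. The function $\theta\mapsto\|\theta\|_1$ on $\mathfrak h^\vee_\RR$ is convex and invariant under the Weyl group $\mathfrak W$ of signed permutations, so its maximum over the weight polytope $\mathrm{WP}(V_\mu)$ (the convex hull of $\mathfrak W\mu$) is attained at a vertex and equals $\|\mu\|_1$. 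Hence \eqref{ineq_mu} for \emph{every} $V_\mu$ in the LLV decomposition is equivalent to the assertion that every weight $\theta$ occurring in the formal character $\ch H^*(X)$ satisfies $\|\theta\|_1\le n$ (the forward direction uses that $\mu$ is itself a weight of $V_\mu$; the converse uses the convexity just noted, exactly in the spirit of Lemma \ref{lem:weight_polytope}). It therefore suffices to bound $\ell^1$-norms of weights in the characters of Theorem \ref{thm_llv_decompose}.

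For $\OG10$ and $\OG6$ this is a direct inspection of \eqref{eq_llv_OG10} and \eqref{eq_llv_OG6}: the summands $(5),(2,2)$ have norms $5=n$ and $4<n$, while $(3),(1,1,1),V=V_{(1)},\RR=V_{(0)}$ have norms $3=n,\,3=n,\,1,\,0$. For $\mathrm{K3}^{[n]}$ I use the product formula \eqref{eq_llv_k3n}. A weight $\theta$ in the $q^n$-coefficient arises from exponents $a_{i,m},b_{i,m}\ge0$ chosen in the factors $(1-x_iq^m)^{-1}$ and $(1-x_i^{-1}q^m)^{-1}$ with $\sum_{i,m}m(a_{i,m}+b_{i,m})=n$, giving $\theta_i=\sum_m(a_{i,m}-b_{i,m})$, whence
\[ \|\theta\|_1 \;\le\; \sum_{i,m}(a_{i,m}+b_{i,m}) \;\le\; \sum_{i,m} m\,(a_{i,m}+b_{i,m}) \;=\; n \]
because $m\ge1$. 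This disposes of three of the four families.

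The case $\Kum_n$ is the main point, because the odd cohomology contributes half-integral weights of norm $2$. Write $B(q)=\prod_{m\ge1}(1+A(q^m))$ as in \eqref{eq_bq}, where $1+A(q)=\sum_{a\ge0}\ch H^*(A^{(a)})\,q^a$ is the symmetric-power series of the abelian surface $A$, and let $b_e$ be the degree-$e$ coefficient of $B$, so that $b_e=\ch H^*(A^{[e]})$. The even classes $W$ carry weights $\pm\varepsilon_i$ of norm $1$, while the odd classes $U$ form a half-spin module with weights $\tfrac12(\pm\varepsilon_0\pm\cdots\pm\varepsilon_3)$ of norm $2$; crucially, in $B$ the odd classes enter only through exterior powers $\wedge^jU$, so any collection of them is a sum of \emph{distinct} half-spin weights. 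A short combinatorial lemma (which I expect to be the delicate step) shows that the sum of $j\ge2$ distinct half-spin weights has $\ell^1$-norm at most $j$, whereas a single one has norm $2$; equivalently, only $\wedge^1U$ at level $m=1$ produces an $\ell^1$-excess over the $q$-degree, and that excess is exactly $1$. Tracking $\ell^1$-norm against $q$-degree through $\prod_m(1+A(q^m))$ (using $m\ge2$ to absorb all higher excesses) then yields that every weight of $b_e$ has $\|\cdot\|_1\le e+1$.

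To finish I combine this with the identity underlying \eqref{eq_llv_kumn},
\[ \ch H^*(\Kum_n)\cdot b_1 \;=\; \sum_{e\mid n+1} J_4\!\Big(\tfrac{n+1}{e}\Big)\, b_e , \]
whose weights therefore have norm at most $\max_{e\mid n+1}(e+1)=n+2$. Let $\mu=(\mu_0,\mu_1,\mu_2,\mu_3)$ be any highest weight of $H^*(\Kum_n)$; since $\mathfrak g=\so(4,5)$ is of type $\mathrm B_4$, $\mu$ is dominant with all $\mu_i\ge0$, and $\mu$ occurs in $\ch H^*(\Kum_n)$. The weight $\phi=\tfrac12(\varepsilon_0+\varepsilon_1+\varepsilon_2+\varepsilon_3)$ is the highest weight of the half-spin module $U=H^{\mathrm{odd}}(A)$ in the chosen positive chamber (Section \ref{subsec_cx_structure}), hence occurs in $b_1$. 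Because characters of genuine representations have non-negative coefficients, $\mu+\phi$ occurs with positive multiplicity in the product, and as all its coordinates are positive we get $\|\mu+\phi\|_1=\|\mu\|_1+2$. Thus $\|\mu\|_1+2\le n+2$, i.e. $\|\mu\|_1\le n$, as required. The two soft points here—the distinct-half-spin estimate $\|b_e\|\le e+1$ and the verification that $\phi$ lies in $H^{\mathrm{odd}}(A)$ (a half-spin parity bookkeeping, checkable against Corollary \ref{cor:g_module2})—are where the real work lies; alternatively one may argue directly from Theorem \ref{thm:kumn}, but the generating-series route above seems cleanest.
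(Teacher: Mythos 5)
Your proof is correct and follows essentially the same route as the paper's: direct inspection of the decompositions for $\OG6$ and $\OG10$, the degree-versus-$q$-degree count in \eqref{eq_llv_k3n} for $\mathrm{K3}^{[n]}$, and for $\Kum_n$ multiplying $\ch (H^* (\Kum_n))$ by $b_1 q$, using positivity of character coefficients and the monomial $(x_0x_1x_2x_3)^{1/2}$ occurring in $b_1$ to shift the weight, together with the bound that weights in the $q^e$-coefficient of $B(q)$ exceed $e$ by at most $1$. The only local divergence is bookkeeping: you track the (nonadditive) $\ell^1$-norm, which forces your ``delicate'' distinct-half-spin-weights lemma — the lemma is in fact true ($j=2,3,4$ by direct check, $j\ge 5$ by complementation since the eight weights sum to zero) — whereas the paper tracks the linear functional $\theta \mapsto \theta_0+\cdots+\theta_3$, for which each spin factor $x^j q^m$ in \eqref{eq_bq} has value $j_0+\cdots+j_3 \in \{0,\pm 2\}$, so only the single factor $\bigl(1+(x_0x_1x_2x_3)^{1/2}q\bigr)$ at $m=1$ can contribute a positive excess (of $1$, and at most once); linearity makes the excess bound immediate, and this functional agrees with the $\ell^1$-norm on dominant weights of the type $\mathrm{B}_4$ algebra $\so(4,5)$, which is all your endgame uses.
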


\begin{remark}\label{rem_equiv}
	There are at least two other equivalent ways to state the condition \eqref{ineq_mu}. The first one is in terms of weight polytopes. The condition \eqref{ineq_mu} is equivalent to the weight polytope of $V_{\mu}$ being contained in that of the Verbitsky component $V_{(n)}$:
	\[ \mathrm{WP} (V_{\mu}) \subset \mathrm{WP} (V_{(n)}) .\]
	This can be easily seen as follows. The weight polytope of the Verbitsky component $V_{(n)}$ has vertices $\pm n \varepsilon_0, \cdots, \pm n \varepsilon_r$, obtained by applying the Weyl group actions to the highest weight $n \varepsilon_0$. From it, one shows its weight polytope is
	\[ \mathrm{WP} (V_{(n)}) = \{ \theta = \theta_0 \varepsilon_0 + \cdots + \theta_r \varepsilon_r \in \mathfrak h_{\RR}^* : | \theta_0 | + \cdots + | \theta_r | \le n \} .\]
	In our case, $\mu$ is a dominant integral weight so we have an assumption $\mu_0 \ge \cdots \ge \mu_{r-1} \ge | \mu_r | \ge 0$. Hence \eqref{ineq_mu} is equivalent to $\mu \in \mathrm{WP} (V_{(n)})$, which is again equivalent to our condition on the weight polytopes. In this sense, the condition \eqref{ineq_mu} in some sense means that the Verbitsky component is the \emph{dominant component} among the LLV components arising in $H^* (X)$.
	
	The second equivalent way to state the condition \eqref{ineq_mu} is to use the notion of a cocharacter. Assume $\dim V = 2r+3$ is odd. Let us denote $\varpi_r$ the fundamental weight associated to the spin representation $V_{\varpi_r}$. Then one can consider the cocharacter $\varpi_r^{\vee}: = \frac{2 (\varpi_r, -)}{(\varpi_r, \varpi_r)}$ associated to it. Then the inequality \eqref{ineq_mu} is equivalent to the inequality
	\[ \langle \mu, \varpi_r^{\vee} \rangle \le \langle n \varepsilon_0, \varpi_r^{\vee} \rangle .\]
	That is, the highest weight $\mu$ is again \emph{dominated} in terms of the pairing with the cocharacter $\varpi_r^{\vee}$ associated to the spin representation. If $\dim V = 2r+2$ is even, then we have to take care of the case $\mu_r < 0$, so \eqref{ineq_mu} is in fact equivalent to two inequalities $\mu_0 + \cdots + \mu_{r-1} - \mu_r \le n$ and $\mu_0 + \cdots + \mu_{r-1} + \mu_r \le n$. This case, we have two half-spin representations associated to the fundamental weights $\varpi_{r-1} = \frac{1}{2} (\varepsilon_0 + \cdots + \varepsilon_{r-1} - \varepsilon_r)$ and $\varpi_r = \frac{1}{2} (\varepsilon_0 + \cdots + \varepsilon_r)$. Hence the condition \eqref{ineq_mu} is equivalent to
	\[ \langle \mu, \varpi_{r-1}^{\vee} \rangle \le \langle n \varepsilon_0, \varpi_{r-1}^{\vee} \rangle \quad \mbox{and} \quad \langle \mu, \varpi_r^{\vee} \rangle \le \langle n \varepsilon_0, \varpi_r^{\vee} \rangle .\]
	This can be again interpreted as the highest weight $\mu$ is dominated in terms of the pairing with the cocharacters associated to the two half-spin representations.
\end{remark}

\begin{proof} [Proof of Theorem \ref{thm:stronger_ineq}]
	The inequality for $\OG6$ and $\OG10$  follows directly from the irreducible LLV decomposition in Theorem \ref{thm_llv_decompose} items (3) and (4) respectively. Assume now that $X$ is a hyper-K\"ahler manifold of $\mathrm{K3}^{[n]}$ type. We will in fact prove that \emph{every} weight $\mu$ associated to $H^* (X)$ satisfies the desired inequality above. In this situation, we have the following generating series for the formal character (cf. Theorem \ref{thm_llv_decompose}(1)):  
	\[ \sum_{n=0}^{\infty} \ch (H^* (\mathrm{K3}^{[n]}, \RR)) q^n = \prod_{m=1}^{\infty} \prod_{i=0}^{11} \frac {1} {(1 - x_i q^m) (1 - x_i^{-1} q^m)} .\]
	By definition, the coefficient of $q^n$ gives all the $\mathfrak g$-weights of $H^* (X)$. The weight $\mu = \mu_0 \varepsilon_0 + \cdots + \mu_{11} \varepsilon_{11}$ corresponds to the monomial $x_0^{\mu_0} x_1^{\mu_1} \cdots x_{11}^{\mu_{11}}$ in the representation ring. Hence, the desired inequality $\mu_0 + \cdots + \mu_{11} \le n$ is equivalent to saying that the $x_i$-degree of the coefficient of $q^n$ is $\le n$. This is obvious from the form of the right hand side; whenever we increase the degree of $q$ by $m>0$, then the degree of $x_i$ increases at most by $1(\le m)$. Thus, for every monomial in the generating series, the $x_i$-degree is at most the $q$-degree. The claim follows.
	
	For a $\Kum_n$ type hyper-K\"ahler manifold $X$, the same argument applies. In this case, the generating series of the formal character of $H^* (X)$ is (cf. Theorem \ref{thm_llv_decompose}(2)):
	\[ \sum_{n=0}^{\infty} \ch (H^* (\mathrm{Kum}_n, \RR)) q^n = \sum_{d=1}^{\infty} J_4 (d) \frac{B(q^d) - 1}{b_1 q} ,\]
	where $B(q)$ is defined by
	\[ B(q) = \prod_{m=1}^{\infty} \left[ \prod_{i=0}^3 \frac{1} {(1 - x_i q^m) (1 - x_i^{-1} q^m)} \prod_j (1 + x_0^{j_0} x_1^{j_1} x_2^{j_2} x_3^{j_3} q^m) \right] .\]
	Note that in the denominator we have
	\[ b_1 = x_0 + \cdots + x_3 + x_0^{-1} + \cdots + x_3^{-1} + \sqrt{x_0x_1x_2x_3} + \cdots + \sqrt{x_0x_1x_2x_3}^{-1} .\]
	Assume on the contrary that there exists some monomial $x_0^{\mu_0} \cdots x_3^{\mu_3} q^n$ in the generating series such that $\mu_0 + \cdots + \mu_3 \ge n+\frac{1}{2}$. After multiplying $b_1 q$, it follows that some $B(q^d)$ contains a monomial $x_0^{\mu_0 + \frac{1}{2}} \cdots x_3^{\mu_3 + \frac{1}{2}} q^{n+1}$. This means that $B(q)$ contains a monomial with $x_i$-degree at least $\frac{3}{2}$ bigger than the $q$-degree. One can see without difficulty this cannot happen in $B(q)$ defined as above.
\end{proof}

Combining Theorem \ref{thm:stronger_ineq} with the representation theoretic formulation of Nagai's conjecture (Theorem \ref{thm:criterion}), we conclude Nagai's conjecture holds for all currently known examples of hyper-K\"ahler manifolds.

\begin{corollary}
	Nagai's conjecture \eqref{nagaieq} holds for all one-parameter degenerations of projective hyper-K\"ahler manifolds of $\mathrm{K3}^{[n]}$, $\Kum_n$, $\OG6$, or $\OG10$ type. \qed
\end{corollary}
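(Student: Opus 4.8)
The plan is to deduce this corollary immediately by feeding the output of Theorem~\ref{thm:stronger_ineq} into the representation-theoretic criterion of Theorem~\ref{thm:criterion}. Theorem~\ref{thm:stronger_ineq} guarantees that every highest weight $\mu=(\mu_0,\dots,\mu_r)$ occurring in the LLV decomposition of $H^*(X)$ satisfies the strong inequality \eqref{ineq_mu}, namely $\mu_0+\cdots+\mu_{r-1}+|\mu_r|\le n$; while Theorem~\ref{thm:criterion} asserts that, for a projective hyper-K\"ahler manifold with $b_2(X)\ge 4$ together with a projective one-parameter degeneration, Nagai's conjecture \eqref{nagaieq} is equivalent to the weaker inequality \eqref{eq_mu_cond3}, $\mu_0+\mu_1+|\mu_2|\le n$, holding for all $\mu$ contributing to the even cohomology. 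So the task reduces entirely to checking that the strong inequality forces the weak one.

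First I would confirm that the hypotheses of Theorem~\ref{thm:criterion} are met for the four families. The relevant second Betti numbers are $b_2=23,\,7,\,8,\,24$ for $\mathrm{K3}^{[n]}$, $\Kum_n$, $\OG6$, $\OG10$ respectively, so in particular $b_2(X)\ge 4$ (indeed $r\ge 3$) in every case; and the corollary restricts to projective $X$ and projective degenerations, which is exactly the setting of Theorem~\ref{thm:criterion}. No family among the known ones slips below these thresholds.

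The only remaining point is the implication \eqref{ineq_mu}$\Rightarrow$\eqref{eq_mu_cond3}, which is elementary. Since $\mu$ is a dominant integral weight of $\fg$ (of type $\mathrm{B}$ or $\mathrm{D}$), its coordinates are ordered $\mu_0\ge\mu_1\ge\cdots\ge\mu_{r-1}\ge|\mu_r|\ge 0$; in particular every $\mu_i$ with $i<r$ is non-negative and $|\mu_2|=\mu_2$ (using $r\ge 3$). Hence
\[
\mu_0+\mu_1+|\mu_2|=\mu_0+\mu_1+\mu_2\le \mu_0+\mu_1+\cdots+\mu_{r-1}+|\mu_r|\le n,
\]
where the first inequality merely discards the non-negative terms $\mu_3,\dots,\mu_{r-1},|\mu_r|$ and the second is \eqref{ineq_mu}. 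Thus every weight contributing to the even cohomology satisfies \eqref{eq_mu_cond3}, and Theorem~\ref{thm:criterion} then yields Nagai's conjecture for each of the four types.

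There is essentially no obstacle here: all of the substantive content is already packaged in the two quoted theorems, so the corollary is a one-line consequence. The one spot demanding a moment's care is the edge case $r=2$ (i.e.\ $b_2\in\{4,5\}$), where \eqref{eq_mu_cond3} and \eqref{ineq_mu} coincide and the absolute value on $\mu_2$ becomes genuinely relevant in type $\mathrm{D}$; but since none of the four known families has $b_2\le 5$, this subtlety never intervenes, and the displayed chain of inequalities applies verbatim.
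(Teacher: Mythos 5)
Your proposal is correct and follows exactly the paper's route: the corollary is obtained by combining Theorem \ref{thm:stronger_ineq} with the criterion of Theorem \ref{thm:criterion}, the paper leaving the elementary implication \eqref{ineq_mu}$\Rightarrow$\eqref{eq_mu_cond3} and the check $b_2\ge 4$ implicit. Your explicit verification of these points (including the ordering $\mu_0\ge\mu_1\ge\cdots\ge\mu_{r-1}\ge|\mu_r|\ge 0$ and the irrelevance of the $r=2$ edge case for the known families) is accurate and fills in precisely what the paper's \qed elides.
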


It is natural to speculate that Nagai's conjecture (or even the stronger inequality \eqref{ineq_mu}) holds for any hyper-K\"ahler manifold. We do not have much to say in this direction. However, for completeness, we note Nagai's conjecture holds in general for low ($\le 8$) dimensional cases.

\begin{proposition}
	Nagai's conjecture \eqref{nagaieq} holds when $\dim X \le 8$.
\end{proposition}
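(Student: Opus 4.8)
The plan is to deduce the statement directly from the representation-theoretic reformulation of Nagai's conjecture (Theorem~\ref{thm:criterion}) together with the elementary weight bound already recorded in Proposition~\ref{prop:weaker_condition}. Write $\dim X = 2n$, so the hypothesis becomes $n \le 4$. Throughout I would assume $b_2(X) \ge 4$, the case $b_2(X) = 3$ being handled separately in Proposition~\ref{prop:nagai_b2_3}. Under this assumption Theorem~\ref{thm:criterion} reduces Nagai's conjecture, for an arbitrary one-parameter projective degeneration with general fiber $X$, to a single condition on the (diffeomorphism-invariant) LLV decomposition of $X$: every dominant integral weight $\mu = (\mu_0, \mu_1, \ldots, \mu_r)$ occurring in the even cohomology $H^*_{\even}(X)$ must satisfy $\mu_0 + \mu_1 + |\mu_2| \le n$. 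Since this condition no longer refers to the degeneration, it suffices to verify it for the LLV decomposition of $X$ alone.

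The verification splits into the Verbitsky component and the remaining summands. For $\mu = (n)$ one has $\mu_0 + \mu_1 + |\mu_2| = n + 0 + 0 = n$, so the bound holds with equality. For every other weight $\mu \neq (n)$, Proposition~\ref{prop:weaker_condition} gives $\mu_0 + \mu_1 \le n-1 \le 3$. The key point is that a dominant weight satisfies $\mu_0 \ge \mu_1 \ge |\mu_2| \ge 0$, so $2\mu_1 \le \mu_0 + \mu_1 \le n-1 \le 3$; and since $\mu$ lies in even cohomology, Proposition~\ref{prop:even_rep} forces $\mu_i \in \ZZ$. Hence $\mu_1 \le \lfloor (n-1)/2 \rfloor \le 1$, and therefore $|\mu_2| \le \mu_1 \le 1$. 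Adding these inequalities yields $\mu_0 + \mu_1 + |\mu_2| \le (n-1) + 1 = n$, which is exactly the criterion. This disposes of all even-cohomology weights and completes the proof.

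I do not expect a genuine obstacle here: the only place the dimension hypothesis enters is the step $\lfloor (n-1)/2\rfloor \le 1$, which is equivalent to $n \le 4$. The difficulty is therefore not that the argument is hard but that it is \emph{sharp}. Indeed, the largest value of $\mu_0 + \mu_1 + |\mu_2|$ compatible with the weaker bound $\mu_0 + \mu_1 \le n-1$ and the dominance ordering is $(n-1) + \lfloor (n-1)/2 \rfloor$, which exceeds $n$ as soon as $n \ge 5$ (e.g.\ the weight $(2,2,0,\ldots,0)$ meets the weaker bound for $n=5$ yet gives $\mu_0+\mu_1+|\mu_2| = 6 > 5$). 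Thus Proposition~\ref{prop:weaker_condition} ceases to suffice beyond dimension $8$, and the general statement genuinely requires the finer structural input that the Verbitsky component dominates the LLV decomposition --- namely Conjecture~\ref{main_conj}, which we can only establish unconditionally for the known types in Theorem~\ref{thm:stronger_ineq}. This is precisely why the low-dimensional range is the one accessible by the present elementary argument.
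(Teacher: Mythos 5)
Your proof is correct and follows essentially the same route as the paper: reduce to $b_2(X)\ge 4$ via Proposition \ref{prop:nagai_b2_3}, apply the criterion of Theorem \ref{thm:criterion}, and combine Proposition \ref{prop:weaker_condition} with the dominance ordering $\mu_0 \ge \mu_1 \ge |\mu_2|$ to get $|\mu_2| \le 1$ when $n \le 4$. Your explicit invocation of integrality (Proposition \ref{prop:even_rep}) to rule out $\mu_1 = \tfrac{3}{2}$ is a step the paper leaves implicit, and your closing sharpness observation correctly identifies why the argument stops at $n=4$; neither changes the substance of the proof.
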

\begin{proof}
	If $b_2(X) \le 4$ then Remark~\ref{rmk:typeII_degeneration} shows Nagai's conjecture is always true. If $b_2(X) \ge 5$ then we may apply Theorem~\ref{thm:criterion}. From Proposition \ref{prop:weaker_condition}, every highest weight $\mu$ in the LLV decomposition of the even cohomology $H^*_{\even} (X) = \bigoplus_{\mu} V_{\mu}^{\oplus m_{\mu}}$ satisfies either $\mu_0 + \mu_1 \le n-1$ or $\mu = (n)$. The case $\mu = (n)$ clearly satisfies $\mu_0 + \mu_1 + \mu_2 \le n$. If $n \le 4$, we get $\mu_0 + \mu_1 \le 3$ and hence $\mu_2 \le \mu_1\le 1$. This proves $\mu_0 + \mu_1 + \mu_2 \le n$.
\end{proof}

\appendix
\medskip
\medskip

\section{Representation theory of simple Lie algebras of type BD} \label{sec:appendixA}
We present a short review and fix notation for finite dimensional representation theory of simple Lie algebras of type BD. Throughout this section, we fix the notation $\mathfrak g = \mathfrak {so} (V, q)$ for a special orthogonal Lie algebra associated to an arbitrary \emph{nondegenerate} quadratic space $(V, q)$ over $\QQ$. Over the complex numbers $\CC$, there is only one quadratic space of dimension $n$ up to isomorphism, so every type BD simple Lie algebra over $\CC$ is isomorphic to $\so (n, \CC)$. Over the real numbers $\RR$, by Sylvester's classification, quadratic forms on $\RR^n$ is classified by its signature $(a, b)$, so every Lie algebra $\so (V, q)$ over $\RR$ is isomorphic to $\so (a, b)$. (N.B. not every type BD simple Lie algebra over $\RR$ is of the form $\so(V, q)$.) Over the rational numbers $\QQ$, the case of interest here, the classification of quadratic forms on $\QQ^n$ is well understood, but more subtle (e.g., \cite{omeara}).

The LLV algebra of a hyper-K\"ahler manifold is of the form $\mathfrak g = \so (V, q)$ (see Theorem \ref{prop:ll_isom}) for a rational quadratic space $(V, q)$. We will review some representation theory facts in this simplest case of type BD Lie algebra. We will do the representation theory over $\QQ$ as much as possible. By definition, a finite dimensional $\QQ$-vector space $W$ is called a \emph{$\mathfrak g$-module}, or a \emph{$\mathfrak g$-representation}, if it is equipped with a Lie algebra homomorphism $\mathfrak g \to \mathfrak {gl} (W)$. Our main references for this appendix are \cite{fh} for representation theory over $\CC$, and Milne \cite{milne} for that over $\QQ$. 

\subsection{Type B}
Assume $(V, q)$ is a rational quadratic space of odd dimension $2r+1 \ge 3$. 

Fix any Cartan subalgebra $\mathfrak h \subset \mathfrak g_{\CC}$. Let $0, \pm \varepsilon_1, \cdots, \pm \varepsilon_r$ be the associated weights of the standard representation $V_{\CC}$ with respect to $\mathfrak h$. We can choose a positive Weyl chamber appropriately so that it is generated by the fundamental weights
\begin{equation} \label{eq:fund_weights_B}
	\varpi_i = \varepsilon_1 + \cdots + \varepsilon_i \quad\mbox{for}\quad 1 \le i \le r-1, \qquad \varpi_r = \tfrac{1}{2}(\varepsilon_1 + \cdots + \varepsilon_r) .
\end{equation}
Let $\Lambda \subset \mathfrak h_{\RR}^{\vee}$ be the weight lattice of $\mathfrak g$. It is a rank $r$ lattice in the Euclidean space $\mathfrak h_{\RR}^{\vee}$ generated by the above fundamental weights. The intersection of the positive Weyl chamber and $\Lambda$ is the monoid of dominant integral weights
\begin{equation} \label{app:eq:dominant_integral_weights}
	\Lambda^+ = \{ \lambda = \sum_{i=1}^r \lambda_i \varepsilon_i : \lambda_1 \ge \cdots \ge \lambda_r \ge 0,\ \ \lambda_i \in \tfrac{1}{2} \ZZ,\ \ \lambda_i - \lambda_j \in \ZZ \} .
\end{equation}
We will often denote a dominant integral weight $\lambda = \sum_{i=1}^r \lambda_i \varepsilon_i$ simply as $\lambda = (\lambda_1, \cdots, \lambda_r)$, and omit the zeros in the end for simplicity. Whenever we use this notation, we assume $\lambda$ is a dominant integral weight and the conditions on $\lambda_i$ above \eqref{app:eq:dominant_integral_weights} are tacitly assumed.

Let $\lambda$ be a dominant integral weight of $\mathfrak g$. Over $\CC$, we always have a unique irreducible $\mathfrak g_{\CC}$-module $V_{\lambda, \CC}$ with highest weight $\lambda$. We call this a highest $\mathfrak g_{\CC}$-module of weight $\lambda$. Over $\QQ$, this is not always possible. However, in our case of $\mathfrak g = \so (V, q)$, we have a strong condition that the standard $\mathfrak g$-module $V$ is defined over $\QQ$. This implies that many of the modules relevant to us are in fact defined over $\QQ$.

\begin{proposition} \label{prop:def_Q}
	Let $\lambda = (\lambda_1, \cdots, \lambda_r)$ be a dominant integral weight of $\mathfrak g$. If $\lambda_i$ are integers, then there exists a unique irreducible $\mathfrak g$-module $V_{\lambda}$ such that its complexification $(V_{\lambda})_{\CC}$ is isomorphic to the highest $\mathfrak g_{\CC}$-module $V_{\lambda, \CC}$. We call this $V_{\lambda}$ the highest $\mathfrak g$-module of weight $\lambda$.
\end{proposition}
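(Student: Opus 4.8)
The plan is to descend the highest weight $\mathbb{C}$-module $V_{\lambda,\CC}$ to $\QQ$ by exhibiting it inside a tensor construction that is already defined over $\QQ$, and then invoke a Galois-descent/uniqueness argument. The key input is that the standard module $V = \so(V,q)$ itself is defined over $\QQ$ by hypothesis, so every tensor power $V^{\otimes N}$, every symmetric and exterior power, and every $\Sym^{a_1}(\wedge^1 V)\otimes\cdots\otimes\Sym^{a_r}(\wedge^r V)$ is a $\QQ$-rational $\fg$-module. The crucial observation, valid precisely because all the $\lambda_i$ are integers, is that the highest-weight module $V_{\lambda,\CC}$ of \emph{integral} (non-spin) highest weight appears as the image of a $\fg_\CC$-equivariant projector on such a tensor space, and this construction can be carried out over $\QQ$.

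\medskip

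First I would set $a_i = \lambda_i - \lambda_{i+1}$ for $1 \le i \le r-1$ and $a_r = \lambda_r$, so that the weight $\lambda = \sum_i a_i \varpi_i$ is expressed in fundamental weights with integer coefficients $a_i \ge 0$ (the integrality of all $\lambda_i$ is exactly what guarantees $a_r$ is an integer rather than a half-integer, so no spin factor $\varpi_r$ with odd coefficient intervenes). Consider the $\QQ$-rational module
\[
	W \ = \ \Sym^{a_1} (\wedge^1 V) \otimes \Sym^{a_2}(\wedge^2 V) \otimes \cdots \otimes \Sym^{a_r}(\wedge^r V).
\]
Over $\CC$, standard highest-weight theory (e.g.\ \cite{fh}) shows that $W_\CC$ contains $V_{\lambda,\CC}$ with multiplicity one as its unique irreducible summand of highest weight $\lambda$, and every other summand has strictly smaller highest weight in the dominance order. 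The next step is to extract $V_{\lambda,\CC}$ canonically: I would take $V_\lambda$ to be the $\fg$-submodule of $W$ generated over $\QQ$ by applying a suitable $\QQ$-rational projector built from the Casimir operator (or equivalently, the isotypic component cut out by iterating the $\QQ$-rational contraction maps with respect to $q$), so that $V_\lambda \otimes_\QQ \CC \cong V_{\lambda,\CC}$.

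\medskip

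For uniqueness and rationality I would argue as follows. The absolute Galois group $\Gal(\overline{\QQ}/\QQ)$ acts on the set of complex irreducible summands of $W_\CC$; since $V_{\lambda,\CC}$ is the \emph{unique} summand of its highest weight and occurs with multiplicity one, any Galois automorphism must fix this summand, hence the corresponding isotypic projector is Galois-invariant and therefore defined over $\QQ$. This gives a canonical $\QQ$-form $V_\lambda \subset W$ with $(V_\lambda)_\CC \cong V_{\lambda,\CC}$. Uniqueness follows because two $\QQ$-forms of an absolutely irreducible $\fg_\CC$-module whose complexifications are isomorphic differ by an element of $\mathrm{Aut}_{\fg_\CC}((V_\lambda)_\CC) = \CC^\times$ (by Schur's lemma, absolute irreducibility forcing the endomorphism algebra to be $\CC$), and a scalar isomorphism defined over $\overline\QQ$ descends, so the two $\QQ$-forms are isomorphic over $\QQ$.

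\medskip

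I expect the main obstacle to be the descent/uniqueness step rather than the construction: one must verify that the $\fg_\CC$-endomorphism algebra of $V_{\lambda,\CC}$ is exactly $\CC$ (absolute irreducibility), so that the Schur obstruction to descent, which a priori lives in a Brauer group $H^1(\Gal, \mathrm{Aut})$, in fact vanishes. For these non-spin orthogonal modules this is classical, but it is the point where the integrality hypothesis on $\lambda$ is genuinely used: it excludes the spin and half-spin representations, whose $\QQ$-forms are governed by the (nontrivial) arithmetic of the Clifford algebra of $(V,q)$ and need not exist over $\QQ$. The cleanest route is to cite Milne \cite{milne} for the statement that, for $\fg = \so(V,q)$ with $V$ defined over $\QQ$, every $\fg_\CC$-module with integral highest weight descends uniquely to $\QQ$; the tensor-construction argument above then serves to make the descent explicit and to pin down the Schur index as trivial.
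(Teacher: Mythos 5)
Your proposal is correct in substance, and it rests on the same underlying idea as the paper's proof---rationality of the standard module propagates through tensor constructions, and integrality of the $\lambda_i$ is exactly what excludes the spin representation---but the mechanism is genuinely different. The paper's proof is shorter and more direct: it cites the orthogonal Schur--Weyl (Weyl) construction \cite[Thm 19.22]{fh}, which realizes $V_{\lambda,\CC}$ itself as the image of a Young symmetrizer on the space of traceless tensors inside $V_{\CC}^{\otimes d}$; since Young symmetrizers have rational coefficients and the contractions are taken with respect to the rational form $q$, that construction already outputs an irreducible $\QQ$-module, with no extraction or descent step, and uniqueness is quoted from \cite[Thm 25.34]{milne}. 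You instead embed $V_{\lambda,\CC}$ with multiplicity one into the rational module $W=\Sym^{a_1}(\wedge^1 V)\otimes\cdots\otimes\Sym^{a_r}(\wedge^r V)$ (incidentally the same auxiliary module the paper uses in the proof of Lemma \ref{lem:main}) and cut it out by a rational projector. What your route buys is a conceptual explanation of why descent is unobstructed: multiplicity one inside a $\QQ$-rational module forces the Schur index to be $1$, a point the paper's citation leaves implicit. What it costs is the extra rationality-of-the-projector step, which the paper sidesteps entirely.

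A few repairs would tighten your argument. First, your Casimir projector does work here, but for a reason you should make explicit: every constituent of $W_{\CC}$ other than $V_{\lambda,\CC}$ has highest weight $\mu<\lambda$, and the Casimir eigenvalues then differ by $(\lambda,\lambda+2\rho)-(\mu,\mu+2\rho)=(\lambda+\mu+2\rho,\lambda-\mu)>0$, since $\lambda-\mu$ is a nonzero sum of positive roots and $\rho$ is strictly dominant; hence the top Casimir eigenspace of $W$ (a rational submodule, the Casimir being a rational element of the enveloping algebra built from $q$-dual bases) is precisely the sought $V_\lambda$. In general the Casimir does \emph{not} separate non-isomorphic irreducibles, so without this monotonicity the phrase ``suitable projector built from the Casimir'' is a gap. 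Second, your Galois step needs care: $\fg=\so(V,q)$ need not be split over $\QQ$ (e.g.\ $q$ definite), so ``the unique summand of highest weight $\lambda$'' is not a Galois-stable characterization as stated; what saves it is that type $\mathrm B_r$ has no diagram automorphisms, so every Galois twist of $V_{\lambda,\overline{\QQ}}$ is isomorphic to itself and the isotypic component is Galois-stable. Third, a bookkeeping slip: in type B one has $\lambda=\sum_{i<r}a_i\varpi_i+2\lambda_r\varpi_r$, because $\wedge^rV\cong V_{2\varpi_r}$; this is harmless for your construction since the highest weight of $\wedge^iV$ is $\varepsilon_1+\cdots+\varepsilon_i$ in all cases. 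Finally, uniqueness needs no $H^1$ or Brauer-group language: if $M,M'$ are two $\QQ$-forms, then $\Hom_{\fg}(M,M')\otimes\CC\cong\Hom_{\fg_{\CC}}(V_{\lambda,\CC},V_{\lambda,\CC})=\CC$, so $\Hom_{\fg}(M,M')\neq 0$, and any nonzero map between irreducibles is an isomorphism---which is essentially what the paper's reference to \cite[Thm 25.34]{milne} packages.
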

\begin{proof}
	The orthogonal Schur-Weyl construction \cite[Thm 19.22]{fh} realizes the highest $\mathfrak g_{\CC}$-module $V_{\lambda, \CC}$ as an explicit tensor construction starting from the standard module $V_{\CC}$. This construction works over $\QQ$, and hence one can apply it to the rational $\mathfrak g$-module $V$ and end up with a rational $\mathfrak g$-module $V_{\lambda}$. This proves $V_{\lambda, \CC}$ is in fact defined over $\QQ$. Uniqueness is a general fact in representation theory over an arbitrary field (see, e.g., \cite[Thm 25.34]{milne}).
\end{proof}

The highest $\mathfrak g$-modules associated to the fundamental weights \eqref{eq:fund_weights_B} are most easily described. For $1 \le i \le r-1$, the highest $\mathfrak g$-module of weight $\varpi_i$ is isomorphic to $\wedge^i V$, the $i$-th wedge power of the standard module $V$. The highest module associated to the weight $\varpi_r$ is exceptional; it is the spin $\mathfrak g$-module $V_{\varpi_r, \CC}$. Note that Proposition \ref{prop:def_Q} does not guarantee $V_{\varpi_r, \CC}$ is defined over $\QQ$. Indeed, it is completely possible that the spin module is not even defined over $\RR$ (see, e.g., \cite{del99}). However, one should be aware that $V_{2\varpi_r}$ is defined over $\QQ$ and isomorphic to $\wedge^r V$. It is an irreducible $\mathfrak g$-module.

Any $\mathfrak g$-module $W$ over $\QQ$ admits an associated $\Spin (V, q)$-module structure and vice versa \cite[Thm 22.53]{milne}. The existence of a degree 2 isogeny $\Spin (V, q) \to \SO (V, q)$ says there are exactly half the irreducible $\SO(V,q)$-modules than that of $\Spin (V, q)$-modules. More specifically, $V_{\lambda}$ admits an associated $\SO (V, q)$-module structure if and only if $\lambda$ is contained in the following submonoid of $\Lambda^+$:
\[ \Lambda^+_{\SO} = \{ \lambda = \sum_{i=1}^r \lambda_i \varepsilon_i : \lambda_1 \ge \cdots \ge \lambda_r \ge 0,\ \ \lambda_i \in \ZZ \} .\]
Note that this consists of precisely the dominant integral weights stated in Proposition \ref{prop:def_Q}. Hence, every $\SO (V, q)$-module is defined over $\QQ$.

The Weyl group $\mathfrak W$ of $\mathfrak g$ is the symmetry group of its root system consisting of permutations of the weights $\varepsilon_1,\cdots,\varepsilon_r$ and their sign changes. More specifically, $\mathfrak W \cong \mathfrak S_r \ltimes (\ZZ/2)^{\times r}$ where $\mathfrak S_r$ is the symmetric group of order $r$ and the semidirect product is defined in terms of the group $\mathfrak S_r$ acting on $(\ZZ/2)^{\times r}$ by permuting factors. For every highest weight $\lambda$, the set of weights of $V_{\lambda, \CC}$ is $\mathfrak W$-invariant as a subset in the Euclidean space $\mathfrak h^{\vee}_{\RR}$. Moreover, if we consider the convex hull in $\mathfrak h_{\RR}^{\vee}$ generated by all of the weights of $V_{\lambda, \CC}$, then we have a weight polytope $\mathrm{WP} (V_{\lambda})$. The vertices of this polytope are exactly the points $w.\lambda$ where $w \in \mathfrak W$ varies through all the Weyl group actions. Some of them can coincide.

The Weyl dimension formula provides a convenient way to compute the dimension of the highest weight modules $V_{\lambda, \CC}$. The formula is as follows.
\begin{equation} \label{eq_weyl_formula}
	\dim V_{\lambda, \CC} = \prod_{1 \le i < j \le r} \frac{(\lambda + \rho, \varepsilon_i + \varepsilon_j) \cdot (\lambda + \rho, \varepsilon_i - \varepsilon_j)}{(\rho, \varepsilon_i + \varepsilon_j) \cdot (\rho, \varepsilon_i - \varepsilon_j)} \cdot \prod_{i=1}^{r} \frac {(\rho + \lambda, \varepsilon_i)} {(\rho, \varepsilon_i)} .
\end{equation}
Here $\rho = \sum_{i=1}^r (r-i+\frac{1}{2}) \varepsilon_i$ is half the sum of the positive roots and $(,)$ is the standard Euclidean inner product on $\mathfrak h_{\RR}^{\vee}$ with respect to the basis $\varepsilon_i$, the Killing form. Of course, if $V_{\lambda, \CC}$ is defined over $\QQ$ with its $\QQ$-form $V_{\lambda}$, then the $\QQ$-dimension of $V_{\lambda}$ can be computed by exactly the same formula.

\subsection{Type D}
Assume $(V, q)$ is a rational quadratic space of even dimension $2r \ge 4$. There is an analogue but slightly different story in this case. Again, start with fixing any Cartan subalgebra $\mathfrak h \subset \mathfrak g_{\CC}$. We have $\pm \varepsilon_1, \cdots, \pm \varepsilon_r$ as the weights associated to the standard $\mathfrak g_{\CC}$-module $V_{\CC}$ with respect to $\mathfrak h$. Taking an appropriate positive Weyl chamber, we can choose the fundamental weights by
\begin{equation} \label{eq:fund_weights_D}
	\varpi_i = \varepsilon_1 + \cdots + \varepsilon_i \quad\mbox{for}\quad 1 \le i \le r-2, \qquad \varpi_{r-1} = \tfrac{1}{2} (\varepsilon_1 + \cdots + \varepsilon_{r-1} - \varepsilon_r), \quad \varpi_r = \tfrac{1}{2} (\varepsilon_1 + \cdots + \varepsilon_r) .
\end{equation}
The monoid of dominant integral weights is
\begin{equation} \label{app:eq:dominant_integral_weights2}
	\Lambda^+ = \{ \lambda = \sum_{i=1}^r \lambda_i \varepsilon_i : \lambda_1 \ge \cdots \ge \lambda_{r-1} \ge |\lambda_r| \ge 0, \ \ \lambda_i \in \tfrac{1}{2} \ZZ,\ \ \lambda_i - \lambda_j \in \ZZ \} .
\end{equation}
We often denote $\lambda = (\lambda_1, \cdots, \lambda_r)$ for a dominant integral weight $\lambda = \sum_{i=1}^r \lambda_i \varepsilon_i$, satisfying the condition \eqref{app:eq:dominant_integral_weights2}.

We denote $V_{\lambda, \CC}$ the highest $\mathfrak g_{\CC}$-module of weight $\lambda$. A similar proposition on their field of definition holds for $V_{\lambda, \CC}$, but this time a bit more complicated than the previous one.

\begin{proposition}
	Let $\lambda = (\lambda_1, \cdots, \lambda_r)$ be a dominant integral weight of $\mathfrak g$. If $\lambda_r = 0$ then there exists a unique irreducible $\mathfrak g$-module $V_{\lambda}$ such that its base change over $\CC$ is the highest module $V_{\lambda, \CC}$.
\end{proposition}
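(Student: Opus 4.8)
The plan is to mirror the proof of Proposition \ref{prop:def_Q} for type $\mathrm{B}$, paying the extra attention that type $\mathrm{D}$ demands. First I would observe that the hypothesis $\lambda_r = 0$ already forces all coordinates $\lambda_i$ to be integers: by \eqref{app:eq:dominant_integral_weights2} we have $\lambda_i \in \tfrac{1}{2}\ZZ$ and $\lambda_i - \lambda_j \in \ZZ$, so $\lambda_i = \lambda_i - \lambda_r \in \ZZ$ for every $i$. In particular $\lambda$ lies in the sublattice of weights that integrate to the algebraic group $\SO(V,q)$ (rather than merely to $\Spin(V,q)$), so over $\CC$ the module $V_{\lambda,\CC}$ descends to a representation of $\SO(V,q)_{\CC}$.

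The key step is then the orthogonal Schur--Weyl (Weyl) construction, exactly as in Proposition \ref{prop:def_Q}. For a dominant integral weight $\lambda$ with $\lambda_r = 0$ --- equivalently, a partition with strictly fewer than $r$ parts --- Weyl's construction realizes $V_{\lambda,\CC}$ as the traceless part (with respect to $q$) of the image of a rational Young symmetrizer acting on $V_{\CC}^{\otimes |\lambda|}$; see \cite[Thm 19.22]{fh} and the accompanying discussion for $\so_{2r}$. Because the standard module $V$, the quadratic form $q \in \Sym^2 V^*$, and the Young symmetrizers all have $\QQ$-coefficients, this construction is defined over $\QQ$ and produces a $\QQ$-form $V_{\lambda}$ with $(V_{\lambda})_{\CC} \cong V_{\lambda,\CC}$. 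The hard part, and the place where the hypothesis $\lambda_r = 0$ is genuinely used, is verifying that this piece of Weyl's construction stays irreducible over $\CC$: when $\lambda$ has exactly $r$ parts (i.e.\ $\lambda_r \neq 0$), the analogous traceless-tensor space splits over $\CC$ into the two conjugate irreducibles of highest weights $(\lambda_1,\ldots,\lambda_{r-1},\pm\lambda_r)$, interchanged by the outer automorphism $\varepsilon_r \mapsto -\varepsilon_r$, and neither summand need be individually rational (compare the spin modules and \cite{del99}). Restricting to $\lambda_r = 0$ is precisely what keeps the constructed module absolutely irreducible, so that the field-of-definition bookkeeping goes through.

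Finally, uniqueness of the $\QQ$-form follows from general representation theory over a field, as in \cite[Thm 25.34]{milne}: an absolutely irreducible representation admits at most one $\QQ$-structure up to isomorphism. I expect no difficulty beyond citing this statement. Thus the only substantive point is the irreducibility check under $\lambda_r = 0$; everything else is formal and parallels the type $\mathrm{B}$ argument verbatim.
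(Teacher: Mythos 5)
Your proposal is correct and follows essentially the same route as the paper, which proves this proposition by the same argument as Proposition \ref{prop:def_Q}: the orthogonal Schur--Weyl construction of \cite[Thm 19.22]{fh} applied to the rational standard module $V$, with uniqueness from \cite[Thm 25.34]{milne}. Your added observations --- that $\lambda_r = 0$ forces all $\lambda_i \in \ZZ$, and that it is exactly what prevents the splitting into the conjugate pair $(\lambda_1,\ldots,\lambda_{r-1},\pm\lambda_r)$ --- correctly spell out the point the paper compresses into ``the conditions on $\lambda$ are different for type B and type D.''
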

\begin{proof}
	The proof is the same as above. Notice the conditions on $\lambda$ are different for type B and type D.
\end{proof}

Note that the orthogonal Schur--Weyl construction argument \cite[Thm 19.22]{fh} still says $V_{\lambda, \CC} \oplus V_{\lambda', \CC}$ is defined over $\QQ$ for $\lambda_r \in \ZZ \setminus \{ 0 \}$, where $\lambda' = (\lambda_1,\cdots, \lambda_{r-1}, -\lambda_r)$. Hence one cannot say the highest $\mathfrak g_{\CC}$-module $V_{\lambda, \CC}$ is always defined over $\QQ$ when $\lambda_r \in \ZZ \setminus \{ 0 \}$. However, only two cases can possibly arise:
\begin{itemize}
	\item[i)] There do exist irreducible $\mathfrak g$-modules $V_{\lambda}$ and $V_{\lambda'}$, whose complexification are the highest $\mathfrak g_{\CC}$-modules $V_{\lambda, \CC}$ and $V_{\lambda', \CC}$.
	
	\item[ii)] There does not exist any irreducible $\mathfrak g$-module whose complexification is the highest $\mathfrak g_{\CC}$-module $V_{\lambda, \CC}$ (resp. $V_{\lambda', \CC})$. Nonetheless, there exists a unique irreducible $\mathfrak g$-module $V_{\lambda} = V_{\lambda'}$ whose complexification is $V_{\lambda, \CC} \oplus V_{\lambda', \CC}$.
\end{itemize}

The highest $\mathfrak g$-modules associated to the fundamental weights \eqref{eq:fund_weights_D} are $V_{\varpi_i} = \wedge^i V$ for $1 \le i \le r-2$. For $i = r-1, r$, we get the two half-spin representations $V_{\varpi_{r-1}, \CC}$ and $V_{\varpi_r, \CC}$. Again, it is totally possible these half-spin representations are not defined over $\QQ$ \cite{del99}. We also note the isomorphisms $\wedge^{r-1} V = V_{\varpi_{r-1} + \varpi_r}$ and $\wedge^r V = V_{2\varpi_{r-1}} \oplus V_{2\varpi_r}$. In particular, $\wedge^{r-1} V$ is an irreducible $\mathfrak g$-module whereas $\wedge^r V$ is not.

Any $\mathfrak g$-module $W$ over $\QQ$ admits an associated $\Spin (V, q)$-module structure and vice versa. There exists a degree $2$ isogeny $\Spin (V, q) \to \SO (V, q)$. This says $V_{\lambda}$ admits an associated $\SO (V, q)$-module structure if and only if $\lambda$ is contained in
\[ \Lambda^+_{\SO} = \{ \lambda = \sum_{i=1}^r \lambda_i \varepsilon_i : \lambda_1 \ge \cdots \ge \lambda_{r-1} \ge |\lambda_r| \ge 0,\ \ \lambda_i \in \ZZ \} .\]
In this case, the center of $\Spin (V, q)$ is isomorphic to $(\ZZ/2)^{\times 2}$ and hence there exists a further degree $2$ isogeny $\SO (V, q) \to \PSO (V, q)$. Therefore, there are more possibility of $\QQ$-algebraic groups with the associated Lie algebra $\mathfrak g$.

The Weyl group $\mathfrak W$ of $\mathfrak g$ consists of permutations of the weights $\varepsilon_1,\cdots,\varepsilon_r$ and even number of their sign changes. The group $\mathfrak W$ is an index $2$ subgroup of $\mathfrak S_r \ltimes (\ZZ/2)^{\times r}$, consisting of elements of even number of $1$'s in $(\ZZ/2)^{\times r}$. The set of weights of $V_{\lambda}$ is $\mathfrak W$-invariant, and generates a convex hull $\mathrm{WP} (V_{\lambda})$, the weight polytope of $V_{\lambda}$. The vertices of $\mathrm{WP} (V_{\lambda})$ are exactly the points $w.\lambda$ where $w \in \mathfrak W$ varies through all the Weyl group actions.

For any dominant integral weight $\lambda$, the Weyl dimension formula for this case has the following form:
\begin{equation}\label{eq__weyl_formula2}
	\dim V_{\lambda, \CC} = \prod_{1 \le i < j \le r} \frac{(\lambda + \rho, \varepsilon_i + \varepsilon_j) \cdot (\lambda + \rho, \varepsilon_i - \varepsilon_j)}{(\rho, \varepsilon_i + \varepsilon_j) \cdot (\rho, \varepsilon_i - \varepsilon_j)} .
\end{equation}
Here $\rho = \sum_{i=1}^r (r-i) \varepsilon_i$ denotes again half the sum of the positive roots and $(,)$ is the standard Euclidean inner product on $\mathfrak h_{\RR}^{\vee}$ with respect to the basis $\varepsilon_i$. As an example, in this paper, we have used the Weyl dimension formula to generate  Table \ref{table:betti} for  hyper-K\"ahler manifolds of $\OG10$ type. Since, in this case, the rank of $\mathfrak g$ is $r = 13$, our computations were computer-aided.  Finally, we provide the following lemma about the dimension comparison of $V_{\lambda}$ (cf. \cite[Ex. 24.9]{fh}). It is used in \S\ref{case_llv_og10} for the study of the LLV decomposition of $\OG10$ hyper-K\"ahler manifolds.

\begin{lemma} \label{lem:dim_ineq}
	Let $\lambda, \mu$ be dominant integral weights of $\mathfrak g$. Then $\dim V_{\lambda + \mu, \CC} \ge \dim V_{\lambda, \CC}$.
\end{lemma}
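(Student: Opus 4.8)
The plan is to read the lemma straight off the Weyl dimension formula by rewriting it as a product over positive roots and then comparing the two products factor by factor. Recall that for type $\mathrm D_r$ the positive roots are $\Phi^+ = \{ \varepsilon_i \pm \varepsilon_j : 1 \le i < j \le r \}$, so formula \eqref{eq__weyl_formula2} is nothing but the compact expression
\[ \dim V_{\lambda, \CC} \ = \ \prod_{\alpha \in \Phi^+} \frac{(\lambda + \rho, \alpha)}{(\rho, \alpha)} . \]
The first step is to record that every factor of this product is a strictly positive real number: since $\rho$ is strictly dominant we have $(\rho, \alpha) > 0$ for all $\alpha \in \Phi^+$, and since $\lambda$ is dominant we have $(\lambda, \alpha) \ge 0$, so that $(\lambda + \rho, \alpha) \ge (\rho, \alpha) > 0$. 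This positivity is the one point that must be checked, because it is precisely what makes a term-wise comparison of the two products legitimate.

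The comparison itself is then immediate. For each $\alpha \in \Phi^+$ one has
\[ (\lambda + \mu + \rho, \alpha) \ = \ (\lambda + \rho, \alpha) + (\mu, \alpha) \ \ge \ (\lambda + \rho, \alpha) , \]
the inequality holding because $\mu$ is dominant and hence $(\mu, \alpha) \ge 0$. Dividing through by the positive quantity $(\rho, \alpha)$ shows that each factor of $\dim V_{\lambda + \mu, \CC}$ dominates the corresponding factor of $\dim V_{\lambda, \CC}$; since all factors are positive, the products satisfy the same inequality, giving $\dim V_{\lambda + \mu, \CC} \ge \dim V_{\lambda, \CC}$.

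I do not anticipate a genuine obstacle here — the argument is a one-line consequence of the product-over-positive-roots shape of the Weyl formula — the only subtlety being to invoke the strict dominance of $\rho$ and the dominance of $\lambda$ to guarantee that all factors are positive before passing from the factor-wise inequality to the inequality of products. The identical reasoning, using \eqref{eq_weyl_formula} together with the type $\mathrm B_r$ positive roots $\{ \varepsilon_i \pm \varepsilon_j : i < j \} \cup \{ \varepsilon_i \}$, covers the type B case as well, so the statement holds for both families of LLV algebras arising in the paper.
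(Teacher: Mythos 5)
Your proof is correct, and it is the standard argument: the paper itself gives no proof of this lemma, citing it instead to \cite[Ex.~24.9]{fh}, and your term-wise comparison of the factors $\frac{(\lambda+\rho,\alpha)}{(\rho,\alpha)}$ in the Weyl dimension formula --- justified by the positivity coming from strict dominance of $\rho$ and the nonnegative pairing $(\mu,\alpha)\ge 0$ of a dominant weight with each positive root --- is precisely the argument that citation intends. You also correctly handle both families, matching \eqref{eq_weyl_formula} with the type $\mathrm B_r$ positive roots (including the short roots $\varepsilon_i$) and \eqref{eq__weyl_formula2} with the type $\mathrm D_r$ roots; note in the $\mathrm D_r$ case that dominance allows $\lambda_r<0$, but $(\lambda,\varepsilon_i+\varepsilon_j)\ge\lambda_{r-1}+\lambda_r\ge 0$ still holds since $\lambda_{r-1}\ge|\lambda_r|$, so your positivity check goes through there as well.
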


\section{Representation ring and restriction representations} \label{sec:appendixB}
Since many of our results involve several different Lie algebras and heavily depends on the relation between their representation theory, we provide a separate section to discuss this topic.

\subsection{Representation ring and restriction representations}

Let $\mathfrak g$ be a reductive Lie algebra over $\QQ$. Recall that a (rational) $\mathfrak g$-module is a finite dimensional rational vector space $V$ equipped with a Lie algebra homomorphism $\mathfrak g \to \mathfrak {gl} (V)$. We define a complex $\mathfrak g$-module by a finite dimensional complex vector space $V$ equipped with a Lie algebra homomorphism $\mathfrak g_{\CC} \to \mathfrak {gl} (V)$. Notice that the notion of a complex $\mathfrak g$-module is nothing but just a $\mathfrak g_{\CC}$-module. If we have a rational $\mathfrak g$-module $V_{\QQ}$, then its complexification $(V_{\QQ})_{\CC}$ is clearly a complex $\mathfrak g$-module. On the other hand, not every complex $\mathfrak g$-module can be obtained by the complexification of a rational $\mathfrak g$-module.

Let $\Rep (\mathfrak g)$ and $\Rep_{\CC} (\mathfrak g)$ $(= \Rep (\mathfrak g_{\CC}))$ be the categories of finite dimensional rational $\mathfrak g$-modules and complex $\mathfrak g$-modules, respectively. Since we have assumed $\mathfrak g$ is reductive, both categories are semisimple, i.e., every object in the category is completely reducible. The discussion in the previous paragraph implies there exists a complexification functor
\[ \Rep (\mathfrak g) \to \Rep_{\CC} (\mathfrak g) .\]

Consider the Grothendieck ring $K (\mathfrak g)$ and $K_{\CC} (\mathfrak g)$ of the categories $\Rep (\mathfrak g)$ and $\Rep_{\CC} (\mathfrak g)$, respectively. These rings are called the \emph{representation ring} (resp. complex representation ring) of $\mathfrak g$. Since $\Rep (\mathfrak g)$ (resp. $\Rep_{\CC} (\mathfrak g)$) is semisimple, the representation ring $K (\mathfrak g)$ (resp. $K_{\CC} (\mathfrak g)$) coincides with the abelianization of the monoid of isomorphism classes of $\mathfrak g$-modules (resp. complex $\mathfrak g$-modules). Moreover, the above complexification functor induces an injective ring homomorphism (see \cite[\textsection 25.d]{milne})
\begin{equation} \label{app:eq:rep_ring_inj}
	K (\mathfrak g) \hookrightarrow K_{\CC} (\mathfrak g) .
\end{equation}
Thus, to describe the structure of (rational or complex) $\mathfrak g$-modules up to isomorphism, it is enough to describe them as elements in $K_{\CC} (\mathfrak g)$.

\begin{proposition}
	Let $V$ be a $\mathfrak g$-module. Then the $\mathfrak g$-module structure of $V$ is completely determined by an element $[V_{\CC}] \in K_{\CC} (\mathfrak g)$ in the complex representation ring.
\end{proposition}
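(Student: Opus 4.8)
The statement to prove asserts that a $\mathfrak g$-module $V$ over $\QQ$ is determined up to isomorphism by the class $[V_\CC] \in K_\CC(\mathfrak g)$ of its complexification. The plan is to deduce this as a direct corollary of the two facts just established in the excerpt: first, the semisimplicity of $\Rep(\mathfrak g)$ (since $\mathfrak g$ is reductive), which says that $V$ is a direct sum of irreducible $\mathfrak g$-modules, and the multiset of isomorphism classes of these irreducible summands is exactly the datum recorded by the class $[V] \in K(\mathfrak g)$ in the (integral, rational) representation ring; and second, the injectivity of the complexification homomorphism $K(\mathfrak g) \hookrightarrow K_\CC(\mathfrak g)$ of \eqref{app:eq:rep_ring_inj}, cited from \cite[\textsection 25.d]{milne}.

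The argument runs as follows. First I would observe that, because $\Rep(\mathfrak g)$ is semisimple, two $\mathfrak g$-modules $V$ and $W$ are isomorphic if and only if they have the same class in $K(\mathfrak g)$: the class $[V]$ is precisely $\sum_i m_i [V_i]$ where the $V_i$ are the distinct irreducible constituents and $m_i$ their multiplicities, and since the $[V_i]$ form a $\ZZ$-basis of $K(\mathfrak g)$, equality of classes forces equality of all multiplicities, hence an isomorphism $V \cong \bigoplus_i V_i^{\oplus m_i} \cong W$. Thus the $\mathfrak g$-module structure of $V$ is determined by the element $[V] \in K(\mathfrak g)$.

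Next I would invoke the injectivity \eqref{app:eq:rep_ring_inj} of the complexification map $K(\mathfrak g)\hookrightarrow K_\CC(\mathfrak g)$, which sends $[V]\mapsto [V_\CC]$. Injectivity means that $[V_\CC]$ determines $[V]$ uniquely inside $K(\mathfrak g)$. Chaining the two implications, $[V_\CC]$ determines $[V]$, which in turn determines the isomorphism class of $V$, which is exactly the assertion. This is essentially a one-line corollary, so the proof is short.

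I do not anticipate any genuine obstacle here, as both ingredients are supplied immediately beforehand; the only subtlety worth flagging explicitly in the write-up is that the injectivity of \eqref{app:eq:rep_ring_inj} is doing real work — a priori, distinct rational modules could conceivably become isomorphic after base change to $\CC$ (this is a genuine phenomenon for non-split forms, and is precisely why one records the class in $K(\mathfrak g)$ rather than attempting to read off constituents directly over $\CC$), and it is the cited result from Milne that rules this out. I would therefore phrase the proof to make clear that the content is entirely concentrated in \eqref{app:eq:rep_ring_inj} together with semisimplicity, rather than in any new computation.

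\begin{proof}
Since $\mathfrak g$ is reductive, the category $\Rep(\mathfrak g)$ is semisimple, so $V$ decomposes as a finite direct sum $V \cong \bigoplus_i V_i^{\oplus m_i}$ of irreducible rational $\mathfrak g$-modules $V_i$ with multiplicities $m_i \ge 0$. The isomorphism classes of irreducible modules form a $\ZZ$-basis of the representation ring $K(\mathfrak g)$, and the class of $V$ is $[V] = \sum_i m_i [V_i]$. Consequently the isomorphism class of $V$ is completely determined by the element $[V] \in K(\mathfrak g)$: if two $\mathfrak g$-modules have the same class, then all their multiplicities agree, and they are isomorphic.

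By \eqref{app:eq:rep_ring_inj}, the complexification functor induces an injective ring homomorphism $K(\mathfrak g) \hookrightarrow K_\CC(\mathfrak g)$, which sends $[V]$ to $[V_\CC]$. Injectivity shows that $[V_\CC]$ determines $[V]$ uniquely. Combining this with the previous paragraph, the element $[V_\CC] \in K_\CC(\mathfrak g)$ determines $[V]$, and hence determines the $\mathfrak g$-module structure of $V$ up to isomorphism.
\end{proof}
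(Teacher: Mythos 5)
Your proof is correct and follows exactly the argument the paper intends: the proposition is stated there without a separate proof precisely because it is the immediate consequence of the two ingredients established just before it, namely semisimplicity of $\Rep(\mathfrak g)$ (so $[V]\in K(\mathfrak g)$ determines $V$ up to isomorphism) and the injectivity of the complexification map \eqref{app:eq:rep_ring_inj} cited from Milne. Your explicit flagging that the injectivity of \eqref{app:eq:rep_ring_inj} carries the real content — ruling out distinct rational forms becoming isomorphic over $\CC$ — is exactly the right point to emphasize.
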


The structure of the representation ring $K_{\CC} (\mathfrak g)$ for simple Lie algebras $\mathfrak g$ is completely understood. It is related to the character theory and weights of $\mathfrak g$-modules. Fix a Cartan subalgebra of $\mathfrak g$ and let $\Lambda$ be the weight lattice of $\mathfrak g$. Consider its group ring $\ZZ [\Lambda]$. To use a multiplicative notation for the multiplication operation in $\ZZ [\Lambda]$, we use a notation $e^{\mu} \in \ZZ [\Lambda]$ to represent $\mu \in \Lambda$ as an element in $\ZZ [\Lambda]$.

\begin{definition}
	Let $V$ be any complex $\mathfrak g$-module. Consider its weight decomposition $V = \bigoplus_{\mu} V(\mu)$, where $V(\mu)$ indicates the weight $\mu$ subvector space of $V$. We define the \emph{formal character map} of $\mathfrak g$ by a ring homomorphism
	\[ \ch : K_{\CC} (\mathfrak g) \to \ZZ [\Lambda], \qquad [V] \mapsto \sum_{\mu} \dim V(\mu) e^{\mu} .\]
\end{definition}

We recall the following well known result (e.g. \cite[\textsection 23]{fh}).

\begin{theorem} \label{thm:formal_character}
	The formal character map $\ch$ is injective, and the image of it is the Weyl group invariant ring $\ZZ [\Lambda]^{\mathfrak W}$. That is, $\ch : K_{\CC} (\mathfrak g) \to \ZZ[\Lambda]^{\mathfrak W}$ is a ring isomorphism.
\end{theorem}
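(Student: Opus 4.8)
The plan is to prove Theorem \ref{thm:formal_character}, which asserts that the formal character map $\ch : K_{\CC}(\mathfrak g) \to \ZZ[\Lambda]^{\mathfrak W}$ is a ring isomorphism onto the Weyl-invariant subring. I would organize the argument into three parts: (1) the map is well-defined and lands in $\ZZ[\Lambda]^{\mathfrak W}$; (2) injectivity; (3) surjectivity. Since the statement is standard (and cited to \cite[\textsection 23]{fh}), the proof is really a matter of assembling classical facts about weights and characters, so I would keep each step brief.

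First I would verify that $\ch$ is a well-defined ring homomorphism landing in the invariant ring. Additivity on short exact sequences holds because the weight-space decomposition is additive in $V$, so $\ch$ descends to the Grothendieck group $K_{\CC}(\mathfrak g)$. Multiplicativity, $\ch([V \otimes W]) = \ch([V])\ch([W])$, follows from the weight-space decomposition of a tensor product, $(V \otimes W)(\nu) = \bigoplus_{\mu + \mu' = \nu} V(\mu) \otimes W(\mu')$, which translates directly into multiplication of formal exponentials $e^{\mu}e^{\mu'} = e^{\mu+\mu'}$. For $\mathfrak W$-invariance I would invoke the fact that the set of weights of any finite-dimensional $\mathfrak g_{\CC}$-module is stable under the Weyl group and that $\dim V(\mu) = \dim V(w.\mu)$ for every $w \in \mathfrak W$; this is a consequence of the existence, for each simple root, of an $\mathfrak{sl}_2$-triple whose representation theory forces the multiplicities to be symmetric under the corresponding reflection, and the reflections generate $\mathfrak W$.

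Next I would treat injectivity. Because $\Rep_{\CC}(\mathfrak g)$ is semisimple, $K_{\CC}(\mathfrak g)$ is the free abelian group on the isomorphism classes of irreducibles $\{V_{\lambda,\CC}\}_{\lambda \in \Lambda^+}$, so it suffices to show their images $\ch(V_{\lambda,\CC})$ are $\ZZ$-linearly independent in $\ZZ[\Lambda]^{\mathfrak W}$. The key structural input is that each $V_{\lambda,\CC}$ has highest weight $\lambda$ occurring with multiplicity one, and all other weights $\mu$ satisfy $\mu < \lambda$ in the dominance order (i.e., $\lambda - \mu$ is a nonnegative combination of positive roots). Ordering the dominant weights by any total refinement of the dominance partial order, the matrix expressing the $\ch(V_{\lambda,\CC})$ in terms of the orbit-sums of dominant weights is unitriangular, hence invertible; this gives injectivity.

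Finally, for surjectivity I would show that the orbit sums $m_{\lambda} = \sum_{\mu \in \mathfrak W . \lambda} e^{\mu}$, as $\lambda$ ranges over dominant integral weights $\Lambda^+$, form a $\ZZ$-basis of $\ZZ[\Lambda]^{\mathfrak W}$ (every $\mathfrak W$-invariant element is constant on $\mathfrak W$-orbits, and each orbit meets $\Lambda^+$ in exactly one point), and then combine this with the unitriangularity from the previous step: the change-of-basis matrix between $\{\ch(V_{\lambda,\CC})\}$ and $\{m_{\lambda}\}$ is unitriangular over $\ZZ$, so the $\ch(V_{\lambda,\CC})$ also span $\ZZ[\Lambda]^{\mathfrak W}$. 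I expect the main obstacle to be organizing the dominance-order bookkeeping cleanly enough that the two unitriangularity claims (irreducible characters versus orbit sums) are transparent; everything else is formal. In practice, since this is a well-documented classical theorem, I would state the well-definedness and invariance carefully and then simply cite \cite[\textsection 23]{fh} for the unitriangularity and the resulting isomorphism, rather than reproving it in full.
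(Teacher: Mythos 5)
Your proposal is correct; in fact the paper gives no proof of Theorem \ref{thm:formal_character} at all, simply recalling it as a well-known result with a citation to \cite[\textsection 23]{fh}, and your sketch (invariance of weight multiplicities via $\mathfrak{sl}_2$-triples for simple reflections, then injectivity and surjectivity via the unitriangular change of basis between irreducible characters and Weyl-orbit sums in the dominance order) is precisely the standard argument behind that citation. The only point worth making explicit is local finiteness --- for each dominant $\lambda$ the set of dominant $\mu \le \lambda$ is finite --- which is what lets the unitriangularity deliver both linear independence and spanning; with that noted, your argument is complete.
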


\subsubsection{Representation ring of type BD simple Lie algebras}
Now let us specialize our discussion to the case of our primary interest, $\mathfrak g = \mathfrak {so} (V, q)$ for a rational quadratic space $(V, q)$.

Assume $\dim V = 2r + 1$ is odd for $r \ge 1$ (Case $\mathrm B_r$). The complexification of $\mathfrak g$ is $\mathfrak g_{\CC} = \mathfrak {so} (2r+1, \CC)$. Recall from Section \ref{sec:appendixA} that the weight lattice $\Lambda$ of it is generated by the fundamental weights
\[ \varpi_1 = \varepsilon_1, \quad \varpi_2 = \varepsilon_1 + \varepsilon_2, \quad \cdots, \quad \varpi_{r-1} = \varepsilon_1 + \cdots + \varepsilon_{r-1}, \quad \varpi_r = \tfrac{1}{2}(\varepsilon_1 + \cdots + \varepsilon_r) .\]
Let us simply write $x_i = e^{\varepsilon_i}$ for $i=1,\cdots,r$. Then we can describe the group ring $\ZZ[\Lambda]$ explicitly as
\begin{equation} \ZZ[\Lambda] = \ZZ [x_1^{\pm1}, \cdots, x_r^{\pm1}, (x_1 \cdots x_r)^{\pm\frac{1}{2}}] .
\end{equation}
Recall the Weyl group of $\mathfrak g$ is isomorphic to $\mathfrak W_{2r+1} \cong \mathfrak S_r \ltimes (\ZZ/2)^{\times r}$. It acts on $\ZZ[x_1^{\pm1}, \cdots, x_r^{\pm1}, (x_1 \cdots x_r)^{\pm\frac{1}{2}}]$ as follows: $\sigma \in \mathfrak S_r$ acts as a permutation on $x_1, \cdots, x_r$, and $1 \in \ZZ/2$ in the $i$-th factor $\ZZ / 2$ acts as $x_i \mapsto x_i^{-1}$. Finally, Theorem \ref{thm:formal_character} completes the explicit description of $K_{\CC} (\mathfrak g)$ by
\[ \ch : K_{\CC} (\mathfrak g) \xrightarrow{\ \cong\ } \ZZ[x_1^{\pm1}, \cdots, x_r^{\pm1}, (x_1 \cdots x_r)^{\pm \frac{1}{2}}]^{\mathfrak W_{2r+1}} .\]

Now assume $\dim V = 2r$ is even for $r \ge 2$ (Case $\mathrm D_r$). The complexification of $\mathfrak g$ is $\mathfrak g_{\CC} = \mathfrak {so} (2r, \CC)$. The weight lattice $\Lambda$ of it is generated by the fundamental weights
\[ \varpi_1 = \varepsilon_1, \quad \cdots, \quad \varpi_{r-2} = \varepsilon_1 + \cdots + \varepsilon_{r-2}, \quad \varpi_{r-1} = \tfrac{1}{2} (\varepsilon_1 + \cdots + \varepsilon_{r-1} - \varepsilon_r), \quad \varpi_r = \tfrac{1}{2}(\varepsilon_1 + \cdots + \varepsilon_r) .\]
Let us also write $x_i = e^{\varepsilon_i}$ for $i=1,\cdots,r$. Then the group ring $\ZZ[\Lambda]$ becomes the same as above:
\[ \ZZ[\Lambda] = \ZZ [x_1^{\pm1}, \cdots, x_r^{\pm1}, (x_1 \cdots x_r)^{\pm\frac{1}{2}}] .\]
However, the Weyl group becomes smaller. The Weyl group $\mathfrak W_{2r}$ in this case is an order $2$ subgroup of $\mathfrak S_r \ltimes (\ZZ/2)^{\times r}$, consisting of elements of even number of $1$'s in $(\ZZ/2)^{\times r}$. It acts on $\ZZ[x_1^{\pm1}, \cdots, x_r^{\pm1}, (x_1 \cdots x_r)^{\pm\frac{1}{2}}]$ in the same way as above. Theorem \ref{thm:formal_character} gives us the isomorphism
\[ \ch : K_{\CC} (\mathfrak g) \xrightarrow{\ \cong\ } \ZZ[x_1^{\pm1}, \cdots, x_r^{\pm1}, (x_1 \cdots x_r)^{\pm \frac{1}{2}}]^{\mathfrak W_{2r}} .\]

\subsubsection{Restriction representations}

A direct but interesting consequence of the above discussions is the following.

\begin{proposition} \label{prop:restriction}
	Let $(V, q)$ be a rational quadratic space and $T \subset V$ a nondegenerate quadratic subspace with $\dim V = 2r+1$ and $\dim T = 2r$. Set $\mathfrak g = \mathfrak {so} (V, q)$ and $\mathfrak m = \mathfrak {so} (T, q)$. Then the restriction representation functor $\Res : \Rep (\mathfrak g) \to \Rep (\mathfrak m)$ induces an injective ring homomorphism on the level of representation rings. That is, the following diagram commutes with all the horizontal arrows injective.
	\[\begin{tikzcd}
		K (\mathfrak g) \arrow[r, hookrightarrow, "\Res"] \arrow[d, hookrightarrow] & K (\mathfrak m) \arrow[d, hookrightarrow] \\
		K_{\CC} (\mathfrak g) \arrow[r, hookrightarrow, "\Res"] \arrow[d, "\cong", "\ch"'] & K_{\CC} (\mathfrak m) \arrow[d, "\cong", "\ch"'] \\
		\ZZ [x_1^{\pm1}, \cdots, x_r^{\pm1}, (x_1 \cdots x_r)^{\pm \frac{1}{2}}]^{\mathfrak W_{2r+1}} \arrow[r, hookrightarrow] & \ZZ [x_1^{\pm1}, \cdots, x_r^{\pm1}, (x_1 \cdots x_r)^{\pm \frac{1}{2}}]^{\mathfrak W_{2r}}
	\end{tikzcd}\]
\end{proposition}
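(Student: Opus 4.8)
The plan is to verify the commutativity of the diagram and the injectivity of each horizontal arrow by working down through the three rows, reducing everything to the bottom row, where the claim becomes an elementary statement about invariant subrings of a Laurent polynomial ring. First I would recall the setup: since $\dim V = 2r+1$ and $\dim T = 2r$, the Lie algebra $\fg = \so(V,q)$ is of type $\mathrm B_r$ and $\fm = \so(T,q)$ is of type $\mathrm D_r$, so both have rank $r$ and \emph{the same} weight lattice $\Lambda = \ZZ[x_1^{\pm 1},\cdots,x_r^{\pm 1},(x_1\cdots x_r)^{\pm \frac12}]$ (this is precisely the content of the two preceding subsubsections). The only difference between the two algebras on the level of characters is the Weyl group: $\mathfrak W_{2r} \subset \mathfrak W_{2r+1}$ is the index-two subgroup consisting of those signed permutations effecting an even number of sign changes.

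The argument proceeds row by row. The two vertical arrows are the complexification inclusions $K(\fg)\hookrightarrow K_{\CC}(\fg)$ and $K(\fm)\hookrightarrow K_{\CC}(\fm)$, which are injective by \eqref{app:eq:rep_ring_inj}. The two bottom vertical arrows are the formal character isomorphisms of Theorem \ref{thm:formal_character}. For the middle row, I would note that the restriction functor $\Res:\Rep_{\CC}(\fg)\to\Rep_{\CC}(\fm)$ corresponds, under the character isomorphisms, simply to the inclusion of invariant rings
\[
	\ZZ[x_1^{\pm1},\cdots,x_r^{\pm1},(x_1\cdots x_r)^{\pm\frac12}]^{\mathfrak W_{2r+1}}
	\ \hookrightarrow\
	\ZZ[x_1^{\pm1},\cdots,x_r^{\pm1},(x_1\cdots x_r)^{\pm\frac12}]^{\mathfrak W_{2r}}.
\]
This is because restricting a $\fg_{\CC}$-module to $\fm_{\CC}$ does not change the underlying vector space nor its weight-space decomposition with respect to the common Cartan subalgebra $\fh$; it only forgets the larger Weyl symmetry. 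Hence the formal character of $\Res(W)$ equals the formal character of $W$, now viewed merely as an element of the smaller invariant ring $\ZZ[\Lambda]^{\mathfrak W_{2r}}$. The bottom horizontal map is then just the set-theoretic inclusion $\ZZ[\Lambda]^{\mathfrak W_{2r+1}}\subseteq \ZZ[\Lambda]^{\mathfrak W_{2r}}$, which is obviously an injective ring homomorphism since $\mathfrak W_{2r}$-invariance is a weaker condition than $\mathfrak W_{2r+1}$-invariance. Commutativity of the lower square is the identity "character of the restriction equals restriction (i.e.\ reinterpretation) of the character," and commutativity of the upper square follows since complexification commutes with restriction.

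Having established commutativity, injectivity of the middle and bottom rows is immediate; injectivity of the top row $\Res:K(\fg)\to K(\fm)$ then follows by a diagram chase: if $[W]\in K(\fg)$ restricts to zero in $K(\fm)$, then its image in $K_{\CC}(\fm)$ is zero (using commutativity of the upper square and injectivity of the right vertical arrow), hence its image in $K_{\CC}(\fg)$ is zero (injectivity of the middle row), hence $[W]=0$ (injectivity of the left vertical arrow). The main conceptual point—and the only step requiring genuine care—is the identification of $\Res$ with the invariant-ring inclusion; I expect the mild obstacle to be checking that the weight-space structure is genuinely preserved under restriction, i.e.\ that one may and does choose a single Cartan subalgebra $\fh\subset\fm_{\CC}\subset\fg_{\CC}$ compatible with both algebras so that the $\varepsilon_i$ coincide. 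This is where the rank equality $r=r$ and the explicit coincidence of weight lattices from the preceding subsubsections are essential; everything else is formal.
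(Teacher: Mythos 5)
Your proof is correct and takes essentially the same route as the paper's: identify the bottom map with the inclusion of Weyl-invariant rings (injective since $\mathfrak W_{2r}\subsetneq\mathfrak W_{2r+1}$), transport injectivity up through the character isomorphisms, and conclude the top row by the injectivity of the complexification maps \eqref{app:eq:rep_ring_inj}. The only difference is that you spell out the details the paper leaves to its ``previous discussions''---notably the choice of a common Cartan subalgebra $\mathfrak h\subset\mathfrak m_{\CC}\subset\mathfrak g_{\CC}$ making the character of a restriction literally equal to the character reinterpreted in the smaller invariant ring---which is a worthwhile clarification but not a different argument.
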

\begin{proof}
	The statement follows almost directly from the previous discussions. Observe that $\mathfrak W_{2r} \subsetneq \mathfrak W_{2r+1}$. This implies the bottom map is injective. Since the character homomorphisms are isomorphisms by Theorem \ref{thm:formal_character}, the middle restriction map on the complex representation ring is also injective. It follows the restriction map on the first row is also injective, because the two vertical maps $K (\mathfrak g) \to K_{\CC} (\mathfrak g)$ and $K (\mathfrak m) \to K_{\CC} (\mathfrak m)$ are both injective by \eqref{app:eq:rep_ring_inj}.
\end{proof}

That is, in the set-up of the proposition,  if $W$ is a $\mathfrak g$-module then the $\mathfrak m$-module structure on $W$ by restriction representation determines its $\mathfrak g$-module structure. In particular, since $b_2$ is even for $K3$ surfaces, notice that this applies to case of the (Mukai completed) MT algebra $\fm_{\RR} = \so(4,20)$ for K3 surfaces and the LLV algebra $\fg_{\RR} = \so(4,21)$ for $\mathrm{K3}^{[n]}$ (and similarly, for the $\Kum_n$ series). This fact plays a key role in Section \ref{Sect_compute_LLV}.

\subsection{Some explicit examples of branching rules} \label{sec:appendixB_branching}
A branching rule is simply a combinatorial rule describing how the restriction representation of the two Lie algebras $\mathfrak m \subset \mathfrak g$ behave. Although we discussed above the theoretical framework of restriction representations, more explicit combinatorial descriptions are often easier to deal with. We collect a few branching rules for type BD Lie algebras, which are useful for us.

\subsubsection{The branching rule of $\so(n, \CC) \subset \so(n+1, \CC)$}
Let us consider the branching rule of $\so(n, \CC) \subset \so(n+1, \CC)$. We temporarily assume everything is over $\CC$ for this discussion. However, applying \eqref{app:eq:rep_ring_inj}, one also concludes exactly the same branching rule for rational Lie algebras. Denote $V = \CC^{n+1}$ and $W = \CC^n$ for the standard representations of $\so (n+1)$ and $\so(n)$. In \cite[\S 25.3]{fh}, there is an explicit branching rule describing how the highest $\so(n+1)$-module $V_{\lambda}$ of weight $\lambda$ splits as a direct sum of irreducible $\so(n)$-modules. The description is as follows.

Assume $n = 2r$ and let $\lambda = (\lambda_1, \cdots, \lambda_r)$ be a dominant integral weight of $\so(2r+1)$. Then we have an $\so(2r)$-module irreducible decomposition of the highest $\so(2r+1)$-module
\[ V_{\lambda} = \sideset{}{_{\lambda'}} {\bigoplus} W_{\lambda'} ,\]
where $\lambda' = (\lambda'_1, \cdots, \lambda'_r)$ runs through all the $\so (2r)$-dominant integral weights with
\[ \lambda_1 \ge \lambda'_1 \ge \lambda_2 \ge \lambda'_2 \ge \cdots \ge \lambda'_{r-1} \ge \lambda_r \ge | \lambda'_r | \ge 0 ,\]
and $\lambda_i$ and $\lambda'_i$ are simultaneously all integers or half-integers.

Assume $n = 2r-1$ and let $\lambda = (\lambda_1, \cdots, \lambda_r)$ be a dominant integral weight of $\so(2r)$. Then we have an $\so(2r-1)$-module irreducible decomposition of the highest $\so(2r)$-module
\[ V_{\lambda} = \sideset{}{_{\lambda'}} {\bigoplus} W_{\lambda'} ,\]
where $\lambda' = (\lambda'_1, \cdots, \lambda'_{r-1})$ runs through all the $\so(2r-1)$-dominant integral weights with
\[ \lambda_1 \ge \lambda'_1 \ge \lambda_2 \ge \lambda'_2 \ge \cdots \ge \lambda'_{r-1} \ge | \lambda_r | \ge 0 ,\]
and $\lambda_i$ and $\lambda'_i$ are simultaneously all integers or half-integers.

\subsubsection{Some special branching rules for $\mathfrak m \subset \mathfrak g$}
Let $(V, q)$ be a rational quadratic space and $W \subset V$ be a nondegenerate quadratic subspace. Set $\mathfrak g = \so (V, q)$ and $\mathfrak m = \so (W, q)$ be rational Lie algebras. Since any nondegenerate subspace $W \subset V$ has its orthogonal complement, evidently we have an inclusion $\mathfrak m \subset \mathfrak g$. Applying the above discussion on the branching rule of $\so (n, \CC) \subset \so (n+1, \CC)$ several times (with the aid of \eqref{app:eq:rep_ring_inj}), we can get an explicit branching rule for $\mathfrak m \subset \mathfrak g$.

However, in the two special cases $V_{(k)}$ and $V_{(1,\cdots,1)}$, there is another easier way to obtain a branching rule. Let us first consider the case of the $\mathfrak g$-module $V_{(k)}$. This is precisely the case for the Verbitsky component in the cohomology of compact hyper-K\"ahler manifold. Classically, this component is viewed as a ``symmetric power'' of the second cohomology as its $2k$-th degree part is isomorphic to the $k$-th symmetric power of the second degree part. We can recover this fact in the following way. Assume that we are in the standard set-up of the Mukai completion, i.e.
$$(V, q) = (\bar V, \bar q) \oplus U,$$ 
where $U$ is the $2$-dimensional hyperbolic quadratic space, and denote $\bar {\mathfrak g} = \so (\bar V, \bar q)$. Then we have an equality $V = \bar V \oplus \QQ^2$ (since we are interested in the $\so (\bar V, \bar q)$-structure, the precise structure on the second component $\QQ^2$ does not matter). One can compute
\[ \Sym^k V = \Sym^k (\bar V \oplus \QQ^2) = \Sym^k \bar V \oplus 2\Sym^{k-1} \bar V \oplus 3 \Sym^{k-2} \bar V \oplus \cdots \oplus k \bar V \oplus (k+1)\QQ \]
as a $\bar {\mathfrak g}$-module decomposition. Now it is well known $\Sym^k V = \Sym^{k-2} V \oplus V_{(k)}$ as $\mathfrak g$-modules, so this leads us to the identity
\[ V_{(k)} = \Sym^k \bar V \oplus 2 \Sym^{k-1} \bar V \oplus 2 \Sym^{k-2} \bar V \oplus \cdots \oplus 2 \bar V \oplus 2 \QQ .\]
In particular, this recovers the symmetric power description of the Verbitsky component $V_{(k)}$. If one also wants to capture the degree of the components, then one can consider the decomposition $V = \QQ (-1) \oplus V \oplus \QQ(1)$ instead, where $\QQ(-1)$ and $\QQ(1)$ denote the $\pm2$ eigenspaces of the ``grading operator'' $h$ (see Section \ref{sec:llv}, esp. \eqref{eq_def_h}).  

The branching rule for the $\mathfrak g$-module $V_{(1,\cdots,1)} = V_{(1^k)}$ ($k$ times of $1$'s) will be used when we discuss the LLV decomposition of  hyper-K\"ahler manifolds of $\OG6$ type. Here we assume $\mathfrak m = \so (W, q)$ with $\dim V - \dim W = m$. Thus, we can write $V = W \oplus \QQ^m$ and get
\begin{align*}
	V_{(1^k)} = \wedge^k V &= \wedge^k (W \oplus \QQ^m) \\
	&= \wedge^k W \oplus m \wedge^{k-1} W \oplus \binom{m}{2} \wedge^{k-2} W \oplus \cdots \oplus \binom{m}{k-1} W \oplus \binom{m}{k} \QQ \\
	&= W_{(1^k)} \oplus m W_{(1^{k-1})} \oplus \cdots \oplus \binom{m}{k-1} W \oplus \binom{m}{k} \QQ .
\end{align*}
This gives us the decomposition of $V_{(1^k)}$ into a direct sum of irreducible $\mathfrak m$-modules.

\bibliography{LLV_HK.bib}

\providecommand{\bysame}{\leavevmode\hbox to3em{\hrulefill}\thinspace}
\providecommand{\MR}{\relax\ifhmode\unskip\space\fi MR }
\providecommand{\MRhref}[2]{%
  \href{http://www.ams.org/mathscinet-getitem?mr=#1}{#2}
}
\providecommand{\href}[2]{#2}
\begin{thebibliography}{KLSV18}

\bibitem[Bea83]{beauville}
A.~Beauville, \emph{Vari\'{e}t\'{e}s {K}\"{a}hleriennes dont la premi\`ere
  classe de {C}hern est nulle}, J. Differential Geom. \textbf{18} (1983),
  no.~4, 755--782 (1984).

\bibitem[Bea99]{B-YZ}
\bysame, \emph{Counting rational curves on {$K3$} surfaces}, Duke Math. J.
  \textbf{97} (1999), no.~1, 99--108.

\bibitem[Bea07]{B-Conj}
\bysame, \emph{On the splitting of the {B}loch-{B}eilinson filtration},
  Algebraic cycles and motives. {V}ol. 2, London Math. Soc. Lecture Note Ser.,
  vol. 344, Cambridge Univ. Press, Cambridge, 2007, pp.~38--53.

\bibitem[Bog96]{bog96}
F.~Bogomolov, \emph{On the cohomology ring of a simple hyperk{\"a}hler manifold
  (on the results of {V}erbitsky)}, Geometric and Functional Analysis
  \textbf{6} (1996), no.~4, 612--618.

\bibitem[dCM00]{dCM}
M.~A. de~Cataldo and L.~Migliorini, \emph{The {D}ouady space of a complex
  surface}, Adv. Math. \textbf{151} (2000), no.~2, 283--312.

\bibitem[dCRS19]{dCRS}
M.~A. de~Cataldo, A.~Rapagnetta, and G.~Sacc\`a, \emph{The {H}odge numbers of
  {O}'{G}rady 10 via {N}g\^o strings}, arXiv:1905.03217, 2019.

\bibitem[Del99]{del99}
P.~Deligne, \emph{Notes on spinors}, Quantum fields and strings: a course for
  mathematicians, {V}ol. 1, 2 ({P}rinceton, {NJ}, 1996/1997), Amer. Math. Soc.,
  Providence, RI, 1999, pp.~99--135.

\bibitem[FFZ20]{FFZ}
S.~Floccari, L.~Fu, and Z.~Zhang, \emph{On the motive of {O}'{G}rady's
  ten-dimensional hyper-{K}\"ahler varieties}, Commun. Contemp. Math. (2020),
  2050034.

\bibitem[FH91]{fh}
W.~Fulton and J.~Harris, \emph{Representation theory}, Graduate Texts in
  Mathematics, vol. 129, Springer-Verlag, New York, 1991.

\bibitem[FS86]{FS}
R.~Friedman and F.~Scattone, \emph{Type {${\rm III}$} degenerations of {$K3$}
  surfaces}, Invent. Math. \textbf{83} (1986), no.~1, 1--39.

\bibitem[Fuj87]{fuj87}
A.~Fujiki, \emph{On the de {R}ham cohomology group of a compact {K}\"{a}hler
  symplectic manifold}, Algebraic geometry, {S}endai, 1985, Adv. Stud. Pure
  Math., vol.~10, North-Holland, Amsterdam, 1987, pp.~105--165.

\bibitem[G\"90]{got90}
L.~G\"{o}ttsche, \emph{The {B}etti numbers of the {H}ilbert scheme of points on
  a smooth projective surface}, Math. Ann. \textbf{286} (1990), no.~1-3,
  193--207.

\bibitem[G\"94]{got94}
\bysame, \emph{Hilbert schemes of zero-dimensional subschemes of smooth
  varieties}, Lecture Notes in Mathematics, vol. 1572, Springer-Verlag, Berlin,
  1994.

\bibitem[GGK12]{ggk}
M.~Green, P.~A. Griffiths, and M.~Kerr, \emph{Mumford-{T}ate groups and
  domains}, Annals of mathematics studies 183, Princeton university press,
  2012.

\bibitem[GS93]{gs93}
L.~G\"{o}ttsche and W.~Soergel, \emph{Perverse sheaves and the cohomology of
  {H}ilbert schemes of smooth algebraic surfaces}, Math. Ann. \textbf{296}
  (1993), no.~2, 235--245.

\bibitem[Gua01]{guan01}
D.~Guan, \emph{On the {B}etti numbers of irreducible compact hyperk\"{a}hler
  manifolds of complex dimension four}, Math. Res. Lett. \textbf{8} (2001),
  no.~5-6, 663--669.

\bibitem[HLS20]{hls19}
K.~Hulek, R.~Laza, and G.~Sacc\`a, \emph{The {E}uler number of
  hyper-{K}\"{a}hler manifolds of {OG}10 type}, Mat. Contemp. \textbf{47}
  (2020), 151--170.

\bibitem[HLSY21]{HLSY}
A.~Harder, Z.~Li, J.~Shen, and Q.~Yin, \emph{{P=W} for {L}agrangian fibrations
  and degenerations of hyper-{K}\"ahler manifolds}, Forum Math. Sigma
  \textbf{9} (2021), Paper No. e50.

\bibitem[Huy03]{huy:hk}
D.~Huybrechts, \emph{Compact hyperk\"{a}hler manifolds}, Calabi-{Y}au manifolds
  and related geometries ({N}ordfjordeid, 2001), Universitext, Springer,
  Berlin, 2003, pp.~161--225.

\bibitem[Huy12]{huy11}
\bysame, \emph{A global {T}orelli theorem for hyperk\"{a}hler manifolds [after
  {M}. {V}erbitsky]}, Ast\'{e}risque (2012), no.~348, Exp. No. 1040, x,
  375--403, S\'{e}minaire Bourbaki: Vol. 2010/2011. Expos\'{e}s 1027--1042.

\bibitem[KL20]{b2HK}
Y.-J. Kim and R.~Laza, \emph{A conjectural bound on the second {B}etti number
  for hyper-{K}\"{a}hler manifolds}, Bull. Soc. Math. France \textbf{148}
  (2020), no.~3, 467--480.

\bibitem[KLSV18]{klsv18}
J.~Koll\'{a}r, R.~Laza, G.~Sacc\`a, and C.~Voisin, \emph{Remarks on
  degenerations of hyper-{K}\"{a}hler manifolds}, Ann. Inst. Fourier (Grenoble)
  \textbf{68} (2018), no.~7, 2837--2882.

\bibitem[KSV19]{ksv17}
N.~Kurnosov, A.~Soldatenkov, and M.~Verbitsky, \emph{Kuga-{S}atake construction
  and cohomology of hyperk\"{a}hler manifolds}, Adv. Math. \textbf{351} (2019),
  275--295.

\bibitem[LL97]{ll97}
E.~Looijenga and V.~A. Lunts, \emph{A {L}ie algebra attached to a projective
  variety}, Invent. Math. \textbf{129} (1997), no.~2, 361--412.

\bibitem[Loo21]{loo19}
E.~Looijenga, \emph{Teichm\"{u}ller spaces and {T}orelli theorems for
  hyperk\"{a}hler manifolds}, Math. Z. \textbf{298} (2021), no.~1-2, 261--279.

\bibitem[LPZ18]{LPZ}
R.~Laza, G.~Pearlstein, and Z.~Zhang, \emph{On the moduli space of pairs
  consisting of a cubic threefold and a hyperplane}, Adv. Math. \textbf{340}
  (2018), 684--722.

\bibitem[LSV17]{LSV}
R.~Laza, G.~Sacc\`a, and C.~Voisin, \emph{A hyper-{K}\"ahler compactification
  of the intermediate {J}acobian fibration associated with a cubic 4-fold},
  Acta Math. \textbf{218} (2017), no.~1, 55--135.

\bibitem[Mar02]{mar02}
E.~Markman, \emph{Generators of the cohomology ring of moduli spaces of sheaves
  on symplectic surfaces}, J. Reine Angew. Math. \textbf{544} (2002), 61--82.

\bibitem[Mil13]{mil11}
J.~S. Milne, \emph{Shimura varieties and moduli}, Handbook of moduli. {V}ol.
  {II}, Adv. Lect. Math. (ALM), vol.~25, Int. Press, Somerville, MA, 2013,
  pp.~467--548.

\bibitem[Mil17]{milne}
\bysame, \emph{Algebraic groups}, Cambridge Studies in Advanced Mathematics,
  vol. 170, Cambridge University Press, Cambridge, 2017, The theory of group
  schemes of finite type over a field.

\bibitem[MN20]{MN}
D.~Maulik and A.~Negu\c{t}, \emph{Lehn's formula in {C}how and conjectures of
  {B}eauville and {V}oisin}, J. Inst. Math. Jussieu (2020), 1--39.

\bibitem[Moo99]{moo99}
B.~Moonen, \emph{Notes on {M}umford-{T}ate groups}, 1999.

\bibitem[Moo16]{moonen}
\bysame, \emph{On the {C}how motive of an abelian scheme with non-trivial
  endomorphisms}, J. Reine Angew. Math. \textbf{711} (2016), 75--109.

\bibitem[Moz06]{moz06}
S.~Mozgovyy, \emph{The {E}uler number of {O'G}rady's $10$-dimensional
  symplectic manifold}, Ph.D. thesis, Uni. Mainz, 2006.

\bibitem[MRS18]{MRS}
G.~Mongardi, A.~Rapagnetta, and G.~Sacc\`a, \emph{The {H}odge diamond of
  {O}'{G}rady's six-dimensional example}, Compos. Math. \textbf{154} (2018),
  no.~5, 984--1013.

\bibitem[MSS11]{mss11}
L.~Maxim, M.~Saito, and J.~Sch\"{u}rmann, \emph{Symmetric products of mixed
  {H}odge modules}, J. Math. Pures Appl. (9) \textbf{96} (2011), no.~5,
  462--483.

\bibitem[Nag08]{nagai08}
Y.~Nagai, \emph{On monodromies of a degeneration of irreducible symplectic
  {K}\"{a}hler manifolds}, Math. Z. \textbf{258} (2008), no.~2, 407--426.

\bibitem[NOY21]{NOY}
A.~Negu\c{t}, G.~Oberdieck, and Qizheng Y., \emph{Motivic decompositions for
  the {H}ilbert scheme of points of a {K3} surface}, J. Reine Angew. Math.
  (2021).

\bibitem[Obe21]{Ober}
G.~Oberdieck, \emph{A {L}ie algebra action on the {C}how ring of the {H}ilbert
  scheme of points of a {K}3 surface}, Comment. Math. Helv. \textbf{96} (2021),
  no.~1, 65--77.

\bibitem[O'G99]{OG10}
K.~G. O'Grady, \emph{Desingularized moduli spaces of sheaves on a {$K3$}}, J.
  Reine Angew. Math. \textbf{512} (1999), 49--117.

\bibitem[O'G03]{OG6}
\bysame, \emph{A new six-dimensional irreducible symplectic variety}, J.
  Algebraic Geom. \textbf{12} (2003), no.~3, 435--505.

\bibitem[O'M63]{omeara}
O.~T. O'Meara, \emph{Introduction to quadratic forms}, Die Grundlehren der
  mathematischen Wissenschaften, Bd. 117, Academic Press, Inc., Publishers, New
  York; Springer-Verlag, Berlin-G\"{o}ttingen-Heidelberg, 1963.

\bibitem[Rap07]{rapog6}
A.~Rapagnetta, \emph{Topological invariants of {O}'{G}rady's six dimensional
  irreducible symplectic variety}, Math. Z. \textbf{256} (2007), no.~1, 1--34.

\bibitem[Rap08]{rapagnetta}
\bysame, \emph{On the {B}eauville form of the known irreducible symplectic
  varieties}, Math. Ann. \textbf{340} (2008), no.~1, 77--95.

\bibitem[Sch73]{sch73}
W.~Schmid, \emph{Variation of {H}odge structure: the singularities of the
  period mapping}, Invent. Math. \textbf{22} (1973), 211--319.

\bibitem[Sol20]{sol18}
A.~Soldatenkov, \emph{Limit mixed {H}odge structures of hyperk\"{a}hler
  manifolds}, Mosc. Math. J. \textbf{20} (2020), no.~2, 423--422.

\bibitem[Sol21]{sol19}
\bysame, \emph{On the {H}odge structures of compact hyperk\"ahler manifolds},
  Math. Res. Lett. \textbf{28} (2021), no.~2, 623--635.

\bibitem[SS20]{ss17}
S.~Schreieder and A.~Soldatenkov, \emph{The {K}uga--{S}atake construction under
  degeneration}, J. Inst. Math. Jussieu \textbf{19} (2020), no.~6, 2165--2182.

\bibitem[SY18]{SY}
J.~Shen and Q.~Yin, \emph{Topology of {L}agrangian fibrations and {H}odge
  theory of hyper-{K}\"ahler manifolds}, to appear in Duke Math. J.
  (arXiv:1812.10673); with an appendix by C. Voisin, 2018.

\bibitem[Tae19]{Tae}
L.~Taelman, \emph{Derived equivalences of hyperk\" ahler varieties},
  arXiv:1906.08081, 2019.

\bibitem[Ver90]{ver90}
M.~Verbitsky, \emph{Action of the {L}ie algebra of {${\rm SO}(5)$} on the
  cohomology of a hyper-{K}\"{a}hler manifold}, Funktsional. Anal. i Prilozhen.
  \textbf{24} (1990), no.~3, 70--71.

\bibitem[Ver95]{ver95}
\bysame, \emph{Cohomology of compact hyperk\"ahler manifolds}, Ph.D. thesis,
  Harvard University, 1995.

\bibitem[Ver96]{ver96}
\bysame, \emph{Cohomology of compact hyper-{K}\"{a}hler manifolds and its
  applications}, Geom. Funct. Anal. \textbf{6} (1996), no.~4, 601--611.

\bibitem[Ver13]{ver13}
\bysame, \emph{Mapping class group and a global {T}orelli theorem for
  hyperk\"{a}hler manifolds}, Duke Math. J. \textbf{162} (2013), no.~15,
  2929--2986, Appendix A by Eyal Markman.

\bibitem[vGV16]{vGV}
B.~van Geemen and C.~Voisin, \emph{On a conjecture of {M}atsushita}, Int. Math.
  Res. Not. IMRN (2016), no.~10, 3111--3123.

\bibitem[Zar83]{zar83}
Yu.~G. Zarhin, \emph{Hodge groups of {$K3$} surfaces}, J. Reine Angew. Math.
  \textbf{341} (1983), 193--220.

\bibitem[Zha15]{LZ}
L.~Zhang, \emph{Character formulas on cohomology of deformations of {H}ilbert
  schemes of {$K3$} surfaces}, J. Lond. Math. Soc. (2) \textbf{92} (2015),
  no.~3, 675--688.

\end{thebibliography}
\end{document}